\documentclass{amsart}

\usepackage{amssymb}
\usepackage{amsthm}
\usepackage{amsmath}
\usepackage{enumitem}
\usepackage[mathscr]{euscript}

\usepackage{cancel}
\usepackage{soul}

\usepackage[normalem]{ulem}

\usepackage[foot]{amsaddr}

\usepackage[usenames]{xcolor}

\usepackage{hyperref} 

\usepackage{tikz-cd} 
 
\theoremstyle{plain}
\newtheorem{proposition}{Proposition}[section] 
\newtheorem{theorem}[proposition]{Theorem}
\newtheorem{lemma}[proposition]{Lemma}  
\newtheorem{corollary}[proposition]{Corollary}
\theoremstyle{definition}
\newtheorem{example}[proposition]{Example} 
\newtheorem{definition}[proposition]{Definition}
\newtheorem{observation}[proposition]{Observation}

\theoremstyle{remark}
\newtheorem{remark}[proposition]{Remark}

\DeclareMathOperator{\Aut}{\mathsf{Aut}}
\DeclareMathOperator{\Isom}{\mathsf{Isom}}

\DeclareMathOperator{\End}{End}

\DeclareMathOperator{\diam}{diam}

\DeclareMathOperator{\supp}{supp}

\DeclareMathOperator{\interior}{int}

\DeclareMathOperator{\id}{id}
\DeclareMathOperator{\rank}{rank}

\DeclareMathOperator{\dist}{d}

\DeclareMathOperator{\Gr}{Gr}

\DeclareMathOperator{\Ac}{\mathcal{A}}
\DeclareMathOperator{\Cc}{\mathcal{C}}
\DeclareMathOperator{\Dc}{\mathcal{D}}
\DeclareMathOperator{\Fc}{\mathcal{F}}
\DeclareMathOperator{\Gc}{\mathcal{G}}
\DeclareMathOperator{\Hc}{\mathcal{H}}

\DeclareMathOperator{\Lc}{\mathcal{L}}

\DeclareMathOperator{\Nc}{\mathcal{N}}
\DeclareMathOperator{\Oc}{\mathcal{O}}
\DeclareMathOperator{\Qc}{\mathcal{Q}}

\DeclareMathOperator{\Tc}{\mathcal{T}}
\DeclareMathOperator{\Uc}{\mathcal{U}}

\DeclareMathOperator{\Cb}{\mathbb{C}}

\DeclareMathOperator{\Hb}{\mathbb{H}}

\DeclareMathOperator{\Nb}{\mathbb{N}}
\DeclareMathOperator{\Pb}{\mathbb{P}}
\DeclareMathOperator{\Rb}{\mathbb{R}}

\DeclareMathOperator{\Zb}{\mathbb{Z}}

\DeclareMathOperator{\Gsf}{\mathsf{G}}
\DeclareMathOperator{\Psf}{\mathsf{P}}
\DeclareMathOperator{\PGL}{\mathsf{PGL}}
\DeclareMathOperator{\PSL}{\mathsf{PSL}}
\DeclareMathOperator{\PO}{\mathsf{PO}}
\DeclareMathOperator{\SL}{\mathsf{SL}}

\newcommand{\abs}[1]{\left|#1\right|}
\newcommand{\norm}[1]{\left\|#1\right\|}

\newcommand{\Ga}{\Gamma}
\newcommand{\ga}{\gamma}

\newcommand{\La}{\Lambda}

\newcommand{\Msf}{\mathsf{M}}
\newcommand{\Asf}{\mathsf{A}}

\newcommand{\Nsf}{\mathsf{N}}
\newcommand{\Ksf}{\mathsf{K}}

\newcommand{\fa}{\mathfrak{a}}
\newcommand{\mfa}{\mathfrak{a}}
\newcommand{\mfp}{\mathfrak{p}}
\newcommand{\mfg}{\mathfrak{g}}
\newcommand{\mfk}{\mathfrak{k}}

\newcommand{\Hsf}{\mathsf{H}}

\newcommand{\R}{\mathbb{R}}

\newcommand{\ba}{\backslash}

\newcommand{\opp}{\mathrm{i}}

\newcommand{\Mod}{\mathsf{Mod}}

\newcommand{\PML}{\mathcal{PML}}

\newcommand{\UE}{\mathcal{UE}}

\newcommand{\Prob}{\mathrm{Prob}}

\begin{document}

\title[Boundary maps and PS-measures for non-Borel Anosov groups]{Measurable boundary maps and Patterson--Sullivan measures for non-Borel Anosov groups on the Furstenberg boundary}

\author[Kim]{Dongryul M. Kim}
\email{dongryul.kim97@gmail.com}
\address{Simons Laufer Mathematical Sciences Institute, USA}

\author[Zimmer]{Andrew Zimmer}
\email{amzimmer2@wisc.edu}
\address{Department of Mathematics, University of Wisconsin-Madison, USA}
\date{\today}

 \keywords{}
 \subjclass[2020]{}

\begin{abstract} In this paper we develop a theory for Patterson--Sullivan measures for non-Borel Anosov groups on the Furstenberg boundary. Previously, such a theory has been successfully developed for measures supported on the partial flag manifold associated to the Anosov condition, which coincides with the Furstenberg boundary only under the strongest Anosov condition, Borel Anosov. We establish existence, uniqueness, and ergodicity results for the measures on the Furstenberg boundary under arbitrary Anosov conditions;  we show ergodicity  of Bowen--Margulis--Sullivan measures on the homogeneous space; and we establish strict convexity results for the critical exponent associated to functionals on the entire Cartan subspace. Using this strict convexity, we establish an entropy rigidity result for Anosov groups with Lipschitz limit set.

A key tool we develop is a new sufficient condition for the existence of a measurable boundary map associated to a Zariski dense representation. This result not only applies to Anosov groups, but also to transverse groups, mapping class groups, and discrete subgroups of the isometry groups of Gromov hyperbolic spaces.

\end{abstract}

\maketitle

\vspace{-2.5em}

\setcounter{tocdepth}{1}
\tableofcontents

\section{Introduction}

Throughout this paper $\Gsf$ will be a semisimple Lie group with finite center and no compact factors. We fix a Cartan decomposition $\mfg = \mfp + \mfk$ of the Lie algebra, a Cartan subspace  $\mfa \subset \mfp$, and a positive Weyl chamber $\mfa^+ \subset \mfa$. Let $\Delta \subset \mfa^*$ denote the system of simple restricted roots corresponding  to the choice of $\mfa^+$ and let $\kappa : \Gsf \rightarrow \mfa^+$ denote the Cartan projection. 

Given a non-empty subset $\theta \subset \Delta$, let $\Psf_\theta < \Gsf$ denote the associated parabolic subgroup. A discrete subgroup $\Gamma < \Gsf$ is \emph{$\Psf_\theta$-Anosov} if $\Gamma$ is word hyperbolic as an abstract group and its Gromov boundary nicely embeds into the partial flag manifold $\Fc_\theta : = \Gsf /\Psf_\theta$ (see Section \ref{section:Anosov transverse} for a precise definition). 

For each $\theta \subset \Delta$, there is a partial Cartan subspace $\mfa_\theta \subset \mfa$ and a cocycle $B_\theta^{IW} :~\Gsf \times \Fc_\theta \rightarrow \mfa_\theta$ called the \emph{(partial) Iwasawa cocycle}. This cocycle can be used to define Patterson--Sullivan measures as follows. 

\begin{definition} \label{defn:Quints definition}
Given a subgroup $\Gamma < \Gsf$, $\theta \subset \Delta$,  $\phi \in \mfa_\theta^*$, and $\delta \geq 0$, a Borel probability measure $\mu$ on $\Fc_\theta$ is a \emph{$(\Gamma, \phi, \delta)$-Patterson--Sullivan measure} if for every $\gamma \in \Gamma$ the measures $\mu$, $\gamma_*\mu$ are absolutely continuous and 
$$
\frac{d\gamma_* \mu}{d\mu}(x) = e^{-\delta \phi B_\theta^{IW}(\gamma^{-1}, x)} \quad \mu\text{-a.e.}
$$
\end{definition} 

When $\Gsf$ is of rank one, the above definition (with an appropriate choice of functional) coincides with the classical Patterson--Sullivan measures introduced by Patterson \cite{Patterson1976} and Sullivan \cite{Sullivan_density}. In higher rank, the above definition is  due to Quint \cite{Quint_PS}.

The theory of Patterson--Sullivan measures on $\Fc_\theta$ for $\Psf_\theta$-Anosov groups has been extensively developed and has become a useful tool for studying Anosov groups.  In this case, the definition of an Anosov group implies that a $\Psf_\theta$-Anosov group acts very nicely on $\Fc_{\theta} =  \Gsf / \Psf_{\theta}$, but in general the action of a $\Psf_\theta$-Anosov group  can be very complicated on larger flag manifolds, especially the Furstenberg boundary~$\Fc_\Delta$.  

In this paper we use and extend ideas from our earlier work~\cite{KimZimmer1} to develop a theory of Patterson--Sullivan measures on the Furstenberg boundary $\Fc_\Delta$ for Zariski dense $\Psf_\theta$-Anosov groups, even though $\theta \neq \Delta$.  Having such a Patterson--Sullivan theory on the Furstenberg boundary enables us to obtain several applications as discussed below, which was previously known only for $\Psf_{\Delta}$-Anosov groups (or, Borel Anosov groups). Recall that the $\Psf_{\Delta}$-Anosov condition is very restrictive; in many cases, the group must either be virtually a free group or a surface group~\cite{CT_restrictions_2020,Tsouvalas2020,Dey2025,DGZ_restrictions_2024}.

Our results hold for the more general class of transverse groups (which also contain the relatively Anosov groups), for any partial flag manifold containing $\Fc_\theta$,
and for weaker irreducibility conditions than Zariski dense. However, for simplicity \textbf{in the introduction we only state our results for Zariski dense Anosov groups} and Patterson--Sullivan measures on the Furstenberg boundary. 

Before stating our results for discrete subgroups of Lie groups, we describe one of the tools we develop. 

\subsection{Measurable boundary maps} 
The existence of boundary maps for representations of lattices played a significant role in the proof of Mostow's Rigidity \cite{Mostow_QCrigidity,Mostow_book,Prasad_rigidity} and Margulis' Superrigidity \cite{MargulisBook}. A key tool in this work is constructing measurable boundary maps associated to Zariski dense representations.

The domains in our boundary map theorem are ``Patterson--Sullivan systems,'' a notion introduced in our earlier work~\cite{KimZimmer1}. A precise definition is given in Section ~\ref{sec:PS systems} below, but informally these consist of a compact metric space $M$, a Borel probability measure $\mu$ on $M$, and an action $\Gamma \curvearrowright M$ which preserves the measure class of $\mu$ which satisfy certain assumptions that allowed us to extend the classical theory of Patterson--Sullivan measures. We note that a different framework for abstract Patterson--Sullivan-like measures was given in~\cite{BCZZ_coarse}.

Amongst ``Patterson--Sullivan systems,'' we further identified a special class called ``well-behaved Patterson--Sullivan systems'' and for these systems defined conical limit sets (again see Section ~\ref{sec:PS systems} for a precise definition). Under the assumption that the conical limit set has full measure, in this paper we prove the following existence theorem for boundary maps. 

\begin{theorem}[see Theorem~\ref{thm:boundary map} below]\label{thm:boundary map in intro} Suppose $(M, \Gamma, \sigma, \mu)$ is a well-behaved Patterson--Sullivan system with respect to a hierarchy $\mathscr{H} = \{ \mathscr{H} (R) \subset \Ga : R \ge 0\}$, the $\Hc$-conical limit set has full $\mu$-measure, and the $\Gamma$-action on $(M,\mu)$ is amenable. 
  
If  $\rho : \Gamma \rightarrow \Gsf$ is a Zariski dense representation, then there exists a unique $\rho$-equivariant $\mu$-a.e.\ defined measurable map $f : M \rightarrow \Fc_\Delta$. 
\end{theorem}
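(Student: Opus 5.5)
The plan is the standard amenability-plus-contraction argument (as in Margulis, Zimmer, Burger--Mozes): amenability gives an equivariant field of measures, and the conical dynamics, together with Zariski density, force these measures to be Dirac.

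\emph{Step 1: an equivariant field of probability measures.} Since the $\Gamma$-action on $(M,\mu)$ is amenable and $\Fc_\Delta$ is a compact metric $\Gamma$-space via $\rho$, there is a $\rho$-equivariant measurable map
$$
\nu : M \rightarrow \Prob(\Fc_\Delta), \qquad \nu_{\gamma x} = \rho(\gamma)_*\nu_x \quad \mu\text{-a.e.}
$$
This is Zimmer's characterization of amenable actions: every affine $\Gamma$-space of measurable fields of compact convex sets has an invariant section.

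\emph{Step 2: the measures $\nu_x$ are Dirac.} This is the heart of the proof. Fix a $\mu$-typical $x$ in the $\Hc$-conical limit set. By definition there are $\gamma_n \in \mathscr{H}(R)$ with $\gamma_n \to x$ conically, so that $\gamma_n^{-1}x$ stays in a compact region bounded away from the "repelling" part. Two inputs are needed.

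First, $\nu_{\gamma_n^{-1}x} = \rho(\gamma_n)^{-1}_*\nu_x$. Using recurrence and Lusin's theorem (restricting $\nu$ to a large compact set where it is continuous, and using that the conical returns can be chosen to land in such a set, via a Poincaré-type recurrence for well-behaved PS systems), we may pass to a subsequence along which $\nu_{\gamma_n^{-1}x} \to \nu'$.

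Second, by Zariski density and the Benoist/Abels--Margulis--Soifer machinery, we can perturb the $\gamma_n$ by a fixed finite set so that $\rho(\gamma_n)$ is $\Delta$-divergent (every simple root gap tends to infinity). In the $KAK$ decomposition $\rho(\gamma_n) = k_n a_n \ell_n$, the elements $\rho(\gamma_n)$ then contract all of $\Fc_\Delta$, off a proper algebraic subvariety, to a point $\xi$. Hence $\nu_x = \lim \rho(\gamma_n)_*\nu_{\gamma_n^{-1}x}$ equals $\delta_\xi$ as long as the limit measure $\nu'$ gives zero mass to proper algebraic subvarieties.

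So the main obstacle is showing that $\nu_y$ charges no proper subvariety, uniformly along the sequence. I would prove this first: the map $x \mapsto \nu_x$ generates a $\rho(\Gamma)$-invariant measurable structure, and a standard argument (Furstenberg's lemma in the style of Zimmer's book, using Zariski density and ergodicity of the PS system) shows that $\nu_x$ assigns zero mass to every proper Zariski closed subset for a.e.\ $x$. Concretely, take a subvariety of minimal dimension and maximal mass; the finitely many such subvarieties form an equivariant finite set, which yields a finite-index subgroup of $\rho(\Gamma)$ preserving a proper subvariety, contradicting Zariski density (after passing to the connected component). Ergodicity of the PS system, proven earlier for well-behaved systems with full conical measure, is used to make the maximal mass constant. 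Once Step 2 is established, set $f(x)$ to be the support point of $\nu_x$.

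\emph{Step 3: uniqueness.} Given two equivariant maps $f, f'$, the average $\frac12(\delta_{f(x)}+\delta_{f'(x)})$ is again an equivariant measure field. Its limiting measure $\nu'$ charges no proper subvariety by Step 2, while being supported on at most two points. This forces $f = f'$ a.e. Equivalently, one can rerun the contraction argument: both $f(x)$ and $f'(x)$ must equal the attracting point $\xi$.
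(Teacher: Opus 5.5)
\textbf{There is a genuine gap in Step~2.} Your Step~2 rests on the claim that for $\mu$-a.e.\ $x$ the measure $\nu_x$ gives zero mass to every proper Zariski closed subset of $\Fc_\Delta$. That claim is false. It is also incompatible with your own conclusion that $\nu_x$ is a Dirac mass, since a Dirac mass charges the point $\{f(x)\}$. Indeed, by part (2) of Theorem~\ref{thm:boundary map}, every $\rho$-equivariant field equals $\Dc_{f(x)}$ a.e.

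The Furstenberg-type argument you invoke works only for a single $\rho(\Gamma)$-\emph{invariant} measure. For an \emph{equivariant} field, the finite family $\mathcal{V}_x$ of maximal-mass, minimal-dimensional subvarieties only satisfies $\mathcal{V}_{\gamma x} = \rho(\gamma)\mathcal{V}_x$. This gives no $\rho(\Gamma)$-invariant finite set and hence no contradiction with Zariski density; the field $x \mapsto \{f(x)\}$ is exactly of this kind. Your first uniqueness argument rests on the same false claim.

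There is a second gap. The hypotheses give no relation between $\norm{\gamma_n}_\sigma$ and $\kappa(\rho(\gamma_n))$, and $\rho$ need not be faithful or discrete, so along a conical sequence $\rho(\gamma_n)$ could a priori stay bounded. Right-multiplying by a fixed finite set (Abels--Margulis--Soifer) moves $\kappa$ only by a bounded amount and cannot create $\Delta$-divergence. In the paper, $\alpha(\kappa(\rho(\gamma\gamma_n)))\to+\infty$ is an output (part (1) of Theorem~\ref{thm:boundary map}), not an input.

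Finally, the Lusin/recurrence step is unsupported in this setting, since there is no flow. Even granting it, the limit $\nu'$ would be a value of the field, hence a Dirac mass, and there is no reason it should avoid the repelling set of $\rho(\gamma_n)$.

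\textbf{The missing idea: kernel avoidance comes from choosing group elements, not from a non-charging property of the measures.}
\begin{enumerate}
\item Using ergodicity (Theorem~\ref{thm:ergodicity KZ}) and smoothness of the $\Gsf$-action on $\Prob(\Fc_\Delta)$, the paper writes $\mathsf{m}_0(x) = (g_x)_*\nu_0$. It then restricts $\nu_0$ to the finitely many irreducible subvarieties of minimal dimension $d_0$ and mass $>\epsilon$, which gives an equivariant field $\mathsf{m}_1$.
\item In place of recurrence it uses Theorem~\ref{thm:weak continuity property of boundary maps}. For $\mu$-a.e.\ $x_0$ and a conical sequence, there are $h_1,\dots,h_N$ and a null set $E$ such that $\rho(\gamma\gamma_n h_i)_*\mathsf{m}_1(y) \to \mathsf{m}_1(x_0)$ for every $y \notin E$, with some $i$ depending on $y$.
\item Picking $y_0 \notin \Gamma\cdot E$, one may take $y = u_1 g_m u_2 y_0$ for every $m$, where $\{\rho(g_m)\}$ is a $\Psf_\Delta$-contracting sequence; this exists by Zariski density.
\item After a diagonal argument, the normalized limit of $\rho_\alpha(\gamma\gamma_{n_m} h_1 u_1 g_m u_2)$, with $\rho_\alpha := \Phi_\alpha\circ\rho$, is proportional to $S_\alpha \rho_\alpha(h_1u_1) T_\alpha \rho_\alpha(u_2)$. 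This has rank one whatever $\rank S_\alpha$ is.
\item Lemma~\ref{lem:rho irr implies flag irr} lets one choose $u_1,u_2$ so that this limit is nonzero and its kernel contains none of the finitely many $d_0$-dimensional components carrying $\mathsf{m}_1(y_0)$. Minimality of $d_0$ then gives the kernel zero mass, so $\mathsf{m}_1(x_0)$ is Dirac.
\item Only afterwards is $\rank S_\alpha = 1$ proved, which yields the divergence and $U_\Delta(\rho(\gamma\gamma_n)) \to f(x_0)$. Uniqueness follows from this characterization of $f$, together with a bootstrap showing that the non-atomic part of any equivariant field vanishes.
\end{enumerate}
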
 

\begin{remark} The above is actually a special case of Theorem~\ref{thm:boundary map}, which assumes weaker conditions than Zariski dense, provides the existence of $\mu$-a.e.\  conical limits, and shows that $f$ maps into the ``contracting conical limit set.''
\end{remark} 

We highlight three examples of well-behaved Patterson--Sullivan systems: 
\begin{enumerate} 
\item If $\Gamma < \Gsf$ is $\Psf_\theta$-Anosov and $\mu$ is a Patterson--Sullivan measure (in the sense of Definition~\ref{defn:Quints definition}) supported on the limit set of $\Gamma$ in $\Fc_\theta$, then $\Fc_\theta$, $\Gamma$,  $\mu$ are part of a well-behaved Patterson--Sullivan system where the conical limit set has full $\mu$-measure. In fact it suffices to assume $\Gamma$ is a $\Psf_\theta$-transverse group and the Poincar\'e series associated to $\mu$ diverges at the critical exponent, see Section \ref{sec:shadows and cc limit set}  for more details. When $\Ga$ is $\Psf_{\theta}$-Anosov, amenability follows from the work of Adams \cite{Adams_hypgps}. For $\Psf_{\theta}$-transverse groups, we show amenability in Section \ref{sec:amenable actions of transverse groups}. See Theorem \ref{thm:lifting properties} for more details.

\item Let $X$ be a proper Gromov hyperbolic metric space, let $\partial X$ denote the Gromov boundary of $X$, let $\Gamma < \Isom(X)$ be a discrete subgroup,  let $\delta_X(\Gamma)$ denote the critical exponent of $\Gamma$, and let $\mu$ be a Patterson--Sullivan measure of dimension $\delta_X(\Gamma)$. Then $\partial X$, $\Gamma$, $\mu$ are part of a well-behaved Patterson--Sullivan system. Further, if 
$$
\sum_{\gamma \in \Gamma} e^{-\delta_X(\Gamma) \dist(o,\gamma o) } = +\infty
$$
for some (any) $o \in X$, then the conical limit set has full $\mu$-measure. When $X$ has exponential bounded geometry,  Kaimanovich \cite{Kaimanovich2004} showed that the $\Gamma$-action on $(\partial X,\mu)$ is amenable. See 
Theorem \ref{thm:GH exp bdd} for more details. 

\item Let $S$ be a connected orientable surface of finite type with negative Euler characteristic, let $\Mod(S)$ denote its mapping class group, let $\PML$ denote the space of projective measured laminations on $S$, and let ${\rm Leb}$  denote the natural Lebesgue measure class on $\PML$. Then $\PML$, $\Mod(S)$, ${\rm Leb}$ are part of a well-behaved Patterson--Sullivan system where the conical limit set has full ${\rm Leb}$-measure. Further, the work of Hamenst\"adt \cite{Hamenstadt_amenable} implies that the  $\Mod(S)$-action on $(\PML, {\rm Leb})$ is amenable. See 
Theorem \ref{thm:PML boundary map body} for more details. 
\end{enumerate}

We note that Bader--Furman~\cite{BaderFurman2014,BaderFurman2025} have developed a different abstract setting that leads to the existence of boundary maps. Many examples seem to satisfy both setting, but there does not seem to be any obvious implication between the two abstract settings. Further some examples, like  Poisson boundaries of countable groups, satisfy the Bader--Furman assumptions but not ours. Other examples, like Kleinian groups of divergent type with infinite Bowen--Margulis--Sullivan (BMS) measure, satisfy our assumptions but do not seem to satisfy the Bader--Furman assumptions.

In this paper our main motivation is the study of $\Psf_\theta$-transverse groups. These have natural flow spaces which can have infinite  BMS-measures and thus it seems that they do not satisfy the Bader--Furman assumptions  (in fact any Kleinian group is a transverse group). 

One idea in the proof of Theorem~\ref{thm:boundary map in intro} is to adapt the ``standard argument'' for almost sure convergence of random walks on symmetric spaces (e.g.\ as in \cite[Chapter VI, Section 2]{MargulisBook} or \cite[Section 4.2]{BQ_book}) where we replace the use of the Martingale convergence theorem with a technical continuity result along ``conical sequences'' (see Section~\ref{sec:continuity properties of boundary maps}).

\subsection{Existence and uniqueness of PS-measures} For $\alpha \in \Delta$, let $\omega_\alpha$ denote the associated fundamental weight. The dual space of the  partial Cartan subspace $\mfa_\theta \subset \mfa$ can be identified with ${\rm span} \, \{ \omega_\alpha\}_{\alpha \in \theta}$. Then given $\phi \in \mfa_\theta^*:= {\rm span} \, \{ \omega_\alpha\}_{\alpha \in \theta}$, the \emph{$\phi$-critical exponent} of a discrete subgroup $\Gamma < \Gsf$ is the exponential growth rate
\begin{equation*}
\delta^\phi(\Gamma) : = \limsup_{T \rightarrow +\infty} \frac{1}{T} \log \# \left\{ \gamma \in \Gamma : \phi(\kappa(\gamma)) \leq T\right\} \in [0,+\infty],
\end{equation*}
equivalently 
\begin{equation}\label{eqn:series defn of critical exp}
\delta^{\phi}(\Ga) = \inf \left\{ s > 0 : \sum_{\ga \in \Ga} e^{-s \phi(\kappa(\ga))} < + \infty \right\} \in [0, + \infty].
\end{equation} 

We establish existence and uniqueness of Patterson--Sullivan (PS) measures on the Furstenberg boundary. Further, we show that the measures are supported on the ``contracting conical limit set'' which is a smaller subset than the usual conical limit set defined in terms of shadows in the symmetric space. See Section \ref{sec:shadows and cc limit set} for details.

\begin{theorem}[PS-measures on the Furstenberg boundary]
   \label{thm:Anosov existence uniqueness} Suppose $\Gamma < \Gsf$ is Zariski dense and $\Psf_\theta$-Anosov. If $\phi \in \mfa_\theta^*$ and $ \delta^\phi(\Gamma) < +\infty$, then there exists a unique $(\Gamma, \phi,  \delta^\phi(\Gamma))$-Patterson--Sullivan measure on the Furstenberg boundary $\Fc_\Delta$. Moreover, this unique Patterson--Sullivan measure is supported on the contracting conical limit set of $\Ga$ in $\Fc_{\Delta}$.
   
\end{theorem}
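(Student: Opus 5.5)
We plan to reduce everything to the well-understood theory on $\Fc_\theta$ and then lift using the boundary map theorem, Theorem~\ref{thm:boundary map in intro}. \textbf{Existence.} Since $\Gamma$ is $\Psf_\theta$-Anosov and $\delta^\phi(\Gamma)<+\infty$, the known theory on the partial flag manifold produces a $(\Gamma,\phi,\delta^\phi(\Gamma))$-Patterson--Sullivan measure $\nu$ on $\Fc_\theta$ supported on the limit set $\Lambda_\theta(\Gamma)$. For Anosov groups the $\phi$-Poincar\'e series diverges at the critical exponent, so by example (1) after Theorem~\ref{thm:boundary map in intro}, $(\Fc_\theta,\Gamma,\nu)$ is part of a well-behaved Patterson--Sullivan system whose conical limit set has full measure, and the action is amenable by Adams. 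Zariski density of $\Gamma$ means the inclusion $\rho:\Gamma\to\Gsf$ is Zariski dense. Theorem~\ref{thm:boundary map in intro} (in its stronger form, Theorem~\ref{thm:boundary map}) then gives a $\Gamma$-equivariant measurable map $f:\Fc_\theta\to\Fc_\Delta$, defined $\nu$-a.e., with image in the contracting conical limit set. We set $\mu:=f_*\nu$.

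To check that $\mu$ is a Patterson--Sullivan measure on $\Fc_\Delta$, we use that $\phi\in\mfa_\theta^*$ is a combination of $\omega_\alpha$ with $\alpha\in\theta$. Hence $\phi B_\Delta^{IW}(g,x)=\phi B_\theta^{IW}(g,\pi(x))$, where $\pi:\Fc_\Delta\to\Fc_\theta$ is the projection. We also need $\pi\circ f=\id$ $\nu$-a.e. This should follow from the construction of $f$ via conical limits: along a conical sequence $\gamma_n\to x$, the attracting flags converge in $\Fc_\theta$ to $x$. Alternatively, $\pi\circ f$ is an equivariant measurable self-map of $(\Fc_\theta,\nu)$, and uniqueness in Theorem~\ref{thm:boundary map} applied on $\Fc_\theta$ forces it to be the identity. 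Given both facts, equivariance of $f$ gives
\[
\frac{d\gamma_*\mu}{d\mu}(y)=e^{-\delta^\phi(\Gamma)\phi B_\Delta^{IW}(\gamma^{-1},y)},
\]
since for $x=\pi(y)$ the Radon--Nikodym derivative transports through the a.e.-bijection $f$ with inverse $\pi$.

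\textbf{Uniqueness.} Let $\mu'$ be any $(\Gamma,\phi,\delta^\phi(\Gamma))$-Patterson--Sullivan measure on $\Fc_\Delta$. By the cocycle compatibility above, $\pi_*\mu'$ is a Patterson--Sullivan measure on $\Fc_\theta$ of the same dimension. By uniqueness on $\Fc_\theta$ for Anosov groups at the critical exponent, $\pi_*\mu'=\nu$. Disintegrating along $\pi$ gives $\mu'=\int \mu'_x\,d\nu(x)$ with fibre measures $\mu'_x$ on $\pi^{-1}(x)$. Since the densities depend only on $\pi(y)$, these fibre measures are $\Gamma$-equivariant: $\gamma_*\mu'_x=\mu'_{\gamma x}$ for a.e.\ $x$. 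This gives a $\Gamma$-equivariant measurable map $\Fc_\theta\to\Prob(\Fc_\Delta)$. We need the version of Theorem~\ref{thm:boundary map} showing that any equivariant map into probability measures is a.e.\ Dirac, i.e.\ the "uniqueness among measurable maps into $\Prob(\Fc_\Delta)$." This is the standard output of the Furstenberg-type argument using amenability and contraction along conical sequences. Then $\mu'_x=\delta_{f(x)}$, so $\mu'=f_*\nu=\mu$.

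\textbf{Main obstacle.} We expect the hardest step to be this Dirac-ness of equivariant measure-valued maps, together with $\pi\circ f=\id$. If Theorem~\ref{thm:boundary map} only asserts uniqueness among point maps, we would redo its argument for measure-valued maps. Along a conical sequence $\gamma_n$ for $x$, the map $\gamma_n^{-1}$ moves $\mu'_x$ to measures $\mu'_{\gamma_n^{-1}x}$ lying in a compact family. Zariski density ensures limits of such measures are not supported on proper algebraic subvarieties, so contraction by $\gamma_n$ on $\Fc_\Delta$ pushes these limits to a Dirac mass. The support statement then follows from $f$ mapping into the contracting conical limit set.
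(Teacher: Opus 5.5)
Your proposal is correct. The existence half is exactly the paper's: push $\nu$ forward by the map $f$ of Theorem~\ref{thm:boundary map} with $\Delta$ in place of $\theta$, get $\pi\circ f=\id$ from the uniqueness clause applied on $\Fc_\theta$, and read the support statement off part (3), since the hierarchy is trivial. Your uniqueness argument, however, takes a different route.

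The paper proves Proposition~\ref{prop:uniquePS} using the Shadow Lemma on $\partial_\Delta X$ and Vitali covering on $\partial_\theta X$. This shows $f_*\nu\ll\mu'$ for any PS measure $\mu'$ of dimension at most $\delta^\phi(\Gamma)$. A restriction trick then forces $\mu'$ to be carried by $f(Y)$, and only then does the paper project to $\Lambda_\theta(\Gamma)$ and invoke uniqueness there.

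You instead project first, disintegrate along $\pi$, and use that the Radon--Nikodym cocycle factors through $\pi$. This gives an equivariant measurable map $x\mapsto\mu'_x$ into $\Prob(\Fc_\Delta)$. The step you flag as the main obstacle is stated outright as Theorem~\ref{thm:boundary map}(2), so nothing needs to be redone and $\mu'_x=\Dc_{f(x)}$ is immediate.

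The trade-offs run in both directions. Your route needs $\pi_*\mu'=\nu$ among \emph{all} PS measures on $\Fc_\theta$, not only those carried by $\Lambda_\theta(\Gamma)$ as in Theorem~\ref{thm:CZZ PS transverse}. This stronger input genuinely uses Zariski density; it is supplied by \cite{KOW_PD}. The paper only needs limit-set uniqueness.

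Conversely, the paper's route yields more. It gives uniqueness on the larger space $\partial_\Delta X$ and the bound $\beta\ge\delta^\phi(\Gamma)$ of Theorem~\ref{thm:lifting properties}(4). It also provides a comparison tool reused in Section~\ref{section:strict convexity}. There the competing measures come from functionals outside $\mfa_\theta^*$ whose densities do not factor through $\pi$, so no fibrewise argument like yours is available. For the statement as posed, your argument is shorter and softer, exploiting the rigidity already built into the boundary map theorem.
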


Previously, existence and uniqueness for Patterson--Sullivan measures for $\Psf_\theta$-Anosov groups were only established on the partial flag manifold $\Fc_\theta$. The limit set in $\Fc_\theta$ compactifies the group (in fact identifies with the Gromov boundary of the group) which makes the construction on $\Fc_\theta$ much more straightforward. 

We construct the Patterson--Sullivan measure on $\Fc_\Delta$ by using Theorem~\ref{thm:boundary map in intro} to construct a measurable section $\Fc_\theta \rightarrow \Fc_\Delta$ and then pushing forward the Patterson--Sullivan measure on $\Fc_\theta$.

\begin{remark} \label{remark:Quint work}
Quint \cite{Quint_PS} proved that for a general Zariski dense discrete subgroup $\Ga < \Gsf$, $\phi \in \fa^*$, and $\delta \ge 0$, a $(\Ga, \phi, \delta)$-Patterson--Sullivan measure on $\Fc_{\Delta}$ exists if $\delta \cdot \phi$ is tangent to the so-called growth indicator of $\Ga$ at the direction in the interior $\interior \fa^+$. We suspect that this condition can fail to hold for our most general existence theorem (see Theorem~\ref{thm:lifting properties}) and in particular when the flow space associated to the transverse group has infinite BMS-measure. Indeed, in general, it is a priori possible that a tangent direction is on the wall $\fa^+ \cap \fa_{\theta}$. See Figure \ref{fig:tangent direction} below.

\begin{figure}[h]
\begin{tikzpicture}[scale=0.9]
    \draw (-2, 0) -- (2, 0) -- (0, 3.4641016151) -- (-2, 0);
    \draw[very thick, red] (2, 0) -- (0, 3.4641016151);
    \draw[red] (2, 0) node[right] {$\fa^+ \cap \fa_{\theta}$};    
    \draw[white] (-2, 0) node[left] {$\fa^+ \cap \fa_{\theta}$};

    \draw[very thick, color=blue, fill=blue, opacity=0.3] (1.7, 0.5196152423) .. controls (-0.5, 0.5) .. (0.3, 2.9444863729) --  (1.7, 0.5196152423);

    \filldraw (1, 1.7320508076) circle(2pt);
    \draw (1, 1.7320508076) node[right] {tangent direction};
   \draw[white] (-1, 1.7320508076) node[left] {tangent direction};

    \draw[color=blue] (-0.5, 0.5) node {$\Lc(\Ga)$};

\end{tikzpicture}
\caption{Slice of $\fa^+$ along its unit sphere, when $\rank \Gsf = 3$ and $\# \theta = 2$. $\Lc(\Ga) \subset \fa^+$ denotes the asymptotic cone of $\kappa(\Ga)$, and there is a tangent direction for $\delta^{\phi}(\Ga) \cdot \phi$ on $\Lc(\Ga) \cap \fa_{\theta}$.} \label{fig:tangent direction}
\end{figure}
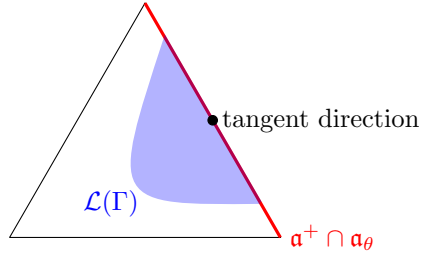
\end{remark}

\begin{remark}
   In particular, if $\Ga < \Gsf$ is Zariski dense and $\Psf_{\Delta}$-Anosov, then the existence of PS-measures on $\Fc_{\Delta}$ follows from Quint's work as in Remark \ref{remark:Quint work}. In this case, the uniqueness was proved by Edwards--Lee--Oh \cite{ELO_unique} when $\rank \Gsf \le 3$, and by Lee--Oh \cite{LO_Dichotomy} without rank assumption. On the other hand, for $\Psf_{\theta}$-Anosov case with $\theta \neq \Delta$, the existence and uniqueness of PS-measures on $\Fc_{\Delta}$ were not known, to the best of our knowledge.
\end{remark}

We also show that these Patterson--Sullivan measures are singular to every other Patterson--Sullivan measure. 

\begin{theorem}  
   \label{thm:singular introduction} 
   
   Suppose $\Gamma < \Gsf$ is Zariski dense and $\Psf_\theta$-Anosov. Assume 
\begin{itemize}
\item $\phi_1 \in \mfa_\theta^*$, $\delta^{\phi_1}(\Gamma) < +\infty$, and $\mu_1$ is the $(\Ga, \phi_1, \delta^{\phi_1}(\Ga))$-Patterson--Sullivan measure  on the Furstenberg boundary $\Fc_\Delta$. 
\item $\phi_2 \in \mfa^*$, $\delta \ge 0$, and $\mu_2$ is a $(\Ga, \phi_2, \delta)$--Patterson--Sullivan measure on the Furstenberg boundary $\Fc_\Delta$. 
\end{itemize}
Then, 
$$
\text{$\mu_1$ and $\mu_2$ are non-singular} \quad \Longleftrightarrow \quad \mu_1 = \mu_2 \quad \Longleftrightarrow \quad \delta^{\phi_1}(\Ga) \cdot \phi_1 = \delta \cdot \phi_2.
$$
\end{theorem}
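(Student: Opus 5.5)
The plan is to prove the chain of equivalences through the cycle non-singular $\Rightarrow$ equal functionals $\Rightarrow$ equal measures $\Rightarrow$ non-singular. The last implication is trivial. For the middle one, suppose $\delta^{\phi_1}(\Ga)\phi_1 = \delta\phi_2$. Then $\mu_2$ is itself a $(\Ga,\phi_1,\delta^{\phi_1}(\Ga))$-Patterson--Sullivan measure on $\Fc_\Delta$, since the Radon--Nikodym cocycle only depends on the product $\delta\phi_2 \circ B^{IW}_\Delta$. The uniqueness part of Theorem~\ref{thm:Anosov existence uniqueness} then gives $\mu_2=\mu_1$.

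The substantive implication is non-singular $\Rightarrow$ $\delta^{\phi_1}(\Ga)\phi_1=\delta\phi_2$. I first reduce to absolute continuity. Write $\mu_2 = \mu_2^{ac}+\mu_2^{s}$, the Lebesgue decomposition with respect to $\mu_1$. Since both measures are $\Ga$-quasi-invariant, $\Ga$ preserves the decomposition, so $\mu_2^{ac}$ is nonzero and $\Ga$-quasi-invariant with the same cocycle as $\mu_2$. Normalize it to a probability measure $\nu \ll \mu_1$, and set $h = d\nu/d\mu_1$. Comparing the two cocycles gives, for $\mu_1$-a.e.\ $x$ with $h(x)>0$,
$$
h(\gamma^{-1}x) = h(x)\, e^{(\delta\phi_2 - \delta^{\phi_1}(\Ga)\phi_1)(B^{IW}_\Delta(\gamma^{-1},x))}.
$$
Write $\psi := \delta\phi_2 - \delta^{\phi_1}(\Ga)\phi_1 \in \mfa^*$. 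Then $\log h$ is a measurable function that cohomologically trivializes the cocycle $\psi\circ B^{IW}_\Delta$ on the $\Ga$-invariant set $\{h>0\}$, which has positive $\mu_1$-measure.

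To force $\psi=0$, I would use the fact that $\mu_1$ is supported on the contracting conical limit set (Theorem~\ref{thm:Anosov existence uniqueness}), where the Iwasawa cocycle is coarsely comparable to the Cartan projection. For $x$ contracting-conical, there are sequences $\gamma_n$ with $\gamma_n^{-1}x$ staying in a compact set in general position and $B^{IW}_\Delta(\gamma_n^{-1},x) = -\kappa(\gamma_n) + O(1)$, or the analogous statement with the sign conventions of the paper. By Lusin's theorem I choose a compact set $K$ of positive measure on which $\log h$ is bounded. Ergodicity of $\mu_1$ then gives recurrence: for $\mu_1$-a.e.\ $x\in K$ there are infinitely many such conical $\gamma_n$ with $\gamma_n^{-1}x \in K$. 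I expect the ergodicity of the $\Fc_\Delta$ measure to be proved in the body via the lift from $\Fc_\theta$ and the Hopf argument; I assume it as stated in the paper. It follows that $\psi(\kappa(\gamma_n))$ stays bounded along these sequences.

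The main obstacle is upgrading this to $\psi=0$. Boundedness along one conical sequence only controls $\psi$ in the limiting directions of $\kappa(\gamma_n)$. I would first try the following argument. The same argument applied along all conical sequences for $\mu_1$-typical points shows that $\psi\circ\kappa$ is bounded on "typical" elements. Combined with Zariski density and Benoist's result that the Jordan projections $\lambda(\Ga)$ generate a dense subgroup of $\mfa$, this should pass to Jordan projections of loxodromic elements. The passage uses the fact that for loxodromic $\gamma$ with attracting fixed point in the support, $\gamma^n$ is conical, so $n\psi(\lambda(\gamma))$ is bounded along the fixed-point orbit. That gives $\psi(\lambda(\gamma))=0$, hence $\psi=0$. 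A cleaner alternative is to compute the cohomology class directly at a fixed point: $\log h$ is measurable rather than continuous, so instead I would use the density-point argument via shadows, which the well-behaved Patterson--Sullivan system framework provides for such "h is a.e.\ invariant up to cocycle" statements. Once $\psi=0$ is established, the equality $\mu_1=\mu_2$ follows from the previous paragraph.
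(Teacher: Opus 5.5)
Your two easy implications are correct and match the paper. If $\delta^{\phi_1}(\Ga)\phi_1=\delta\phi_2$, then $\mu_2$ has the same Radon--Nikodym cocycle as $\mu_1$, so uniqueness (Theorem~\ref{thm:lifting properties}(2)) gives $\mu_1=\mu_2$; and equality trivially gives non-singularity.

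\textbf{The gap is in non-singular $\Rightarrow$ equal functionals.} Your Lusin/recurrence argument yields, at best, boundedness of $\psi(\kappa(\ga_n))$ along conical sequences attached to $\mu_1$-generic points $x$. Even this needs more than ergodicity. Ergodicity (hence conservativity) gives returns $\ga^{-1}x\in K$. It does not give returns along elements $\ga$ with $x\in\Oc_R^\Delta(\ga)$, and those are the only ones for which you may replace $B^{IW}_\Delta(\ga^{-1},x)$ by $-\kappa(\ga)$. You would need a shadow-density lemma or recurrence for a finite flow-space measure.

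More seriously, such sequences only test $\psi$ in the limiting directions of $\kappa(\ga_n)$ at generic points. Nothing in your argument shows these directions span $\mfa$; for Anosov groups one expects a single generic direction. So this cannot force $\psi=0$ when $\dim\mfa\ge 2$.

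The proposed upgrade to Jordan projections fails. The attracting fixed point of a loxodromic $\ga$ is a $\mu_1$-null point, and $h$ is only measurable. Nothing you have proved (Lusin sets, a.e.\ recurrence) says anything along the specific sequence $\ga^n$ at that point. The ``density-point argument via shadows'' alternative is left unspecified, and that is exactly where the content lies.

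\textbf{The missing ingredient is a bound that is uniform over all of $\Ga$.} The paper gets it by citing the boundary rigidity theorem, Theorem~\ref{thm:KZ Rigidity}, rather than reproving it. Write $\mu_1=f_*\mu$, where $\mu$ is the Patterson--Sullivan measure on $\La_\theta(\Ga)$ and $f$ is the injective equivariant lift of Theorem~\ref{thm:lifting properties}(1). Take $M_1=\partial_\theta X$ with the well-behaved system of Theorem~\ref{thm:transverse well-behaved} for $\mu$, whose conical limit set has full measure by Proposition~\ref{prop:concon limit set for transverse groups}. Take $M_2=\partial_\Delta X$ with the PS-system of Theorem~\ref{thm:Zdense PS} for $\mu_2$, which applies since $\phi_2\in\mfa^*=\mfa_\Delta^*$ and $\Ga$ is Zariski dense.

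Non-singularity of $f_*\mu$ and $\mu_2$ then gives $\sup_{\ga\in\Ga}\abs{\delta^{\phi_1}(\Ga)\phi_1(\kappa(\ga))-\delta\phi_2(\kappa(\ga))}<+\infty$ over every element of $\Ga$. Applying this to $\ga^n$ and dividing by $n$ gives $\delta^{\phi_1}(\Ga)\phi_1(\lambda(\ga))=\delta\phi_2(\lambda(\ga))$ for all $\ga$, and Benoist's theorem then yields $\psi=0$. The passage from $\kappa$ to $\lambda$ works only because the bound holds for the specific elements $\ga^n$, which no almost-everywhere argument along generic conical sequences supplies.
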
 

Notice that the functional $\phi_2$ is not assumed to be in $\mfa_\theta^*$. Previously, singularity results of this form were only established for Patterson--Sullivan measures associated to functionals in $\mfa_\theta^*$ on the partial flag manifold $\Fc_\theta$. The Anosov case was established in \cite{LO_Invariant,Sambarino_report}  and the transverse group case was established in \cite{BCZZ_coarse, Kim2024}.

\subsection{Double ergodicity and Bowen--Margulis--Sullivan measures}

Let $\Ksf < \Gsf$ denote the maximal compact subgroup with Lie algebra $\mfk$, let $\Asf< \Gsf$ denote the subgroup with Lie algebra $\mfa$, and let $\Msf < \Ksf$ denote the centralizer of $\Asf$ in $\Ksf$. 

When $\Gamma$ is Zariski dense and $\Psf_\Delta$-Anosov, the dynamics of the $\Asf$-action on the homogeneous space $\Gamma \ba \Gsf / \Msf$ has been extensively studied; see for instance \cite{CS_local,ELO_local, BLLO, Sambarino_report, LO_Dichotomy, KOW_PD, CZZ_relative,KOW_SF}.
 However, very little is known when $\Gamma$ is only $\Psf_\theta$-Anosov and $\theta \neq \Delta$. One reason for this is that to construct Bowen--Margulis--Sullivan measures, one needs to start with appropriate Patterson--Sullivan measures on the Furstenberg boundary which previously were only known to exist when $\Gamma$ is $\Psf_\Delta$-Anosov, or under extra hypotheses as in Remark~\ref{remark:Quint work}.

Using Theorem~\ref{thm:Anosov existence uniqueness} we can now construct Bowen--Margulis--Sullivan measures on $\Gamma \ba \Gsf / \Msf$ when $\Gamma < \Gsf$ is Zariski dense and $\Psf_\theta$-Anosov. We briefly describe the construction here, for more details see Section \ref{section:BMS}. 

Suppose $\Gamma < \Gsf$ is Zariski dense $\Psf_\theta$-Anosov and $\phi \in \mfa_\theta^*$ satisfies $\delta^\phi(\Gamma) < +\infty$. Let $\opp : \mfa \rightarrow \mfa$ denote the opposite involution. The adjoint $\opp^* : \mfa^* \rightarrow \mfa^*$ preserves the set of simple roots and we write $\opp^* \theta := \{ \opp^* \alpha : \alpha \in \theta\}$. Then $\opp^*\phi  \in \mfa_{\opp^* \theta}^*$ and $\Gamma$ is $\Psf_{\opp^* \theta}$-Anosov. 
Since 
$$
\opp^*\phi(\kappa(g)) =\phi(\kappa(g^{-1})) \quad \text{for all} \quad g \in \Gsf,  
$$
we have $\delta : = \delta^\phi(\Gamma) = \delta^{\opp^*\phi}(\Gamma)$. Then, by Theorem~\ref{thm:Anosov existence uniqueness} there exist unique $(\Ga, \phi, \delta)$ and $(\Ga, \opp^*\phi, \delta)$-Patterson--Sullivan measures $\mu_{\phi}$ and $ \mu_{\opp^*\phi}$ on $\Fc_\Delta$, respectively.

Next let $\Fc_\Delta^{(2)}$ denote the space of ordered transverse pairs and let $\Gc_\Delta : \Fc_\Delta^{(2)} \rightarrow \mfa$ denote the Gromov product (see Equation \eqref{eqn:Gromov product} for a definition). 
One can show that the measure 
$$
d\nu_\phi(x,y) : = e^{\delta \phi \Gc_\Delta(x,y)}d\mu_{\phi}(x) \otimes d\mu_{\opp^*\phi}(y) \quad \text{on} \quad \Fc_{\Delta}^{(2)}
$$
is $\Gamma$-invariant.  There is a natural equivariant identification of the  homogeneous space $\Gsf / \Msf$ with $\Fc_\Delta^{(2)} \times \mfa$ where the left multiplication of $\Gsf$ on $\Gsf / \Msf$ descends to the $\Gsf$-action on $\Fc_\Delta^{(2)} \times \mfa$  by 
$$
g \cdot (x,y, u) = (gx, gy, u+B_{\Delta}^{IW}(g, x)).
$$
Since $\Msf$ commutes with $\Asf$, we also have a right $\Asf$-action on $\Gsf / \Msf$, which corresponds to the $\fa$-action on $\Fc_{\Delta}^{(2)} \times \fa$ by translation on the $\fa$-component.
Further, if ${\rm Leb}_{\fa}$ denotes the Lebesgue measure on $\mfa$, then the measure
$
\tilde{\mathsf{m}}_\phi: = \nu_{\phi} \otimes {\rm Leb}_{\fa}
$
on $\Gsf / \Msf$ is $\Ga$-invariant, and hence it descends to a Radon measure
$$\mathsf{m}_\phi \quad \text{on} \quad \Ga \ba \Gsf / \Msf$$
called the \emph{Bowen--Margulis--Sullivan measure}, which is invariant under the $\Asf$-action.

\begin{theorem}[Ergodicity on homogeneous spaces]
    \label{thm:BMSergodic}
   With the notations above, 
\begin{enumerate}
\item The measure $\nu_{\phi}$ is non-zero and the $\Gamma$-action on $(\Fc_\Delta^{(2)}, \nu_\phi)$ is ergodic. 
\item The Bowen--Margulis--Sullivan measure $\mathsf{m}_{\phi}$ is non-zero and the $\Asf$-action on $(\Gamma \backslash \Gsf / \Msf, \mathsf{m}_\phi)$ is ergodic. 
\end{enumerate}
\end{theorem}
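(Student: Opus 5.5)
The plan is to reduce both statements to the corresponding, already-known facts on the partial flag manifolds $\Fc_\theta$ and $\Fc_{\opp^*\theta}$. We do this through the measurable boundary maps produced by Theorem~\ref{thm:boundary map in intro}, and then run a Hopf-type argument for the $\Asf$-action.

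\textbf{Step 1: lifting to $\Fc_\theta$.} Let $\bar\mu_\phi$ on $\Fc_\theta$ and $\bar\mu_{\opp^*\phi}$ on $\Fc_{\opp^*\theta}$ be the unique Patterson--Sullivan measures on the Anosov limit sets. By the construction behind Theorem~\ref{thm:Anosov existence uniqueness}, there are equivariant measurable sections $f_\theta:\Lambda_\theta \to \Fc_\Delta$ and $f_{\opp^*\theta}:\Lambda_{\opp^*\theta}\to\Fc_\Delta$, defined a.e., with $\mu_\phi=(f_\theta)_*\bar\mu_\phi$ and $\mu_{\opp^*\phi}=(f_{\opp^*\theta})_*\bar\mu_{\opp^*\phi}$.

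\textbf{Step 2: non-vanishing of $\nu_\phi$.} Since $\Lambda_\theta$ and $\Lambda_{\opp^*\theta}$ are identified with $\partial\Gamma$, the classical measures are non-atomic. For $a\ne b$ in $\partial\Gamma$, the points $\xi_\theta(a)$ and $\xi_{\opp^*\theta}(b)$ are transverse, so $\bar\mu_\phi\otimes\bar\mu_{\opp^*\phi}$ gives full mass to transverse pairs. We then need the lifts $f_\theta(a)$ and $f_{\opp^*\theta}(b)$ to be transverse in $\Fc_\Delta$ for a.e.\ $(a,b)$. Here we use that $f$ takes values in the contracting conical limit set. Take a conical sequence $\gamma_n$ toward $a$. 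Since the map is continuous along conical sequences, $\gamma_n^{-1}f_\theta(a)$ converges. Generic points are pushed by $\gamma_n^{-1}$ toward a point opposite to this limit, and Zariski density implies that $\mu_{\opp^*\phi}$ gives zero mass to proper Schubert subvarieties. Together these give transversality off a null set. So $\nu_\phi$ is non-zero on $\Fc_\Delta^{(2)}$, and hence $\mathsf m_\phi\neq 0$.

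\textbf{Step 3: ergodicity of $\Gamma\curvearrowright(\Fc_\Delta^{(2)},\nu_\phi)$.} The map $(a,b)\mapsto(f_\theta(a),f_{\opp^*\theta}(b))$ is a $\Gamma$-equivariant measurable isomorphism (mod null sets) from $(\partial\Gamma^{(2)},\bar\mu_\phi\otimes\bar\mu_{\opp^*\phi})$ onto $(\Fc_\Delta^{(2)},\nu_\phi)$. The density $e^{\delta\phi\Gc_\Delta}$ does not affect ergodicity, since it is positive and finite a.e. Ergodicity for the diagonal action on $\partial\Gamma^{(2)}$ is known: it is the $\Fc_\theta$ double ergodicity, proved via a Hopf argument on the flow space of the Anosov group with finite BMS measure, as in Sambarino and Lee--Oh. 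This gives (1).

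\textbf{Step 4: ergodicity of the $\Asf$-action (main obstacle).} $\Asf$-ergodicity on $\Gamma\backslash\Gsf/\Msf$ is equivalent to ergodicity of $\Gamma$ on $(\Fc^{(2)}_\Delta\times\fa,\nu_\phi\otimes{\rm Leb})$, where the action is twisted by the cocycle $B^{IW}_\Delta(\gamma,x)$. By Step 3, this reduces to showing that the essential range of the cocycle $(\gamma,(x,y))\mapsto B_\Delta^{IW}(\gamma,x)$ is all of $\fa$ (Schmidt's essential values). The key difficulty is that only the $\fa_\theta$-part of the cocycle is controlled by $\phi$.

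We first get the $\fa_\theta$-directions from the rank-$\#\theta$ theory on $\Fc_\theta$. For the transverse $\fa$-directions we use Zariski density: Benoist's theorem says the Jordan projections of loxodromic elements generate a dense subgroup of $\fa$. Each essential value is detected by the standard argument. For a loxodromic $\gamma$ with attracting and repelling points in the support, small neighborhoods of $(\gamma^+,\gamma^-)$ have positive $\nu_\phi$-measure, which relies on the conical/contracting support statement. On such a neighborhood the cocycle $B^{IW}_\Delta(\gamma^n,x)$ is within a bounded error of $n\lambda(\gamma)$, and this puts $\lambda(\gamma)$ in the essential range. The essential range is a closed subgroup, so it equals $\fa$. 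Verifying the positive-measure neighborhoods and the uniform estimates on $\Fc_\Delta$, where $\Gamma$ acts non-nicely, is the step I expect to require the most care.
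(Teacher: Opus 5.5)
Steps 1--3 match the paper's argument. There, Theorem~\ref{thm:lifting properties} writes $\mu_\phi=f_*\mu$ and $\mu_{\opp^*\phi}=\bar f_*\bar\mu$ with injective equivariant maps. Double ergodicity of $\mu\otimes\bar\mu$ on $\Lambda_\theta(\Gamma)^2$ from \cite{CZZ2024} transfers to $\Fc_\Delta^2$ and then to the invariant subset $\Fc_\Delta^{(2)}$ carrying the equivalent measure $\nu_\phi$. Finally, $\nu_\phi\neq 0$ follows from a Fubini argument with quasi-invariance and Zariski density. Your conical-sequence detour in Step 2, and the unproved claim about Schubert varieties, are not needed for that last point.

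The genuine gap is in Step 4: the reduction you state is false.

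\begin{itemize}
\item In Hopf coordinates, the $\Gamma$-action on $\Fc_\Delta^{(2)}\times\fa$ twisted by $B^{IW}_\Delta$ is exactly the left action of $\Gamma$ on $\Gsf/\Msf$. This action is properly discontinuous.
\item Its invariant sets are the preimages of arbitrary Borel subsets of $\Gamma\backslash\Gsf/\Msf$. Since $\mathsf{m}_\phi$ is a nonzero non-atomic measure there, this action is never ergodic for $\nu_\phi\otimes{\rm Leb}_{\fa}$.
\item By Schmidt's criterion (the base action is ergodic by Step 3), the essential range of $B^{IW}_\Delta$ is therefore \emph{not} all of $\fa$. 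So the statement you set out to prove in Step 4 is false, and the ``careful'' step you anticipated cannot be carried out.
\end{itemize}

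Concretely, knowing that $B^{IW}_\Delta(\gamma^n,\cdot)$ is within bounded error of $n\lambda(\gamma)$ near $(\gamma^+,\gamma^-)$ does not make $\lambda(\gamma)$ an essential value. That would require returns with cocycle value near $\lambda(\gamma)$ inside \emph{every} set of positive measure. Moving an arbitrary set into those neighborhoods with bounded cocycle is a recurrence property of the skew product, and it fails here.

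The correct duality runs the other way and makes (2) an immediate consequence of (1). This is what the paper means by ``it follows from the definition of $\mathsf{m}_\phi$.''

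\begin{itemize}
\item An $\Asf$-invariant function on $\Gamma\backslash\Gsf/\Msf$ lifts to a function $F(x,y,u)$ on $\Fc_\Delta^{(2)}\times\fa$ that is $\Gamma$-invariant and invariant under all translations of the $\fa$-coordinate.
\item Hence $F(x,y,u)=F_0(x,y)$ almost everywhere, where $F_0$ is a $\Gamma$-invariant function on $(\Fc_\Delta^{(2)},\nu_\phi)$.
\item By Step 3, $F_0$ is constant.
\end{itemize}

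Essential-range arguments like yours are the right tool for a proper subgroup of $\Asf$, such as a one-parameter direction $\exp(\Rb v)$. Its ergodicity is equivalent to ergodicity of $\Gamma$ on $\Fc_\Delta^{(2)}\times(\fa/\Rb v)$ with the projected cocycle. For the full Cartan action, no cocycle analysis is needed.
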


More generally, we obtain an ergodic dichotomy for transverse groups in Theorem \ref{thm:ergodic dichotomy}, generalizing the classical Hopf--Tsuji--Sullivan dichotomy \cite{Tsuji_potential,Hopf_ergodic,Sullivan_density,AS_rational,Roblin_ergodic}. Similar ergodicity results were previously known only for certain abstract flow spaces associated to $\theta \subset \Delta$ \cite{Sambarino_report, CZZ2024, KOW_PD}, which are not homogeneous spaces unless $\theta = \Delta$. When $\theta = \Delta$, the ergodic dichotomy for $\Psf_{\Delta}$-Anosov groups was first proved in \cite{LO_Dichotomy}.

To the best of our knowledge, Theorem \ref{thm:BMSergodic}  is the first ergodicity result on the homogeneous space for $\Psf_\theta$-Anosov groups when $\theta \neq \Delta$.

\begin{remark}
   One might also ask for the ergodicity as in Theorem \ref{thm:BMSergodic} when $\phi \notin \fa_{\theta}^*$. However, this is not true in general. Indeed, consider $\Gsf = \PSL(2, \Rb) \times \PSL(2, \Cb)$ and 
   $$
   \Ga := \left\{ (\rho_1(\ga), \rho_2(\ga)) \in \Gsf : \ga \in \pi_1(S) \right\}
   $$
   where $S$ is a closed surface of genus at least two, $\rho_1 : \pi_1(S) \to \PSL(2, \Rb)$ is a cocompact representation, and $\rho_2 : \pi_1(S) \to \PSL(2, \Cb)$ is a discrete faithful representation such that $\rho_2(\pi_1(S)) \ba \Hb^3$ has one geometrically finite end and one geometrically infinite end.

   In this case, we can assume that $\Delta = \{ \alpha_1, \alpha_2\}$  where $\alpha_1$ is the simple root for $\PSL(2, \Rb)$ and $\alpha_2$ is the one for $\PSL(2, \Cb)$, and $\Fc_{\Delta} = \partial \Hb^2 \times \partial \Hb^3$. In addition, $\Ga$ is Zariski dense and $\Psf_{\{\alpha_1\}}$-Anosov.

   On the other hand, if one produces a measure $\nu_{\alpha_2}$ on $\Fc_{\Delta}^{(2)}$ associated to $\alpha_2$, then $\nu_{\alpha_2}$ is not ergodic under the $\Ga$-action. This is because, if $\mu$ is a $(\Ga, \alpha_2, \delta)$-Patterson--Sullivan measure on $\Fc_{\Delta}$, then passing to the projection $ \pi : \Fc_{\Delta} \to \partial \Hb^3$, the measure $\pi_* \mu$ is a $(\rho_2(\pi_1(S)), \alpha_2, \delta)$-Patterson--Sullivan measure on $\partial \Hb^3$. Hence, the $\Ga$-ergodicity of $\nu_{\alpha_2}$ implies that the $\rho_2(\pi_1(S))$-action on $\partial \Hb^3 \times \partial \Hb^3 \smallsetminus \text{diag}$ with respect to the measure $\pi_* \mu \otimes \pi_* \mu$ is ergodic, which is impossible due to the work of Canary \cite{Canary1993ends}. A similar discussion holds for Bowen--Margulis--Sullivan measures.
\end{remark}

\begin{example}
   Consider the case that $\Gsf = \Gsf_1 \times \Gsf_2$ for some semisimple Lie groups $\Gsf_1$ and $\Gsf_2$. Then for any Anosov  subgroup $\Ga_1 < \Gsf_1$ and a representation $\rho :~\Ga_1 \to~\Gsf_2$, the subgroup
   $$
   \Ga := \{ (\ga, \rho(\ga)) : \ga \in \Ga_1 \} < \Gsf
   $$
   is $\Psf_{\theta}$-Anosov for some $\theta \subset \Delta$ consisting of simple roots for $\Gsf_1$. While $\rho$ can even be an indiscrete representation, Theorem \ref{thm:BMSergodic} applies to $\Ga \ba \Gsf / \Msf$ and implies the $\Asf$-ergodicity  when $\Ga$ is Zariski dense.

   A typical example is when $\Gsf_1 = \Gsf_2 = \PSL(2, \Rb)$, $\Ga_1 < \PSL(2, \Rb)$ discrete, and $\rho : \Ga_1 \to \PSL(2, \Rb)$ indiscrete. Such examples arise as Kobayashi geodesic curves in Hilbert modular varieties.
\end{example}

\subsection{Strict convexity of entropy} Using the theory developed here, we establish new strict convexity results for variations of critical exponent. 

Given a discrete subgroup $\Gamma < \Gsf$, one can show that the subset 
\begin{equation}\label{eqn:the set Q}
\Qc_\theta(\Gamma):=\{ \phi \in \mfa_\theta^* : \delta^\phi(\Gamma) \leq 1\}
\end{equation} 
is convex. Further, when $\Gamma$ is $\Psf_\theta$-Anosov the set
$
\Qc_\theta(\Gamma)
$
is strictly convex \cite[Corollary 5.9.1]{Sambarino_report} (see \cite[Corollary 13.2]{CZZ2024} for the $\Psf_{\theta}$-transverse case). In this paper, we will establish the following strict convexity result for $\Qc_\Delta(\Gamma)$. 

\begin{theorem}[Strict convexity in non-Anosov directions I] 
   \label{thm:extreme points} If $\Gamma < \Gsf$ is  Zariski dense and $\Psf_\theta$-Anosov, then 
$$
\{ \phi \in \mfa_\theta^* : \delta^\phi(\Gamma) = 1\}
$$
are extreme points of $\Qc_\Delta(\Gamma)$. 
\end{theorem}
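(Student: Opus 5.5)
Fix $\phi \in \mfa_\theta^*$ with $\delta^\phi(\Ga)=1$, and suppose $\phi = t\phi_1 + (1-t)\phi_2$ with $\phi_1,\phi_2 \in \Qc_\Delta(\Ga)$ and $t \in (0,1)$. The aim is to show $\phi_1=\phi_2=\phi$. The plan is to deduce this from the singularity dichotomy of Theorem~\ref{thm:singular introduction}, using the Patterson--Sullivan measures on $\Fc_\Delta$ from Theorem~\ref{thm:Anosov existence uniqueness}.

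First, convexity of $\delta^{(\cdot)}$ (via H\"older) gives
$$1 = \delta^\phi(\Ga) \le \max\{\delta^{\phi_1}(\Ga),\delta^{\phi_2}(\Ga)\} \le 1.$$
More precisely, by H\"older,
$$\sum_\ga e^{-s\phi(\kappa(\ga))} \le \Bigl(\sum_\ga e^{-s\phi_1(\kappa(\ga))}\Bigr)^t\Bigl(\sum_\ga e^{-s\phi_2(\kappa(\ga))}\Bigr)^{1-t}.$$
Hence, for $s>1$, both series on the right converge, and the left-hand side diverges as $s\searrow 1$, since the Anosov Poincar\'e series is of divergence type at the critical exponent. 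Consequently at least one $\phi_i$, and in fact both if we argue carefully, has $\delta^{\phi_i}=1$ with a divergent series at $s=1$.

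Second, I would produce a $(\Ga,\phi_i,1)$-Patterson--Sullivan measure $\mu_i$ on $\Fc_\Delta$ for each $i$. Since $\phi_i\notin\mfa_\theta^*$ in general, Theorem~\ref{thm:Anosov existence uniqueness} does not apply directly. Instead I would use the standard construction: take a weak-$*$ limit of $\frac{1}{P(s)}\sum_\ga e^{-s\phi_i(\kappa(\ga))}\delta_{\ga o}$ in a compactification of the symmetric space, then project to $\Fc_\Delta$. Conformality with respect to $B^{IW}_\Delta$ comes from the standard Iwasawa--Cartan comparison.

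Alternatively, and more cleanly, I would combine $\phi_1,\phi_2$ with the unique measure $\mu_\phi$ directly. Consider $\mu := $ a measure built from the Poincar\'e series weighted by $e^{-\phi(\kappa(\ga))}$. H\"older then gives, on shadows $O_R(\ga)$ where the shadow lemma holds, the bound
$$\mu_\phi(O_R(\ga)) \asymp e^{-\phi(\kappa(\ga))} = \bigl(e^{-\phi_1(\kappa(\ga))}\bigr)^t\bigl(e^{-\phi_2(\kappa(\ga))}\bigr)^{1-t}.$$
Comparing with the shadow estimates for $\mu_1,\mu_2$ yields $\mu_\phi \ll$ a geometric mean of $\mu_1,\mu_2$, which in particular forces $\mu_\phi$ to be non-singular to $\mu_1$ or to $\mu_2$.

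Once non-singularity is established, Theorem~\ref{thm:singular introduction}, applied with $\phi_1\mapsto\phi$ and $\phi_2\mapsto\phi_i$ (with $\delta=1$), gives $\phi=\phi_i$. Then $\phi=t\phi_1+(1-t)\phi_2$ forces the other $\phi_j$ to equal $\phi$ as well, proving extremality.

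The main obstacle is the second step: constructing $\mu_i$ on $\Fc_\Delta$ for $\phi_i\in\mfa^*$ that is not in $\mfa_\theta^*$, and proving the non-singularity with $\mu_\phi$ rigorously. The shadow lemma is only available for the conical points of $\mu_\phi$. What I would try first is the following. Use the fact that $\mu_\phi$ is supported on the contracting conical limit set, and apply Cauchy--Schwarz/H\"older to the Radon--Nikodym cocycles: the function $\frac{d\ga_*\mu_\phi}{d\mu_\phi}$ is the $t,(1-t)$ geometric mean of the corresponding cocycles for $\phi_1,\phi_2$. From this one can build a $\Ga$-invariant finite measure comparison, namely the Hellinger-type measure $\mu_1^{t}\mu_2^{1-t}$, normalized. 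This measure is $(\Ga,\phi,1)$-conformal, and if it is nonzero, uniqueness in Theorem~\ref{thm:Anosov existence uniqueness} identifies it with $\mu_\phi$, after which equality of the cocycles gives $\phi_1=\phi_2$. The case where $\mu_1^t\mu_2^{1-t}=0$, i.e.\ $\mu_1\perp\mu_2$, must be excluded, presumably via the divergence/H\"older estimate from the first step.
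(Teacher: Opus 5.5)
Your architecture is the right one, and it is how the paper argues: H\"older gives $\delta^\phi\le 1$, a shadow-lemma comparison gives non-singularity of the $\phi$-measure with one of the $\phi_i$-measures, rigidity plus Benoist follows, and linear algebra finishes. The paper proves Theorem~\ref{thm:convexity body}, whose corollary gives $\delta^\phi\le \bigl(t/\delta^{\phi_1}+(1-t)/\delta^{\phi_2}\bigr)^{-1}\le 1$. Equality then forces $\delta^{\phi_i}=1$ and bounded differences $|\phi(\kappa(\ga))-\phi_i(\kappa(\ga))|$, hence $\phi=\phi_i$ by Benoist.

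However, the step you flag as the main obstacle is a genuine gap, and the construction you propose for it fails. You cannot get $(\Ga,\phi_i,1)$-Patterson--Sullivan measures on $\Fc_\Delta$ from Patterson's construction plus a ``projection.'' $\Ga$ is only $\Psf_\theta$-divergent, so orbit points $\ga o$ may accumulate at non-regular boundary points, and there is no equivariant projection from those to $\Fc_\Delta$. Moreover, the Cartan--Iwasawa comparison only yields $\pi_\theta B^{IW}$ along $\theta$-regular sequences, which does not give conformality for $\phi_i\notin\mfa_\theta^*$. This is exactly Remark~\ref{remark:Quint work}: existence on $\Fc_\Delta$ for such $\phi_i$ is not known.

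The missing idea is to never put $\mu_i$ on $\Fc_\Delta$. Take $\mu_i$ on the horofunction boundary $\partial_\Delta X$ (Proposition~\ref{prop:PS meas exists}), where the shadow lemma holds for $\Oc^\Delta_R(\ga)$ (Theorem~\ref{thm:Zdense PS}). Compare them there with $\mu_\phi=f_*\mu$, which lives on $\Fc_\Delta\subset\partial_\Delta X$. Then use Corollary~\ref{cor:singularity between PS} (i.e.\ Theorem~\ref{thm:KZ Rigidity}) on $\partial_\Delta X$ in place of Theorem~\ref{thm:singular introduction}.

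Even granting the measures, your non-singularity step is not justified. Shadow estimates do not give ``$\mu_\phi\ll$ a geometric mean of $\mu_1,\mu_2$.'' For the Hellinger measure $\nu$ with density $(d\mu_1/d\lambda)^t(d\mu_2/d\lambda)^{1-t}$, H\"older gives $\nu(O)\le\mu_1(O)^t\mu_2(O)^{1-t}$, which is the wrong direction. What works is the weighted AM--GM inequality, giving $f_*\mu(\Oc^\Delta_R(\ga))\le C\,(\mu_1+\mu_2)(\Oc^\Delta_R(\ga))$ for all $\ga$.

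You then still need to upgrade domination on shadows to $f_*\mu\ll\mu_1+\mu_2$; this is Proposition~\ref{prop:uniquePS}, and it is not formal. It uses Theorem~\ref{thm:boundary map}(1) to see that, for $\mu$-a.e.\ $x$, $f(x)$ lies in $\Delta$-shadows of diameter tending to $0$ along a sequence whose $\theta$-shadows contain $x$. It then performs the Vitali covering with $\theta$-shadows, where the system is well-behaved, and transfers disjointness via $\pi_\theta\Oc^\Delta_R(\ga)\subset\Oc^\theta_R(\ga)$.

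Your Hellinger alternative leaves open precisely the case $\mu_1\perp\mu_2$, and nothing in the H\"older/divergence estimate excludes it. Ruling it out is essentially the content of the theorem, and it comes from the domination argument above. Minor point: divergence of both $\phi_i$-series at $s=1$ does not follow from H\"older, but you do not need it. Your endgame is correct and slightly shortcuts the paper's second round: once $\phi=\phi_1$, the relation $\phi=t\phi_1+(1-t)\phi_2$ forces $\phi_2=\phi$.
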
 

Theorem~\ref{thm:extreme points} is a consequence of the following estimate for critical exponent. 
We also establish a more general version for transverse groups (see Section \ref{section:strict convexity}).

\begin{theorem}[Strict convexity in non-Anosov directions II] 
   \label{thm:strict convexity intro}
   Suppose $\Gamma < \Gsf$ is a Zariski dense $\Psf_\theta$-Anosov group, $\phi \in \fa_\theta^*$, and $\delta^\phi(\Gamma)<+\infty$. If $\phi_1, \phi_2 \in \fa^*$ are linearly independent  
   with $\delta^{\phi_1}(\Ga), \delta^{\phi_2}(\Ga) < + \infty$ 
   and $\phi = c_1 \phi_1 + c_2 \phi_2$ for some $c_1,c_2 > 0$, then 
$$
\delta^\phi(\Gamma) < \frac{1}{ \frac{c_1}{\delta^{\phi_1}(\Gamma)} + \frac{c_2}{\delta^{\phi_2}(\Gamma)} }.
$$
\end{theorem}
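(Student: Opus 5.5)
First I would get the non-strict inequality from Hölder. Set $\delta_i := \delta^{\phi_i}(\Ga)$ and choose weights $t_i := \frac{c_i/\delta_i}{c_1/\delta_1 + c_2/\delta_2}$, so $t_1+t_2=1$. Put $s := 1/(c_1/\delta_1+c_2/\delta_2)$. Then
$$
s\phi(\kappa(\ga)) = t_1 \delta_1 \phi_1(\kappa(\ga)) + t_2 \delta_2\phi_2(\kappa(\ga)).
$$
Applying Hölder's inequality to $\sum_\ga e^{-s'\phi(\kappa(\ga))}$ for $s'>s$ gives $\delta^\phi(\Ga) \le s$. So the content of the statement is the strict inequality, and I would argue by contradiction, assuming $\delta := \delta^\phi(\Ga) = s$.

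Under that assumption, $\delta\phi = t_1(\delta_1\phi_1) + t_2(\delta_2\phi_2)$ with $t_1,t_2 > 0$. Let $\mu$ be the unique $(\Ga,\phi,\delta)$-Patterson--Sullivan measure on $\Fc_\Delta$ from Theorem~\ref{thm:Anosov existence uniqueness}. The plan is to produce, from $\mu$, a Patterson--Sullivan measure on $\Fc_\Delta$ for $\delta_1\phi_1$ (or for $\delta_2 \phi_2$), and then contradict Theorem~\ref{thm:singular introduction}.

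The key step is to show that along $\mu$-typical (contracting conical) points, the cocycles $\delta_i\phi_i B^{IW}_\Delta$ are controlled by $\delta\phi B^{IW}_\Delta$. Concretely, I would try to construct a measure $\mu_1$ with Radon--Nikodym cocycle $e^{-\delta_1\phi_1 B^{IW}_\Delta}$ that is non-singular to $\mu$. The approach is a Patterson-type construction: take a weak limit of
$$
\frac{1}{Q(s')}\sum_\ga e^{-s'\delta_1\phi_1(\kappa(\ga))/\delta_1}\mathcal{D}_{\ga}
$$
on $\Fc_\Delta$, where $\mathcal{D}_\ga$ denotes the Dirac mass at a point of $\Fc_\Delta$ attached to $\ga$. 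The aim is to compare this with $\mu$ via shadow lemmas. Equality in Hölder forces the summands $e^{-\delta_1\phi_1(\kappa(\ga))}$ and $e^{-\delta_2\phi_2(\kappa(\ga))}$ to be comparable along "most" $\ga$, weighted by $e^{-\delta\phi(\kappa(\ga))}$. Via the shadow lemma for $\mu$ on contracting conical shadows, this should translate into
$$
\delta_1\phi_1(\kappa(\ga)) - \delta\phi(\kappa(\ga)) = O(1)
$$
along conical sequences for $\mu$-a.e.\ point.

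Given that boundedness, I would run the standard argument: a measure $\mu_1$ with the $\delta_1\phi_1$ cocycle satisfies a shadow lemma comparable to $\mu$'s on a full-measure set, hence $\mu_1 \ll \mu$. Theorem~\ref{thm:singular introduction}, applied with $\phi_2 := \phi_1$ and $\delta := \delta_1$, then gives $\delta\phi = \delta_1\phi_1$. This contradicts the linear independence of $\phi_1,\phi_2$ together with $t_2>0$.

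I expect the main obstacle to be the step that converts equality in the critical exponents into almost-sure boundedness of $\delta_1\phi_1 - \delta\phi$ along conical sequences. A cleaner alternative I would try first goes through the BMS flow. By Theorem~\ref{thm:BMSergodic}, $\mathsf{m}_\phi$ is $\Asf$-ergodic, so a Birkhoff/Hopf-type argument would show the directional growth is concentrated in the single direction dual to $\phi$. Then a variational characterization $\delta^{\psi} = \sup$ over measures of entropy divided by the $\psi$-average would make the Hölder equality case force the "$\phi_1$-entropy direction" to coincide with the $\phi$ one. This yields the same contradiction.
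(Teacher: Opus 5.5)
Your H\"older bound is correct, and so is your endgame: once $\delta\phi=\delta_i\phi_i$ holds for a single index $i$, linear independence of $\phi_1,\phi_2$ is contradicted. But the middle step, which you flag as the main obstacle, is the entire content of the theorem, and neither of your routes supplies it.

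First, a Patterson construction on $\Fc_\Delta$ is not available here. $\Ga$ is only $\Psf_\theta$-divergent, so for $\alpha\notin\theta$ the values $\alpha(\kappa(\ga))$ need not tend to infinity, and there is no well-behaved point of $\Fc_\Delta$ attached to $\ga$. The Iwasawa cocycle is recovered only as a limit along $\Delta$-regular sequences, so weak limits of your Dirac sums need not transform by $e^{-\delta_1\phi_1 B^{IW}_\Delta}$. This is exactly the issue in Remark~\ref{remark:Quint work}, and the paper deliberately does not assume such measures exist on $\Fc_\Delta$. Instead it takes $(\Ga,\phi_i,\delta_i)$-Patterson--Sullivan measures $\mu_i$ on the horofunction boundary $\partial_\Delta X\supset\Fc_\Delta$ (Proposition~\ref{prop:PS meas exists}).

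Second, you do not actually have ``equality in H\"older''. Equality of two exponential growth rates carries no pointwise or almost-sure information, so nothing converts $\delta^\phi(\Ga)=s$ into $\delta_1\phi_1(\kappa(\ga))-\delta\phi(\kappa(\ga))=O(1)$ along conical sequences. Your BMS alternative does not close the gap either. The variational characterization you invoke comes from thermodynamic formalism and is available only for functionals in $\fa_\theta^*$; for $\phi_1\notin\fa_\theta^*$ there is no continuous equivariant map to $\Fc_\Delta$, hence no H\"older cocycle over the flow encoding $\phi_1$. Moreover, $\mathsf{m}_\phi$ is an infinite measure in this setting.

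The missing idea is a direct comparison of shadows, which needs no information about individual $\ga$. Under your contradiction hypothesis, $\delta\phi=t_1\delta_1\phi_1+t_2\delta_2\phi_2$ exactly. The Shadow Lemma (Proposition~\ref{prop.shadowlemma}) applies to $\mu$ and to $\mu_1,\mu_2$ by Theorem~\ref{thm:Zdense PS}. Combined with the weighted AM--GM inequality, it gives, for all $\ga\in\Ga$,
\begin{align*}
\mu\bigl(\Oc_R^\Delta(\ga)\bigr) &\le C e^{-\delta\phi(\kappa(\ga))} = C\bigl(e^{-\delta_1\phi_1(\kappa(\ga))}\bigr)^{t_1}\bigl(e^{-\delta_2\phi_2(\kappa(\ga))}\bigr)^{t_2}\\
&\le C'\,\mu_1\bigl(\Oc_R^\Delta(\ga)\bigr)^{t_1}\mu_2\bigl(\Oc_R^\Delta(\ga)\bigr)^{t_2}\le C'\,(\mu_1+\mu_2)\bigl(\Oc_R^\Delta(\ga)\bigr).
\end{align*}
Proposition~\ref{prop:uniquePS} is a Vitali covering argument that exploits the fact that $\mu$ is carried by contracting conical points. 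It gives $\mu\ll\mu_1+\mu_2$, so $\mu$ is non-singular to some $\mu_i$. Theorem~\ref{thm:KZ Rigidity} (equivalently Corollary~\ref{cor:singularity between PS} with $\Theta=\Delta$) together with Benoist's theorem then yields $\delta\phi=\delta_i\phi_i$.

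Your sketch goes wrong in two places here. The domination is only by the sum $\mu_1+\mu_2$, so you cannot decide in advance to work with $\phi_1$. And the absolute continuity must run as $\mu\ll\cdot$, because the covering argument needs the dominated measure to be conical. Your proposed $\mu_1\ll\mu$ would require knowing that $\mu_1$ is carried by conical points, and that is not available.
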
 

The proof of strict convexity is based on the following proof strategy: if the critical exponent does not drop under convex combination, then the associated Patterson--Sullivan measures should be non-singular, which should imply that the associated lengths are equal. Implementing this strategy has several parts: 
\begin{itemize}
\item Constructing Patterson--Sullivan measures. Prior to our work, it was known that there is a Patterson--Sullivan measure for $\phi$ on $\Fc_\theta$ and there was no guarantee that Patterson--Sullivan measures exist for $\phi_1,\phi_2$ (see Remark~\ref{remark:Quint work}). In this paper we show that there is a Patterson--Sullivan measure for $\phi$ on $\Fc_\Delta$ (see Theorem~\ref{thm:Anosov existence uniqueness}). In a separate paper \cite{KimZimmer_vector}, we show that there are Patterson--Sullivan measures for $\phi_1,\phi_2$ on a different boundary $\partial_\Delta X$ which contains $\Fc_\Delta$. The boundary $\partial_\Delta X$ is constructed as a vector-valued horofunction boundary of the symmetric space using Cartan projections, see Section~\ref{sec:compactifications and PS measures} for details. 

\item Showing non-singularity of Patterson--Sullivan measures when the critical exponent does not drop. This will be a consequence of Proposition~\ref{prop:uniquePS}. 
\item Showing non-singularity of Patterson--Sullivan measures implies equality of lengths. This was the main result of our earlier work~\cite{KimZimmer1} in the context of Patterson--Sullivan systems. 
\end{itemize}

\subsection{Entropy rigidity} As an application of our strict convexity results, we establish an entropy rigidity result for Anosov groups with Lipschitz limit sets.

In the following discussion, let $\mfa = \{ {\rm diag}(t_1,\dots, t_d) : t_1 + \cdots + t_d = 0\}$ denote the standard Cartan subspace for $\SL(d,\Rb)$ and let $\Delta =\{ \alpha_1,\dots, \alpha_{d-1}\}$ denote the standard simple roots defined by 
$$
\alpha_j ({\rm diag}(t_1,\dots, t_d))=t_j-t_{j+1}.
$$
For notation simplicity let $\Psf_k : = \Psf_{\{\alpha_k\}}$, $\Fc_k := \Fc_{\{\alpha_k\}}$, and $\Lambda_k( \Gamma) : = \Lambda_{\alpha_k}(\Gamma)$. Notice that we have a natural identification $\Fc_1 = \Pb(\Rb^d)$. 

The \emph{Hilbert functional} $\phi_{\rm H} \in \mfa^*$ is 
$$
\phi_{\rm H}( {\rm diag}(t_1,\dots, t_d)) = \frac{1}{2}(t_1 - t_d)
$$
and the \emph{Hilbert critical exponent} of a discrete subgroup $\Gamma < \SL(d,\Rb)$ is the critical exponent $\delta^{\phi_{\rm H}}(\Gamma)$ associated to $\phi_{\rm H}$. The motivation for this terminology comes from the fact that if $\Gamma$ preserves a properly convex domain $\Omega \subset \Pb(\Rb^d)$, then $\delta^{\phi_{\rm H}}(\Gamma)$ coincides with the critical exponent of $\Gamma$ with respect to the Hilbert metric on $\Omega$ (this follows from \cite[Proposition 10.1]{DGK_cc}).

For a $\Psf_1$-Anosov group $\Ga < \SL(d, \Rb)$ with Lipschitz limit set $\La_1(\Ga) \subset \Pb(\Rb^d)$, Pozzetti--Sambarino--Wienhard established the following upper bound for the Hilbert entropy.

\begin{theorem}[{\cite[consequence of Theorem A]{PSW2023}}] Suppose $\Gamma < \SL(d,\Rb)$ is a $\Psf_1$-Anosov group acting strongly irreducibly on $\Rb^d$ and  on $\wedge^{p+1} \Rb^d$ for some $p \le d-2$. If $\Lambda_1(\Gamma)$ is a Lipschitz $p$-manifold, then 
$$
\delta^{\phi_{\rm H}}(\Gamma) \leq p.
$$
\end{theorem}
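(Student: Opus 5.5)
The plan is to derive the bound from the main output of \cite[Theorem A]{PSW2023} by a comparison of functionals, and not to reprove anything about Lipschitz limit sets. As I recall it, that theorem states the following. If $\Gamma$ is $\Psf_1$-Anosov, acts strongly irreducibly on $\Rb^d$ and on $\wedge^{p+1}\Rb^d$, and $\Lambda_1(\Gamma)$ is a Lipschitz $p$-manifold, then the critical exponent of the functional
$$
\psi({\rm diag}(t_1,\dots,t_d)) := \sum_{j=2}^{p+1} (t_1 - t_j) = p\,\omega_{1} - \omega_{p+1}
$$
equals $1$. I only need $\delta^{\psi}(\Gamma) \le 1$. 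I would first check this reading of their theorem against the paper itself. If it is phrased with a different normalization (for example via the $(1,1,p+1)$-hyperconvexity exponent), the argument below adapts by rescaling $\psi$.

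\textbf{Step 1: symmetrize.} Since $\opp^*\psi(\kappa(g)) = \psi(\kappa(g^{-1}))$, we have $\delta^{\opp^*\psi}(\Gamma) = \delta^{\psi}(\Gamma) \le 1$. For $\SL(d,\Rb)$ the involution gives $\opp^*\psi({\rm diag}(t_1,\dots,t_d)) = \sum_{k=d-p}^{d-1}(t_k - t_d)$. The set $\Qc_\Delta(\Gamma)$ from \eqref{eqn:the set Q} is convex, so $\delta^{\frac12(\psi + \opp^*\psi)}(\Gamma) \le 1$. This convexity follows from Hölder's inequality applied to the Poincaré series \eqref{eqn:series defn of critical exp}.

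\textbf{Step 2: compare on $\mfa^+$.} For $t_1 \ge \dots \ge t_d$,
$$
\psi + \opp^*\psi = p(t_1 - t_d) - \Big( \sum_{j=2}^{p+1} t_j - \sum_{k=d-p}^{d-1} t_k \Big).
$$
Pair the $i$-th index $1+i$ of the first sum with the $i$-th index $d-p-1+i$ of the second, for $i = 1,\dots,p$. The hypothesis $p \le d-2$ gives $1+i \le d-p-1+i$, so $t_{1+i} \ge t_{d-p-1+i}$ and the bracket is nonnegative. Hence, on $\mfa^+$,
$$
\psi + \opp^*\psi \le p(t_1 - t_d) = 2p\,\phi_{\rm H}, \qquad\text{that is,}\qquad \phi_{\rm H} \ge \frac{1}{p}\cdot\frac{\psi + \opp^*\psi}{2}.
$$

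\textbf{Step 3: conclude.} A pointwise inequality $\phi_{\rm H}\circ\kappa \ge \frac1p \eta\circ\kappa$ on $\Gamma$ with $\eta := \frac12(\psi + \opp^*\psi)$ gives
$$
\delta^{\phi_{\rm H}}(\Gamma) \le p\,\delta^{\eta}(\Gamma) \le p,
$$
directly from the series definition. The only real obstacle is Step 0, matching precisely what \cite[Theorem A]{PSW2023} asserts. The symmetrization trick is needed because the naive bound $\psi \le 2p\,\phi_{\rm H}$ only yields $\delta^{\phi_{\rm H}}(\Gamma) \le 2p$.
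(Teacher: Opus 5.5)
Your proposal is correct and follows the paper's own argument in Section~\ref{section:Lipschitz}. There, $\delta^{\phi_p}(\Gamma)=\delta^{\bar\phi_p}(\Gamma)=1$ is taken from \cite[Theorem A]{PSW2023}, with $\bar\phi_p=\opp^*\phi_p$; the H\"older step from the proof of Theorem~\ref{thm:convexity body} is applied with $t=1/2$; and the pointwise bound $p\,\phi_{\rm H}\ge\frac12(\phi_p+\bar\phi_p)$ on $\mfa^+$ from Lemma~\ref{lem:comparing functionals} finishes the proof. That lemma pairs the indices differently so as to also characterize the equality case, which is not needed for this inequality.

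One slip: $\sum_{j=2}^{p+1}(t_1-t_j)$ equals $(p+1)\omega_1-\omega_{p+1}$, which is the paper's $\phi_p$, not $p\,\omega_1-\omega_{p+1}$. This is harmless, since every later step of your argument uses only the coordinate expression.
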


Using Theorem \ref{thm:strict convexity intro} we prove rigidity in the equality case. 

\begin{theorem}[Entropy rigidity] 
   \label{thm:Lipschitz limit set rigidity intro}
   Suppose $\Gamma < \SL(d,\Rb)$ is a $\Psf_1$-Anosov group acting strongly irreducibly on $\Rb^d$ and on $\wedge^{p+1} \Rb^d$ whose limit set $\Lambda_1(\Gamma)$ is a Lipschitz $p$-manifold for some $p \le d-2$. Then
$$
\delta^{\phi_{\rm H}}(\Gamma) = p
$$
if and only if $\Gamma$ is conjugate to a uniform lattice in $\mathsf{SO}(d-1, 1)$ and $p=d-2$. 
\end{theorem}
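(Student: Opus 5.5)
The plan is to deduce the equality case from Theorem~\ref{thm:strict convexity intro}, applied inside the Zariski closure of $\Gamma$, after recasting the Pozzetti--Sambarino--Wienhard bound as a convexity statement about critical exponents.

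The ``if'' direction is classical. If $\Gamma$ is a uniform lattice in $\mathsf{SO}(d-1,1)$, then $\kappa(\gamma) = {\rm diag}(t,0,\dots,0,-t)$ up to bounded error, so $\phi_{\rm H}(\kappa(\gamma)) = t$ is the hyperbolic displacement in $\Hb^{d-1}$. Hence $\delta^{\phi_{\rm H}}(\Gamma) = d-2$, and $\Lambda_1(\Gamma)$ is the round sphere $S^{d-2}$, a Lipschitz $(d-2)$-manifold. So we concentrate on the converse.

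\emph{Step 1 (reduction to a Zariski dense group).} Let $\Hsf$ be the identity component of the Zariski closure of $\Gamma$, after passing to a finite index subgroup. Strong irreducibility on $\Rb^d$ forces $\Hsf$ to be reductive with compact center, so we may work with its semisimple part. Then $\Gamma$ is Zariski dense in $\Hsf$. It is $\Psf_\theta$-Anosov in $\Hsf$ for the restricted roots $\theta$ coming from $\alpha_1$ and $\alpha_{d-1}$, since $\Psf_1$-Anosov implies $\Psf_{d-1}$-Anosov. Restricting the functionals of $\SL(d,\Rb)$ to the Cartan subspace of $\Hsf$, we have
\[
\phi_{\rm H} = \tfrac12(\omega_1+\omega_{d-1}),
\]
so $\phi_{\rm H}$ lies in (the image of) $\mfa_\theta^*$.

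\emph{Step 2 (writing the PSW bound as convexity).} Next I would write $\phi_{\rm H} = c_1\phi_1 + c_2\phi_2$ with $c_1, c_2 > 0$, where $\phi_1,\phi_2$ are functionals of the type controlled in \cite{PSW2023}. I would try
\[
\phi_1 = \alpha_1+\dots+\alpha_p, \qquad \phi_2 = \opp^*\phi_1 = \alpha_{d-p}+\dots+\alpha_{d-1},
\]
so that $2p\,\phi_{\rm H}$ is a positive combination of $\phi_1$, $\phi_2$ and the remaining simple roots. The Lipschitz $p$-manifold hypothesis, together with irreducibility on $\wedge^{p+1}\Rb^d$, is what PSW use to bound these critical exponents by $p$. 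I would extract from their argument the precise inequality $\delta^{\phi_i}(\Gamma) \le \cdots$ that makes $\delta^{\phi_{\rm H}}(\Gamma) \le p$ follow from the convexity of $\Qc_\Delta(\Gamma)$. If $\delta^{\phi_{\rm H}}(\Gamma) = p$, the chain of inequalities is then an equality, so the convexity bound is not strict. By Theorem~\ref{thm:strict convexity intro}, applied in $\Hsf$, the restrictions of $\phi_1,\phi_2$ (and any further summands) to the Cartan subspace of $\Hsf$ must be linearly dependent, and in fact proportional to $\phi_{\rm H}$.

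\emph{Step 3 (identifying $\Hsf$ and finishing).} The proportionality says that on the Cartan subspace of $\Hsf$ the middle roots $\alpha_2,\dots,\alpha_{d-2}$ vanish, that is, $t_2=\dots=t_{d-1}$. Hence $\Hsf$ has real rank one and its Cartan projections have the form ${\rm diag}(t,0,\dots,0,-t)$. A rank one simple group acting irreducibly on $\Rb^d$ with these weights must be $\mathsf{SO}(d-1,1)$, with the standard representation. Then $\Gamma$ is a convex cocompact subgroup of $\mathsf{SO}(d-1,1)$, $\Lambda_1(\Gamma) \subset \partial\Hb^{d-1} = S^{d-2}$, and $\delta^{\phi_{\rm H}}(\Gamma)$ equals the Hausdorff dimension of the limit set by Sullivan. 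Since the limit set is a topological $p$-manifold of Hausdorff dimension $p$ (it is Lipschitz) sitting inside $S^{d-2}$, either $p<d-2$ and $\Gamma$ is quasi-Fuchsian-like in a lower dimensional sense (excluded by strong irreducibility, since $\Gamma$ would then preserve a totally geodesic subspace), or $p=d-2$ and the limit set is all of $S^{d-2}$. In the latter case $\Gamma$ is a uniform lattice.

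\emph{Main obstacle.} I expect the hard part to be Step 2: choosing the decomposition of $\phi_{\rm H}$ so that the PSW bound literally reads as a convexity inequality to which Theorem~\ref{thm:strict convexity intro} applies. This requires that $\phi_{\rm H}$ be in $\mfa_\theta^*$ (it is) and that each $\phi_i$ have finite critical exponent with the sharp bound. I would first attempt this with $\phi_1 = \alpha_1+\dots+\alpha_p$ and its opposite, using the PSW estimate relating the Lipschitz property to the growth of $\sigma_1\cdots\sigma_p/\sigma_{p+1}^p$.
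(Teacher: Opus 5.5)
Your proposal has a genuine gap at exactly the step you flag as the main obstacle, and the candidate decomposition you suggest does not work. With $\phi_1=\alpha_1+\dots+\alpha_p$ and $\phi_2=\opp^*\phi_1$, at $X={\rm diag}(t_1,\dots,t_d)\in\mfa^+$ one has
\[
\phi_1(X)+\phi_2(X)=2\phi_{\rm H}(X)-(t_{p+1}-t_{d-p}).
\]
So $\phi_{\rm H}\ge\frac12(\phi_1+\phi_2)$ on $\mfa^+$ holds only when $2p+1\le d$. In the case you must ultimately single out, $p=d-2\ge 2$, the inequality reverses: $\phi_1+\phi_2=2\phi_{\rm H}+(t_2-t_{d-1})$.

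Adding ``the remaining simple roots'' as further summands does not rescue this. For $d\ge 4$, $\alpha_2\circ\kappa$ is bounded on a lattice in $\mathsf{SO}(d-1,1)$, so such summands can have infinite critical exponent. The crude bound $2p\,\phi_{\rm H}\ge\phi_1+\phi_2$ also loses a factor of $p$.

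Moreover, Theorem~\ref{thm:strict convexity intro} requires an exact identity $\phi=c_1\phi_1+c_2\phi_2$, and in the equality case it only yields linear dependence. No exact two-term decomposition with finite exponents exists here. What is needed is the inequality version (Theorem~\ref{thm:convexity body}, Corollary~\ref{cor:convexity general lin comb}). Its equality case says that $\phi$ and each $\phi_i$ agree on Jordan projections.

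The missing idea is to use \cite[Theorem A]{PSW2023} verbatim. It gives $\delta^{\phi_p}(\Gamma)=\delta^{\bar\phi_p}(\Gamma)=1$ for $\phi_p=(p+1)\omega_1-\omega_{p+1}$ and $\bar\phi_p=\opp^*\phi_p$. Note that $\phi_p(\kappa(g))=\log\bigl(\sigma_1(g)^p/\sigma_2(g)\cdots\sigma_{p+1}(g)\bigr)$, not the quantity you wrote. Combine this with the elementary inequality $p\,\phi_{\rm H}\ge\frac12(\phi_p+\bar\phi_p)$ on $\mfa^+$, which is an equality iff $t_2=\dots=t_{d-1}$. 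Then Theorem~\ref{thm:convexity body}, with $\theta=\{\alpha_1,\alpha_{d-1}\}$ and $\Theta=\{\alpha_1,\alpha_{p+1},\alpha_{d-p-1},\alpha_{d-1}\}$, gives $\delta^{\phi_{\rm H}}(\Gamma)\le p$. In the equality case it gives $\phi_p(\lambda(\gamma))=\bar\phi_p(\lambda(\gamma))=p\,\phi_{\rm H}(\lambda(\gamma))$ for all $\gamma$. That theorem only needs semisimple Zariski closure (available from \cite{BCLS2015}), so your Step 1 reduction to a Zariski dense group is unnecessary.

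Step 3 also does not follow from what your Step 2 would provide.
\begin{itemize}
\item Proportionality of your $\phi_i$ to $\phi_{\rm H}$ (e.g.\ $t_1-t_{p+1}=\frac12(t_1-t_d)$) does not force $t_2=\dots=t_{d-1}$. In the correct argument this comes from the equality case of the pointwise inequality above.
\item Even with $t_2=\dots=t_{d-1}$, you only have a condition compatible with rank two. You need a further computation: from $\phi_p(\lambda(\gamma))=p\,\phi_{\rm H}(\lambda(\gamma))$ and $\det=1$ one gets $\lambda_1(\gamma)\lambda_d(\gamma)=1$, hence $\lambda_2(\gamma)=\dots=\lambda_{d-1}(\gamma)=1$.
\item Transferring this eigenvalue condition from $\Gamma$ to its Zariski closure requires Benoist's theorem \cite{Benoist_properties}.
\item Identifying the resulting rank one group with $\mathsf{SO}(d-1,1)$ in its standard representation requires excluding the other rank one groups and other representations. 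The paper does this via conformality of the action at fixed points and highest weight theory.
\item Your claim that $p<d-2$ would force $\Gamma$ to preserve a totally geodesic subspace does not follow from Sullivan's theorem and dimension counting. It is the rigidity theorem \cite[Theorem 1.3]{Kapovich2009}, which you must invoke to conclude $p=d-2$ and cocompactness.
\end{itemize}
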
 

Previously Pozzetti--Sambarino--Wienhard proved a variant of this rigidity result when $\Gamma$ is also $\Psf_{p+1}$-Anosov and  $\Lambda_1(\Gamma)$ is a $\Cc^1$-smooth $p$-manifold ~\cite[Proposition 7.7]{PSW2023}. In this case, the $\Psf_{p+1}$-Anosov assumption implies that the set $\Qc_{\{\alpha_1,\alpha_{p+1}\}}(\Gamma)$ in Equation~\eqref{eqn:the set Q} is strictly convex and this strict convexity is essential in their proof. Using our ``strict convexity in non-Anosov directions'' theorem, we are able to extend their argument to the non-$\Psf_{p+1}$-Anosov case. 

We also note that Theorem~\ref{thm:Lipschitz limit set rigidity intro} generalizes a result of Crampon~\cite{Crampon2009} who proved that if $\Gamma < \SL(d,\Rb)$ is a discrete group which acts cocompactly on a strictly convex domain $\Omega \subset \Pb(\Rb^d)$, then 
$$
\delta_{\rm Hil}(\Gamma) \leq d-2
$$
with equality if and only if $\Omega$ is an ellipsoid (and hence $\Gamma$ is conjugate to a uniform lattice in $\mathsf{SO}(d-1, 1)$). Under Crampon's hypothesis, $\Gamma$ is $\Psf_1$-Anosov and acts irreducibly on $\Rb^d$ and hence also $\wedge^{d-1} \Rb^d$, see~\cite[Section 6.2]{GW2012}. Further, $\Lambda_1(\Gamma)$ coincides with $\partial\Omega$ and is hence a Lipschitz $(d-2)$-manifold. So Theorem~\ref{thm:Lipschitz limit set rigidity intro} does indeed generalize Crampon's result.

\subsection{Outline of Paper} 

The first part of the paper is expository. In Section~\ref{sec:PS systems}, we recall  the definition and some properties of abstract Patterson--Sullivan systems, which were introduced in our earlier work \cite{KimZimmer1}. In Section~\ref{sec: notation for ss groups},  we fix the notation involving semisimple Lie groups that we will use throughout the paper. 
 In Section~\ref{sec:compactifications and PS measures}, we recall the definition and some properties of vector-valued horofunction boundaries of symmetric spaces. 

In the second part of the paper, we prove the existence of boundary maps for Zariski dense representations of groups in a Patterson--Sullivan system. In Section~\ref{sec:continuity properties of boundary maps}, we prove a technical continuity result for measurable maps into separable metric spaces whose domain is well-behaved Patterson--Sullivan systems. 
In Section~\ref{sec: existence of boundary maps}, we establish the existence of a boundary map associated to a Zariski dense representation of a group which is part of a well-behaved Patterson--Sullivan system.
Our boundary map existence result requires that the group action in the Patterson--Sullivan system is amenable and in Section~\ref{sec:amenable actions of transverse groups} we verify this for transverse groups.

In the final part of the paper, we apply the previous two parts to study Patterson--Sullivan measures for transverse groups, mapping class groups, and discrete subgroups of isometry groups of Gromov hyperbolic spaces. In Section \ref{section:lifting map}, we establish existence and uniqueness results for Patterson--Sullivan measures on $\Fc_\Theta$ associated to $\Psf_\theta$-transverse groups when $\Theta \supset \theta$. In Section \ref{section:BMS}, we construct Bowen--Margulis--Sullivans measures on the homogeneous space $\Gamma \ba \Gsf / \Msf$ when $\Gamma$ is a Zariski dense $\Psf_\theta$-transverse group and prove a version of the  Hopf--Tsuji--Sullivan dichotomy for such measures. 
In Section \ref{section:strict convexity}, we prove the strict convexity of critical exponents in non-Anosov directions. In Section \ref{section:Lipschitz}, we prove the entropy rigidity for Anosov subgroups with Lipschitz limit sets. In Section \ref{section:MCG}, we consider representations of mapping class groups and obtain measurable boundary maps from $\PML$ equipped with the Lebesgue measure class. In Section \ref{section:GH exp bdd}, we consider representations of discrete isometry groups of Gromov hyperbolic spaces with exponentially bounded geometry and also obtain measurable boundary maps from their Gromov boundaries.
Finally, in Section \ref{section:deductions}, we explain how to establish the statements in the introduction from the results in the body of the paper.

\subsection*{Acknowledgements} Kim thanks the University of Wisconsin--Madison for hospitality during a visit in October 2025.
Zimmer was partially supported by a Sloan research fellowship and grants DMS-2105580 and
DMS-2452068 from the National Science Foundation. 
This material is based upon work supported by the National Science Foundation under Grant No. DMS-2424139, while the authors were in residence at the Simons Laufer Mathematical Sciences Institute in Berkeley, California, during the Spring 2026 semester.

\part{Background} 


\section{Patterson--Sullivan systems}\label{sec:PS systems} 


In this section, we recall the definition and some properties of abstract Patterson--Sullivan systems, which were introduced in our earlier work \cite{KimZimmer1}. The main idea in this previous work was to identify the key features of a group action on a probability space that allows one to extend the theory of Patterson--Sullivan measures. We note that a different framework for abstract Patterson--Sullivan-like measures was given in~\cite{BCZZ_coarse}.

Given a  compact metric space $M$, a subgroup $\Gamma < \mathsf{Homeo}(M)$, and $\kappa \geq 0$, a function $\sigma : \Ga \times M \to \mathbb{R}$ is called a \emph{$\kappa$-coarse-cocycle} if 
$$
   \left| \sigma(\ga_1 \ga_2, x) - \left( \sigma(\ga_1, \ga_2 x) + \sigma(\ga_2, x) \right) \right| \le \kappa
$$
  for any $\ga_1, \ga_2 \in \Ga$ and $x \in M$.
Given such a coarse-cocycle and $\delta \ge 0$, a Borel probability measure $\mu$ on $M$ is called \emph{coarse $(\Ga, \sigma,\delta)$-Patterson--Sullivan measure} if there exists $C\geq1$ such that for any $\ga \in \Ga$ the measures $\mu, \gamma_*\mu$ are absolutely continuous and 
   \begin{equation} \label{eqn.coarse PS meaasure intro}
   C^{-1}e^{ - \delta \sigma(\ga^{-1}, x)} \le \frac{d \ga_* \mu}{d \mu}(x) \le Ce^{- \delta \sigma(\ga^{-1}, x)} \quad \text{for } \mu\text{-a.e. } x \in M.
   \end{equation} 
When $C = 1$ and hence equality holds in Equation \eqref{eqn.coarse PS meaasure intro}, we call $\mu$ a \emph{$(\sigma,\delta)$-Patterson--Sullivan measure}.

Now we recall the definition of Patterson--Sullivan systems.

\begin{definition}\label{defn:PS systems}
A \emph{Patterson--Sullivan-system (PS-system) of dimension $\delta \ge 0$} consists of
\begin{itemize}
\item a coarse-cocycle $\sigma : \Gamma \times M \rightarrow \Rb$, 
\item a coarse $(\sigma,\delta)$-Patterson--Sullivan measure (PS-measure) $\mu$,
\item for each $\gamma \in \Gamma$, a number $\norm{\gamma}_\sigma \in \R$ called the \emph{$\sigma$-magnitude of $\gamma$}, and
\item for each $\gamma \in \Gamma$ and $R > 0$, a non-empty open set $\Oc_R(\gamma) \subset M$ called the \emph{$R$-shadow of $\gamma$}
\end{itemize}
such that:
\begin{enumerate}[label=(PS\arabic*)]
\item\label{item:coycles are bounded} For any $\ga \in \Ga$, there exists $c=c(\gamma) > 0$ such that $\abs{\sigma(\ga, x)}\leq  c(\gamma)$ for  $\mu$-a.e.\ $x\in M$. 
\item\label{item:almost constant on shadows} For every $R> 0$ there is a constant $C=C(R) > 0$ such that
$$
 \norm{\gamma}_\sigma - C  \leq \sigma(\gamma,x) \leq \norm{\gamma}_\sigma + C 
$$
for all $\gamma \in \Gamma$ and $\mu$-a.e.\ $x \in \gamma^{-1} \Oc_R(\gamma)$.
\item\label{item:empty Z intersection} If $\{\gamma_n\} \subset \Gamma$, $R_n \rightarrow +\infty$, $Z \subset M$ is compact, and $[M \smallsetminus \gamma_n^{-1}\Oc_{R_n}(\gamma_n)] \rightarrow Z$ with respect to the  Hausdorff distance, then for any $x \in Z$, there exists $g \in \Ga$ such that 
$$gx \notin Z.$$
\end{enumerate} 

We call the PS-system \emph{well-behaved} with respect to a collection
$$
\mathscr{H} := \{ \mathscr{H}(R) \subset \Ga : R \ge 0 \}
$$
 of non-increasing subsets of $\Ga$ if the following additional properties hold:
\begin{enumerate}[label=(PS\arabic*)]
\setcounter{enumi}{3}
\item\label{item:properness} $\Gamma$ is countable and for any $T > 0$, the set $\{ \gamma \in \mathscr{H}(0) : \norm{\gamma}_\sigma \leq T\}$ is finite. 

\item\label{item:baire} If $\{\gamma_n\} \subset \Gamma$, $R_n \rightarrow +\infty$, $Z \subset M$ is compact, and $[M \smallsetminus \gamma_n^{-1}\Oc_{R_n}(\gamma_n)] \rightarrow Z$ with respect to the  Hausdorff distance, then for any $h_1, \ldots, h_m \in \Ga$ and $x \in Z$, there exists $g \in \Ga$ such that
$$
g x \notin \bigcup_{i = 1}^m h_i Z.
$$

\item\label{item:shadow inclusion} If $R_1 \leq R_2$ and $\gamma \in \mathscr{H}(0)$, then $\Oc_{R_1}(\gamma) \subset \Oc_{R_2}(\gamma)$. 

\item\label{item:intersecting shadows} For any $R > 0$ there exist $C>0$ and $R'> 0$ such that: if $\alpha, \beta \in \mathscr{H}(R)$, $\norm{\alpha}_\sigma \leq \norm{\beta}_\sigma$, and $\Oc_R(\alpha) \cap \Oc_R(\beta) \neq \emptyset$, then 
$$
\Oc_R(\beta) \subset \Oc_{R'}(\alpha)
$$ 
and  
$$
\abs{\norm{\beta}_\sigma - (\norm{\alpha}_\sigma + \norm{\alpha^{-1}\beta}_\sigma)} \leq C. 
$$
\item \label{item:diam goes to zero ae} For every $R > 0$, there exists a set $M' \subset M$ of full $\mu$-measure such that 
$$
\lim_{n \rightarrow + \infty} {\rm diam} \Oc_R(\gamma_n) = 0
$$
whenever $\{\gamma_n\} \subset \mathscr{H}(R)$ is an escaping sequence and 
$$
x \in M' \cap  \bigcap_{n \ge 1} \Oc_R(\gamma_n).
$$
\end{enumerate} 
We call the collection $\mathscr{H}$ the \emph{hierarchy} of the Patterson--Sullivan system. 
\end{definition}

For well-behaved PS-systems, there is a natural notion of conical limit set.

\begin{definition} \label{defn:conical limit set}
   Let $(M, \Ga, \sigma, \mu)$ be a PS-system. 
\begin{itemize}
\item Given a subset $H \subset \Gamma$ and $R > 0$, let $\Lambda_R(H) \subset M$ be the set of points $x \in M$ where there exists an escaping sequence $\{\gamma_n\} \subset H$  such that 
$$
x \in \bigcap_{n \geq 1} \Oc_{R}(\gamma_n).
$$
\item If $(M, \Ga, \sigma, \mu)$ is well-behaved with respect to a hierarchy $\mathscr{H} = \{ \mathscr{H}(R) \subset \Ga : R \ge 0\}$, then the \emph{$\mathscr{H}$-conical limit set} is
\begin{equation} \label{eqn:conical}
\La^{\rm con}(\mathscr{H}) := \Ga \cdot \bigcup_{R > 0} \bigcap_{n \ge 1} \Lambda_R(\mathscr{H}(n)).
\end{equation}
\end{itemize} 
\end{definition}

 
 \subsection{Boundary rigidity} \label{section:boundary rigidity}


 One of the primary aims for developing the theory of PS-systems in \cite{KimZimmer1} was to establish a general setting where Tukia's measurable boundary rigidity theorem~\cite{Tukia1989} holds.

\begin{theorem}[Boundary rigidity, {\cite[Theorem 1.28]{KimZimmer1}}] \label{thm:KZ Rigidity}
 Suppose 
\begin{itemize} 
\item $(M_1, \Gamma_1, \sigma_1, \mu_1)$ is a well-behaved PS-system of dimension $\delta_1$ with respect to a hierarchy $\mathscr{H}_1 = \{ \mathscr{H}_1(R) \subset \Ga_1  : R \ge 0 \}$ and
$$\mu_1(\La^{\rm con}(\mathscr{H}_1)) = 1.$$
\item $(M_2, \Gamma_2, \sigma_2, \mu_2)$ is a  PS-system of dimension $\delta_2$.
\item There exists an onto homomorphism $\rho : \Gamma_1 \rightarrow \Gamma_2$ and a $\mu_1$-a.e.\ defined measurable $\rho$-equivariant injective map $f : M_1 \rightarrow M_2$.
\end{itemize} 
If the measures $f_*\mu_1$ and $\mu_2$ are not singular, then 
$$
\sup_{\gamma \in \Gamma_1} \abs{\delta_1 \norm{\gamma}_{\sigma_1} - \delta_2 \norm{\rho(\gamma)}_{\sigma_2}} < +\infty.
$$
\end{theorem}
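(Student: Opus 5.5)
We plan to follow the Tukia-type strategy from \cite{KimZimmer1}: compare the two Radon--Nikodym cocycles along conical sequences and use that the shadow lemma pins down measures of shadows. The argument has three steps.

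First, we reduce to absolute continuity. Since $f_*\mu_1$ and $\mu_2$ are not singular, we take the Lebesgue decomposition $f_*\mu_1 = \nu_{ac} + \nu_s$ with respect to $\mu_2$, where $\nu_{ac} \neq 0$. Both pieces are quasi-invariant under $\Gamma_2$, because $f$ is equivariant and $\rho$ is onto. Hence the density $h := d\nu_{ac}/d\mu_2$ is a nonzero measurable function satisfying a coarse transformation rule. We pull everything back to $M_1$ through the injective map $f$. On the set $E = f^{-1}(\{h>0\})$, which has positive $\mu_1$-measure, the cocycle relations for $\mu_1$ and $\mu_2$ combine to give
\[
\delta_1\sigma_1(\gamma^{-1},x) - \delta_2\sigma_2(\rho(\gamma)^{-1},f(x)) = \log h(f(\gamma x)) - \log h(f(x)) + O(1)
\]
for $\mu_1$-a.e.\ $x$ with $x,\gamma x\in E$. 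The right-hand side is a measurable coboundary.

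Second, we apply density points and conicality. Since $\mu_1(\Lambda^{\rm con}(\mathscr H_1))=1$, we pick a point $x\in E$ that is conical. Then there are $R>0$ and an escaping sequence $\gamma_n\in\mathscr H_1(n)$ with $x\in\mathcal O_R(\gamma_n)$ and $\operatorname{diam}\mathcal O_R(\gamma_n)\to0$ by (PS8). We also choose $x$ to be a Lebesgue-type density point of $E$ and a continuity point of $\log h\circ f$ relative to the shadows $\mathcal O_R(\gamma_n)$. This is the continuity-along-conical-sequences principle, proved using (PS6)--(PS7) as a Vitali-type covering for shadows. Applying $\gamma_n^{-1}$ blows the shadow up to a set $\gamma_n^{-1}\mathcal O_R(\gamma_n)$ whose complement tends to a compact $Z$. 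By (PS2), $\sigma_1(\gamma_n,\cdot)=\|\gamma_n\|_{\sigma_1}+O(1)$ on that set. The coboundary relation then shows that, on sets of uniformly positive measure, $\delta_2\sigma_2(\rho(\gamma_n),\cdot)$ equals $\delta_1\|\gamma_n\|_{\sigma_1}+O(1)$.

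Third, we must pass from these particular sequences to the uniform bound over all $\gamma\in\Gamma_1$; this is the main obstacle. To compare magnitudes on the $M_2$ side we need the points $f(\cdot)$ to land in shadows of $\rho(\gamma_n)$, or at least to use (PS2) for $\sigma_2$. That gives $\sigma_2(\rho(\gamma_n),y)\approx\|\rho(\gamma_n)\|_{\sigma_2}$ only on $\rho(\gamma_n)^{-1}\mathcal O_{R'}(\rho(\gamma_n))$. I would first try to argue by contradiction. Suppose $\gamma_k$ satisfies $|\delta_1\|\gamma_k\|_{\sigma_1}-\delta_2\|\rho(\gamma_k)\|_{\sigma_2}|\to\infty$. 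Using (PS3) and (PS5) with the $g$'s chosen so that $gx\notin Z$, we move a positive-measure set off the bad limit sets. This produces ratios $\mu_2(\rho(\gamma)A)/\mu_1(\gamma A')$ that are forced to stay bounded by the coboundary relation and (PS1) but must be unbounded. The quasi-cocycle relation of (PS7) then lets us propagate the bound from conical sequences to arbitrary elements via finitely many corrections. The delicate point is ensuring that the positive-measure sets survive under the contracting maps. That is exactly where the hypothesis $\mu_1(\Lambda^{\rm con})=1$ and the injectivity of $f$, used to push sets of $M_1$ to sets of $M_2$ with controlled measure, are used.
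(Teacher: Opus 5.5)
The paper gives no proof of this statement; it is quoted from \cite[Theorem~1.28]{KimZimmer1}. Your sketch therefore has to stand on its own, and Step~3, which carries all the content of a bound uniform over $\Gamma_1$, is not an argument.

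There are two smaller slips first. With $\Phi:=\log h\circ f$, the transformation rule should read $\delta_2\sigma_2(\rho(\gamma),f(x))-\delta_1\sigma_1(\gamma,x)=\Phi(\gamma x)-\Phi(x)+O(1)$; your version pairs $\sigma(\gamma^{-1},x)$ with $\gamma x$. You should also invoke the ergodicity of Theorem~\ref{thm:ergodicity KZ} to get $f_*\mu_1\ll\mu_2$, i.e.\ $\mu_1(E)=1$. If $E$ only has positive measure, nothing makes it meet $\gamma_n^{-1}\mathcal{O}_R(\gamma_n)$ in uniformly positive measure. Likewise, the ``$O(1)$'' at the end of Step~2 hides $\Phi(x)-\Phi(z)$, so it depends on the base point $x$.

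The gap in Step~3 has two parts.

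\begin{enumerate}
\item[(i)] To replace $\sigma_2(\rho(\gamma),f(z))$ by $\|\rho(\gamma)\|_{\sigma_2}$ via (PS2), you need $\mu_1(W_\gamma)\ge c>0$ uniformly in $\gamma$, where $W_\gamma:=\gamma^{-1}\mathcal{O}_R(\gamma)\cap f^{-1}\bigl(\rho(\gamma)^{-1}\mathcal{O}_R(\rho(\gamma))\bigr)$. This is a mixed shadow estimate, of the kind quoted from \cite{KimZimmer1} in Section~\ref{sec: existence of boundary maps}. (PS3) and (PS5) on $M_1$ do not give it by themselves, because $f$ is only measurable and the $M_2$-side exceptional sets pulled back by $f$ are not controlled by the topology of $M_1$.

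\item[(ii)] Even granting (i), density points only give $|D(\gamma_n)-\Phi(x)|\le C$ along conical sequences at a generic $x$, where $D(\gamma):=\delta_2\|\rho(\gamma)\|_{\sigma_2}-\delta_1\|\gamma\|_{\sigma_1}$. Your contradiction for a bad sequence $\gamma_k$ is circular.
 \begin{itemize}
 \item For a.e.\ $z\in W_\gamma$ one has $\Phi(\gamma z)=\Phi(z)+D(\gamma)+O(1)$. Taking $A'\subset W_\gamma\cap\{|\Phi|\le K\}$, this gives $\Phi=D(\gamma)+O(1)$ on $\gamma A'$, which is a definite proportion of $\mathcal{O}_R(\gamma)$.
 \item Hence $\mu_2(f(\gamma A'))=\int_{\gamma A'}e^{-\Phi}\,d\mu_1\asymp e^{-D(\gamma)}\mu_1(\gamma A')$. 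This is exactly what (PS2) on both sides predicts, so there is no tension shadow by shadow.
 \item No ratio is ``forced to stay bounded'' unless you already know $\Phi$ is bounded on these small shadows, which is the conclusion in disguise. (PS1) cannot force it, since its constants depend on $\gamma$.
 \item (PS7) does not help either. It is a quasi-additivity for $\|\cdot\|_{\sigma_1}$ only, and $M_2$ is not assumed well-behaved, so there is no counterpart for $\|\rho(\cdot)\|_{\sigma_2}$.
 \item By the computation above, the shadow of an element with large $|D(\gamma)|$ is, in a definite proportion, precisely where $\Phi$ is large. Such elements cannot be reached by propagating from points where $\Phi$ is controlled.
 \end{itemize}
\end{enumerate}

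What is missing is a genuinely global step. One option is to show that $\Phi$ is essentially bounded, i.e.\ that $df_*\mu_1/d\mu_2$ is bounded above and below on $f(M_1)$. Given that and the uniform bound $\mu_1(W_\gamma)\ge c$, the relation $D(\gamma)=\Phi(\gamma z)-\Phi(z)+O(1)$ for a.e.\ $z\in W_\gamma$ immediately gives $\sup_\gamma|D(\gamma)|<\infty$. Local density arguments at generic points control $\Phi$ only near those points. They do not by themselves rule out an a.e.\ finite but essentially unbounded $\Phi$, and your proposal does not supply the additional idea needed here.
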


\subsection{Properties of Patterson--Sullivan systems}

We now recall some useful properties of Patterson--Sullivan systems proved in \cite{KimZimmer1}, which were key ingredients in the proof of Theorem \ref{thm:KZ Rigidity}. They will also be used in this paper.

We begin with the ergodic property of a well-behaved Patterson--Sullivan system.

 \begin{theorem}[{\cite[Corollary 5.2]{KimZimmer1}}] \label{thm:ergodicity KZ}
   
If $(M, \Ga, \sigma, \mu)$ is a well-behaved PS-system with respect to a hierarchy $ \mathscr{H}$ and $\mu(\La^{\rm con}(\mathscr{H}))=1$, then the $\Ga$-action on $(M, \mu)$ is ergodic.
 \end{theorem}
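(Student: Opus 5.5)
The plan is a Hopf-type density argument on shadows, using the Lebesgue-differentiation-type properties available in a well-behaved PS-system. Let $E \subset M$ be a $\Ga$-invariant Borel set with $\mu(E) > 0$; the goal is to show $\mu(E) = 1$.

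First I reduce to a single scale. Since $\mu(\La^{\rm con}(\mathscr{H}))=1$ and the conical limit set is a countable union of $\Ga$-translates of the sets $\bigcap_n \Lambda_R(\mathscr{H}(n))$ with $R \in \Nb$, there is a fixed $R$ such that $\mu$-a.e.\ point of $E$ lies, up to a $\Ga$-translate, in $\bigcap_n \Lambda_R(\mathscr{H}(n))$. Next I need a density statement: for $\mu$-a.e.\ $x \in E \cap \bigcap_n \Lambda_R(\mathscr{H}(n))$ and every escaping sequence $\{\ga_n\} \subset \mathscr{H}(R)$ with $x \in \bigcap_n \Oc_R(\ga_n)$,
$$
\frac{\mu(E \cap \Oc_R(\ga_n))}{\mu(\Oc_R(\ga_n))} \to 1 .
$$
I would prove this by a Vitali-type covering argument for shadows. \ref{item:intersecting shadows} says that intersecting shadows of comparable magnitude are nested into an enlarged shadow, and \ref{item:diam goes to zero ae} says shadows shrink to points. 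These two facts let one run the standard proof of the Lebesgue density theorem, approximating $E$ by open sets and using the fact that shadows of size $R'$ have measure comparable to those of size $R$. That comparability comes from a shadow lemma: by \ref{item:almost constant on shadows} and the quasi-conformality of $\mu$, one gets $\mu(\Oc_R(\ga)) \asymp e^{-\delta\norm{\ga}_\sigma}\mu(\ga^{-1}\Oc_R(\ga))$, with the second factor bounded below once $R$ is large, via \ref{item:empty Z intersection}.

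Now I pull back by $\ga_n^{-1}$. By $\Ga$-invariance of $E$ and the bounded distortion in \ref{item:almost constant on shadows},
$$
\frac{\mu(E^c \cap \ga_n^{-1}\Oc_R(\ga_n))}{\mu(\ga_n^{-1}\Oc_R(\ga_n))} \to 0 .
$$
I then pass to a subsequence with $R_n \to \infty$; this uses the nesting of \ref{item:shadow inclusion} and a diagonal choice over $R$. Along it, $M \smallsetminus \ga_n^{-1}\Oc_{R_n}(\ga_n)$ converges in the Hausdorff sense to a compact set $Z$. Since the bad mass $\mu(E^c \cap \ga_n^{-1}\Oc_{R_n}(\ga_n))$ tends to zero, we get $\mu(E^c \smallsetminus U) = 0$ for every open neighbourhood $U$ of $Z$, and hence $E^c \subset Z$ up to a null set.

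The last step uses \ref{item:empty Z intersection} and \ref{item:baire}. The set $E^c$ is $\Ga$-invariant and, up to a null set, contained in $Z$. Suppose $\mu(E^c) > 0$, and take a point $y$ in the support of $\mu|_{E^c}$ that is a density point in the same sense. Applying \ref{item:baire}, a Baire-category type property, to translates of $Z$ gives some $g$ with $gy \notin \bigcup_i h_i Z$ for finitely many $h_i$. With care this contradicts $g E^c = E^c \subset Z$ a.e.; the care is needed because \ref{item:empty Z intersection} speaks about points rather than measure, so one works with a point of positive density and a small neighbourhood of $gy$ that is disjoint from $Z$. Hence $\mu(E^c) = 0$.

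I expect the main obstacle to be the density theorem for shadows. It requires controlling overlaps uniformly, which is what \ref{item:intersecting shadows} provides, together with a shadow lemma whose lower bound is obtained from \ref{item:empty Z intersection}.
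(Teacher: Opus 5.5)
Your proposal is correct and follows essentially the argument the paper relies on; the same mechanism appears in the proof of Theorem~\ref{thm:weak continuity property of boundary maps}: a Vitali/shadow-lemma density theorem for shadows, pull-back by $\ga_n^{-1}$ using \ref{item:almost constant on shadows}, a diagonal choice $R_n\to+\infty$ giving $E^c\subset Z$ up to a null set, and then \ref{item:empty Z intersection}. Two small points: you only need $\mu(E\cap\bigcap_n\Lambda_R(\mathscr{H}(n)))>0$ for a single $R$, which follows from the invariance of $E$ and countability, not your unjustified claim that a.e.\ point of $E$ works at one fixed $R$; and the last step needs neither density points nor \ref{item:baire}, since $\mu(E^c\smallsetminus gZ)=0$ for every $g$ in the countable group $\Ga$, while \ref{item:empty Z intersection} gives $\bigcap_{g\in\Ga} gZ=\emptyset$.
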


The ergodicity as in Theorem \ref{thm:ergodic dichotomy} is based on the study of shadows and in particular a version of the Shadow Lemma. 

\begin{proposition}[Shadow Lemma, {\cite[Proposition 3.1]{KimZimmer1}}] \label{prop.shadowlemma} Let $(M, \Ga, \sigma, \mu)$ be a PS-system of dimension $\delta \ge 0$. For any $R > 0$ sufficiently large there exists $C=C(R) > 1$ such that 
$$
\frac{1}{C} e^{-\delta \norm{\gamma}_\sigma} \leq \mu( \Oc_R(\gamma)) \leq C e^{-\delta \norm{\gamma}_\sigma} 
$$
for all $\ga \in \Ga$.
\end{proposition}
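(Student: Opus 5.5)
The plan is to prove the two inequalities separately, using the PS-measure transformation rule to transport the measure of $\Oc_R(\gamma)$ to the measure of $\gamma^{-1}\Oc_R(\gamma)$, and then estimating the latter set from both sides.

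\textbf{Transport step.} Write
$$
\mu(\Oc_R(\gamma)) = \gamma_*\mu\bigl(\gamma^{-1}\ldots\bigr),
$$
or more precisely
$$
\mu(\Oc_R(\gamma)) = \int_{\gamma^{-1}\Oc_R(\gamma)} \frac{d\gamma^{-1}_*\mu}{d\mu}(y)\, d\mu(y).
$$
Apply the coarse PS-measure inequality \eqref{eqn.coarse PS meaasure intro} with $\gamma^{-1}$ in place of $\gamma$. This gives the density bounds
$$
C^{-1}e^{-\delta\sigma(\gamma,y)} \le \frac{d\gamma^{-1}_*\mu}{d\mu}(y) \le C e^{-\delta\sigma(\gamma,y)}.
$$
By \ref{item:almost constant on shadows}, for $\mu$-a.e.\ $y\in\gamma^{-1}\Oc_R(\gamma)$ we have $|\sigma(\gamma,y)-\norm{\gamma}_\sigma|\le C(R)$. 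Hence
$$
\mu(\Oc_R(\gamma)) \asymp e^{-\delta\norm{\gamma}_\sigma}\,\mu(\gamma^{-1}\Oc_R(\gamma)),
$$
with multiplicative constants depending only on $R$.

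\textbf{Upper bound.} This is immediate, since $\mu(\gamma^{-1}\Oc_R(\gamma))\le 1$.

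\textbf{Lower bound.} It remains to show that for $R$ large there is $\epsilon>0$ with $\mu(\gamma^{-1}\Oc_R(\gamma))\ge\epsilon$ for all $\gamma$. I expect this to be the main obstacle; it is the step where \ref{item:empty Z intersection} must be used.

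Argue by contradiction. If no such $R$ works, then there are $R_n\to\infty$ and $\gamma_n$ with $\mu(\gamma_n^{-1}\Oc_{R_n}(\gamma_n))\to 0$. Here some care is needed, because monotonicity of shadows in $R$ (\ref{item:shadow inclusion}) is only available in the well-behaved setting. So I would phrase the negation directly as: for each $n$ there exist $R\ge n$ and $\gamma_n$ with small measure, and pass to that sequence.

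Let $K_n := M\smallsetminus\gamma_n^{-1}\Oc_{R_n}(\gamma_n)$. These are compact, since shadows are open. By compactness of the hyperspace, pass to a subsequence with $K_n\to Z$ in the Hausdorff distance. A weak-$*$ limit argument then gives $\mu(Z)=1$: for every open neighborhood $U$ of $Z$ we have $K_n\subset U$ eventually, so $\mu(U)\ge\mu(K_n)\to1$, and taking the intersection over shrinking neighborhoods yields $\mu(Z)=1$.

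Now use quasi-invariance. Each $g\mu$ is equivalent to $\mu$, so $\mu(g^{-1}Z)=1$ for every $g\in\Gamma$. Since $\Gamma$ is countable (assumed here, or reduced to a countable dense subset), the set $\bigcap_{g} g^{-1}Z$ has full measure, and in particular is nonempty. Pick $x$ in it. Then $gx\in Z$ for all $g\in\Gamma$, and in particular $x\in Z$ (take $g=\id$). This contradicts \ref{item:empty Z intersection}.

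If $\Gamma$ is uncountable, I would instead use that \ref{item:empty Z intersection} gives, for each $x\in Z$, some $g$ with $gx\notin Z$. Since $Z$ is closed, $g$ then moves a neighborhood of $x$ off $Z$. Covering the compact set $Z$ by finitely many such neighborhoods produces finitely many $g_i$ with $Z\subset\bigcup_i g_i^{-1}(M\smallsetminus Z)$, so $\mu(Z)\le\sum_i\mu(g_i^{-1}(M\smallsetminus Z))=0$, again a contradiction. This finite-cover version avoids countability altogether, and I would use it as the main argument.

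Combining the lower bound on $\mu(\gamma^{-1}\Oc_R(\gamma))$ with the transport estimate gives the Shadow Lemma for all sufficiently large $R$.
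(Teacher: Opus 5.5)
Your proposal is correct and follows essentially the same route as the cited proof of this Shadow Lemma. You transport via the density of $(\gamma^{-1})_*\mu$ together with \ref{item:almost constant on shadows} to get $\mu(\Oc_R(\gamma)) \asymp e^{-\delta\norm{\gamma}_\sigma}\mu(\gamma^{-1}\Oc_R(\gamma))$, and then obtain a uniform positive lower bound on $\mu(\gamma^{-1}\Oc_R(\gamma))$ by contradiction: a Hausdorff limit $Z$ with $\mu(Z)=1$ is impossible by \ref{item:empty Z intersection}, compactness of $Z$, and quasi-invariance. Your finite-cover version is the right one to keep, since a general PS-system does not assume $\Gamma$ countable. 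Also fix the slip in your first display, which should read $(\gamma^{-1})_*\mu(\gamma^{-1}\Oc_R(\gamma))$ rather than $\gamma_*\mu(\cdots)$.
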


When the underlying PS-system is well-behaved, shadows also satisfy the following version of the Vitali covering lemma. 

\begin{lemma}[Vitali Covering, {\cite[Lemma 3.2]{KimZimmer1}}] \label{lem:V covering} Let $(M, \Ga, \sigma, \mu)$ be a well-behaved PS-system with respect to a hierarchy $\mathscr{H} = \{ \mathscr{H}(R) \subset \Ga : R \ge 0 \}$. Fix $R > 0$ and let $R' > 0$ be the constant satisfying Property \ref{item:intersecting shadows} for $R$. Then  for any $I \subset \mathscr{H}(R)$, there exists $J \subset I$ such that 
$$
\bigcup_{\gamma \in I} \Oc_R(\gamma) \subset \bigcup_{\gamma \in J} \Oc_{R'}(\gamma)
$$
and the shadows $\{ \Oc_R(\gamma) : \gamma \in J\}$ are pairwise disjoint. 
\end{lemma}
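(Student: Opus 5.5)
The plan is a greedy selection, as in the classical Vitali covering lemma, with Property \ref{item:properness} used to order the shadows by magnitude and Property \ref{item:intersecting shadows} replacing the metric ``enlarge by a factor of three'' step.

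\textbf{Step 1: enumerate $I$ by magnitude.} Since $\mathscr{H}$ is non-increasing, $I \subset \mathscr{H}(R) \subset \mathscr{H}(0)$. By Property \ref{item:properness}, $\Gamma$ is countable and for every $T>0$ the set $\{\gamma \in I : \norm{\gamma}_\sigma \le T\}$ is finite. Hence $I$ can be enumerated as a finite or infinite sequence $\gamma_1, \gamma_2, \dots$ with
$$
\norm{\gamma_1}_\sigma \le \norm{\gamma_2}_\sigma \le \cdots,
$$
breaking ties arbitrarily. To do this, list the finitely many elements with magnitude in $(-\infty, 1]$, then those in $(1,2]$, and so on. Each element then receives a finite index, so the order type is at most $\omega$.

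\textbf{Step 2: greedy selection.} Define $J$ inductively. Put $\gamma_n$ into $J$ if and only if $\Oc_R(\gamma_n)$ is disjoint from $\Oc_R(\gamma_m)$ for every $m<n$ with $\gamma_m \in J$. By construction, the shadows $\{\Oc_R(\gamma) : \gamma \in J\}$ are pairwise disjoint.

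\textbf{Step 3: covering.} Fix $\gamma_n \in I$. There are two cases.

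If $\gamma_n \notin J$, there is some $m<n$ with $\alpha := \gamma_m \in J$ and $\Oc_R(\alpha) \cap \Oc_R(\gamma_n) \ne \emptyset$. By the ordering, $\norm{\alpha}_\sigma \le \norm{\gamma_n}_\sigma$, and both elements lie in $\mathscr{H}(R)$. Property \ref{item:intersecting shadows}, applied with $\beta = \gamma_n$, gives $\Oc_R(\gamma_n) \subset \Oc_{R'}(\alpha)$.

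If $\gamma_n \in J$, apply Property \ref{item:intersecting shadows} with $\alpha = \beta = \gamma_n$. The hypothesis $\Oc_R(\gamma_n) \cap \Oc_R(\gamma_n) \ne \emptyset$ holds because shadows are non-empty by definition, so $\Oc_R(\gamma_n) \subset \Oc_{R'}(\gamma_n)$. Alternatively, one could use Property \ref{item:shadow inclusion}, but that requires $R \le R'$, and the self-application above avoids this issue.

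In both cases $\Oc_R(\gamma_n) \subset \bigcup_{\gamma\in J}\Oc_{R'}(\gamma)$, which proves the lemma.

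\textbf{Main obstacle.} The only delicate point is Step 1. The greedy argument requires that every element of $I$ be preceded by only finitely many elements, and that every earlier element have magnitude at most its own, so that Property \ref{item:intersecting shadows} applies in the correct direction. This is exactly what the finiteness in Property \ref{item:properness} provides. Without it, one would need transfinite induction, or a supremum of magnitudes might not be attained.
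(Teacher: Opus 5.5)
Your argument is correct and is essentially the standard proof of this lemma, which the present paper only quotes from \cite[Lemma 3.2]{KimZimmer1}: enumerate $I$ by non-decreasing $\sigma$-magnitude using Property \ref{item:properness}, select greedily, and absorb each discarded shadow into an earlier selected one via Property \ref{item:intersecting shadows}. Applying Property \ref{item:intersecting shadows} with $\alpha=\beta$ to get $\Oc_R(\gamma)\subset\Oc_{R'}(\gamma)$ for $\gamma\in J$ is a neat way to avoid needing $R\le R'$.
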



\section{Notations for semisimple Lie groups}\label{sec: notation for ss groups}


In this section we fix the notation involving semisimple Lie groups that we will use throughout the paper. Of particular importance for our arguments are the linear representations fixed in Section~\ref{sec:linear representations}. 

Recall from the introduction that $\Gsf$ is a semisimple Lie group with finite center and no compact factors, $\mfg = \mfp + \mfk$ is a fixed Cartan decomposition of the Lie algebra, $\mfa \subset \mfp$ is a fixed Cartan subspace, and $\mfa^+ \subset \mfa$ is a fixed positive Weyl chamber. We use $\Sigma \subset \mfa^*$ to denote the set of restricted roots and use $\Delta \subset \mfa^*$ to denote the system of simple restricted roots corresponding  to the choice of $\mfa^+$. Then 
$$
\mfg = \mfg_0 \oplus \bigoplus_{\alpha \in \Sigma} \mfg_\alpha
$$
where 
$$
\mfg_\alpha = \{ X \in \mfg : [H,X] = \alpha(H)X \text{ for all } H \in \mfa\}.
$$
Let $\Sigma^+$ (resp. $\Sigma^-$) denote the restricted roots which are non-negative (respectively non-positive) linear combinations of elements of $\Delta$. 

\subsection{Cartan and Jordan projections} Let $\Ksf < \Gsf$ denote the maximal compact subgroup with Lie algebra $\mfk$. Recall that every $g \in \Gsf$ has a \emph{Cartan decomposition}, i.e. $g = k e^{\kappa(g)} \ell$ for some $k, \ell \in \Ksf$ and a unique $\kappa(g) \in \mfa^+$ (the elements $k, \ell$ are not unique). The map  $\kappa : \Gsf \rightarrow \mfa^+$ is called the \emph{Cartan projection}. The \emph{Jordan projection} $\lambda : \Gsf \to \fa^+$ is defined as 
$$
\lambda(g) := \lim_{n \to  + \infty} \frac{ \kappa(g^n)}{n}.
$$

We fix a representative $w_0 \in \Ksf$ of the longest Weyl element which is of order $2$. Let $\opp := - {\rm Ad}_{w_0} : \fa \to \fa$ denote the \emph{opposition involution}. This map has the property that 
\begin{equation}\label{eqn:opp inv and cartan proj}
\kappa(g^{-1}) = \opp \kappa(g) \quad \text{for all} \quad g \in \Gsf. 
\end{equation} 
The adjoint $\opp^* : \fa^* \to \fa^*$ of the opposite involution preserves the set of simple roots and for a subset $\theta \subset \Delta$, we define 
$$
\opp^* \theta := \{ \opp^*\alpha : \alpha \in \theta \}.
$$

\subsection{Parabolic subgroups and flag manifolds} Given a non-empty $\theta \subset \Delta$, the associated parabolic subgroup $\Psf_\theta$ is the stabilizer under the adjoint action of the Lie algebra 
$$
\mathfrak{u}_\theta^+ : = \bigoplus_{\alpha \in \Sigma_\theta^+} \mfg_\alpha
$$
where $\Sigma_\theta^+ : = \Sigma^+ \smallsetminus {\rm span}(\Delta \smallsetminus \theta)$. We also set
$ \Asf := \exp \fa$ and $\Asf^+ := \exp \fa^+$, and denote by $\Nsf < \Psf_{\Delta}$ the unipotent radical of $\Psf_{\Delta}$.

The \emph{Furstenberg boundary} and general \emph{$\theta$-boundary} are the quotient spaces 
$$
\Fc_{\Delta} := \Gsf / \Psf_{\Delta} \quad \text{and} \quad \Fc_{\theta} := \Gsf / \Psf_{\theta}.
$$
We also call $\Fc_{\theta}$ a \emph{partial flag manifold}.
Two elements $x \in \Fc_\theta$ and $y \in \Fc_{\opp^* \theta}$ are \emph{transverse} if there exists $g \in \Gsf$ such that 
$$
x = g \Psf_\theta \quad \text{and} \quad y = g w_0 \Psf_{\opp^* \theta},
$$
equivalently $(x,y)$ is contained in the unique open $\Gsf$-orbit in $\Fc_\theta \times \Fc_{\opp^* \theta}$.

\subsection{The special linear group}\label{sec:sl notation} In this section, we fix some notation when $\Gsf= \SL(V)$ and $V$ is a finite dimensional real vector space endowed with an inner product. 

For a linear tranformation $T : V \rightarrow V$, we let 
$$
\sigma_1(T)  \geq \cdots \geq \sigma_d(T)
$$
denote the singular values of $T$ with respect to the inner product and let $\norm{T} := \sigma_1(T)$ denote the operator norm. We assume that the fixed maximal compact $\Ksf$ of $\SL(V)$ coincides with the orthogonal group of the fixed inner product. Then we can fix a Cartan subspace $\mfa$, a positive Weyl chamber $\mfa^+$, and a labelling $\Delta = \{ \alpha_1,\dots, \alpha_{\dim V-1}\}$ of the simple roots such that 
$$
\alpha_j(\kappa(g)) = \log \frac{\sigma_j(g)}{\sigma_{j+1}(g)}
$$
for each $1 \leq j \leq \dim V-1$. Then the associated fundamental weights satisfy 
$$
\omega_{\alpha_j}( \kappa(g)) = \log \big( \sigma_1(g) \cdots \sigma_j(g)\big).
$$
For notation simplicity we let $\Psf_k : = \Psf_{\{\alpha_k\}}$, $\Fc_k := \Fc_{\{\alpha_k\}}$, and $\Lambda_k( \Gamma) : = \Lambda_{\alpha_k}(\Gamma)$. 

When $V = \Rb^d$, we always use the standard inner product, the standard Cartan subspace 
$$
\mfa = \{ {\rm diag}(t_1,\dots, t_d) : t_1 + \cdots + t_d = 0\},
$$
and the standard positive Weyl chamber
$$
\mfa^+ = \{ {\rm diag}(t_1,\dots, t_d) \in \mfa : t_1 \geq \cdots \geq t_d \}.
$$
Then 
$$
\alpha_j ({\rm diag}(t_1,\dots, t_d))=t_j-t_{j+1} \quad \text{and} \quad \omega_{\alpha_j} ({\rm diag}(t_1,\dots, t_d))=t_1+\cdots + t_j. 
$$

\subsection{Projection to the flag manifold}\label{sec:proj to flag manifolds}

For $g \in \Gsf$ with $\min_{\alpha \in \theta} \alpha(\kappa(g)) >~0$, we define 
$$
U_{\theta}(g) := k \Psf_\theta \in \Fc_{\theta}
$$
where $g$ has Cartan decomposition $g = k a \ell \in \Ksf \Asf^+ \Ksf$ (the condition on the roots implies that $U_{\theta}(g)$ is well-defined). By Equation~\eqref{eqn:opp inv and cartan proj}, 
$$
\min_{\alpha \in \theta} \alpha(\kappa(g))  = \min_{\alpha \in \opp^*\theta} \alpha(\kappa(g^{-1}))
$$
and so $U_{\opp^*\theta}(g^{-1})$ is also well-defined when  $\min_{\alpha \in \theta} \alpha(\kappa(g)) >~0$.

These projection maps have the following dynamical behavior (for a proof see for instance \cite[Section 4]{KLP_Anosov} or \cite[Proposition 2.3]{CZZ2024}). 

\begin{proposition}\label{prop:NS dynamics} If $\{g_n\} \subset \Gsf$, $x^+ \in \Fc_\theta$, and $x^- \in \Fc_{\opp^* \theta}$, then the following are equivalent: 
\begin{enumerate}
\item $g_n x \rightarrow x^+$ for all $x \in \Fc_\theta$ transverse to $x^-$ and the convergence is uniform on compact subsets.
\item $\min_{\alpha \in \theta} \alpha(\kappa(g_n))\rightarrow +\infty$, $U_\theta(g_n) \rightarrow x^+$, and $U_{\opp^* \theta}(g_n^{-1}) \rightarrow x^-$. 
\end{enumerate} 

\end{proposition}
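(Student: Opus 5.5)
We reduce to $\Gsf=\SL(d,\Rb)$ and standard linear algebra. By the Cartan decomposition and $\Ksf$-equivariance of everything, write $g_n = k_n e^{\kappa(g_n)} \ell_n$. For (2)$\Rightarrow$(1), pass to subsequences so that $k_n \to k$ and $\ell_n \to \ell$. Then $U_\theta(g_n)=k_n\Psf_\theta \to k\Psf_\theta = x^+$ and $U_{\opp^*\theta}(g_n^{-1}) = \ell_n^{-1} w_0 \Psf_{\opp^*\theta} \to \ell^{-1}w_0\Psf_{\opp^*\theta}=x^-$, using $g_n^{-1} = \ell_n^{-1} w_0 \, e^{\kappa(g_n^{-1})}\, w_0^{-1} k_n^{-1}$. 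A point $x$ is transverse to $x^-$ exactly when $\ell x$ is transverse to $w_0\Psf_{\opp^*\theta}$, i.e.\ $\ell x$ lies in the big cell $\Nsf^-_\theta \Psf_\theta/\Psf_\theta$ (the opposite unipotent orbit of the base point). On this cell, $e^{H}$ with $\min_{\alpha\in\theta}\alpha(H)\to+\infty$ contracts $\Nsf^-_\theta$ to the identity, since $\mathrm{Ad}(e^{H})$ scales $\mfg_{-\beta}$, $\beta\in\Sigma^+_\theta$, by $e^{-\beta(H)}$ and each such $\beta$ has a positive coefficient on some $\alpha\in\theta$. The contraction is uniform on compacts of the cell. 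Because $\ell_n\to\ell$, compact subsets of $\{x \text{ transverse to } x^-\}$ are eventually mapped by $\ell_n$ into a fixed compact subset of the cell. Hence $g_nx\to k\Psf_\theta=x^+$ uniformly on compacts. Since every subsequence has a further subsequence with this limit, the whole sequence converges.

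For (1)$\Rightarrow$(2), first show $\min_{\alpha\in\theta}\alpha(\kappa(g_n))\to\infty$. Suppose not. Along a subsequence, $k_n\to k$, $\ell_n\to \ell$, and $\kappa(g_n)\to H$ modulo the directions where $\alpha$ diverges, with $\alpha_0(\kappa(g_n))$ bounded for some $\alpha_0\in\theta$. The limiting action on the open set of points transverse to $x^-$ is then not constant: the factor of $\Psf$-type corresponding to $\alpha_0$ (the rank-one Levi piece) acts with bounded distortion. So the image of a small open set in the fiber direction of $\Fc_\theta\to\Fc_{\theta\smallsetminus\{\alpha_0\}}$, or in $\Fc_{\{\alpha_0\}}$, stays non-degenerate, contradicting uniform convergence to a point. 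Concretely, we use the projection $\Fc_\theta\to\Fc_{\{\alpha_0\}}$ and the fact that bounded $\alpha_0$ implies that $g_n$ acts on $\Fc_{\{\alpha_0\}}$ with Lipschitz-bi-bounded derivative on some open set. This is the main obstacle, and I would handle it through the representation $\wedge^{k}$-type (Tits) representation for $\alpha_0$, where the ratio $\sigma_1/\sigma_2=e^{\alpha_0(\kappa)}$ is bounded.

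Once divergence holds, apply (2)$\Rightarrow$(1) along any subsequence where $U_\theta(g_n)\to y^+$ and $U_{\opp^*\theta}(g_n^{-1})\to y^-$. This gives $g_nx\to y^+$ for $x$ transverse to $y^-$. The open dense sets of points transverse to $x^-$ and to $y^-$ intersect, so $y^+=x^+$. If $y^-\ne x^-$, choose $x$ transverse to $x^-$ but not to $y^-$, with $x$ a limit of points transverse to both. Applying the same argument to $g_n^{-1}$, or comparing the images of points near $x$ under the uniform convergence hypothesis, shows that points not transverse to $y^-$ are not sent near $y^+$ in a neighborhood. This contradicts uniform convergence on a compact neighborhood of $x$. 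Hence $y^-=x^-$, and compactness gives convergence of the full sequences.
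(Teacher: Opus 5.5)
Your argument for (2)$\Rightarrow$(1) is correct and is the standard one. The paper gives no proof of this proposition and just refers to \cite[Section 4]{KLP_Anosov} and \cite[Proposition 2.3]{CZZ2024}. Your outline of (1)$\Rightarrow$(2) also has the right shape, but both of its steps contain genuine gaps. Below, $\Omega_y\subset\Fc_\theta$ denotes the set of points transverse to $y\in\Fc_{\opp^*\theta}$. The big cell $C:=\Omega_{w_0\Psf_{\opp^*\theta}}$ contains the point $\Psf_\theta$ and is $\Asf$-invariant, and $Z:=\Fc_\theta\smallsetminus C$ is its complement.

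\emph{Divergence of $\min_{\alpha\in\theta}\alpha(\kappa(g_n))$.} The mechanism you propose is false: bounded $\alpha_0(\kappa(g_n))$ does not give a bi-Lipschitz action on any open subset of $\Fc_{\{\alpha_0\}}$. For example, in $\SL(3,\Rb)$ take $g_n=\mathrm{diag}(e^{t_n},e^{t_n},e^{-2t_n})$ with $t_n\to+\infty$. Then $\alpha_1(\kappa(g_n))=0$, yet $g_n$ collapses every compact subset of $\Pb(\Rb^3)\smallsetminus\{[e_3]\}$ onto the line $\Pb(\mathrm{span}(e_1,e_2))$. What must be shown is only that the limiting image is not a point, and that needs an argument you have not given.

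Pass to a subsequence with $\Phi_{\alpha_0}(g_n)/\norm{\Phi_{\alpha_0}(g_n)}\to T$; then $\rank T\ge 2$ by Property~\ref{item:SVs of Tits repn}. By (1) and equivariance, $T$ maps every $v$ with $[v]\in\zeta_{\alpha_0}(\Omega_{x^-})\smallsetminus\Pb(\ker T)$ into the line $\zeta_{\alpha_0}(x^+)$. The missing fact is that $\zeta_{\alpha_0}$ of any non-empty open subset of $\Fc_\theta$ spans $V_{\alpha_0}$. This follows from real-analyticity of $g\mapsto\Phi_{\alpha_0}(g)v$ for $v\in V_{\alpha_0}^+$, together with irreducibility. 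It gives both $\zeta_{\alpha_0}(\Omega_{x^-})\not\subset\Pb(\ker T)$ and $\mathrm{im}\,T\subset\zeta_{\alpha_0}(x^+)$, contradicting $\rank T\ge2$.

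\emph{Identification of $x^-$.} Two ingredients are missing.

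First, you assume without proof that $y^-\neq x^-$ yields a point of $\Omega_{x^-}$ not transverse to $y^-$. This is true but not obvious. One proof: move $x^-$ to $w_0\Psf_{\opp^*\theta}$, so that $\Omega_{x^-}=C$, and let $S$ be the set of $y$ with $C\subset\Omega_y$.
\begin{itemize}
\item $S$ is $\Asf$-invariant.
\item $S$ is closed: if $y_n=h_ny\in S$ with $h_n\to\id$ and some $z\in C$ were not transverse to $y$, then for large $n$ the point $h_nz\in C$ would not be transverse to $y_n$.
\item If $y\in S\smallsetminus\{x^-\}$, then $y$ is transverse to $\Psf_\theta\in C$, so $y=ux^-$ with $u\neq 1$ in the unipotent radical of $\Psf_\theta$.
\item For $H\in\interior\mfa^+$ the points $e^{tH}y$ leave every compact subset of the cell of points transverse to $\Psf_\theta$. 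They therefore accumulate at some $y'\in S$ that is not transverse to $\Psf_\theta\in C$, a contradiction.
\end{itemize}

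Second, the failure of uniform convergence must come from a neighbourhood, not from the point itself. A single point not transverse to $y^-$ can be sent to $y^+$. For instance, in $\PSL(2,\Rb)$ let $g_n=\mathrm{diag}(e^n,e^{-n})r_n$ with $r_n$ the rotation by angle $e^{-n}$; then $x^-=[e_2]$ and $g_n[e_2]\to[e_1]=x^+$. Also, ``applying the argument to $g_n^{-1}$'' does not address this.

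The correct mechanism is as follows. Write $g_n=k_ne^{\kappa(g_n)}\ell_n$ with $\ell_n\to\ell$ and $k_n\Psf_\theta\to y^+=x^+$. Take $x\in\Omega_{x^-}\smallsetminus\Omega_{y^-}$ and a compact neighbourhood $K\subset\Omega_{x^-}$ of $x$.
\begin{itemize}
\item For large $n$ the points $x_n:=\ell_n^{-1}\ell x$ lie in $K$.
\item $e^{\kappa(g_n)}\ell_nx_n=e^{\kappa(g_n)}\ell x\in Z$, because $\ell x\in Z$ and $Z$ is $\Asf$-invariant.
\item $Z$ is closed and misses $\Psf_\theta$.
\item For a $\Ksf$-invariant metric, $\dist(g_nx_n,k_n\Psf_\theta)\ge\dist(Z,\Psf_\theta)>0$, which contradicts uniform convergence on $K$.
\end{itemize}

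With these two points supplied, the rest of your argument goes through: the identification $y^+=x^+$ and the passage from subsequences to the full sequence.
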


\subsection{The partial Iwasawa cocycle} 

The \emph{Iwasawa cocycle} $B_{\Delta}^{IW} : \Gsf \times \Fc_{\Delta} \to \fa$ is defined as follows: for $g \in \Gsf$ and $x \in \Fc_{\Delta}$, $B_{\Delta}^{IW}(g, x) \in \fa$ is the unique element such that
$$
gk \in \Ksf ( \exp B_{\Delta}^{IW}(g, x)) \Nsf
$$
for $k \in \Ksf$ such that $k \Psf_{\Delta} = x$ in $\Fc_{\Delta}$. 

For general $\theta \subset \Delta$, let 
$$
\mfa_\theta : = \{ H \in \mfa : \alpha(H) = 0 \text{ for all } \alpha \notin \theta\}.
$$
For $\alpha \in \Delta$, let $\omega_\alpha$ denote the (restricted) fundamental weight associated to $\alpha$.
Then $\{ \omega_\alpha|_{\mfa_\theta}\}_{\alpha \in \theta}$ is a basis for $\mfa_\theta^*$ and so there exists a unique projection 
\begin{equation*}
\pi_{\theta} : \fa \to \fa_{\theta}
\end{equation*}
satisfying 
\begin{equation}\label{eqn:defn of pi_theta to mfa_theta} 
\omega_\alpha \pi_\theta(H) = \omega_\alpha(H)
\end{equation} 
for all $H \in \mfa$ and $\alpha \in \theta$.

The \emph{partial Iwasawa cocycle} $B_{\theta}^{IW} : \Gsf \times \Fc_{\theta} \to \fa_{\theta}$ is defined as 
\begin{equation}\label{eqn:defn of BIW} 
B_{\theta}^{IW}(g, x) := \pi_{\theta} B_{\Delta}^{IW}(g, \tilde x)
\end{equation} 
for any $\tilde x \in \Fc_{\Delta}$ that projects to $x \in \Fc_{\theta}$ under the canonical projection $\Fc_{\Delta} \to \Fc_{\theta}$. The above definition is independent of the choice of $\tilde x$ and defines a cocycle \cite[Lemma 6.1]{Quint_PS}. 

Recall from the introduction that the partial Iwasawa cocycle can be used to define a notion of Patterson--Sullivan measures on the partial flag manifold $\Fc_\theta$ (see Definition~\ref{defn:Quints definition}).

\subsection{Linear Representations}\label{sec:linear representations}

 Throughout the paper,  for each $\alpha \in \Delta$ we fix an irreducible representation $\Phi_{\alpha} : \Gsf \to \SL(V_{\alpha})$ and a $\Phi_\alpha(\Ksf)$-invariant inner product on $V_\alpha$ with the following properties (using the notation in Section~\ref{sec:sl notation}): 
\begin{enumerate}[label=(R\arabic*)]
\item\label{item:SVs of Tits repn} 
There exists $N_{\alpha} \in \Nb$ 
such that if $g \in \Gsf$, then 
$$
\log \norm{\Phi_\alpha(g)}= N_{\alpha} \omega_\alpha(\kappa(g)) \quad \text{and}\quad \log \frac{\sigma_1(\Phi_{\alpha}(g))}{\sigma_2(\Phi_{\alpha}(g))} = \alpha(\kappa(g)).
$$
\item\label{item:splitting for Tits repn}  There exists a $\Phi_\alpha(\Asf)$-invariant orthogonal splitting $V_\alpha = V_\alpha^+ \oplus V_\alpha^-$ such that $\dim V_\alpha^+ = 1$. Moreover, if  $H \in \mfa$ and  $v \in V_\alpha^+$, then 
$$
\Phi_\alpha(e^H)v = e^{N_{\alpha} \omega_\alpha(H)} v.
$$

\item\label{item:boundary maps of Tits repn} There exist $\Phi_\alpha$-equivariant boundary maps $\zeta_\alpha : \Fc_\alpha \rightarrow \Pb(V_\alpha)$ and $\zeta_\alpha^* : \Fc_{\opp^* \alpha} \rightarrow \Gr_{\dim V_\alpha-1}(V_\alpha)$ such that:
\begin{enumerate}
\item $\zeta_\alpha(\Psf_{\alpha}) = V_\alpha^+$ and $\zeta_\alpha^*(w_0 \Psf_{\opp^* \alpha } ) = V_\alpha^-$.
\item $x \in \Fc_\alpha$ and $y \in \Fc_{\opp^* \alpha}$ are transverse if and only if $\zeta_\alpha(x)$ and $\zeta_\alpha^*(y)$ are transverse. 
\end{enumerate} 

\end{enumerate} 

\begin{remark} 

   Such representations exist due to Tits \cite[Theorem 7.2]{Tits_representations}. Indeed, Tits proved the first claim in Property \ref{item:SVs of Tits repn}. For a proof of the second assertion in Property~\ref{item:SVs of Tits repn} and Property~\ref{item:splitting for Tits repn}, see for instance \cite[Lemma 2.13]{Smilga_proper} and \cite[Sections 6.8, 6.9]{BQ_book}. For a proof of Property~\ref{item:boundary maps of Tits repn}, see for instance ~\cite[Section 3]{GGKW2017}.

\end{remark}

\begin{remark} 
   We abuse notation and when $\alpha \in \theta$, also often use $\zeta_\alpha$ to also denote the map $\Fc_\theta \rightarrow \Pb(V_\alpha)$ obtained by precomposing $\zeta_\alpha: \Fc_\alpha \rightarrow \Pb(V_\alpha)$ with the natural projection $\Fc_\theta \rightarrow \Fc_\alpha$. Likewise, we also use $\zeta_{\opp ^*\alpha}^*$ to denote the analogous map defined on $\Fc_{\opp^* \theta}$. 
\end{remark}

The following lemma relates the projections to the flag manifolds introduced in Section~\ref{sec:proj to flag manifolds} to these representations. 

\begin{lemma}[{\cite[Lemma 3.5]{KimZimmer_vector}}] \label{lem:rank one limits in Valpha} 
   Fix $\alpha \in \Delta$ and assume $\{g_n\} \subset \Gsf$ is such that  $\alpha(\kappa(g_n)) \rightarrow +\infty$, $U_\alpha(g_n) \rightarrow x$,  and $U_{\opp^* \alpha }(g_n^{-1}) \rightarrow y$. Then any limit point of 
$$
\frac{1}{\norm{\Phi_\alpha(g_n)}} \Phi_\alpha(g_n) \quad \text{in } \End(V_{\alpha})
$$
has image $\zeta_\alpha(x)$ and kernel $\zeta_\alpha^*(y)$. 
\end{lemma}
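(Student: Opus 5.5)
The plan is to compute the limit directly from a Cartan decomposition, using the structure of $\Phi_\alpha$ fixed in Properties \ref{item:SVs of Tits repn}--\ref{item:boundary maps of Tits repn}.

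\emph{Reduction to a diagonal sequence.} Write $g_n = k_n e^{H_n} \ell_n$ with $k_n,\ell_n \in \Ksf$ and $H_n = \kappa(g_n) \in \mfa^+$. Fix a limit point $T$ of $\Phi_\alpha(g_n)/\norm{\Phi_\alpha(g_n)}$ and pass to a subsequence converging to $T$. By compactness of $\Ksf$, pass to a further subsequence with $k_n \to k$ and $\ell_n \to \ell$. Since $\Phi_\alpha(\Ksf)$ acts by isometries, $\norm{\Phi_\alpha(g_n)} = \norm{\Phi_\alpha(e^{H_n})}$, and so
$$
\frac{\Phi_\alpha(g_n)}{\norm{\Phi_\alpha(g_n)}} = \Phi_\alpha(k_n)\, \frac{\Phi_\alpha(e^{H_n})}{\norm{\Phi_\alpha(e^{H_n})}}\, \Phi_\alpha(\ell_n).
$$

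\emph{The diagonal factor.} Let $P$ be the orthogonal projection onto $V_\alpha^+$ along $V_\alpha^-$. I claim
$$
\frac{\Phi_\alpha(e^{H_n})}{\norm{\Phi_\alpha(e^{H_n})}} \to P.
$$
The splitting $V_\alpha = V_\alpha^+ \oplus V_\alpha^-$ is orthogonal and $\Phi_\alpha(\Asf)$-invariant. Moreover, $\Phi_\alpha(e^{H})$ is self-adjoint for the $\Phi_\alpha(\Ksf)$-invariant inner product, since $\mfp$ acts by symmetric operators. By \ref{item:splitting for Tits repn}, $\Phi_\alpha(e^{H_n})$ acts on $V_\alpha^+$ by $e^{N_\alpha\omega_\alpha(H_n)}$, which equals $\norm{\Phi_\alpha(e^{H_n})} = \sigma_1$ by \ref{item:SVs of Tits repn}. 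Hence the restriction to $V_\alpha^-$ has operator norm $\sigma_2(\Phi_\alpha(e^{H_n}))$. By the second part of \ref{item:SVs of Tits repn}, this normalized restriction then has norm $e^{-\alpha(H_n)} \to 0$. It follows that $T = \Phi_\alpha(k) P \Phi_\alpha(\ell)$, so
$$
\operatorname{Im} T = \Phi_\alpha(k) V_\alpha^+ \quad \text{and} \quad \ker T = \Phi_\alpha(\ell)^{-1} V_\alpha^-.
$$

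\emph{Identifying image and kernel.} By definition $U_\alpha(g_n) = k_n \Psf_\alpha$, which converges to $k\Psf_\alpha$, so $x = k\Psf_\alpha$. Equivariance and \ref{item:boundary maps of Tits repn}(a) then give
$$
\zeta_\alpha(x) = \Phi_\alpha(k)\zeta_\alpha(\Psf_\alpha) = \Phi_\alpha(k)V_\alpha^+.
$$
For the kernel, since $w_0$ has order $2$ we can write
$$
g_n^{-1} = (\ell_n^{-1} w_0)\, e^{\opp H_n}\, (w_0 k_n^{-1}),
$$
which is a Cartan decomposition because $\opp H_n = -\mathrm{Ad}_{w_0}(-H_n) \in \mfa^+$. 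Thus $U_{\opp^*\alpha}(g_n^{-1}) = \ell_n^{-1}w_0\Psf_{\opp^*\alpha}$, which converges to $\ell^{-1}w_0\Psf_{\opp^*\alpha}$, so this limit is $y$. Again by equivariance and \ref{item:boundary maps of Tits repn}(a),
$$
\zeta^*_\alpha(y) = \Phi_\alpha(\ell^{-1})\zeta_\alpha^*(w_0\Psf_{\opp^*\alpha}) = \Phi_\alpha(\ell)^{-1}V_\alpha^-.
$$
Combining this with the previous paragraph gives $\operatorname{Im} T = \zeta_\alpha(x)$ and $\ker T = \zeta_\alpha^*(y)$.

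\emph{Main obstacle.} The step needing the most care is the diagonal limit. There I must check that the norm of $\Phi_\alpha(e^{H})$ on $V_\alpha^-$ is exactly $\sigma_2$. This uses self-adjointness and orthogonality of the splitting, together with the fact that the top singular value is attained on $V_\alpha^+$. A second point to verify is that $U_\alpha$ does not depend on the choice of $k_n$ (this holds since $\alpha(H_n) > 0$ eventually), so the limits $x$ and $y$ computed from the chosen $k_n, \ell_n$ are the given ones.
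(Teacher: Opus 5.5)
Your proposal is correct and is the standard argument; the paper gives no proof of its own and simply cites \cite[Lemma 3.5]{KimZimmer_vector}. You reduce via the Cartan decomposition to the diagonal factor, show it converges to the projection onto $V_\alpha^+$ along $V_\alpha^-$, and read off image and kernel from the equivariance of $\zeta_\alpha,\zeta_\alpha^*$ and Property~\ref{item:boundary maps of Tits repn}(a).

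Two small fixes. First, self-adjointness of $\Phi_\alpha(e^{H})$ is not among the listed properties of the inner product, and you do not need it. Since $\Phi_\alpha(e^{H})$ preserves the orthogonal splitting $V_\alpha^+\oplus V_\alpha^-$, its singular values are $e^{N_\alpha\omega_\alpha(H)}$ together with those of its restriction to $V_\alpha^-$. So Property~\ref{item:SVs of Tits repn} already gives that this restriction has norm $\sigma_2$. Second, $\opp H_n = -\mathrm{Ad}_{w_0}(H_n)$, not $-\mathrm{Ad}_{w_0}(-H_n)$; this is a sign slip only, and your decomposition $g_n^{-1}=(\ell_n^{-1}w_0)\,e^{\opp H_n}\,(w_0k_n^{-1})$ is correct.
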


\subsection{Irreducible actions} 

A subgroup $\Hsf < \SL(V)$ is called \emph{irreducible} if there is no non-trivial and proper subspace of $V$ invariant under $\Hsf$, and is called \emph{strongly irreducible} if any finite index subgroup of $\Hsf$ is irreducible. We transfer these notions to $\Gsf$ using the $\Phi_{\alpha}$'s.

\begin{definition}
    A subgroup $\Ga < \Gsf$ is \emph{$(\Phi_{\alpha})_{\alpha \in \theta}$-irreducible} if $\Phi_{\alpha}(\Ga) < \SL(V_{\alpha})$ is irreducible for all $\alpha \in \theta$, and \emph{strongly  $(\Phi_{\alpha})_{\alpha \in \theta}$-irreducible}  if any finite index subgroup of $\Ga$ is  $(\Phi_{\alpha})_{\alpha \in \theta}$-irreducible. 
\end{definition} 

\begin{remark}  \label{remark:Zdense irreducible}
   Notice that a Zariski dense subgroup is strongly  $(\Phi_{\alpha})_{\alpha \in \Delta}$-irreducible. \end{remark} 

We will use the following observation several times. 
 
\begin{lemma}[{\cite[Lemma 3.8]{KimZimmer_vector}}] \label{lem:rho irr implies flag irr}
Suppose $\Ga < \Gsf$ is strongly  $(\Phi_{\alpha})_{\alpha \in \theta}$-irreducible. If 
\begin{itemize}
\item $\alpha_1,\dots, \alpha_m$ are (possibly non-distinct) elements of $\theta$, 
\item $v_i \in V_{\alpha_i} \smallsetminus\{0\}$ for $i=1,\dots,m$, and 
\item $W_i \subset V_{\alpha_i}$ is a proper linear subspace for $i=1,\dots, m$, 
\end{itemize} 
then there exists $\gamma \in \Gamma$ with 
$$
\Phi_{\alpha_i}(\gamma) v_i \notin W_i
$$
for all $i=1,\dots, m$. 
\end{lemma}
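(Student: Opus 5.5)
The plan is to reduce to a statement about a single finite-index subgroup and then argue by avoidance of a finite union of proper algebraic subsets. For each $i$, consider
$$
X_i := \{ g \in \Gsf : \Phi_{\alpha_i}(g) v_i \in W_i \}.
$$
This is a real algebraic (Zariski closed) subset of $\Gsf$, viewed via the product representation $\bigoplus_{\alpha\in\theta}\Phi_\alpha$. We want $\gamma \in \Gamma \smallsetminus \bigcup_i X_i$. Since a finite union of Zariski closed sets is Zariski closed, it suffices to show that $\Gamma \not\subset X_1 \cup \dots \cup X_m$.

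Suppose for contradiction that $\Gamma \subset \bigcup_i X_i$. Let $\overline{\Gamma}^Z$ denote the Zariski closure of $\Gamma$ (in the real points of the algebraic group $\prod_{\alpha\in\theta}\Phi_\alpha(\Gsf)$, or working in $\prod_\alpha \SL(V_\alpha)$). It has finitely many irreducible components. The identity component $H^0$ is the Zariski closure of the finite-index subgroup $\Gamma_0 := \Gamma \cap H^0$. Since $H^0$ is irreducible and is contained in the finite union $\bigcup_i X_i$, we get $H^0 \subset X_{i_0}$ for some $i_0$, and hence $\Gamma_0 \subset X_{i_0}$. That is, $\Phi_{\alpha_{i_0}}(\gamma)v_{i_0} \in W_{i_0}$ for all $\gamma \in \Gamma_0$.

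The span of $\Phi_{\alpha_{i_0}}(\Gamma_0) v_{i_0}$ is then a $\Phi_{\alpha_{i_0}}(\Gamma_0)$-invariant subspace. It is nonzero because it contains $v_{i_0}$ (take $\gamma = \mathrm{id}$), and it lies inside the proper subspace $W_{i_0}$. This contradicts irreducibility of $\Phi_{\alpha_{i_0}}(\Gamma_0)$, which holds by strong $(\Phi_\alpha)_{\alpha\in\theta}$-irreducibility since $\Gamma_0$ has finite index in $\Gamma$.

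The only delicate point is the passage from "contained in a finite union" to "some finite-index subgroup lies in one piece." I handle it through irreducibility of the identity component of the Zariski closure. Over $\Rb$ one should work with the complex Zariski closure, or equivalently note that $X_i$ is cut out by real polynomial equations. The irreducible-component argument is purely algebraic, so this causes no issue. An alternative route, should one wish to avoid algebraic groups, is a Neumann-type lemma. Each set $\{\gamma : \Phi_{\alpha_i}(\gamma)v_i \in W_i\}$ is not literally a coset, though, so the Zariski argument is the one I would use.
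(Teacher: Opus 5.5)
Your proof is correct. The paper does not reprove this lemma but quotes it from \cite{KimZimmer_vector}, and your argument is the standard Zariski-closure proof of it: the identity component of the Zariski closure of the image of $\Gamma$ is irreducible, so it forces a finite-index subgroup into a single bad set $X_{i_0}$, and then the span of the orbit of $v_{i_0}$ is a nonzero invariant subspace inside $W_{i_0}$, contradicting strong irreducibility.

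Two small cosmetic fixes. First, define the $X_i$ and $H^0$ inside $\prod_{\alpha\in\theta}\SL(V_\alpha)$, since $\Gsf$ itself need not be algebraic. Second, set $\Gamma_0 := \{\gamma\in\Gamma : (\Phi_\alpha(\gamma))_{\alpha\in\theta}\in H^0\}$; you only use that it has finite index in $\Gamma$, so the remark that $H^0$ is its Zariski closure is true but unnecessary.
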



\subsection{(Relatively) Anosov and transverse groups} \label{section:Anosov transverse}


In the rest of this section we recall the definitions of (relatively) Anosov and transverse groups, and some results about Patterson--Sullivan measures for such groups.

Given a discrete subgroup $\Gamma < \Gsf$, a point $x \in \Fc_{\theta}$ is a \emph{limit point} of $\Ga$ if there exists an escaping sequence $\{ \ga_n \} \subset \Ga$ with $\min_{\alpha \in \theta} \alpha(\kappa(\gamma_n)) \to + \infty$ and  $U_{\theta}(\ga_n) \rightarrow x$. The \emph{limit set of $\Gamma$}, denoted by
$$
\La_{\theta}(\Ga) \subset \Fc_{\theta},
$$
is the set of all limit points of $\Ga$. Proposition~\ref{prop:NS dynamics} implies that the points in $\Lambda_\theta(\Gamma)$ are exactly the points $x^+ \in \Fc_\theta$ where there exists a sequence $\{\gamma_n\} \subset \Gamma$ and a non-empty open set $\Uc \subset \Fc_\theta$ such that $\gamma_n x \rightarrow x^+$ for all $x \in \Uc$, uniformly on compact subsets.

A discrete subgroup  $\Ga < \Gsf$ is \emph{$\Psf_{\theta}$-transverse} if $\min_{\alpha \in \theta} \alpha(\kappa(\gamma_n)) \to + \infty$ for any sequence $\{ \ga_n \} \subset \Ga$ of distinct elements and any two distinct points in $\La_{\theta \cup \opp^* \theta}(\Ga)$ are transverse.

Sometimes the definition of transverse group includes the assumption that $\theta$ is symmetric (i.e., $\theta = \opp^* \theta)$. However, as the next observation  demonstrates, this results in no loss of generality. 

\begin{observation}\label{obs:symmetry theta} $\Gamma < \Gsf$ is $\Psf_{\theta}$-transverse if and only if $\Gamma$ is $\Psf_{\theta \cup \opp^*\theta}$-transverse. Moreover, in this case the the projection $\Fc_{\theta \cup \opp^* \theta} \rightarrow \Fc_\theta$ induces a homeomorphism $\Lambda_{\theta \cup \opp^* \theta}(\Gamma) \rightarrow \Lambda_\theta(\Gamma)$.
\end{observation}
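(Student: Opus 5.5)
Set $\theta' := \theta \cup \opp^*\theta$. The proof has two parts: first show that $\Gamma$ is $\Psf_\theta$-transverse exactly when it is $\Psf_{\theta'}$-transverse, then build the homeomorphism of limit sets. Two facts about Cartan projections drive everything:
$$
\min_{\alpha \in \theta'} \alpha(\kappa(\gamma)) = \min\Big\{ \min_{\alpha\in\theta}\alpha(\kappa(\gamma)),\ \min_{\alpha\in\theta}\alpha(\kappa(\gamma^{-1}))\Big\},
$$
which follows from Equation~\eqref{eqn:opp inv and cartan proj}, and $\theta' \cup \opp^*\theta' = \theta'$.

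\emph{The equivalence.} Suppose first that $\Gamma$ is $\Psf_\theta$-transverse and let $\{\gamma_n\}$ be a sequence of distinct elements. Then $\{\gamma_n^{-1}\}$ also consists of distinct elements, so both $\min_{\alpha\in\theta}\alpha(\kappa(\gamma_n))$ and $\min_{\alpha\in\theta}\alpha(\kappa(\gamma_n^{-1}))$ tend to $+\infty$. By the displayed identity, $\min_{\alpha\in\theta'}\alpha(\kappa(\gamma_n)) \to +\infty$. The transversality requirement for $\Psf_{\theta'}$ concerns pairs of points in $\Lambda_{\theta'\cup\opp^*\theta'}(\Gamma) = \Lambda_{\theta'}(\Gamma)$, which is literally the same condition as for $\Psf_\theta$. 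Conversely, if $\Gamma$ is $\Psf_{\theta'}$-transverse, then the divergence on $\theta' \supset \theta$ gives divergence on $\theta$, and the transversality condition is again identical.

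\emph{The map on limit sets.} The projection $\pi : \Fc_{\theta'} \to \Fc_\theta$ is continuous and satisfies $\pi(U_{\theta'}(g)) = U_\theta(g)$ whenever both sides are defined. Hence $\pi$ maps $\Lambda_{\theta'}(\Gamma)$ into $\Lambda_\theta(\Gamma)$.

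For surjectivity, take $x \in \Lambda_\theta(\Gamma)$ with $U_\theta(\gamma_n) \to x$. By transversality, $\min_{\alpha\in\theta'}\alpha(\kappa(\gamma_n)) \to +\infty$, so $U_{\theta'}(\gamma_n)$ is defined. By compactness of $\Fc_{\theta'}$ we pass to a subsequence with $U_{\theta'}(\gamma_n) \to \tilde x$, and then $\pi(\tilde x) = x$.

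Injectivity is the main point. Suppose $\tilde x \ne \tilde y$ in $\Lambda_{\theta'}(\Gamma)$ with $\pi(\tilde x) = \pi(\tilde y)$. The transversality hypothesis makes $\tilde x$ transverse to $\tilde y$, viewed in $\Fc_{\opp^*\theta'} = \Fc_{\theta'}$. Transversality in $\Fc_{\theta'}$ implies transversality of the projections to $\Fc_\theta$ and $\Fc_{\opp^*\theta}$. We use the Tits representations to make this concrete: for every $\alpha \in \theta'$, $\zeta_\alpha(\tilde x) \notin \zeta^*_{\opp^*\alpha}(\tilde y)$ by \ref{item:boundary maps of Tits repn}(b). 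Taking $\alpha \in \theta$, the equality $\pi(\tilde x) = \pi(\tilde y)$ gives $\zeta_\alpha(\tilde x) = \zeta_\alpha(\tilde y)$.

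What remains is to show that every point $z$ of $\Fc_{\theta'}$ is not transverse to itself, i.e.\ $\zeta_\alpha(z) \subset \zeta^*_{\opp^*\alpha}(z)$. Since $\Gsf$ acts transitively, it suffices to check this at the base point, where it becomes $V_\alpha^+ \subset$ the hyperplane $\Phi_\alpha(w_0)V_\alpha^-$. This holds because $w_0 \notin \Psf$: $\Psf_{\theta'}$ and $w_0\Psf_{\theta'}$ are transverse, not equal.

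Applying this to $\tilde x$ shows that $\zeta_\alpha(\tilde x)$ lies in $\zeta^*_{\opp^*\alpha}(\tilde x)$, but we need a statement for the pair $(\tilde x,\tilde y)$. I expect this mismatch to be the main obstacle: equality on the $\theta$-part alone does not obviously contradict transversality on the $\opp^*\theta$-part. The fallback is a dynamical argument. Choose $\gamma_n$ with $U_{\theta'}(\gamma_n) \to \tilde y$; after passing to a subsequence, $U_{\theta'}(\gamma_n^{-1}) \to \tilde z$ for some $\tilde z \in \Lambda_{\theta'}(\Gamma)$. By Proposition~\ref{prop:NS dynamics}, $\gamma_n$ contracts everything transverse to $\tilde z$ onto $\tilde y$. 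We then apply $\gamma_n^{-1}$-type dynamics to the pair and use the fact that limit points distinct from $\tilde z$ are transverse to it, aiming to force $\tilde x = \tilde y$.

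Finally, $\Lambda_{\theta'}(\Gamma)$ is compact, being closed in $\Fc_{\theta'}$, and $\Lambda_\theta(\Gamma)$ is Hausdorff. So $\pi$ restricted to the limit set is a continuous bijection from a compact space to a Hausdorff space, hence a homeomorphism.
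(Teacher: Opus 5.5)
Your proof of the equivalence is fine. So are the inclusion $\pi(\Lambda_{\theta'}(\Gamma))\subset\Lambda_\theta(\Gamma)$, surjectivity, and the compact-to-Hausdorff conclusion; the paper states this observation without proof, and yours is the natural route.

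\emph{The gap.} As written, injectivity is not proved. You stop at a ``mismatch'' and substitute a dynamical sketch whose key step (``aiming to force $\tilde x=\tilde y$'') is never supplied. The mismatch is not real, and you already have every ingredient. The incidence $\zeta_\alpha(z)\subset\zeta^*_{\opp^*\alpha}(z)$ holds for \emph{every} $z\in\Fc_{\theta'}$, so apply it to $\tilde y$ instead of $\tilde x$. For $\alpha\in\theta$ this gives
\[
\zeta_\alpha(\tilde x)=\zeta_\alpha(\tilde y)\subset\zeta^*_{\opp^*\alpha}(\tilde y),
\]
which contradicts $\zeta_\alpha(\tilde x)\not\subset\zeta^*_{\opp^*\alpha}(\tilde y)$, the consequence of transversality of $\tilde x,\tilde y$ that you already derived. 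Geometrically, the $\theta$-part and the $\opp^*\theta$-part of a single flag in $\Fc_{\theta'}$ are never transverse. So if $\tilde x$ has the same $\theta$-part as $\tilde y$, it cannot be transverse to $\tilde y$, and no dynamics is needed.

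\emph{The base-point incidence.} Your stated reason for $V_\alpha^+\subset\Phi_\alpha(w_0)V_\alpha^-$ does not suffice. The fact $w_0\notin\Psf_{\theta'}$ only says that $\Psf_{\theta'}$ is not transverse to itself in $\Fc_{\theta'}$, i.e.\ that the incidence holds for \emph{some} root of $\theta'$. That root could lie in $\opp^*\theta\smallsetminus\theta$, which is useless here; you need the incidence for each $\alpha\in\theta$, including those with $\opp^*\alpha\neq\alpha$.

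Argue with weights instead. Since $w_0$ normalizes $\Asf$ and ${\rm Ad}_{w_0}=-\opp$ on $\mfa$, for $v\in V_\alpha^+$ we get $\Phi_\alpha(e^H)\Phi_\alpha(w_0)v=e^{-N_\alpha\omega_\alpha(\opp H)}\Phi_\alpha(w_0)v$. For $H\in\interior\mfa^+$ this eigenvalue is $<1$. By Property~\ref{item:splitting for Tits repn}, the eigenvalue $e^{N_\alpha\omega_\alpha(H)}$ on $V_\alpha^+$ is $>1$. Because both summands of $V_\alpha=V_\alpha^+\oplus V_\alpha^-$ are $\Phi_\alpha(\Asf)$-invariant, any eigenvector of $\Phi_\alpha(e^H)$ with an eigenvalue different from $e^{N_\alpha\omega_\alpha(H)}$ has zero $V_\alpha^+$-component. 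Hence $\Phi_\alpha(w_0)V_\alpha^+\subset V_\alpha^-$, equivalently (as $w_0^2=\id$) $V_\alpha^+\subset\Phi_\alpha(w_0)V_\alpha^-$, which is the hyperplane attached to the base point.

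With these two repairs your argument is complete.
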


A $\Psf_{\theta}$-transverse group is called \emph{non-elementary} if $\# \La_{\theta}(\Ga) \ge 3$, in which case the natural $\Ga$-action on $\La_{\theta}(\Ga)$ is a minimal convergence action  and $\# \La_{\theta}(\Ga) = + \infty$, see \cite[Theorem 4.16]{KLP_Anosov} or \cite[Proposition 3.3]{CZZ2026}.

A $\Psf_{\theta}$-transverse group is  \emph{$\Psf_{\theta}$-Anosov} if the action of $\Ga$ on $\La_{\theta}(\Ga)$ is a uniform convergence action, equivalently $\Gamma$ is word hyperbolic as an abstract group and there exists an equivariant homeomorphism from the Gromov boundary to the limit set $\La_{\theta}(\Ga)$ \cite{bowditch1998a-topological}. Likewise, a $\Psf_{\theta}$-transverse group is  \emph{$\Psf_{\theta}$-relatively Anosov} if the action of $\Ga$ on $\La_{\theta}(\Ga)$ is geometrically finite, equivalently if $\Gamma$ has the structure of a relatively hyperbolic group and there exists an equivariant homeomorphism from the associated Bowditch boundary to the limit set $\La_{\theta}(\Ga)$ \cite{Yaman2004topological}.

Canary, Zhang, and the second author established the following existence and uniqueness results for Patterson--Sullivan measure supported on the limit set.

\begin{theorem} \label{thm:CZZ PS transverse existence} \label{thm:CZZ PS transverse}
Suppose $\Ga < \Gsf$ is a non-elementary $\Psf_{\theta}$-transverse group,  $\phi \in \fa_{\theta}^*$, and $\delta^{\phi}(\Ga) < + \infty$. 
\begin{enumerate}
   \item \cite[Proposition 4.2]{CZZ2024} There exists a $(\Ga, \phi, \delta^{\phi}(\Ga))$-Patterson--Sullivan measure supported on $\La_{\theta}(\Ga)$.
   \item \cite[Proposition 8.1]{CZZ2024} If $\mu$ is a $(\Ga, \phi, \beta)$-Patterson--Sullivan measure supported on $\La_{\theta}(\Ga)$, then $\beta \geq \delta^\phi(\Gamma)$. 
   \item \cite[Corollaries 12.1 and 12.2, and Proposition 9.1]{CZZ2024} If 
   $$
   \sum_{\ga \in \Ga} e^{-\delta^{\phi}(\Ga) \phi(\kappa(\ga))} = + \infty,
   $$
   then there exists a unique $(\Ga, \phi, \delta^{\phi}(\Ga))$-Patterson--Sullivan measure $\mu$ supported on $\La_{\theta}(\Ga)$. Moreover, $\Gamma$ acts ergodically on $( \La_{\theta}(\Ga), \mu)$ and $\mu$ is supported on the conical limit set in the sense of the convergence action of $\Ga$ on $\La_{\theta}(\Ga)$. 
\end{enumerate}
\end{theorem}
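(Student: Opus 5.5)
The three parts of Theorem~\ref{thm:CZZ PS transverse} are cited from \cite{CZZ2024}, but I would prove them in the classical Patterson--Sullivan way. The key input is that a non-elementary $\Psf_\theta$-transverse group acts on $\La_\theta(\Ga)$ as a minimal convergence group. By Observation~\ref{obs:symmetry theta} I first replace $\theta$ by $\theta \cup \opp^*\theta$, so that limit points are pairwise transverse.

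\textbf{Existence.} I would use Patterson's construction. Set $\delta := \delta^\phi(\Ga)$. If the Poincar\'e series $\sum_{\ga} e^{-s\phi(\kappa(\ga))}$ converges at $s = \delta$, first insert a slowly varying Patterson weight $h$ so that the modified series diverges at $\delta$. For $s > \delta$, consider the normalized measures $\mu_s$ on the compactification $\Ga \sqcup \La_\theta(\Ga)$ putting mass proportional to $h(\phi(\kappa(\ga))) e^{-s\phi(\kappa(\ga))}$ at $\ga$. Since the series diverges at $\delta$, any weak limit $\mu$ as $s \searrow \delta$ is supported on $\La_\theta(\Ga)$.

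The quasi-invariance would come from the following asymptotic. If $\ga_n \to x \in \La_\theta(\Ga)$ in the sense that $U_\theta(\ga_n) \to x$, then
$$\phi(\kappa(g\ga_n)) - \phi(\kappa(\ga_n)) \to \phi B^{IW}_\theta(g, x).$$
One can see this through the representations $\Phi_\alpha$: by \ref{item:SVs of Tits repn}, $\phi\circ\kappa$ is a combination of the quantities $\log\norm{\Phi_\alpha(\cdot)}$, and Lemma~\ref{lem:rank one limits in Valpha} identifies the limits of the normalized matrices $\Phi_\alpha(\ga_n)/\norm{\Phi_\alpha(\ga_n)}$. Convergence is uniform because $\min_{\alpha\in\theta}\alpha(\kappa(\ga_n)) \to \infty$, and this gives the exact Radon--Nikodym identity of Definition~\ref{defn:Quints definition}.

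\textbf{Lower bound on the exponent.} For part (2), I would introduce shadows $\Oc_R(\ga)$: the sets of limit points seen from $\ga$ through a fixed neighborhood, defined via a compact set of transverse points, as in conical convergence-group language. I would then prove a Shadow Lemma
$$\mu(\Oc_R(\ga)) \asymp e^{-\beta\phi(\kappa(\ga))}.$$
The lower bound needs $\mu$ to be non-atomic on a large set, which follows from minimality of the action and $\#\La_\theta(\Ga) = \infty$. Since shadows of the elements of norm in $[T, T+1]$ have bounded multiplicity, summing gives
$$\#\{\ga : \phi(\kappa(\ga)) \le T\} \lesssim e^{\beta T},$$
and hence $\beta \ge \delta$.

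\textbf{Uniqueness and ergodicity under divergence.} For part (3) I would run the Hopf--Tsuji--Sullivan argument. Divergence of the series plus the Shadow Lemma and a Borel--Cantelli converse (a Kochen--Stone type quasi-independence estimate for shadows) show that $\mu$-a.e.\ point lies in infinitely many shadows, i.e.\ $\mu$ is supported on the conical limit set. Ergodicity follows by a Lebesgue density argument along shadows: a positive-measure invariant set has density tending to $1$ on shadows around a.e.\ conical point, and pulling back by $\ga_n^{-1}$ gives full measure.

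For uniqueness, suppose $\mu'$ is another such measure. Then $\mu + \mu'$ is again a PS-measure, and the densities $d\mu/d(\mu+\mu')$ are $\Ga$-invariant functions, hence constant by ergodicity. The two measures are therefore proportional, hence equal since both are probability measures.

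\textbf{Main obstacle.} I expect the hard step to be the Borel--Cantelli quasi-independence estimate
$$\mu(\Oc_R(\ga)\cap\Oc_R(\eta)) \lesssim \mu(\Oc_R(\ga))\,\mu(\Oc_R(\ga^{-1}\eta))$$
in the non-hyperbolic, possibly non-geometrically-finite transverse setting. This requires a coarse additivity $\phi(\kappa(\eta)) \approx \phi(\kappa(\ga)) + \phi(\kappa(\ga^{-1}\eta))$ when the shadows intersect, which is the analogue of \ref{item:intersecting shadows}. I would try to derive it from the convergence dynamics of Proposition~\ref{prop:NS dynamics} together with compactness arguments on transverse pairs.
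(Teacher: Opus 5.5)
The paper gives no argument for this statement; it is quoted from \cite{CZZ2024}. Your outline is correct in substance and has the same overall architecture as that source: Patterson's construction using Quint's asymptotic $\phi(\kappa(g\ga_n))-\phi(\kappa(\ga_n))\to\phi B^{IW}_\theta(g,x)$, a Shadow Lemma, a Kochen--Stone argument for conical support, then ergodicity and uniqueness. For part (1) you are essentially doing what they do.

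The difference lies in parts (2)--(3). In \cite{CZZ2024} the geometric input comes from the projectively visible model of Section~\ref{sec:projectively visible}, and ergodicity and uniqueness are extracted from ergodicity of the diagonal action on $\La_\theta(\Ga)^2$ (Corollary 12.1 there). You instead work with convergence-group shadows, after symmetrizing $\theta$,
\[
\Oc_\epsilon(\ga)=\ga\{x\in\La_\theta(\Ga):\dist(x,U_\theta(\ga^{-1}))>\epsilon\}.
\]
You prove ergodicity of $\Ga$ on $(\La_\theta(\Ga),\mu)$ directly by a density argument. You then get uniqueness from $\Ga$-invariance of $d\mu/d(\mu+\mu')$; this is valid because your ergodicity argument applies to every $(\Ga,\phi,\delta^\phi(\Ga))$-PS measure. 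Your route is more intrinsic and gives exactly what is stated. The cited route needs more machinery but also yields the double ergodicity this paper later uses in Theorem~\ref{thm:ergodic dichotomy}.

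The step you flag is the crux. It does close dynamically, but convergence dynamics alone is not enough: you also need the cocycle identity and properness. Write $B=B^{IW}_\theta$. Two standard estimates in the representations of Section~\ref{sec:linear representations} give the following.
\begin{itemize}
\item $\phi B(g_1,y)=\phi(\kappa(g_1))+O(1)$ when $y$ is uniformly transverse to $U_{\opp^*\theta}(g_1^{-1})$.
\item $\phi(\kappa(g_1g_2))=\phi(\kappa(g_1))+\phi(\kappa(g_2))+O(1)$ when $U_\theta(g_2)$ is uniformly transverse to $U_{\opp^*\theta}(g_1^{-1})$.
\end{itemize}
Suppose $\xi\in\Oc_\epsilon(\ga)\cap\Oc_\epsilon(\eta)$ and $\phi(\kappa(\ga))\le\phi(\kappa(\eta))$. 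Set $g=\ga^{-1}\eta$, $y=\ga^{-1}\xi$ and $z=\eta^{-1}\xi$, so that $gz=y$.
\begin{enumerate}
\item The cocycle identity $B(\eta,z)=B(\ga,y)+B(g,z)$ and the first estimate give $\phi B(g^{-1},y)=-\phi B(g,z)\le C$.
\item Since $\delta^\phi(\Ga)<\infty$, the map $\ga\mapsto\phi(\kappa(\ga^{\pm1}))$ is proper on $\Ga$. So the first estimate applied to $g^{-1}$ forces $y$ to lie near $U_\theta(g)$ once $g$ is large.
\item Since $y$ is $\epsilon$-far from $U_\theta(\ga^{-1})$ and distinct limit points are transverse, compactness makes $U_\theta(g)$ uniformly transverse to $U_\theta(\ga^{-1})$.
\item The second estimate then gives $\phi(\kappa(\eta))=\phi(\kappa(\ga))+\phi(\kappa(\ga^{-1}\eta))+O(1)$.
\end{enumerate}

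Three further points need tightening.
\begin{itemize}
\item You already need this lemma in part (2). Bounded multiplicity of shadows over an annulus is this lemma plus properness, not a free fact.
\item Combined with Proposition~\ref{prop:NS dynamics}, the same argument gives the containment $\Oc_\epsilon(\eta)\subset\Oc_{\epsilon'}(\ga)$ that your density step needs.
\item Kochen--Stone only gives the limsup set positive measure. To get full measure, apply the same argument to the normalized restriction of $\mu$ to the invariant complement, which is again a PS measure of dimension $\delta^\phi(\Ga)$. Invoking ergodicity here would be circular, since you derive ergodicity from conical support.
\end{itemize}
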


\begin{remark} When $\Ga < \Gsf$ is Zariski dense, the uniqueness holds for measures supported on $\Fc_{\theta}$, as shown by the first author, Oh, and Wang \cite{KOW_PD}. \end{remark}


\section{Vector-valued horofunction compactifications and PS-measures}\label{sec:compactifications and PS measures}


In this section, we recall the vector-valued horofunction compactifications of the symmetric space $X = \Gsf/\Ksf$ associated to $\Gsf$ introduced in our other work~\cite{KimZimmer_vector}. It turns out that they contain $\theta$-boundaries, and we also consider Patterson--Sullivan measures there. Similar compactifications, but using Finsler metrics, appear in \cite{KL_Finsler,HSWW_horofunction,LP_horofunction_2023,Lemmens_horofunction_2023}.

Fix the basepoint $o:=\Ksf \in X$. The symmetric space distance is given by 
$$
\dist_X(go, ho) = \norm{\kappa(g^{-1}h)}
$$
where $\norm{\cdot}$ is some norm on $\mfa$. 
For $x = go$, define the vector-valued horofunction $b_x : X \rightarrow \fa$ by 
$$
b_x(ho) = \kappa(h^{-1}g) - \kappa(g). 
$$
Then the maps $\{ b_x : x \in X\}$ are uniformly Lipschitz~\cite[Lemma 4.1]{KimZimmer_vector}.

For non-empty $\theta \subset \Delta$,  let $\pi_\theta : \mfa \rightarrow \mfa_\theta$ be the projection satisfying Equation~\eqref{eqn:defn of pi_theta to mfa_theta}  and then let  $\partial_{\theta} X$ be the set of functions $\xi : X \rightarrow \mfa_\theta$ where there exists an escaping sequence $\{x_n\} \subset X$ with $\pi_\theta b_{x_n} \rightarrow \xi$ in the compact-open topology. The uniform Lipschitzness above, together with the separability of $X$, implies that $\partial_\theta X$ is compact in the compact-open topology. Further, $\Gsf$  acts on $\partial_{\theta} X$  by 
$$
g\cdot \xi = \xi \circ g^{-1} - \xi(g^{-1}o).
$$
The space $\partial_\theta X$ can be used to compactify $X$. 

\begin{proposition}[{\cite[Proposition 4.2]{KimZimmer_vector}}] \label{prop:compactification} The space $\overline{X}^{\theta}: = X \sqcup \partial_{\theta} X$ has a topology which makes it a compactification of $X$, that is $\overline{X}^{\theta}$ is a compact metrizable space and the inclusion $X \hookrightarrow \overline{X}^{\theta}$ is a topological embedding with open dense image. Moreover with respect to this topology:
\begin{enumerate}
\item  $\{x_n\} \subset X$ converges to $\xi \in \partial_{\theta} X$ if and only if $\dist_X(o,x_n) \rightarrow +\infty$ and $\pi_\theta b_{x_n} \rightarrow \xi$ in the compact-open topology.
\item The $\Gsf$-action on $\overline{X}^{\theta}$ is continuous.
\end{enumerate} 

\end{proposition}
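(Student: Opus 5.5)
The plan is to realize $\overline{X}^{\theta}$ as a closed subset of a product of two compact metrizable spaces, so that compactness, metrizability and the description of convergence all come from the ambient product. Let $X^* = X \sqcup \{\infty\}$ be the one-point compactification of $X$; it is compact metrizable because $X$ is proper. Let $\mathcal{C}$ be the set of $L$-Lipschitz maps $f : X \to \mfa_\theta$ with $f(o) = 0$, where $L$ is a uniform Lipschitz constant for the maps $\pi_\theta b_x$ (from \cite[Lemma 4.1]{KimZimmer_vector} and linearity of $\pi_\theta$). Give $\mathcal{C}$ the compact-open topology. By Arzel\`a--Ascoli, using that $X$ is separable and $\sigma$-compact, $\mathcal{C}$ is compact and metrizable. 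Note that $b_x(o) = \kappa(g) - \kappa(g) = 0$, so each $\pi_\theta b_x$ lies in $\mathcal{C}$.

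Define
$$
\iota : X \sqcup \partial_\theta X \to X^* \times \mathcal{C}, \qquad \iota(x) = (x, \pi_\theta b_x), \quad \iota(\xi) = (\infty, \xi),
$$
and give $\overline{X}^{\theta}$ the topology pulled back by $\iota$. The map $\iota$ is injective: on $X$ the first coordinate already separates points, and on $\partial_\theta X$ the second coordinate does. The key step is to show that $\iota(\overline{X}^{\theta})$ is exactly the closure of $\iota(X)$; take a limit point $(p, f)$ of a sequence $\iota(x_n)$.
\begin{itemize}
\item If $p \in X$, then $x_n \to p$. Since the map $x \mapsto \pi_\theta b_x$ is continuous (the Cartan projection is continuous), $f = \pi_\theta b_p$.
\item If $p = \infty$, then $\{x_n\}$ is escaping, and $f \in \partial_\theta X$ by the very definition of $\partial_\theta X$.
\end{itemize}
Conversely, every point $(\infty, \xi)$ with $\xi \in \partial_\theta X$ arises this way. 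Hence $\overline{X}^{\theta}$ is a closed subset of a compact metrizable space, so it is compact metrizable.

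The remaining properties then follow directly.
\begin{itemize}
\item $X$ is open in $\overline{X}^{\theta}$, being the preimage of the open set $X \times \mathcal{C}$.
\item $X$ is dense, by the closure computation above.
\item The inclusion $X \hookrightarrow \overline{X}^{\theta}$ is a topological embedding, since the first coordinate restricted to $X$ is the identity and the second coordinate is continuous.
\item Part (1) is a restatement of convergence in the product: $x_n \to \infty$ in $X^*$ means $\dist_X(o, x_n) \to +\infty$, and the second coordinate converges in the compact-open topology.
\end{itemize}

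For part (2), first check that the boundary action extends the action on $X$. For $x = ho$ and $g, k \in \Gsf$,
$$
b_{ho}(g^{-1}ko) - b_{ho}(g^{-1}o) = \big(\kappa(k^{-1}gh) - \kappa(h)\big) - \big(\kappa(gh) - \kappa(h)\big) = b_{gho}(ko).
$$
Thus $g \cdot (\pi_\theta b_x) = \pi_\theta b_{gx}$ under the formula $g \cdot \xi = \xi \circ g^{-1} - \xi(g^{-1}o)$. Define the action on $X^* \times \mathcal{C}$ by $g \cdot (p, f) = (gp,\ f \circ g^{-1} - f(g^{-1}o))$, with $g \infty = \infty$. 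This preserves $\iota(X)$, and therefore preserves its closure $\iota(\overline{X}^{\theta})$, which also shows that $\partial_\theta X$ is $\Gsf$-invariant.

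The main technical point is joint continuity of $(g, f) \mapsto f \circ g^{-1} - f(g^{-1}o)$ on $\Gsf \times \mathcal{C}$. I would handle it using equicontinuity. If $g_n \to g$ and $f_n \to f$ uniformly on compacta, then on a compact set $K$ the points $g_n^{-1}y$ and $g^{-1}y$ stay in a common compact set and are uniformly close for $y \in K$. The uniform Lipschitz bound then gives
$$
|f_n(g_n^{-1}y) - f(g^{-1}y)| \le L\, \dist_X(g_n^{-1}y, g^{-1}y) + |f_n(g^{-1}y) - f(g^{-1}y)| \to 0
$$
uniformly in $y \in K$, and the same estimate handles the term $f_n(g_n^{-1}o)$. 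Continuity of the $\Gsf$-action on $X^*$ is standard, since $\Gsf$ acts properly and isometrically on $X$. Together these give continuity of the action on $\overline{X}^{\theta}$.
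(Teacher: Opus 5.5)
Your proof is correct. This paper gives no argument of its own for the proposition (it is quoted from \cite{KimZimmer_vector}), so there is no in-paper route to compare yours against. Realizing $\overline{X}^{\theta}$ as the closure of the graph of $x \mapsto \pi_\theta b_x$ in $X^* \times \mathcal{C}$ is a clean, self-contained way to get compactness, metrizability, openness and density of $X$, and part (1) all at once. Your treatment of part (2) is exactly what is needed: the equivariance $g \cdot \pi_\theta b_x = \pi_\theta b_{gx}$, joint continuity on $X^* \times \mathcal{C}$ via the uniform Lipschitz bound, and invariance of the closure.

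It is worth saying explicitly why the $X^*$ factor is essential rather than cosmetic. The map $x \mapsto \pi_\theta b_x$ need not be injective, and its values can already lie in $\partial_\theta X$. For example, let $\Gsf = \Gsf_1 \times \Gsf_2$ and let $\theta$ consist of simple roots of $\Gsf_1$. Then $\pi_\theta b_{(x_1,x_2)}$ depends only on $x_1$, and taking $x_n = (x_1, z_n)$ with $z_n \to \infty$ shows that this same function is also a boundary point. So the naive closure of $\{\pi_\theta b_x\}$ in $\mathcal{C}$ would not be a compactification of $X$. Your first coordinate is what keeps $X$ and $\partial_\theta X$ disjoint, matching the definition $\overline{X}^\theta = X \sqcup \partial_\theta X$ and the condition $\dist_X(o,x_n)\to+\infty$ in (1).

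The one step you state tersely is continuity of $x \mapsto \pi_\theta b_x$ into $\mathcal{C}$. To make it precise:
\begin{enumerate}
\item Lift $x_n \to x$ to a convergent sequence $g_n \to g$ in $\Gsf$.
\item Lift a compact set $K \subset X$ to a compact subset of $\Gsf$; both lifts are possible because $\Ksf$ is compact.
\item Conclude using uniform continuity of $(g,h) \mapsto \kappa(h^{-1}g)$ on compact subsets of $\Gsf \times \Gsf$.
\end{enumerate}
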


Patterson--Sullivan measures on $\partial_{\theta} X$ can naturally be defined as follows. 

\begin{definition}\label{defn: PS on X theta}
Given  a subgroup $\Gamma < \Gsf$, $\theta \subset \Delta$, $\phi \in \mfa_\theta^*$, and $\delta \geq 0$, a Borel probability measure $\mu$ on $\partial_{\theta} X$ is a \emph{coarse $(\Gamma, \phi, \delta)$-Patterson--Sullivan measure} if  there exists $C \ge 1$ such that for every $\gamma \in \Gamma$ the measures $\mu$, $\gamma_*\mu$ are absolutely continuous and 
$$
C^{-1}  e^{-\delta \phi  \xi(\ga o)} \le \frac{d\gamma_* \mu}{d\mu}(\xi) \le  C e^{-\delta \phi \xi(\ga o)} \quad \mu\text{-a.e.}
$$
We call $\mu$ a \emph{$(\Ga, \phi, \delta)$-Patterson--Sullivan measure} if $C = 1$. 
\end{definition} 

Using Patterson's original construction for Fuchsian groups, we proved the following existence result. 

\begin{proposition}[{\cite[Proposition 4.5]{KimZimmer_vector}}]  \label{prop:PS meas exists}
   If $\Gamma < \Gsf$ is discrete, $\phi \in \mfa_\theta^*$, and $\delta^\phi(\Gamma) < +\infty$, then there exists a $(\Gamma, \phi, \delta^\phi(\Gamma))$-Patterson--Sullivan measure on $\partial_\theta X$. 
\end{proposition}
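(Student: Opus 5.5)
We would follow Patterson's original construction, carried out in the compact space $\overline{X}^{\theta}$ of Proposition~\ref{prop:compactification}. Write $\delta := \delta^\phi(\Gamma)$. For $s > \delta$ set
$$
Q(s) := \sum_{\gamma \in \Gamma} h(\phi(\kappa(\gamma)))\, e^{-s\phi(\kappa(\gamma))}, \qquad \mu_s := \frac{1}{Q(s)} \sum_{\gamma\in\Gamma} h(\phi(\kappa(\gamma)))\, e^{-s\phi(\kappa(\gamma))}\, \mathcal{D}_{\gamma o},
$$
where $\mathcal{D}_{\gamma o}$ denotes the Dirac mass at $\gamma o \in X \subset \overline{X}^{\theta}$. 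The factor $h$ is Patterson's slowly increasing function. It is chosen so that $Q(s) < +\infty$ for $s > \delta$ and $Q(s) \to +\infty$ as $s \searrow \delta$. It also satisfies: for every $\epsilon > 0$ there is $T_0$ such that $h(t+u) \le e^{\epsilon u} h(t)$ for $t \ge T_0$ and $u \ge 0$. If the series defining $\delta$ already diverges at $\delta$, we simply take $h \equiv 1$.

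Both ingredients of the construction depend on $\phi$ being defined on $\mfa_\theta$. First, because $\Gamma$ is discrete and $\delta < +\infty$, each set $\{\gamma : \phi(\kappa(\gamma)) \le T\}$ is finite, so $\phi(\kappa(\gamma)) \to +\infty$ along escaping sequences. Second, since $\phi \in \mfa_\theta^*$, we have $\phi \circ \pi_\theta = \phi$. Taking $\mu$ to be a weak-$*$ limit of $\mu_{s_n}$ along some sequence $s_n \searrow \delta$, divergence of $Q$ shows that any finite set of orbit points carries asymptotically zero mass. Hence $\mu$ is a probability measure supported on $\partial_\theta X$.

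For the quasi-invariance, we compute $\gamma_*\mu_s$ for a fixed $\gamma \in \Gamma$. After reindexing, its mass at the point $\gamma\eta o$ is proportional to $h(\phi\kappa(\eta))e^{-s\phi\kappa(\eta)}$. We compare $\phi(\kappa(\eta))$ with $\phi(\kappa(\gamma\eta))$ using the horofunction at $x = \gamma\eta o$:
$$
b_{\gamma\eta o}(\gamma o) = \kappa(\gamma^{-1}\gamma\eta) - \kappa(\gamma\eta) = \kappa(\eta) - \kappa(\gamma\eta).
$$
Applying $\phi = \phi\circ\pi_\theta$ gives
$$
\phi(\kappa(\eta)) = \phi(\kappa(\gamma\eta)) + \phi\bigl(\pi_\theta b_{x}(\gamma o)\bigr), \qquad x = \gamma\eta o.
$$
Therefore $d\gamma_*\mu_s/d\mu_s$ at the point $x$ equals $e^{-s\phi(\pi_\theta b_x(\gamma o))}$, up to the ratio $h(\phi\kappa(\eta))/h(\phi\kappa(\gamma\eta))$.

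The main obstacle is passing to the limit, and there are two issues. The first is the $h$-ratio. The $b_x$ are uniformly Lipschitz, so $|\phi(\kappa(\eta)) - \phi(\kappa(\gamma\eta))|$ is bounded by a constant depending only on $\gamma$. Patterson's property of $h$ then forces the ratio to tend to $1$ uniformly as $\eta$ escapes. The second is continuity of the density. The function $F_\gamma(x) := e^{-s\phi(\pi_\theta b_x(\gamma o))}$ for $x \in X$, extended by $F_\gamma(\xi) := e^{-\delta\phi\xi(\gamma o)}$ for $\xi \in \partial_\theta X$, must be continuous on $\overline{X}^{\theta}$. This follows from the characterization of convergence in Proposition~\ref{prop:compactification}(1): compact-open convergence $\pi_\theta b_{x_n} \to \xi$ implies pointwise convergence at $\gamma o$. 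The exponent changes from $s$ to $\delta$, but the two versions differ by a factor that tends to $1$ uniformly on compact sets of $\gamma$ as $s \to \delta$.

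With both issues handled, for every continuous test function $f$ we get $\int f\, d\gamma_*\mu_{s_n} = \int f\, F_\gamma\, d\mu_{s_n} + o(1)$. Letting $n \to \infty$ gives
$$
\frac{d\gamma_*\mu}{d\mu}(\xi) = e^{-\delta \phi\xi(\gamma o)}.
$$
The Lipschitz bound shows this density lies in $[e^{-c},e^{c}]$ for a constant $c = c(\gamma)$, so $\mu$ and $\gamma_*\mu$ are mutually absolutely continuous. Hence $\mu$ is a $(\Gamma,\phi,\delta^\phi(\Gamma))$-Patterson--Sullivan measure on $\partial_\theta X$ with $C=1$.
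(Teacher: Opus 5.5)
Your proposal is correct and follows the same route as the paper, which obtains this measure by Patterson's original construction. As in your sketch, that construction takes weighted orbit measures $\mu_s$ in the compactification $\overline{X}^{\theta}$, uses Patterson's slowly increasing function to force divergence at $s \searrow \delta^{\phi}(\Gamma)$, and reads off the density from $\phi\circ\pi_\theta = \phi$ applied to the horofunctions $\pi_\theta b_x$. Two minor tidy-ups: define $F_\gamma$ with exponent $\delta$ on $X$ as well, since the $s$-dependence is already absorbed in the uniformly small error you note; and treat the trivial case of finite $\Gamma$ separately, because there $Q(s)$ cannot diverge and the $\Gamma$-invariant measure is obtained by averaging a Dirac mass on $\partial_\theta X$ over $\Gamma$.
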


\subsection{Embeddings of partial flag manifolds}

The partial flag manifold $\Fc_{\theta}$ turns out to be naturally embedded into $\partial_{\theta} X$. Recall that $B_\theta^{IW} : \Gsf \times \Fc_\theta \rightarrow~\mfa_\theta$ denotes the partial Iwasawa cocyle. Quint~\cite[Lemma 6.6]{Quint_PS} proved that 
$$
\lim_{n \rightarrow \infty} \pi_\theta\kappa(g^{-1}h_n)-\pi_\theta\kappa(h_n) =B^{IW}_\theta(g^{-1},x) \quad \text{for all} \quad g \in \Gsf
$$
when $\min_{\alpha \in \theta} \alpha(\kappa(h_n)) \rightarrow +\infty$  and $U_\theta(h_n) \rightarrow x$. Using this fact, we showed that $\Fc_\theta$ embeds into $\partial_\theta X$. 
 
\begin{proposition}[{\cite[Proposition 4.7]{KimZimmer_vector}}] \label{prop:embedding of partial flags}
There exists a topological embedding  $\iota : \Fc_\theta \rightarrow \partial_\theta X$ that satisfies 
\begin{equation}\label{eqn:definition of iota}
\iota(x)(ho) = B_\theta^{IW}(h^{-1},x)
\end{equation}
for all $x \in \Fc_\theta$ and $h \in \Gsf$. Moreover:
\begin{enumerate}
\item If a sequence $\{g_n\} \subset \Gsf$ satisfies $\min_{\alpha \in \theta} \alpha(\kappa(g_n)) \to + \infty$ and $U_{\theta}(g_n) \rightarrow x$,  then 
$$
g_n o \rightarrow \iota(x) \quad \text{in} \quad  \overline{X}^\theta.
$$
\item If $\mu$ is a (coarse, resp.) $(\Gamma, \phi, \delta)$-Patterson--Sullivan measure on $\Fc_\theta$ in the sense of Definition~\ref{defn:Quints definition}, then $\iota_* \mu$ is a (coarse, resp.) $(\Gamma, \phi, \delta)$-Patterson--Sullivan measure on $\partial_\theta X$ in the sense of Definition~\ref{defn: PS on X theta}.
\end{enumerate}
\end{proposition}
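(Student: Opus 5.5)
The approach is to define $\iota$ by the formula \eqref{eqn:definition of iota} and then check in turn that $\iota(x)\in\partial_\theta X$, that (1) holds, that $\iota$ is continuous and injective, and finally (2). Compactness of $\Fc_\theta$ then upgrades a continuous injection to a topological embedding.

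\emph{Step 1: $\iota$ lands in $\partial_\theta X$, and property (1).} Suppose $\{g_n\}$ satisfies $\min_{\alpha\in\theta}\alpha(\kappa(g_n))\to+\infty$ and $U_\theta(g_n)\to x$. Then $\dist_X(o,g_no)\to\infty$. For every $h\in\Gsf$, Quint's lemma gives
$$\pi_\theta b_{g_no}(ho)=\pi_\theta\kappa(h^{-1}g_n)-\pi_\theta\kappa(g_n)\to B^{IW}_\theta(h^{-1},x).$$
The maps $b_{g_no}$ are uniformly Lipschitz, so this pointwise convergence is in fact convergence in the compact-open topology. Hence $\iota(x)\in\partial_\theta X$ and $g_no\to\iota(x)$ by Proposition~\ref{prop:compactification}(1), which is property (1). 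Such sequences exist for every $x$: write $x=k\Psf_\theta$ with $k\in\Ksf$, pick $H\in\mfa^+$ with $\alpha(H)>0$ for all $\alpha\in\theta$, and take $g_n=ke^{nH}$.

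\emph{Step 2: continuity.} The Iwasawa cocycle is continuous, so $x\mapsto\iota(x)(ho)$ is continuous for each fixed $h$. The family $\{\iota(x)\}$ is uniformly Lipschitz, being a set of limits of uniformly Lipschitz maps. An equicontinuous family that depends continuously on $x$ pointwise depends continuously on $x$ in the compact-open topology.

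\emph{Step 3: injectivity.} I expect this to be the main obstacle, and I would use the representations of Section~\ref{sec:linear representations}. Let $\alpha\in\theta$ and let $v_x$ be a unit vector spanning $\zeta_\alpha(x)$. Taking $k\in\Ksf$ with $k\Psf_\alpha$ equal to the image of $x$ in $\Fc_\alpha$, properties \ref{item:SVs of Tits repn}--\ref{item:splitting for Tits repn} give
$$N_\alpha\,\omega_\alpha B^{IW}_\theta(g,x)=\log\norm{\Phi_\alpha(g)v_x}.$$
So $\iota(x)=\iota(y)$ forces $\norm{\Phi_\alpha(g)v_x}=\norm{\Phi_\alpha(g)v_y}$ for all $g\in\Gsf$. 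Now take $g=e^{tH}k^{-1}$ with $k\in\Ksf$ and $H$ as in Step 1, and let $t\to\infty$. After normalizing by $e^{-tN_\alpha\omega_\alpha(H)}$, the norm tends to $\abs{\langle v,ke\rangle}$, where $e$ is a unit vector in $V_\alpha^+$; this uses that $\Phi_\alpha(e^{tH})$ is strictly slower on $V_\alpha^-$ because $\alpha(H)>0$. Thus $\abs{\langle v_x,ke\rangle}=\abs{\langle v_y,ke\rangle}$ for all $k\in\Ksf$. The same equality then holds for the quadratic forms $\langle\cdot,ke\rangle^2$, and I would argue that, by irreducibility of $\Phi_\alpha$, these quadratic forms span enough of the symmetric forms to separate lines, giving $v_x=\pm v_y$. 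Checking this spanning claim (for instance by averaging over $\Ksf$, or via polarization using $e^{tX}$ with $X\in\mfp$) is the step I would need to verify carefully. Granting it, $\zeta_\alpha(x)=\zeta_\alpha(y)$ for every $\alpha\in\theta$. Since $\Fc_\theta\to\prod_{\alpha\in\theta}\Pb(V_\alpha)$ is injective, $x=y$. A continuous injection from the compact space $\Fc_\theta$ into the Hausdorff space $\partial_\theta X$ is a topological embedding.

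\emph{Step 4: property (2).} By \eqref{eqn:definition of iota}, $\iota(x)(\gamma o)=B^{IW}_\theta(\gamma^{-1},x)$. The map $\iota$ is $\Gsf$-equivariant by the cocycle identity, which gives $g\cdot\iota(x)=\iota(gx)$. Therefore
$$\frac{d\gamma_*\iota_*\mu}{d\iota_*\mu}(\iota(x))=\frac{d\gamma_*\mu}{d\mu}(x),$$
and this Radon--Nikodym derivative equals, or is coarsely comparable to, $e^{-\delta\phi\iota(x)(\gamma o)}$. This is exactly Definition~\ref{defn: PS on X theta}.
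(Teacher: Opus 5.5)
Your overall plan is sound and follows the route the paper indicates through Quint's limit formula. Step 1 gives $\iota(x)\in\partial_\theta X$ and part (1) at once. Step 2 plus compactness gives the embedding once injectivity is known. Step 4 correctly transports Radon--Nikodym derivatives using $g\cdot\iota(x)=\iota(gx)$.

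The one incomplete point is the step you flagged in Step 3, and it closes immediately without any spanning argument. Choose $k_x\in\Ksf$ with $k_x\Psf_\theta=x$, so that $v_x=\pm\Phi_\alpha(k_x)e$. Evaluate your identity $\abs{\langle v_x,\Phi_\alpha(k)e\rangle}=\abs{\langle v_y,\Phi_\alpha(k)e\rangle}$ at $k=k_x$. This gives $\abs{\langle v_y,v_x\rangle}=1$, hence $v_y=\pm v_x$ by the equality case of Cauchy--Schwarz.

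You do not even need the limit $t\to\infty$. For $g=e^{H}k_x^{-1}$ you have $\norm{\Phi_\alpha(g)v_x}=e^{N_\alpha\omega_\alpha(H)}=\norm{\Phi_\alpha(g)}$. By \ref{item:SVs of Tits repn}--\ref{item:splitting for Tits repn}, the restriction of $\Phi_\alpha(e^H)$ to the orthogonal complement $V_\alpha^-$ has norm $e^{N_\alpha\omega_\alpha(H)-\alpha(H)}$, which is strictly smaller because $\alpha(H)>0$. So $\norm{\Phi_\alpha(g)v_y}=\norm{\Phi_\alpha(g)}$ forces $\Phi_\alpha(k_x^{-1})v_y\in V_\alpha^+$, i.e.\ $\zeta_\alpha(y)=\zeta_\alpha(x)$.

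I would drop the spanning route as you phrased it. The tensors $w\otimes w$ with $w\in\Phi_\alpha(\Ksf)e$ span only the $\Gsf$-submodule of $\mathrm{Sym}^2V_\alpha$ generated by $e\otimes e$. This is in general a proper subspace: for the standard representation of $\mathsf{SO}_0(n,1)$, every such $w$ is isotropic for the invariant Lorentzian form. So these quadratic forms do not span the symmetric forms. What you actually need is only to separate points of the single orbit $\Phi_\alpha(\Ksf)e$, and the choice $k=k_x$ does exactly that.

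With this fix, your argument is complete. It also relies on the standard injectivity of $x\mapsto(\zeta_\alpha(x))_{\alpha\in\theta}$ on $\Fc_\theta$, which the paper uses in the same way in Proposition~\ref{prop:key result in boundary map}.
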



\subsection{Shadows and contracting~conical~limit~sets}\label{sec:shadows and cc limit set} 
 

We now define shadows on $\partial_{\theta} X$ and use them to introduce the contracting conical limit set of a discrete subgroup. 

 In \cite[Lemma 4.6]{KimZimmer_vector}, we proved: if $\xi \in \partial_\theta X$ and $g \in \Gsf$, then  
$$
\omega_\alpha\xi(g^{-1}o) \leq \omega_\alpha \kappa(g)
$$
for all $\alpha \in \theta$. Given $g \in \Gsf$, we then define shadows in $\partial_\theta X$ by considering the set of functionals $\xi$ that are close to maximizing the expression $\omega_\alpha\xi(g^{-1}o)$ for all $\alpha \in \theta$. 

More precisely, for $g \in \Gsf$ and $R > 0$, the associated \emph{shadow} is defined by
$$
\Oc_R^\theta(g):= g \cdot \{    \xi \in \partial_{\theta} X : \omega_\alpha \xi(g^{-1}o) > \omega_\alpha \kappa(g) - R \text{ for all } \alpha \in \theta  \}.
$$
 In what follows we use $\pi_\theta$ to denote both the projection $\fa \rightarrow \fa_\theta$ satisfying Equation~\eqref{eqn:defn of pi_theta to mfa_theta} and the map $\partial_{\Delta} X \to \partial_{\theta} X$ obtained by the postcomposition with this projection. Since $\omega_\alpha \xi = \omega_\alpha \pi_\theta \xi$ for all $\alpha \in \theta$ and $\xi \in \partial_\Delta X$, we have 
$$
\pi_\theta \Oc_R^\Delta(g) \subset \Oc_R^\theta(g). 
$$
We also use Proposition \ref{prop:embedding of partial flags} to view $\Fc_\theta$ as a subset of $\partial_\theta X$. Then Equation~\eqref{eqn:defn of BIW}  and Proposition~\ref{prop:embedding of partial flags} imply that $\pi_\theta|_{\Fc_\Delta}$ coincides with the natural projection $\Fc_\Delta \rightarrow \Fc_\theta$ given by $g \Psf_\Delta \rightarrow g \Psf_\theta$.

The boundary $\partial_\theta X$ has a natural cocycle  $B_\theta : \Gsf \times \partial_{\theta} X \rightarrow \fa$ defined by 
$$
B_\theta(g,x) :=  \xi(g^{-1} o).
$$
Indeed one can see that 
$
B_\theta(g_1g_2,\xi) = B_\theta(g_1, g_2 \xi) + B_\theta(g_2, \xi)
$
for all $g_1,g_2 \in \Gsf$ and $\xi \in \partial_{\theta} X$.  

In ~\cite{KimZimmer_vector}, we verified that the definitions above give PS-systems on $\overline{X}^\theta$.

\begin{theorem}[{\cite[Theorem 6.1]{KimZimmer_vector}}] \label{thm:Zdense PS} Suppose $\theta \subset \Delta$ and $\phi \in \mfa_\theta^*$. 
   If $\Gamma < \Gsf$ is strongly $(\Phi_{\alpha})_{\alpha \in \theta}$-irreducible and $\mu$ is a coarse $(\Gamma, \phi, \delta)$-Patterson--Sullivan measure on $\partial_{\theta} X$, then $(\partial_{\theta} X, \Gamma, \phi \circ B_{\theta}, \mu)$ is a PS-system, with  magnitude $\norm{\ga}_{\phi} := \phi(\kappa(\ga))$ and the  $R$-shadows $\Oc_R^{\theta}(\ga)$ for each $\ga \in \Ga$. Moreover, \ref{item:baire} holds.

\end{theorem}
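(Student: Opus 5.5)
The plan is to verify the axioms \ref{item:coycles are bounded}--\ref{item:empty Z intersection} and \ref{item:baire} directly for $\sigma(\gamma,\xi) := \phi B_\theta(\gamma,\xi) = \phi\,\xi(\gamma^{-1}o)$. This is a genuine cocycle, so it is a $0$-coarse-cocycle. Note that $\mu$ being a coarse PS-measure in the sense of Definition~\ref{defn: PS on X theta} is exactly the coarse PS-condition for $\sigma$, since $\sigma(\gamma^{-1},\xi)=\phi\,\xi(\gamma o)$. The key tool for the last two axioms will be a linear-algebraic description of each $\xi\in\partial_\theta X$ through the representations $\Phi_\alpha$.

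\textbf{Easy axioms.} Property \ref{item:coycles are bounded} follows from the uniform Lipschitz property of the horofunctions $b_x$, which passes to their limits: $\abs{\xi(\gamma^{-1}o)} \le L\,\dist_X(o,\gamma^{-1}o)$ for every $\xi\in\partial_\theta X$. Openness of $\Oc_R^\theta(\gamma)$ follows because evaluation at a point is continuous in the compact-open topology. For non-emptiness, write $\gamma=k e^{\kappa(\gamma)}\ell$ and take $\xi=\iota(\ell^{-1}\Psf_\theta)$. Using \eqref{eqn:definition of iota} and the Iwasawa cocycle, $\xi(\gamma^{-1}o)=B^{IW}_\theta(\gamma,\ell^{-1}\Psf_\theta)=\pi_\theta\kappa(\gamma)$, so $\xi$ lies in the set whose $\gamma$-translate is the shadow.

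For \ref{item:almost constant on shadows}, note that $\xi\in\gamma^{-1}\Oc_R^\theta(\gamma)$ means $\omega_\alpha\kappa(\gamma)-R<\omega_\alpha\xi(\gamma^{-1}o)\le\omega_\alpha\kappa(\gamma)$ for all $\alpha\in\theta$, where the upper bound is \cite[Lemma 4.6]{KimZimmer_vector}. Since $\xi$ takes values in $\mfa_\theta$, we can write $\phi=\sum_{\alpha\in\theta}c_\alpha\omega_\alpha$. This gives $\abs{\sigma(\gamma,\xi)-\phi(\kappa(\gamma))}\le R\sum\abs{c_\alpha}$ for every such $\xi$, not merely $\mu$-a.e.

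\textbf{Linear model for boundary points.} If $\xi=\lim\pi_\theta b_{h_mo}$, then by \ref{item:SVs of Tits repn}
$$
N_\alpha\,\omega_\alpha\xi(g^{-1}o)=\lim_m \log\frac{\norm{\Phi_\alpha(gh_m)}}{\norm{\Phi_\alpha(h_m)}}=\log\norm{\Phi_\alpha(g)T_\alpha}.
$$
Here $T_\alpha$ is any limit of $\Phi_\alpha(h_m)/\norm{\Phi_\alpha(h_m)}$, which is a norm-one endomorphism of $V_\alpha$. Since the left-hand side depends only on $\xi$, so does the function $g\mapsto\norm{\Phi_\alpha(g)T_\alpha}$. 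The subspace $\mathrm{Im}\,T_\alpha$ should then be recoverable from $\xi$, for instance as the smallest subspace $W$ such that $\norm{\Phi_\alpha(g)T_\alpha}$ is controlled by $\norm{\Phi_\alpha(g)|_W}$ uniformly in $g$. Pinning down that $\mathrm{Im}\,T_\alpha$, or at least a nonzero vector $v_\alpha(\xi)$ in it, is canonically attached to $\xi$ and transforms by $v_\alpha(g\xi)\propto\Phi_\alpha(g)v_\alpha(\xi)$, is the step I expect to be the \textbf{main obstacle}. To sidestep it, I would first try to work with the whole (closed) set of possible limits $T_\alpha$ for each $\xi$, and check that the arguments below only use "some $T_\alpha$".

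\textbf{\ref{item:empty Z intersection} and \ref{item:baire}.} Take $\xi\in Z$ and a Hausdorff limit of complements $M\smallsetminus\gamma_n^{-1}\Oc_{R_n}(\gamma_n)$. Then $\xi=\lim\xi_n$, where each $\xi_n$ has some $\alpha\in\theta$ with $\norm{\Phi_\alpha(\gamma_n^{-1})T^{(n)}_\alpha}\le e^{-N_\alpha R_n}\norm{\Phi_\alpha(\gamma_n^{-1})}$. Passing to subsequences, I fix $\alpha$, take $\Phi_\alpha(\gamma_n^{-1})/\norm{\Phi_\alpha(\gamma_n^{-1})}\to S_\alpha\neq0$, and set $W_\alpha:=\ker S_\alpha$, a proper subspace. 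In the limit $S_\alpha T_\alpha=0$, which requires an upper semicontinuity of the linear model under $\xi_n\to\xi$ and should follow from a diagonal argument. Hence
$$
Z\subset\bigcup_{\alpha\in\theta}\{\xi : \mathrm{Im}\,T_\alpha(\xi)\subset W_\alpha\}.
$$
Now, given $h_1,\dots,h_m$ and $\xi\in Z$, I pick nonzero $v_\alpha\in\mathrm{Im}\,T_\alpha(\xi)$ for each $\alpha\in\theta$. I then apply Lemma~\ref{lem:rho irr implies flag irr} to the finite family of pairs $(v_\alpha,\Phi_\alpha(h_i)W_\alpha)$ over $\alpha\in\theta$ and $i=1,\dots,m$. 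This produces $\gamma\in\Gamma$ with $\Phi_\alpha(\gamma)v_\alpha\notin\Phi_\alpha(h_i)W_\alpha$ for all $\alpha$ and $i$. By equivariance of the linear model, $\gamma\xi\notin h_iZ$ for every $i$. This proves \ref{item:baire}, and \ref{item:empty Z intersection} is the case $m=1$, $h_1=\id$.
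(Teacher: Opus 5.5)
\textbf{Verdict: there is a genuine gap in the final step for \ref{item:empty Z intersection} and \ref{item:baire}; it can be closed with one short observation.}

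\textbf{What is correct.} $\phi\circ B_\theta$ is an honest cocycle. \ref{item:coycles are bounded} follows from $\xi(o)=0$ and the uniform Lipschitz bound. The point $\iota(\ell^{-1}\Psf_\theta)$ does show the shadow is non-empty. \ref{item:almost constant on shadows} holds everywhere on $\ga^{-1}\Oc_R^\theta(\ga)$.

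\textbf{Index slip in the Hausdorff-limit step.} By your own identity with $g=\ga_n$, the complement condition reads $\norm{\Phi_\alpha(\ga_n)T^{(n)}_\alpha}\le e^{-N_\alpha R_n}\norm{\Phi_\alpha(\ga_n)}$, not with $\ga_n^{-1}$. So $S_\alpha$ must be a limit of $\Phi_\alpha(\ga_n)/\norm{\Phi_\alpha(\ga_n)}$. Extract this subsequence once, for all $\alpha\in\theta$, before choosing points of $Z$; this does not change $Z$. With that fix, the step correctly yields: every $\xi\in Z$ has, for some $\alpha\in\theta$, \emph{some} limit $T_\alpha$ (a ``model'' of $\xi$) with $S_\alpha T_\alpha=0$.

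\textbf{The gap.} Your last sentence appeals to ``equivariance of the linear model''. That is exactly the canonical, single-valued $T_\alpha(\xi)$ you flagged as unproved. The proposed sidestep through ``some $T_\alpha$'' does not close it, because the quantifiers do not match. Your description of $Z$ is existential in the model, but $\ga$ is chosen using a vector $v_\alpha$ in the image of \emph{one} fixed model of $\xi$.

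Models of $g\xi$ are the normalizations of $\Phi_\alpha(g)T$ with $T$ a model of $\xi$; this follows from $\pi_\theta b_{gho}=g\cdot\pi_\theta b_{ho}$ and continuity of the $\Gsf$-action on $\overline{X}^\theta$. So if $h_i^{-1}\ga\xi\in Z$, you only learn that $S_\alpha\Phi_\alpha(h_i^{-1}\ga)T=0$ for \emph{some} model $T$ of $\xi$. Nothing forces $v_\alpha\in\mathrm{Im}\,T$, and you have not shown that $\mathrm{Im}\,T$ is independent of the model.

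\textbf{The missing observation.} The vanishing you need is model-independent. For every model $T$ of $\xi$ and every $g\in\Gsf$ you have $\norm{\Phi_\alpha(g)T}=e^{N_\alpha\omega_\alpha\xi(g^{-1}o)}$. Hence
\[
\norm{S_\alpha\Phi_\alpha(h_i^{-1}\ga)T}=\lim_{n\to+\infty}\frac{\norm{\Phi_\alpha(\ga_nh_i^{-1}\ga)T}}{\norm{\Phi_\alpha(\ga_n)}}=\lim_{n\to+\infty}\frac{e^{N_\alpha\omega_\alpha\xi((\ga_nh_i^{-1}\ga)^{-1}o)}}{\norm{\Phi_\alpha(\ga_n)}},
\]
which depends only on $\xi$.

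Consequently, $h_i^{-1}\ga\xi$ lies in the $\alpha$-piece of your cover of $Z$ if and only if $\Phi_\alpha(h_i^{-1}\ga)\,\mathrm{Im}\,T\subset W_\alpha$ for \emph{every} model $T$ of $\xi$. In particular this must hold for the model whose image contains $v_\alpha$, and your choice of $\ga$ via Lemma~\ref{lem:rho irr implies flag irr} excludes exactly that, for all $\alpha\in\theta$ and all $i$.

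Equivalently, you can define the pieces intrinsically as $\{\eta : N_\alpha\omega_\alpha\eta(\ga_n^{-1}o)-\log\norm{\Phi_\alpha(\ga_n)}\to-\infty\}$. The limit always exists and equals $\log\norm{S_\alpha T_\eta}$ for any model $T_\eta$ of $\eta$.

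With this observation added, your argument proves \ref{item:empty Z intersection} and \ref{item:baire}.
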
   

For transverse groups, we showed that the system is well-behaved. 

\begin{theorem}[{\cite[Theorem 6.2]{KimZimmer_vector}}]  \label{thm:transverse well-behaved}
Suppose $\theta \subset \Delta$,
$\phi \in \mfa_\theta^*$, and $\Gamma < \Gsf$ is a strongly $(\Phi_{\alpha})_{\alpha \in \theta}$-irreducible  $\Psf_{\theta}$-transverse group. Let $\mu$ be a coarse $(\Gamma, \phi, \delta)$-Patterson--Sullivan measure on $\partial_{\theta} X$. Then the PS-system $(\partial_{\theta}X , \Gamma, \phi \circ B_{\theta}, \mu)$ in Theorem \ref{thm:Zdense PS} is well-behaved with respect to the trivial hierarchy $\mathscr{H}(R) \equiv \Ga$.

\end{theorem}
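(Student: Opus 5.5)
The plan is to verify the remaining axioms \ref{item:properness}, \ref{item:shadow inclusion}, \ref{item:intersecting shadows}, \ref{item:diam goes to zero ae} for the trivial hierarchy $\mathscr{H}(R) \equiv \Ga$. Theorem~\ref{thm:Zdense PS} already supplies (PS1)--(PS3) and \ref{item:baire}. Two of the remaining axioms are soft.
\begin{itemize}
\item \ref{item:shadow inclusion} follows directly from the definition of $\Oc_R^\theta(g)$, since the defining inequality $\omega_\alpha\xi(g^{-1}o) > \omega_\alpha\kappa(g) - R$ only weakens as $R$ grows.
\item $\Ga$ is countable because it is a discrete subgroup of a second countable Lie group.
\end{itemize}
For the finiteness half of \ref{item:properness}, I would argue after establishing \ref{item:intersecting shadows} and \ref{item:diam goes to zero ae}. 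Suppose infinitely many distinct $\gamma_n$ satisfy $\phi(\kappa(\gamma_n)) \le T$. The Shadow Lemma (Proposition~\ref{prop.shadowlemma}) gives $\mu(\Oc_R^\theta(\gamma_n)) \ge C^{-1}e^{-\delta T}$. Lemma~\ref{lem:V covering} then extracts a disjoint subfamily whose $R'$-enlargements cover the union, and a diameter argument shows this subfamily cannot be finite. Together these force infinitely many disjoint sets of mass bounded below, which is a contradiction.

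The core of the proof is \ref{item:intersecting shadows}, which I would prove by contradiction and compactness. Transversality gives $\min_{\alpha\in\theta}\alpha(\kappa(\gamma_n)) \to +\infty$ along any sequence of distinct elements. Take counterexamples $\alpha_n,\beta_n$ with intersecting $R$-shadows and set $\eta_n = \alpha_n^{-1}\beta_n$. If either $\alpha_n$ or $\eta_n$ stays bounded, both the additivity estimate and the inclusion $\Oc_R(\beta_n)\subset\Oc_{R'}(\alpha_n)$ hold for trivial reasons (the coarse cocycle bounds). Otherwise I pass to subsequences along which $U_\theta(\alpha_n)$, $U_{\opp^*\theta}(\alpha_n^{-1})$, $U_\theta(\eta_n)$ and $U_{\opp^*\theta}(\eta_n^{-1})$ all converge, with limits in $\Lambda_{\theta\cup\opp^*\theta}(\Ga)$. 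By Lemma~\ref{lem:rank one limits in Valpha}, the normalized maps $\Phi_a(\alpha_n)/\norm{\Phi_a(\alpha_n)}$ converge to a rank-one map with kernel $\zeta_a^*(\lim U_{\opp^*a}(\alpha_n^{-1}))$, and similarly for $\eta_n$. Hence
$$
\omega_a\kappa(\beta_n) = \omega_a\kappa(\alpha_n) + \omega_a\kappa(\eta_n) + O(1)
$$
for every $a\in\theta$ (using \ref{item:SVs of Tits repn}), provided the image of the limit of $\Phi_a(\eta_n)$ is not contained in the kernel of the limit of $\Phi_a(\alpha_n)$. This condition is transversality of $\lim U_\theta(\eta_n)$ and $\lim U_{\opp^*\theta}(\alpha_n^{-1})$. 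These two points are distinct limit points because the shadows intersect: a common point $\xi$ of the shadows forces $\alpha_n^{-1}\xi$ to nearly maximize $\omega_a(\cdot)(\alpha_n o)$, which places it away from the repelling point. Transversality of distinct limit points then gives the required transversality. The same limiting picture, using $B_\theta$ and the horofunction description of $\partial_\theta X$ (Proposition~\ref{prop:compactification}), should yield the shadow inclusion $\Oc_R(\beta_n)\subset\Oc_{R'}(\alpha_n)$.

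The step I expect to be the main obstacle is \ref{item:diam goes to zero ae}. Shadows live in the full horofunction boundary $\partial_\theta X$, not just in $\Fc_\theta$, so controlling $U_\theta(\gamma_n)$ does not immediately control the whole function $\xi$. I would first show that any $\xi\in\Oc_R^\theta(\gamma_n)$ satisfies $\xi(ho) = \pi_\theta\kappa(h^{-1}\gamma_n) - \pi_\theta\kappa(\gamma_n) + O_R(1)$ on compact sets of $h$ up to an error tending to zero. To do this I would again apply Lemma~\ref{lem:rank one limits in Valpha} on each $V_a$, using Quint's limit formula behind Proposition~\ref{prop:embedding of partial flags}. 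Consequently, every point of $\Oc_R^\theta(\gamma_n)$ is close to $\iota(U_\theta(\gamma_n))$ for large $n$. If this uniform estimate fails off a null set, I would restrict to the full-measure set $M'$ of points lying in the image of the limit set. Transversality there makes the error terms vanish in the limit.
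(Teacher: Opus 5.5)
The paper does not prove this theorem; it imports it from \cite[Theorem 6.2]{KimZimmer_vector}. So there is nothing to match against, and I judged your sketch on its own. The bounded cases and the reduction of additivity, via Lemma~\ref{lem:rank one limits in Valpha}, to transversality of $\lim U_\theta(\eta_n)$ and $\lim U_{\opp^*\theta}(\alpha_n^{-1})$ are fine. However, there is a genuine gap in \ref{item:intersecting shadows}.

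\textbf{The gap in \ref{item:intersecting shadows}.} Your justification of that transversality never uses the hypothesis $\phi(\kappa(\alpha))\le\phi(\kappa(\beta))$, and that hypothesis is essential. Take $\beta_n=\id$ and $\alpha_n$ escaping.
\begin{itemize}
\item Since every $\xi\in\partial_\theta X$ has $\xi(o)=0$, we get $\Oc_R^\theta(\id)=\partial_\theta X$, so the shadows intersect.
\item Both $\alpha_n$ and $\eta_n=\alpha_n^{-1}$ are unbounded, so this is your main case.
\item Yet $\lim U_\theta(\alpha_n^{-1})$ and $\lim U_{\opp^*\theta}(\alpha_n^{-1})$ are projections of a single point of $\Lambda_{\theta\cup\opp^*\theta}(\Gamma)$, hence not transverse.
\item Accordingly, $\omega_a\kappa(\beta_n)=\omega_a\kappa(\alpha_n)+\omega_a\kappa(\eta_n)+O(1)$ fails badly.
\end{itemize}
The faulty step is ``these two points are distinct because the shadows intersect.'' Knowing $\alpha_n^{-1}\xi_n\in\alpha_n^{-1}\Oc_R^\theta(\alpha_n)$ locates $\alpha_n^{-1}\xi_n$ relative to the repelling point of $\alpha_n$. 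To say anything about $\lim U_\theta(\eta_n)$, you would need $\alpha_n^{-1}\xi_n$ to be near it, i.e.\ $\alpha_n^{-1}\Oc_R^\theta(\beta_n)\subset\Oc_{R'}^\theta(\eta_n)$. A short computation shows that this inclusion is equivalent to the additive estimate you are trying to prove.

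What is missing is an argument that uses the ordering to exclude this ``backtracking'' configuration, in which $\eta_n$ heads back toward the repelling point of $\alpha_n$. The common point $\xi_n$ need not lie in $\Fc_\theta$, so the rank-one limits of $\Phi_a(\alpha_n)$ and $\Phi_a(\eta_n)$ alone cannot detect it. You need genuine input from the transverse structure. One possible route is to split $\alpha_n$ and $\beta_n$ at a branch point in a hyperbolic model for $\Gamma$, and use that shadows along two sequences with distinct limit points are eventually disjoint.

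By contrast, the inclusion half of \ref{item:intersecting shadows} is formal once additivity holds. For $\xi=\beta\zeta\in\Oc_R^\theta(\beta)$ and $\eta=\alpha^{-1}\beta$,
\[
\omega_a(\alpha^{-1}\xi)(\alpha^{-1}o)=\omega_a\zeta(\beta^{-1}o)-\omega_a\zeta(\eta^{-1}o)>\omega_a\kappa(\beta)-R-\omega_a\kappa(\eta),
\]
using $\omega_a\zeta(\eta^{-1}o)\le\omega_a\kappa(\eta)$. If $\omega_a\kappa(\beta)\ge\omega_a\kappa(\alpha)+\omega_a\kappa(\eta)-C$, this gives $\xi\in\Oc_{R+C}^\theta(\alpha)$.

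\textbf{The other two axioms are mis-weighted.}
\begin{itemize}
\item For \ref{item:properness}, invoking Lemma~\ref{lem:V covering} is circular, since it is a statement about well-behaved systems. Nothing in your sketch forces the disjoint subfamily to be infinite. Infinitely many shrinking shadows of mass $\ge C^{-1}e^{-\delta T}$ may all accumulate at one point inside a single enlarged shadow. That yields only an atom of $\mu$, not a contradiction. In every use of this theorem in the paper, $\delta^\phi(\Gamma)<+\infty$. Finiteness is then immediate from $\#\{\gamma:\phi(\kappa(\gamma))\le T\}\le e^{sT}\sum_\gamma e^{-s\phi(\kappa(\gamma))}$ for $s>\delta^\phi(\Gamma)$.
\item For \ref{item:diam goes to zero ae}, there is nothing to do. Transversality gives $\min_{a\in\theta}a(\kappa(\gamma_n))\to+\infty$ along every escaping sequence. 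Lemma~\ref{lem:diam decay}(3) then gives $\diam\Oc_R^\theta(\gamma_n)\to0$ at every point, so $M'=M$ works.
\end{itemize}
The real difficulty is concentrated in \ref{item:intersecting shadows}, exactly where your sketch has the hole.
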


\begin{remark} See \cite[Section 5.2]{KimZimmer_vector} for a comparison of shadows in $\partial_{\theta} X$ with the shadows in $\Fc_{\theta}$ defined in terms of positive Weyl chambers in the symmetric space. 
 \end{remark}

\subsection{Properties of shadows} We record some properties of shadows. For part~(3), we fix any metric generating the topology on $\overline{X}^\theta$.

\begin{lemma}[{\cite[Lemmas 5.4, 5.5, 5.6]{KimZimmer_vector}}] \label{lem:shadow translate shadow} \label{lem:endpoint in shadow} \label{lem:diam decay}
\
\begin{enumerate}
   \item For any  $g \in \Gsf$ and $R > 0$, there exists $R' = R'(g, R) > 0$ such that: if $h \in \Gsf$, then 
   $$
   g\Oc_R^{\theta}(h) \subset \Oc_{R'}^{\theta}(gh).
   $$
   \item For $g \in \Gsf$ with $\min_{\alpha \in \theta} \alpha(\kappa(g)) > 0$, 
   $$
   U_{\theta}(g) \in \Oc_R^{\theta}(g)
   $$
   for all $R > 0$. 

   \item For a sequence $\{g_n\} \subset \Gsf$, if $\min_{\alpha \in \theta} \alpha(\kappa(g_n)) \rightarrow +\infty$, then 
$$
 {\rm diam} \Oc_R^\theta(g_n) \rightarrow 0.
$$

\end{enumerate}
\end{lemma}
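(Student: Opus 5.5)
For part (1), write a point of $g\Oc_R^\theta(h)$ as $gh\cdot\xi$, where $\xi \in \partial_\theta X$ satisfies $\omega_\alpha \xi(h^{-1}o) > \omega_\alpha\kappa(h) - R$ for all $\alpha\in\theta$. We must show that $\omega_\alpha\xi((gh)^{-1}o) > \omega_\alpha\kappa(gh) - R'$.

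Two estimates handle this. First, the functions $\pi_\theta b_x$ are uniformly $L$-Lipschitz, and this passes to the limit, so every $\xi\in\partial_\theta X$ is $L$-Lipschitz. Hence
$$|\xi(h^{-1}g^{-1}o)-\xi(h^{-1}o)| \le L\,\dist_X(h^{-1}g^{-1}o,h^{-1}o) = L\,\dist_X(g^{-1}o,o).$$
Second, the standard Cartan projection estimate gives $\norm{\kappa(gh)-\kappa(h)}\le \norm{\kappa(g)}$. Combining the two, we may take $R' := R + C\norm{\kappa(g)}$, where $C$ depends only on $L$ and the norms of the functionals $\omega_\alpha$.

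For part (2), write $g = k e^{\kappa(g)}\ell$ with $k,\ell\in\Ksf$, so $U_\theta(g)=k\Psf_\theta$. Then
$$g^{-1}U_\theta(g) = \ell^{-1}e^{-\kappa(g)}\Psf_\theta = \ell^{-1}\Psf_\theta =: y.$$
Under the embedding $\iota$ of Proposition~\ref{prop:embedding of partial flags}, we get $\iota(y)(g^{-1}o) = B^{IW}_\theta(g,y)$. Expanding by the cocycle relation,
$$B^{IW}_\theta(g,y) = B^{IW}_\theta(k,\cdot)+B^{IW}_\theta(e^{\kappa(g)},\Psf_\theta)+B^{IW}_\theta(\ell,\ell^{-1}\Psf_\theta).$$
The $\Ksf$-terms vanish, and the middle term equals $\pi_\theta\kappa(g)$. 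Therefore $\omega_\alpha\iota(y)(g^{-1}o)=\omega_\alpha\kappa(g)$, which exceeds $\omega_\alpha\kappa(g)-R$. This shows $U_\theta(g)=g\cdot\iota(y)\in\Oc^\theta_R(g)$.

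For part (3), I argue by contradiction. After passing to a subsequence, we may assume $U_\theta(g_n)\to x$ and $U_{\opp^*\theta}(g_n^{-1})\to y$. It then suffices to show that every sequence $\eta_n\in\Oc_R^\theta(g_n)$ converges to $\iota(x)$; applying this to two sequences of points at distance at least $\epsilon$ gives the contradiction. Since $\{\omega_\alpha\}_{\alpha\in\theta}$ is a basis of $\mfa_\theta^*$, it is enough to prove that $\omega_\alpha\eta_n(ho)\to\omega_\alpha B^{IW}_\theta(h^{-1},x)$ for each $\alpha\in\theta$ and each $h$ (locally uniformly, by the Lipschitz bound).

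Write $\eta_n = g_n\xi_n$. Then
$$\eta_n(ho)=\xi_n(g_n^{-1}ho)-\xi_n(g_n^{-1}o).$$
The key step, and the main obstacle, is to describe $\omega_\alpha\xi$ through the Tits representation $\Phi_\alpha$. By property \ref{item:SVs of Tits repn}, $N_\alpha\omega_\alpha b_{zo}(ho)=\log\norm{\Phi_\alpha(h^{-1}z)}-\log\norm{\Phi_\alpha(z)}$. Passing to limits, I expect that each $\xi\in\partial_\theta X$ satisfies $N_\alpha\omega_\alpha\xi(ho)=\log\norm{\Phi_\alpha(h^{-1})A}$ for some unit-norm $A\in\End(V_\alpha)$, namely a limit of normalized $\Phi_\alpha(z_n)$.

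With this description, the shadow condition $\omega_\alpha\xi_n(g_n^{-1}o)>\omega_\alpha\kappa(g_n)-R$ becomes $\norm{\Phi_\alpha(g_n)A_n}\ge e^{-N_\alpha R}\norm{\Phi_\alpha(g_n)}$. Hence
$$N_\alpha\omega_\alpha\eta_n(ho)=\log\frac{\norm{\Phi_\alpha(h^{-1}g_n)A_n}}{\norm{\Phi_\alpha(g_n)A_n}}.$$
Lemma~\ref{lem:rank one limits in Valpha} says that $\Phi_\alpha(g_n)/\norm{\Phi_\alpha(g_n)}$ subconverges to a rank-one map $P$ with image $\zeta_\alpha(x)$. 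The shadow bound keeps $\norm{PA}$ bounded below, where $A$ is a limit of the $A_n$. So the ratio above tends to $\norm{\Phi_\alpha(h^{-1})v}/\norm{v}$ for $v\in\zeta_\alpha(x)$. By Quint's formula this equals $N_\alpha\omega_\alpha B^{IW}_\theta(h^{-1},x)$, which completes the argument.
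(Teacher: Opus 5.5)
Your proof is correct in all three parts. The paper gives no argument of its own for this lemma: it is imported from \cite{KimZimmer_vector} (Lemmas 5.4--5.6) and used as a black box, so there is no in-text proof to compare against. Your argument is a valid self-contained derivation that uses only ingredients recorded in Sections~\ref{sec: notation for ss groups} and~\ref{sec:compactifications and PS measures}: the uniform Lipschitz bound on the $b_x$, Property~\ref{item:SVs of Tits repn}, Lemma~\ref{lem:rank one limits in Valpha}, and the embedding $\iota$.

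A few steps you use implicitly are worth stating.

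\begin{enumerate}
\item[(a)] In part (2), writing $U_\theta(g)=g\cdot\iota(g^{-1}U_\theta(g))$ uses the $\Gsf$-equivariance of $\iota$. This follows from \eqref{eqn:definition of iota} and the cocycle identity for $B^{IW}_\theta$.

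\item[(b)] In part (3), the closing identification needs no separate formula. By Property~\ref{item:SVs of Tits repn}, $N_\alpha\omega_\alpha\pi_\theta b_{g_no}(ho)=\log\bigl(\norm{\Phi_\alpha(h^{-1}g_n)}/\norm{\Phi_\alpha(g_n)}\bigr)$. This has the same limit $\log\bigl(\norm{\Phi_\alpha(h^{-1})v}/\norm{v}\bigr)$ with $v\in\zeta_\alpha(x)$. Moreover $g_no\to\iota(x)$ by Proposition~\ref{prop:embedding of partial flags}(1), and this is exactly the Quint formula you invoke.

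\item[(c)] Also in part (3), pointwise convergence of the $\eta_n$ upgrades to compact-open convergence by the uniform Lipschitz bound. Since $\partial_\theta X$ is compact, the conclusion $\diam\Oc^\theta_R(g_n)\to 0$ does not depend on which compatible metric is used.
\end{enumerate}

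For part (1), there is an alternative that avoids both the Lipschitz constant and the Cartan-projection estimate. The cocycle identity gives $\xi((gh)^{-1}o)=(h\cdot\xi)(g^{-1}o)+\xi(h^{-1}o)$. For any $\zeta\in\partial_\theta X$ one has $\zeta(g^{-1}o)=-(g\cdot\zeta)(go)$, so the inequality $\omega_\alpha\xi(g^{-1}o)\le\omega_\alpha\kappa(g)$ of \cite[Lemma 4.6]{KimZimmer_vector} yields $\omega_\alpha\zeta(g^{-1}o)\ge-\omega_\alpha\kappa(g^{-1})$. Combining this with $\omega_\alpha\kappa(gh)\le\omega_\alpha\kappa(g)+\omega_\alpha\kappa(h)$ gives the explicit constant $R'=R+\max_{\alpha\in\theta}\bigl(\omega_\alpha\kappa(g)+\omega_\alpha\kappa(g^{-1})\bigr)$. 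Your version is equally valid.
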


\subsection{Contracting conical limit sets}

Given a subgroup $\Ga < \Gsf$, we define its \emph{conical limit set} in $\partial_{\theta} X$ by
\begin{equation} \label{eqn:conical limit set classic}
   \La_{\theta}^{\rm con}(\Ga) := \left\{ \xi \in \partial_{\theta} X : \exists \, R > 0, \text{ escaping } \{ \ga_n \} \subset \Ga \text{ s.t. } \xi \in \bigcap_{n \ge 1} \Oc_R^{\theta}(\ga_n) \right\},
\end{equation}
following the classical definition of conical limit sets in rank one settings.

When $\Gsf$ is of higher rank, the intersection $\bigcap_{n \ge 1}\Oc_R^{\theta}(\ga_n)$ may not be a singleton, even after intersecting with the partial flag manifold $\Fc_{\theta}$ and the conical limit set $\La_{\theta}^{\rm con}(\Ga)$, even after intersecting with $\Fc_{\theta}$, may not be a subset of the limit set $\La_{\theta}(\Ga)$, see ~\cite[Example 5.8]{KimZimmer_vector}.  Hence, in view of Lemma \ref{lem:diam decay}(3), we define the following smaller subset of conical limit set, which only involves shrinking shadows.

\begin{definition} \label{def:concon}
   
Given a subgroup $\Ga < \Gsf$, we call $\xi \in \partial_{\theta} X$ a \emph{contracting conical limit point} of $\Ga$ if there exist $R > 0$ and a sequence $\{ \ga_n \} \subset \Ga$ such that
$$
\lim_{n \to + \infty} \min_{\alpha \in \theta} \alpha(\kappa(\ga_n)) = + \infty \quad \text{and} \quad \xi \in \bigcap_{n \ge 1} \Oc_{R}^{\theta}(\ga_n).
$$
We denote by $\La_{\theta}^{\rm concon}(\Ga)$ the \emph{contracting conical limit set} of $\Ga$, which is defined as the set of all contracting conical limit points of $\Ga$. 
\end{definition}

For general $\Ga < \Gsf$, the contracting conical limit set $\La_{\theta}^{\rm concon}(\Ga)$ is a $\Gamma$-invariant subset of the limit set $\Lambda_\theta(\Gamma) \subset \Fc_\theta$ introduced in Section~\ref{section:Anosov transverse} (this follows from Lemma~\ref{lem:shadow translate shadow}).

 For transverse groups we have the following.

\begin{proposition}[{\cite[Proposition 5.3, Theorem 6.3]{KimZimmer_vector}}] \label{prop:concon limit set for transverse groups} If $\Ga < \Gsf$ is a non-elementary $\Psf_{\theta}$-transverse group, then:
\begin{enumerate} 
\item $\La_{\theta}^{\rm concon}(\Ga)=\La_\theta^{\rm con}(\Gamma)$ and $\La_\theta^{\rm con}(\Gamma)$ is a subset of the limit set $\Lambda_\theta(\Gamma) \subset \Fc_\theta$ introduced in Section~\ref{section:Anosov transverse}. 
\item  $\La_\theta^{\rm con}(\Gamma)$ coincides  with the conical limit set in the convergence group sense (recall that $\Gamma$ acts on $\Lambda_\theta(\Gamma)$ as a convergence group). 
\item If $\phi \in \fa_{\theta}^*$, $\delta^{\phi}(\Ga) < + \infty$, and  $\sum_{\ga \in \Ga} e^{-\delta^{\phi}(\Ga) \phi(\kappa(\ga))} = + \infty$, then there exists a unique $(\Ga, \phi, \delta^{\phi}(\Ga))$-Patterson--Sullivan measure $\mu$ on $\partial_{\theta} X$. Moreover, 
   $$
   \mu(\La_{\theta}^{\rm con}(\Ga)) = 1.
   $$
   \end{enumerate} 

\end{proposition}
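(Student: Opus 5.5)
For part (1) I would begin with the trivial inclusion $\La_{\theta}^{\rm concon}(\Ga) \subset \La_\theta^{\rm con}(\Ga)$. If $\min_{\alpha\in\theta}\alpha(\kappa(\ga_n)) \to +\infty$, then $\norm{\kappa(\ga_n)} \to +\infty$, so $\{\ga_n\}$ is escaping. The reverse inclusion is exactly the transverse hypothesis: every escaping sequence $\{\ga_n\}\subset\Ga$ satisfies $\min_{\alpha \in \theta}\alpha(\kappa(\ga_n)) \to +\infty$. Hence the two sets coincide.

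To see that $\La_\theta^{\rm con}(\Ga) \subset \La_\theta(\Ga)$, take $\xi \in \bigcap_n \Oc^\theta_R(\ga_n)$. By Lemma~\ref{lem:endpoint in shadow}(2), $\iota(U_\theta(\ga_n))$ lies in $\Oc_R^\theta(\ga_n)$. By Lemma~\ref{lem:diam decay}(3), ${\rm diam}\,\Oc_R^\theta(\ga_n) \to 0$. Hence $\iota(U_\theta(\ga_n)) \to \xi$. Since $\Fc_\theta$ is compact and $\iota$ is an embedding, we get $\xi = \iota(x)$ with $U_\theta(\ga_n)\to x$, so $x \in \La_\theta(\Ga)$ by definition of the limit set.

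For part (2) I would translate the shadow condition into linear algebra via the Tits representations. For $\xi=\iota(x)$ with $x\in\Fc_\theta$, the defining inequality of $\Oc_R^\theta(\ga)$ is $\omega_\alpha B^{IW}_\theta(\ga^{-1}, \ga^{-1}x) > \omega_\alpha\kappa(\ga)-R$ for all $\alpha\in\theta$. By \ref{item:SVs of Tits repn} and \ref{item:splitting for Tits repn}, the left-hand side equals $\tfrac{1}{N_\alpha}\log\bigl(\norm{\Phi_\alpha(\ga^{-1})^{-1}v}/\norm{v}\bigr)$ for $v\in\zeta_\alpha(\ga^{-1}x)$, up to a bounded error. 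So membership in the shadow means that $\zeta_\alpha(\ga_n^{-1}x)$ stays a definite angle away from the hyperplane where $\Phi_\alpha(\ga_n)$ fails to be nearly maximally expanding. By Lemma~\ref{lem:rank one limits in Valpha}, that hyperplane converges to $\zeta^*_\alpha$ of the limit of $U_{\opp^*\theta}(\ga_n^{-1})$.

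I then pass to a subsequence with $\ga_n^{-1}x\to a$ and $U_{\opp^*\theta}(\ga_n^{-1})\to b$. The shadow condition gives that $a$ is transverse to $b$. Proposition~\ref{prop:NS dynamics} applied to $\{\ga_n^{-1}\}$, combined with Observation~\ref{obs:symmetry theta} and the transversality of distinct limit points, shows that $\ga_n^{-1}y\to b$ for all $y\ne x$ in $\La_\theta(\Ga)$, with $a\neq b$. This is the convergence-group definition of a conical point. Conversely, given the convergence-group data, reversing this argument shows that $\zeta_\alpha(\ga_n^{-1}x)$ stays uniformly transverse to the limiting kernels, which yields a uniform $R$.

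For part (3), existence follows by pushing forward the measure of Theorem~\ref{thm:CZZ PS transverse}(1) via $\iota$, using Proposition~\ref{prop:embedding of partial flags}(2). For uniqueness, let $\mu$ be any $(\Ga,\phi,\delta^\phi(\Ga))$-PS measure on $\partial_\theta X$. The key step is $\mu(\La^{\rm con}_\theta(\Ga))=1$. I would run the classical Sullivan argument: the shadow lemma (Proposition~\ref{prop.shadowlemma}), applied through the PS-system structure, combined with divergence of the Poincar\'e series and the Vitali covering lemma (Lemma~\ref{lem:V covering}), gives a Borel--Cantelli-type lower bound on $\limsup$ of shadows. This shows the conical set has positive measure, and ergodicity upgrades this to full measure.

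Granting that, part (1) places $\mu$ on $\iota(\La_\theta(\Ga))$, where $\phi\circ B_\theta$ agrees with $\phi\circ B^{IW}_\theta$. So $\mu$ is a Quint PS-measure on $\La_\theta(\Ga)$, and it is unique by Theorem~\ref{thm:CZZ PS transverse}(3). The main obstacle is that Theorem~\ref{thm:Zdense PS} assumes strong irreducibility, which is not assumed here. Without it I would have to verify the shadow lemma directly, in particular the lower bound, which needs \ref{item:empty Z intersection}. For this I would first try to replace irreducibility by the convergence dynamics on $\La_\theta(\Ga)$: a measure giving mass to the complement of the limit set is ruled out by North--South dynamics, since non-elementary transverse groups act minimally on $\La_\theta(\Ga)$.
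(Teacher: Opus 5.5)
Parts (1), (2) and existence in (3) are fine. There are two minor slips: the shadow condition for $\iota(x)$ is $\omega_\alpha B^{IW}_\theta(\ga,\ga^{-1}x)>\omega_\alpha\kappa(\ga)-R$, and the identity $\omega_\alpha B^{IW}_\theta(\ga,\ga^{-1}x)=\frac{1}{N_\alpha}\log\frac{\norm{\Phi_\alpha(\ga)v}}{\norm{v}}$ holds exactly. In (2) you should also record that $U_\theta(\ga_n)\to x$, which is what makes $x$ the repelling point when you apply Proposition~\ref{prop:NS dynamics} to $\{\ga_n^{-1}\}$.

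The genuine gap is uniqueness in (3). You must show that an \emph{arbitrary} $(\Ga,\phi,\delta^\phi(\Ga))$-Patterson--Sullivan measure on $\partial_\theta X$ gives full mass to $\La^{\rm con}_\theta(\Ga)$. Your substitute for strong irreducibility (North--South dynamics plus minimality on $\La_\theta(\Ga)$) cannot do this, because without irreducibility the claim is false.

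Here is a counterexample. Let $\Ga_0<\SL(2,\Rb)$ be a cocompact lattice, $\Ga=\{\left(\begin{smallmatrix} A&0\\0&1\end{smallmatrix}\right):A\in\Ga_0\}<\SL(3,\Rb)$, $\theta=\{\alpha_1\}$ and $\phi=\omega_{\alpha_1}$. Then $\kappa(\ga)={\rm diag}(\log\sigma_1(A),0,-\log\sigma_1(A))$ and the limit flags are $(\ell,\ell\oplus\Rb e_3)$ with $\ell\in\Pb(\Rb^2\times\{0\})$. So $\Ga$ is non-elementary and $\Psf_\theta$-transverse (even $\Psf_\Delta$-Anosov), $\delta^\phi(\Ga)=2$, and the Poincar\'e series diverges. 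But $[e_3]$ is $\Ga$-fixed and $\omega_{\alpha_1}B^{IW}_\theta(\ga^{-1},[e_3])=\log\norm{\ga^{-1}e_3}=0$. Hence $\Dc_{\iota([e_3])}$ is a $(\Ga,\phi,\delta)$-Patterson--Sullivan measure on $\partial_\theta X$ for every $\delta\geq 0$, and it gives no mass to $\La^{\rm con}_\theta(\Ga)\subset\iota(\La_\theta(\Ga))$. North--South dynamics never moves $[e_3]$: it lies in every plane $\ell\oplus\Rb e_3$, so it is transverse to no point of $\La_{\opp^*\theta}(\Ga)$. Accordingly \ref{item:empty Z intersection} and the lower bound of the Shadow Lemma both fail for this measure.

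So uniqueness in (3) needs $\Ga$ to be strongly $(\Phi_\alpha)_{\alpha\in\theta}$-irreducible, the hypothesis of Theorems~\ref{thm:Zdense PS} and~\ref{thm:transverse well-behaved}. Without it, what survives is existence, together with $(\iota_*\mu)(\La^{\rm con}_\theta(\Ga))=1$ for the measure pushed forward from $\La_\theta(\Ga)$, by Theorem~\ref{thm:CZZ PS transverse}(3) and part (2); this is what the later sections use.

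With irreducibility, any such measure lies in a well-behaved PS-system, and your outline goes through after two repairs. First, the positivity step should be the Kochen--Stone lemma (Lemma~\ref{lem: KS BC lemma}), not Vitali covering. Its second-moment hypothesis is the quasi-independence of shadows, which comes from \ref{item:intersecting shadows} and the Shadow Lemma, as in \cite[proof of Theorem 4.1]{KimZimmer1}. Second, you cannot invoke ergodicity to pass from positive to full measure, since Theorem~\ref{thm:ergodicity KZ} presupposes that the conical set already has full measure. Instead, use that $\La^{\rm con}_\theta(\Ga)$ is $\Ga$-invariant. If its complement had positive $\mu$-measure, the normalized restriction of $\mu$ to the complement would be another $(\Ga,\phi,\delta^\phi(\Ga))$-Patterson--Sullivan measure with no conical mass, contradicting the positivity step applied to that measure.
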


\part{Construction of measurable boundary maps}


\section{Continuity properties of boundary maps}\label{sec:continuity properties of boundary maps}

 
In this section we prove the following technical continuity result for measurable maps whose domains are well-behaved PS-systems. Notice that the theorem does not assume that the map has any equivariance properties.

\begin{theorem}\label{thm:weak continuity property of boundary maps} Suppose $(M, \Gamma, \sigma, \mu)$ is a well-behaved PS-system with respect to a hierarchy $\mathscr{H} = \{ \mathscr{H} (R) \subset \Ga : R \ge 0\}$. Assume $F : M \rightarrow (Y, \dist_Y)$ is a Borel measurable map into a separable metric space. 

For $\mu$-a.e.\ $x_0 \in M$, if $x_0 \in \bigcap_{n \ge 1} \ga \Oc_R(\ga_n)$ for some $\ga \in \Ga$, escaping sequence $\{ \ga_n \}$ with $\ga_n \in \mathscr{H}(n)$, and $R > 0$, then there exist a subsequence $\{\gamma_{n_j}\}$, $h_1,\dots, h_m \in \Gamma$, and a $\mu$-null set $E \subset M$ such that for each $x  \in M \smallsetminus E$ there exists some $1 \leq i \leq m$ where
$$
\lim_{j \rightarrow  + \infty} F(\gamma \gamma_{n_j} h_i x) = F(x_0).
$$
Moreover, if 
$$
\mu(Z) = 0
$$
whenever $S_n \rightarrow +\infty$  and $\left[  M \smallsetminus \gamma_n^{-1} \Oc_{S_n}(\ga_n) \right] \to Z$ with respect to the Hausdorff distance, then we can assume that $m=1$ and $h_1 = \id$. 
\end{theorem}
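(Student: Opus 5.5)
The plan is to prove a Lebesgue differentiation theorem along shadows and then transport the resulting density statement by $\gamma_n^{-1}$, using \ref{item:almost constant on shadows} to control the Radon--Nikodym derivatives.

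\textbf{Step 1: reduce to $\gamma = \id$.} Since $\Gamma$ is countable and acts quasi-invariantly, it suffices to prove the statement with $\gamma=\id$ simultaneously for the countably many maps $F\circ\gamma$. Indeed, $x_0\in\bigcap_n\gamma\Oc_R(\gamma_n)$ means $\gamma^{-1}x_0\in\bigcap_n\Oc_R(\gamma_n)$. Moreover, a full-measure set for $F\circ\gamma$ pulls back under $\gamma^{-1}$ to a full-measure set.

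\textbf{Step 2: differentiation along shadows.} Fix a countable dense set $\{y_k\}\subset Y$, rational radii $r>0$, and the sets $A_{k,r}:=F^{-1}(B(y_k,r))$. For each $R$ and each Borel $A$, I would show that $\mu$-a.e.\ $x\in A$ satisfies
$$
\frac{\mu(\Oc_R(\eta_n)\cap A)}{\mu(\Oc_R(\eta_n))}\to 1
$$
whenever $\{\eta_n\}$ is escaping, $\eta_n\in\mathscr H(n)$, and $x\in\bigcap_n \Oc_R(\eta_n)$. The argument is the usual maximal-function one. Approximate $A$ from outside by an open set $U$ with $\mu(U\smallsetminus A)$ small. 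Consider the set of bad points, where the ratio stays below $1-\epsilon$ along some such sequence. By \ref{item:diam goes to zero ae}, these shadows eventually lie in $U$. Apply the Vitali covering Lemma~\ref{lem:V covering} to the bad shadows to get disjoint shadows $\Oc_R(\eta)$ whose $R'$-enlargements cover the bad set. The Shadow Lemma (Proposition~\ref{prop.shadowlemma}) gives the doubling bound $\mu(\Oc_{R'}(\eta))\le C\mu(\Oc_R(\eta))$. Together these bound $\mu(\text{bad})\lesssim \epsilon^{-1}\mu(U\smallsetminus A)$, which can be made arbitrarily small. Intersecting over all $k$, $r$, and integer $R$ gives the full-measure set of good $x_0$. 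For such $x_0$, apply this with $A=F^{-1}(B(F(x_0),\epsilon))$, choosing $y_k$ close to $F(x_0)$.

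\textbf{Step 3: transport by $\gamma_n^{-1}$.} By the cocycle relation, $\frac{d(\gamma_n^{-1})_*\mu}{d\mu}$ is comparable, up to constants depending on $R$, to $e^{\delta\norm{\gamma_n}_\sigma}$ on $\gamma_n^{-1}\Oc_R(\gamma_n)$. This uses \ref{item:almost constant on shadows} and the coarse PS property. Combining with the Shadow Lemma, the density statement becomes
$$
\mu\big(\{y\in\gamma_n^{-1}\Oc_R(\gamma_n): \dist_Y(F(\gamma_n y),F(x_0))\ge\epsilon\}\big)\to0 .
$$
This is only on the set $\gamma_n^{-1}\Oc_R(\gamma_n)$ with $R$ fixed, so I would upgrade $R$ to slowly growing $S_n\to\infty$. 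By \ref{item:shadow inclusion} and \ref{item:intersecting shadows}, $x_0\in\Oc_{S}(\gamma_n)$ for every $S\ge R$, and the density statement holds for each fixed $S$. A diagonal argument then gives $S_n\to\infty$ with summable error measures $\le 2^{-j}$ along a subsequence $n_j$, where each error uses $\epsilon_j\to0$. Borel--Cantelli then shows that $F(\gamma_{n_j}x)\to F(x_0)$ for a.e.\ $x$ which eventually lies in $\gamma_{n_j}^{-1}\Oc_{S_{n_j}}(\gamma_{n_j})$. Pass to a further subsequence so that the complements converge in the Hausdorff topology to a compact $Z$. Then every $x\notin Z$ eventually lies in these sets, since $M\smallsetminus Z$ is open and Hausdorff convergence forces this.

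\textbf{Step 4: handling $Z$, the main obstacle.} If $\mu(Z)=0$, we are done with $m=1$ and $h_1=\id$. In general I would use \ref{item:baire} to produce $h_1,\dots,h_m$ with $\bigcap_i h_i^{-1}Z=\emptyset$. Start with $h_1=\id$. While $K:=\bigcap_{i\le m}h_i^{-1}Z\neq\emptyset$, I need a new $h$ removing a point, but to finish I need finiteness. I would try a compactness argument: for each $x\in Z$, \ref{item:baire} gives $g$ with $gx\notin Z$, hence an open neighbourhood $V_x$ with $gV_x\cap Z=\emptyset$. Cover the compact $Z$ by finitely many such $V_x$, with elements $g_1,\dots,g_{m-1}$, and take $h_1=\id$ and $h_i=g_{i-1}$. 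Then every $x\in M$ satisfies $h_ix\notin Z$ for some $i$. Applying Step 3 to the points $h_ix$, which is legitimate after removing the null sets $h_i^{-1}(\text{exceptional set})$, yields the claim. The delicate points are precisely this step and ensuring the Borel--Cantelli error sets are uniform in the translates $h_i$, handled by quasi-invariance. The stronger \ref{item:baire} with several $h_i$ is presumably needed if the naive cover fails to be finite.
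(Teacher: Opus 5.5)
Your proposal is correct and follows essentially the same route as the paper: density along shadows, transport by $\gamma_n^{-1}$ via \ref{item:almost constant on shadows}, a diagonal choice of $S_n\to\infty$ with Borel--Cantelli, and a Hausdorff limit $Z$ handled by \ref{item:empty Z intersection} together with compactness of $Z$. The paper cites the density step as Lemma~\ref{lem:KZ continuity bdr map} rather than reproving it by the Vitali/Shadow Lemma argument, and it uses \ref{item:coycles are bounded} where you replace $F$ by $F\circ\gamma$. Two small remarks: your finite-cover argument in Step 4 is exactly what is needed, and \ref{item:baire} plays no role there; also, the Radon--Nikodym derivative on $\gamma_n^{-1}\Oc_R(\gamma_n)$ is comparable to $e^{-\delta\norm{\gamma_n}_\sigma}$, not $e^{+\delta\norm{\gamma_n}_\sigma}$, which is harmless since only ratios enter.
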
 

We use the following result from our earlier work.

\begin{lemma}[{\cite[Corollary~5.5]{KimZimmer1}}] \label{lem:KZ continuity bdr map} Suppose $(M, \Gamma, \sigma, \mu)$ is a well-behaved PS-system with respect to a hierarchy $\mathscr{H} = \{ \mathscr{H} (R) \subset \Ga : R \ge 0\}$. Assume that $F : M \rightarrow (Y, \dist_Y)$ is a Borel measurable map into a separable metric space.  

If $R > 0$ is sufficiently large, then for $\mu$-a.e.\ $x \in M$ we have 
$$
0 = \lim_{n \rightarrow  + \infty} \frac{1}{\mu(\gamma\Oc_R(\gamma_n))}\mu\left( \left\{ y \in \gamma\Oc_R(\gamma_n) : \dist_Y(F(x), F(y)) > \epsilon\right\} \right)
$$
for all $\epsilon > 0$ whenever $x \in \bigcap_{n \ge 1} \ga \Oc_R(\ga_n)$ for some $\ga \in \Ga$ and escaping sequence $\{ \ga_n \} \subset \mathscr{H}(R)$.

\end{lemma}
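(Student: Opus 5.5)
The plan is to deduce the lemma from a Lebesgue density theorem for shadows. That density theorem will come from a maximal inequality built from the Vitali covering lemma (Lemma~\ref{lem:V covering}) and the Shadow Lemma (Proposition~\ref{prop.shadowlemma}). Fix $R$ large enough that the Shadow Lemma holds both for $R$ and for the constant $R'$ given by Property~\ref{item:intersecting shadows} for $R$.

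\textbf{Step 1 (density theorem, case $\gamma=\id$).} Let $A\subset M$ be Borel and $\eta>0$. Let $B_\eta$ be the set of $x\in A\cap M'$ (with $M'$ from \ref{item:diam goes to zero ae}) for which there is an escaping $\{\gamma_n\}\subset\mathscr H(R)$ with $x\in\bigcap_n\Oc_R(\gamma_n)$ and $\limsup_n \mu(\Oc_R(\gamma_n)\smallsetminus A)/\mu(\Oc_R(\gamma_n))>\eta$. I claim $B_\eta$ has outer measure $0$; since $\Gamma$ is countable and shadows are open, I expect measurability to be routine, and in any case working with outer measure suffices.

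To prove the claim, fix $\epsilon>0$ and choose a compact $K\subset A$ and an open $U\supset A$ with $\mu(U\smallsetminus K)<\epsilon$. For $x\in B_\eta\cap K$, \ref{item:diam goes to zero ae} gives ${\rm diam}\,\Oc_R(\gamma_n)\to0$, so $\Oc_R(\gamma_n)\subset U$ for large $n$. Hence $B_\eta\cap K$ is covered by the family $I$ of shadows $\Oc_R(g)$ with $g\in\mathscr H(R)$, $\Oc_R(g)\subset U$, and $\mu(\Oc_R(g)\cap(U\smallsetminus A))>\eta\,\mu(\Oc_R(g))$. Lemma~\ref{lem:V covering} gives a subfamily $J\subset I$ of pairwise disjoint shadows with $\bigcup_I\Oc_R\subset\bigcup_J\Oc_{R'}$. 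By the Shadow Lemma, $\mu(\Oc_{R'}(g))\le C\mu(\Oc_R(g))$. Combining these,
$$\mu^*(B_\eta\cap K)\le C\sum_{g\in J}\mu(\Oc_R(g))\le \frac{C}{\eta}\sum_{g\in J}\mu(\Oc_R(g)\cap(U\smallsetminus A))\le \frac{C}{\eta}\,\mu(U\smallsetminus A)<\frac{C\epsilon}{\eta}.$$
The last sum is controlled because the shadows in $J$ are disjoint. Also $\mu(B_\eta\smallsetminus K)<\epsilon$. Letting $\epsilon\to0$ and then taking $\eta=1/k$ shows that for $\mu$-a.e.\ $x\in A$ the relative measure of $A$ in the shadows tends to $1$.

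\textbf{Step 2 (general $\gamma$).} By \ref{item:coycles are bounded} and the coarse PS property, $\frac{d\gamma_*\mu}{d\mu}$ is bounded above and below on a full-measure set by constants depending on $\gamma$. Therefore
$$\frac{\mu(\gamma\Oc_R(\gamma_n)\smallsetminus A)}{\mu(\gamma\Oc_R(\gamma_n))}\le c(\gamma)\,\frac{\mu(\Oc_R(\gamma_n)\smallsetminus\gamma^{-1}A)}{\mu(\Oc_R(\gamma_n))}.$$
Apply Step~1 to $\gamma^{-1}A$ at the point $\gamma^{-1}x$, using that $\gamma$ preserves $\mu$-null sets. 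Since $\Gamma$ is countable, we may discard a single null set that works for all $\gamma$ at once.

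\textbf{Step 3 (from sets to $F$).} Fix $\epsilon=1/m$. By separability, cover $Y$ by countably many balls $B(y_k,\epsilon/2)$ and set $A_k=F^{-1}(B(y_k,\epsilon/2))$. If $x\in A_k$ and $\dist_Y(F(x),F(y))>\epsilon$, then $y\notin A_k$, by the triangle inequality. Hence the set in the lemma is contained in $\gamma\Oc_R(\gamma_n)\smallsetminus A_k$. Removing the countably many null sets from Step~2 over all $k$ and $m$ gives the statement for every $\epsilon>0$ simultaneously.

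The main obstacle is Step~1: Vitali covering is only available for elements of $\mathscr H(R)$, and one has to make sure the covering shadows lie inside $U$. This is exactly where \ref{item:diam goes to zero ae} and the restriction to $\{\gamma_n\}\subset\mathscr H(R)$ enter.
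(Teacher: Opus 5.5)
Your proof is correct, and it follows essentially the argument behind the cited result, which this paper only quotes from~\cite{KimZimmer1}. That argument is a Lebesgue density theorem for shadows, obtained from Lemma~\ref{lem:V covering}, Proposition~\ref{prop.shadowlemma} and \ref{item:diam goes to zero ae}, transferred to translated shadows $\gamma\Oc_R(\gamma_n)$ using the bounded Radon--Nikodym derivatives from \ref{item:coycles are bounded}, and then applied to the preimages $F^{-1}(B(y_k,\epsilon/2))$ of a countable cover of $Y$. One small point deserves to be made explicit: you may assume $R'\ge R$ in \ref{item:intersecting shadows}, since enlarging $R'$ is harmless by \ref{item:shadow inclusion} and $\mathscr{H}(R)\subset\mathscr{H}(0)$; this removes the apparent circularity in choosing $R$ large enough for the Shadow Lemma to hold at both $R$ and $R'$.
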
 

\subsection{Proof of Theorem~\ref{thm:weak continuity property of boundary maps}} Fix $R_j \nearrow +\infty$. After possibly increasing each $\{R_j\}$, by Lemma \ref{lem:KZ continuity bdr map} for each $j \geq 1$  there exists a full $\mu$-measure set $M_j \subset M$ such that: if $x_0 \in M_j \cap \bigcap_{n \ge 1} \ga \Oc_{R_j}(\ga_n)$ for some $\ga \in \Ga$ and escaping sequence $\{ \ga_n \} \subset \mathscr{H}(R_j)$, then 
$$
0 = \lim_{n \rightarrow  + \infty} \frac{1}{\mu(\gamma\Oc_{R_j}(\gamma_n))}\mu\left( \left\{ x \in \gamma\Oc_{R_j}(\gamma_n) : \dist_{Y}(F(x), F(x_0)) > \epsilon\right\} \right)
$$
for all $\epsilon > 0$.

Let
$$
M':= \bigcap_{\gamma \in \Gamma, j \geq 1} \gamma M_j.
$$
Then $M'$ is a $\Gamma$-invariant set with full $\mu$-measure.

Fix $x_0 \in M'$, $\gamma \in \Gamma$, and $\{\gamma_n\} \subset \Gamma$ escaping such that $\gamma_n \in  \mathscr{H}(n)$ and 
$$
x_0 \in \bigcap_{n \geq 1}\gamma \Oc_R(\gamma_n)
$$
for some $R > 0$. Passing to a subsequence and relabelling, we can suppose that $\gamma_n \in  \mathscr{H}(R_n)$ for all $n \geq 1$.

We can also assume $R_j \geq R$ for all $j$ and hence by Property \ref{item:shadow inclusion}
$$
x_0 \in \bigcap_{n \geq 1}\gamma \Oc_{R_j}(\gamma_n).
$$
Since $j \mapsto \mathscr{H}(R_j)$ is non-increasing, we have $\{\gamma_n\}_{n \geq j} \subset \mathscr{H}(R_j)$. So 
$$
0 = \lim_{n \rightarrow  + \infty} \frac{1}{\mu(\gamma\Oc_{R_j}(\gamma_n))}\mu\left( \left\{ x \in \gamma\Oc_{R_j}(\gamma_n) : \dist_{Y}(F(x), F(x_0)) > \epsilon\right\} \right)
$$
for all $j \in \Nb$ and $\epsilon > 0$. By Property~\ref{item:coycles are bounded}, 
\begin{align*}
 \lim_{n \rightarrow  + \infty} & \frac{1}{\mu(\Oc_{R_j}(\gamma_n))}\mu\left( \left\{ x \in \Oc_{R_j}(\gamma_n) : \dist_{Y}(F(\gamma x), F(x_0)) > \epsilon\right\} \right) \\
 & \asymp  \lim_{n \rightarrow  + \infty}  \frac{1}{\gamma_*^{-1}\mu(\Oc_{R_j}(\gamma_n))}\gamma_*^{-1}\mu\left( \left\{ x \in \Oc_{R_j}(\gamma_n) : \dist_{Y}(F(\ga x), F(x_0)) > \epsilon\right\} \right) \\
 & =  \lim_{n \rightarrow  + \infty} \frac{1}{\mu(\gamma\Oc_{R_j}(\gamma_n))}\mu\left( \left\{ x \in \gamma\Oc_{R_j}(\gamma_n) : \dist_{Y}(F(x), F(x_0)) > \epsilon\right\} \right)=0
\end{align*}
for all  $j \in \Nb$ and $\epsilon > 0$.

By Property~\ref{item:almost constant on shadows}, there exists $C_j  > 1$ such that 
   $$
   C_j e^{-\delta\norm{\gamma_n}_{\sigma}} \le \frac{d{\ga_n^{-1}}_*\mu}{d\mu} \le C_j e^{-\delta\norm{\gamma_n}_{\sigma}} \quad \mu\text{-a.e.\ on $\ga_n^{-1} \Oc_{R_j}(\ga_n)$}
   $$
   where $\delta \ge 0$ is the dimension of the PS-system.
 Hence 
   \begin{align*}
0 & = \lim_{n \rightarrow  + \infty} \frac{1}{(\gamma_n^{-1})_*\mu(\gamma_n^{-1}\Oc_{R_j}(\gamma_n))}(\gamma_n^{-1})_*\mu\left( \left\{ x \in \gamma_n^{-1}\Oc_{R_j}(\gamma_n) : \dist_{Y}(F(\gamma \gamma_n x), F(x_0)) > \epsilon\right\} \right) \\
& \asymp  \lim_{n \rightarrow  + \infty} \frac{1}{\mu(\gamma_n^{-1}\Oc_{R_j}(\gamma_n))}\mu\left( \left\{ x \in \gamma_n^{-1}\Oc_{R_j}(\gamma_n) : \dist_{Y}(F(\gamma \gamma_n x), F(x_0)) > \epsilon\right\} \right)
\end{align*}
for all  $j \in \Nb$ and $\epsilon > 0$. Since
$$
\mu(\gamma_n^{-1}\Oc_{R_j}(\gamma_n)) \leq 1,
$$
then
   \begin{align*}
0 =  \lim_{n \rightarrow  + \infty} \mu\left( \left\{ x \in \gamma_n^{-1}\Oc_{R_j}(\gamma_n) : \dist_{Y}(F(\gamma \gamma_n x), F(x_0)) > \epsilon\right\} \right)
\end{align*}
for all  $j \in \Nb$ and $\epsilon > 0$. Then we can find $S_n=R_{j_n} \rightarrow  + \infty$ and $\epsilon_n \searrow 0$ such that $j_n \leq n$  and after passing to a subsequence of $\{ \ga_n\}$,
$$
\sum_{n \ge 1}  \mu\left( \left\{ x \in\gamma_n^{-1}\Oc_{S_n}(\gamma_n) : \dist_{Y}(F(\gamma \gamma_n x), F(x_0)) > \epsilon_n\right\} \right)<+\infty.
$$
Let
$$
E_n : = \left\{ x \in\gamma_n^{-1}\Oc_{S_n}(\gamma_n) : \dist_{Y}(F(\gamma \gamma_n x), F(x_0)) > \epsilon_n\right\} 
$$
and 
$$
E':= \bigcap_{N\ge 1} \bigcup_{n\ge N} E_n,
$$
which is $\mu$-null by the Borel--Cantelli Lemma.

Passing to a subsequence we can suppose that $\left[ M \smallsetminus \gamma_n^{-1}\Oc_{S_n}(\gamma_n) \right] \rightarrow Z$ with respect to the Hausdorff distance.

If $\mu(Z) = 0$, then we set
$$
E := E' \cup  Z
$$
which is $\mu$-null. Fix $x \in M \smallsetminus E$. Since $x \notin Z$ and $Z$ is closed, there exists some  $N_0 \in \Nb$ such that $x \in \bigcap_{n \ge N_0} \ga_n^{-1} \Oc_{S_n} (\ga_n)$. Since $x \notin E'$, there exists some $N_1 \in \Nb$ such that $x \notin \bigcup_{n \ge N_1} E_n$. Hence for $n \ge \max (N_0, N_1)$ we have 
$$
\dist_{Y}( F(\ga \ga_n x), F(x_0)) \le \epsilon_n.
$$
Since $\epsilon_n \to 0$, this implies that $F(\ga \ga_n x) \to F(x_0)$, and the ``moreover'' part follows.

In general, we set
$$
E := \Ga \cdot E'
$$
which is $\mu$-null by the quasi-invariance of $\mu$. By 
Property \ref{item:empty Z intersection}, we can pick $h_1, \dots, h_m \in \Ga$ such that 
$$
\bigcap_{i=1}^m h_i^{-1} Z = \emptyset. 
$$
Then fix $r > 0$ such that 
$$
\bigcap_{i=1}^m h_i^{-1} \Nc_r(Z) = \emptyset, 
$$
equivalently 
$$
\bigcup_{i=1}^m h_i^{-1} \left( M \smallsetminus \Nc_r(Z)\right)=M.
$$
Notice that 
$$
M \smallsetminus \Nc_r(Z) \subset \bigcap_{n \ge N_0}  \gamma_n^{-1}\Oc_{S_n}(\gamma_n) 
$$
for some $N_0 \in \Nb$. 

Now fix $x \in M \smallsetminus E$. Then we can fix $1 \leq i \leq m$ such that $h_i x \in M \smallsetminus \Nc_r(Z)$. Since $E= \Gamma \cdot E'$, we have $h_i x \notin E'$. In particular, there exists some $N_1 \in \Nb$ such that 
$$
h_i x \notin \bigcup_{n \geq N_1} E_n.
$$
Then for $n \geq \max (N_0, N_1)$ we have 
$$
\dist_Y( F(\gamma \gamma_n h_i x), F(x_0)) \leq \epsilon_n. 
$$
So $F(\gamma \gamma_n h_i x) \rightarrow F(x_0)$. 
\qed


\section{Boundary maps and almost everywhere contraction}\label{sec: existence of boundary maps}


 In this section, we prove the existence of boundary maps, which is the main technical result of this paper.  We first define two notions which we need to state our result.

Suppose $\Ga$ is a locally compact group acting on a measure space $(M,\mu)$ where $\mu$ is a $\Ga$-quasi-invariant measure. Then the action of $\Ga$ on $(M,\mu)$ is \emph{amenable} if there exists a sequence $\lambda_n : M \rightarrow \Prob(\Gamma)$ so that for $\mu$-a.e.\ $x \in M$ and every $g \in \Ga$,
$$
\lim_{n \rightarrow  + \infty} \norm{\lambda_n(gx) - g_*\lambda_n(x)} =0
$$
where $\Prob( \cdot)$  is the space of  Borel probability measures and $\norm{ \cdot }$ denotes the total variation.

A subgroup $H < \Gsf$ is \emph{$\Psf_{\theta}$-contracting} if there exists a sequence $\{h_n\} \subset H$ such that
$$
\alpha(\kappa(h_n)) \to + \infty \quad \text{for all } \alpha \in \theta.
$$
If $H$ is Zariski dense, then $H$ is $\Psf_{\theta}$-contracting, see \cite[Theorem 3.6]{GolMar1989} or \cite{Prasad1994}.

\begin{theorem}\label{thm:boundary map}  Suppose $(M, \Gamma, \sigma, \mu)$ is a well-behaved Patterson--Sullivan system with respect to a hierarchy $\mathscr{H} = \{ \mathscr{H} (R) \subset \Ga : R \ge 0\}$,
$$
\mu(\Lambda^{\rm con}(\mathscr{H})) = 1,
$$
 and the $\Gamma$-action on $(M,\mu)$ is amenable. 
  
If  $\rho : \Gamma \rightarrow \Gsf$ is a representation such that $\rho(\Ga)$ is $\Psf_{\theta}$-contracting and strongly $(\Phi_{\alpha})_{\alpha \in \theta}$-irreducible, then there exists a $\rho$-equivariant $\mu$-a.e.\ defined measurable map $f : M \rightarrow \La_{\theta}(\Ga)$. Moreover: 
 \begin{enumerate}
 \item For $\mu$-a.e.\ $x \in M$, if $\gamma \in \Gamma$, $\{\gamma_n\} \subset \Gamma$ is escaping, $\gamma_n \in \mathscr{H}(R_n)$ for some $R_n \rightarrow +\infty$, and 
$$
x \in \bigcap_{n \geq 1} \gamma \Oc_R(\gamma_n),
$$
for some $R > 0$, then 
$$
\alpha(\kappa(\rho(\gamma \gamma_n))) \rightarrow + \infty \quad \text{for all} \quad \alpha \in \theta
$$
and 
$$
U_{\theta}(\rho(\ga \ga_n)) \rightarrow f(x).
$$
\item If $\mathsf{m} : M \rightarrow \Prob(\Fc_\theta)$  is some $\rho$-equivariant $\mu$-a.e.\ defined measurable map, then $\mathsf{m}(x) = \Dc_{f(x)}$ $\mu$-a.e.\ In particular, if $f' : M \to \Fc_{\theta}$ is a $\rho$-equivariant $\mu$-a.e.\ defined measurable map, then $f = f'$ $\mu$-a.e.

\item  If the hierarchy $\mathscr{H}$ is trivial (i.e., $\mathscr{H}(R) \equiv \Ga$), then for $\mu$-a.e.\ $x \in M$, there exist $R > 0$, $\ga \in \Ga$, and an escaping sequence $\{ \ga_n \} \subset \Ga$ such that
$$
\lim_{n \to + \infty} \min_{\alpha \in \theta} \alpha(\kappa(\rho(\ga_n))) = + \infty, \quad x \in \bigcap_{n \ge 1} \ga \Oc_R(\ga_n), \quad \text{and} \quad f(x) \in \bigcap_{n \ge 1} \rho(\ga) \Oc_R^{\theta}(\rho(\ga_n)).
$$
In particular,
$$
f_* \mu(\La_{\theta}^{\rm concon}(\Ga)) = 1.
$$

\end{enumerate} 

\end{theorem}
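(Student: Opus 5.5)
The plan follows the ``standard argument'' for boundary maps of random walks, with martingale convergence replaced by Theorem~\ref{thm:weak continuity property of boundary maps}.

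\emph{Step 1: an equivariant measure-valued map.} Since the $\Gamma$-action on $(M,\mu)$ is amenable and $\Prob(\Fc_\theta)$ is a compact metrizable convex $\Gamma$-space (via $\rho$), the standard fixed-point characterization of amenable actions gives a $\rho$-equivariant $\mu$-a.e.\ defined measurable map $\mathsf{m} : M \to \Prob(\Fc_\theta)$.

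\emph{Step 2: the measures are Dirac.} Fix a full-measure set of $x_0$ satisfying the conclusion of Theorem~\ref{thm:weak continuity property of boundary maps} for $F=\mathsf{m}$, with $Y=\Prob(\Fc_\theta)$ separable. Since $\mu(\Lambda^{\rm con}(\mathscr{H}))=1$, a.e.\ $x_0$ lies in $\bigcap_n \gamma\Oc_R(\gamma_n)$ with $\gamma_n\in\mathscr{H}(n)$ escaping. Theorem~\ref{thm:weak continuity property of boundary maps} then gives $h_1,\dots,h_m$ and a subsequence with $\rho(\gamma\gamma_{n_j})_*\mathsf{m}(h_ix) \to \mathsf{m}(x_0)$ for a.e.\ $x$ and some $i=i(x)$. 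Pass to a further subsequence on which $\rho(\gamma\gamma_{n_j})/\norm{\cdot}$ in each $\End(V_\alpha)$ converges.

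\begin{itemize}
\item If some $\alpha(\kappa(\rho(\gamma\gamma_{n_j})))$ stays bounded along a subsequence for some $\alpha\in\theta$, the limit endomorphism in $V_\alpha$ has rank at least $2$.
\item If $\alpha(\kappa)\to\infty$ for all $\alpha\in\theta$, Lemma~\ref{lem:rank one limits in Valpha} and Proposition~\ref{prop:NS dynamics} show that $\rho(\gamma\gamma_{n_j})$ pushes every measure giving zero mass to the non-transverse locus of $y^-$ to $\Dc_{x^+}$, where $x^+=\lim U_\theta$.
\end{itemize}

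To use the second case, one must exclude that $\mathsf{m}(h_ix)$ charges proper algebraic ``hyperplane'' loci $\{z:\zeta_\alpha(z)\in W\}$. Here I would use strong $(\Phi_\alpha)$-irreducibility via Lemma~\ref{lem:rho irr implies flag irr}, in Furstenberg's style: a measure class charging such loci, with equivariance and ergodicity (Theorem~\ref{thm:ergodicity KZ}), produces a finite $\rho(\Gamma)$-invariant union of subspaces of minimal dimension, contradicting strong irreducibility. This is the step I expect to be the main obstacle. One must handle the $m$ choices $h_i$, and show that contraction actually occurs rather than staying bounded.

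For contraction, I would use $\Psf_\theta$-contracting to argue that in the bounded (rank $\ge 2$) limit case $\mathsf{m}(x_0)$ would be a quasi-projective image of a non-degenerate measure. I would then compare along various $\gamma$ using irreducibility to force a contradiction, or more simply show directly that $\mathsf{m}$ is Dirac. The Dirac property would come by applying the argument to products $\mathsf{m}\otimes\mathsf{m}$, or by a Zariski-closure/proximality argument.

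\emph{Step 3: define the map and prove the remaining items.} Set $f(x)$ to be the atom of $\mathsf{m}(x)$. Item~(1) follows by rerunning Step~2 along any such sequence, noting that every subsequence has a further subsequence with $U_\theta \to f(x)$. Item~(1) also shows $f(x)\in\Lambda_\theta$. For item~(2), any other equivariant $\mathsf{m}'$ satisfies $\mathsf{m}'(x)=\lim\rho(\gamma\gamma_n)_*\mathsf{m}'(h_ix)=\Dc_{f(x)}$ by the same contraction. For item~(3), with trivial hierarchy I would take $\gamma_n$ from the conical limit set definition. Then Lemma~\ref{lem:endpoint in shadow}(2), Lemma~\ref{lem:diam decay}(3), and Lemma~\ref{lem:shadow translate shadow}(1) place $U_\theta(\rho(\gamma_n))$, and hence $f(x)$, in the shadows $\rho(\gamma)\Oc_R^\theta(\rho(\gamma_n))$ after enlarging $R$.
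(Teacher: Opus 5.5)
\textbf{Verdict: your proposal has two genuine gaps.} The mechanism that makes the measures Dirac is missing from Step 2, and the argument for item (3) does not prove membership in the shadows.

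\textbf{Step 2.} The Furstenberg-style argument you suggest does not apply here. Furstenberg's lemma concerns a single $\rho(\Gamma)$-invariant or stationary measure. For an equivariant family $x\mapsto\mathsf{m}(x)$, equivariance plus ergodicity only give an \emph{equivariant} family $x\mapsto\mathcal{W}(x)$ of charged subspaces, not a $\rho(\Gamma)$-invariant finite union, so there is no contradiction with strong irreducibility. In fact the expected answer $\mathsf{m}(x)=\Dc_{f(x)}$ charges every locus $\{z:\zeta_\alpha(z)\in\Pb(W)\}$ with $\zeta_\alpha(f(x))\in\Pb(W)$, so no blanket non-charging statement about the values of $\mathsf{m}$ can hold. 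Nor is there any a priori reason for $\rho(\gamma\gamma_n)$ itself to contract; your discussion of the bounded case only lists options.

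The paper uses two ideas you do not have:
\begin{enumerate}
\item The $\Gsf$-action on $\Prob(\Fc_\theta)$ is smooth (countably separated orbit space), so ergodicity gives $\mathsf{m}_0(x)=(g_x)_*\nu_0$. Restricting $\nu_0$ to the finitely many irreducible real algebraic varieties of minimal dimension $d_0$ with mass $>\epsilon$ gives an equivariant $\mathsf{m}_1$ whose values give zero mass to every subvariety of dimension $<d_0$.
\item Contraction is imported from the hypothesis rather than derived. Take $\{g_m\}\subset\Gamma$ with $\min_{\alpha\in\theta}\alpha(\kappa(\rho(g_m)))\to+\infty$ and rank-one limits $T_\alpha$, and fix one point $y_0\notin\Gamma\cdot E$. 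Use Lemma~\ref{lem:rho irr implies flag irr} to pick $u_1$ with $S_\alpha\rho_\alpha(h_iu_1)T_\alpha\neq0$, and $u_2$ moving the finitely many varieties carrying $\mathsf{m}_1(y_0)$ off the relevant kernel loci. Apply Theorem~\ref{thm:weak continuity property of boundary maps} at the points $u_1g_mu_2y_0$; this is allowed because that theorem holds for a.e.\ starting point. Diagonalizing gives $(\zeta_\alpha)_*\mathsf{m}_1(x_0)=(T'_\alpha)_*(\zeta_\alpha)_*\mathsf{m}_1(y_0)$ with $\rank T'_\alpha=1$.
\end{enumerate}
So Dirac-ness comes first. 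Only afterwards is $\rank S_\alpha=1$, and hence item (1), deduced by a further irreducibility choice $u_3$.

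Your item (2) rests on the same unproved non-charging claim for $\mathsf{m}'$. The paper instead reruns the construction to write $\mathsf{m}=\lambda'\Dc_{f'}+\tilde{\mathsf{m}}$ with $\tilde{\mathsf{m}}$ non-atomic, gets $f'=f$ from (1), and renormalizes $\tilde{\mathsf{m}}$ to reach a contradiction.

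\textbf{Item (3).} You know $U_\theta(\rho(\gamma\gamma_n))\in\Oc_R^\theta(\rho(\gamma\gamma_n))$, $\diam\Oc_R^\theta(\rho(\gamma\gamma_n))\to0$, and $U_\theta(\rho(\gamma\gamma_n))\to f(x)$. This only shows that $f(x)$ is \emph{near} these shadows, not in them. Enlarging $R$ does not help: $\rho$ is arbitrary, so $\rho(\gamma\gamma_n)o$ may converge to $f(x)$ tangentially (compare horocyclic convergence in rank one), even though $x$ lies in conical shadows of $\gamma\gamma_n$ in $M$.

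The paper proves (3) by a measure-theoretic argument instead. The mixed shadows $\Oc_R(\gamma)\cap f^{-1}(\Oc_R^\theta(\rho(\gamma)))$ have $\mu$-measure comparable to $\mu(\Oc_R(\gamma))$, by the Mixed Shadow Lemma from the authors' earlier work. They therefore inherit the Kochen--Stone estimate satisfied by the shadows $\Oc_R(\gamma_n)$. This gives a $\Gamma$-invariant set of positive measure of points lying in infinitely many translated mixed shadows, so the same $\gamma_n$ works on both sides, and ergodicity upgrades this set to full measure.
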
 

\begin{remark} Recall that the conditions in (1) imply that $\rho(\ga \gamma_n ) o \rightarrow f(x)$ in $\overline{X}^\theta$ by Proposition~\ref{prop:embedding of partial flags}.
\end{remark}

In the rest of this section, we assume the hypotheses of Theorem \ref{thm:boundary map}.

\begin{lemma} There exists a $\rho$-equivariant $\mu$-a.e.\ defined measurable map $\mathsf{m}_0 : M \rightarrow \Prob(\Fc_\theta)$. Moreover, $\mathsf{m}_0$ maps into a single $\Gsf$-orbit.
\end{lemma}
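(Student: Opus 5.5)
The plan is to take the standard Zimmer-type route. Amenability of the $\Gamma$-action on $(M,\mu)$ gives a $\rho$-equivariant measurable map into probability measures on any compact metrizable $\Gsf$-space. Then an ergodicity argument, combined with smoothness of the $\Gsf$-action on $\Prob(\Fc_\theta)$, puts the image inside a single $\Gsf$-orbit.

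\emph{Step 1 (existence).} The compact metrizable space $\Fc_\theta$ carries the continuous action of $\Gamma$ via $\rho$. The action of $\Gamma$ on $(M,\mu)$ is amenable in the sense defined above. By Zimmer's characterization of amenable actions, equivalently the fixed point property for affine actions on bundles of compact convex sets, there is a $\Gamma$-equivariant measurable map $\mathsf{m}_0 : M \to \Prob(\Fc_\theta)$, where $\Prob(\Fc_\theta)$ has the weak-$*$ topology and $\Gamma$ acts by $\gamma \cdot \nu = \rho(\gamma)_*\nu$.

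If one wants to avoid citing the fixed point theorem, here is a concrete construction. Take the Reiter sequence $\lambda_n : M \to \Prob(\Gamma)$ and fix $\nu_0 \in \Prob(\Fc_\theta)$. Set $\mathsf{m}_n(x) := \sum_{g} \lambda_n(x)(g)\, \rho(g)_*\nu_0$. Then $\norm{\mathsf{m}_n(\gamma x) - \rho(\gamma)_*\mathsf{m}_n(x)} \to 0$. Now pass to a weak-$*$ limit point of $\mathsf{m}_n$ in $L^\infty(M,\mu; C(\Fc_\theta))^* $ (or use a Banach limit) and disintegrate to obtain an equivariant measurable $\mathsf{m}_0$. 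The measurability and disintegration bookkeeping is routine and standard, so I would cite it rather than reprove it.

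\emph{Step 2 (single orbit).} By Theorem~\ref{thm:ergodicity KZ}, the hypotheses $\mu(\Lambda^{\rm con}(\mathscr{H}))=1$ and well-behavedness imply that $\Gamma$ acts ergodically on $(M,\mu)$. Consider the composition of $\mathsf{m}_0$ with the quotient map $\Prob(\Fc_\theta) \to \Gsf\backslash \Prob(\Fc_\theta)$. This composition is $\Gamma$-invariant, because $\rho(\Gamma)\subset \Gsf$.

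The key input is that the action of the algebraic group $\Gsf$ on $\Prob(\Fc_\theta)$ is smooth: the orbit space is countably separated, that is, the action is tame in Zimmer's sense. This holds because $\Fc_\theta$ is an algebraic $\Gsf$-variety, and by Zimmer's theorem (see \cite{Zimmer_book}, Theorem 3.2.6 / Corollary 3.2.12) the action of a real algebraic group on the space of probability measures on a real algebraic variety has locally closed orbits, hence is smooth. A $\Gamma$-invariant measurable map from an ergodic space into a countably separated space is a.e.\ constant. Therefore $\mathsf{m}_0(x)$ lies in a single $\Gsf$-orbit for $\mu$-a.e.\ $x$.

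\emph{Main obstacle.} The main obstacle is justifying the smoothness of $\Gsf \curvearrowright \Prob(\Fc_\theta)$. This is a genuine theorem rather than a formality, since the group $\Gsf$ is only a finite-center cover of the real points of an algebraic group. I would handle this by passing to the adjoint group $\mathrm{Ad}(\Gsf)$. Its action on $\Fc_\theta$ factors the $\Gsf$-action and has the same orbits. One must also check that $\Fc_\theta = \mathrm{Ad}(\Gsf)/\mathrm{Ad}(\Psf_\theta)$ is an orbit in the real points of a projective variety, so that Zimmer's algebraic tameness applies. If one prefers to avoid the algebraic machinery, a fallback is to use the irreducibility and contraction hypotheses together with Furstenberg's lemma to show directly that $\mathsf{m}_0(x)$ is a.e.\ a Dirac mass. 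However, that is essentially the content of the later parts of Theorem~\ref{thm:boundary map}, so the smoothness route is cleaner for this lemma.
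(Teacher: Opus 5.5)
Your proposal is correct and follows essentially the same route as the paper: existence of $\mathsf{m}_0$ comes from amenability via Zimmer's fixed-point characterization \cite[Proposition 4.3.9]{Zimmer_book}. The single-orbit statement comes from ergodicity (Theorem~\ref{thm:ergodicity KZ}), combined with smoothness of the $\Gsf$-action on $\Prob(\Fc_\theta)$ and the fact that invariant maps from an ergodic space into a countably separated space are a.e.\ constant \cite[Corollary 3.2.17, Proposition 2.1.11]{Zimmer_book}; your extra remarks (the Reiter-sequence construction and passing to $\mathrm{Ad}(\Gsf)$) only make explicit details the paper leaves to these citations.
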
 

\begin{proof} 

   The existence of the map $\mathsf{m}_0$ follows from  \cite[Proposition 4.3.9]{Zimmer_book}. By Theorem~\ref{thm:ergodicity KZ}, the $\Gamma$-action on $(M,\mu)$ is ergodic. Then the fact that $\mathsf{m}_0$ maps into a single $\Gsf$-orbit follows from \cite[Corollary 3.2.17, Proposition 2.1.11]{Zimmer_book}, together with the ergodicity of the $\Ga$-action on $(M, \mu)$.
\end{proof}

Since $\mathsf{m}_0$ maps into a single $\Gsf$-orbit, we can fix a measure $\nu_0 \in \Prob(\Fc_\theta)$ and a measurable map $x \in M \mapsto g_x \in \Gsf$ such that  
$$
\mathsf{m}_0(x) = (g_x)_* \nu_0
$$
for $\mu$-a.e.\ $x \in M$.

Let $d_0 \in \Zb_{\geq 0}$ be the smallest non-negative integer where there exists an (real) algebraic variety of dimension $d_0$ in $\Fc_\theta$ with positive $\nu_0$ measure. Then let $\Ac$ denote the set of all irreducible $d_0$-dimensional algebraic varieties  with positive $\nu_0$ measure. Notice that if $Z_1, Z_2 \in \Ac$ are distinct, then $\nu_0(Z_1 \cap Z_2) = 0$ by the minimality of $d_0$. Hence 
$$
\nu_0 = \sum_{Z \in \Ac} \nu_0(\cdot \cap Z) +\tilde  \nu_0
$$
where $\tilde \nu_0$ is a non-negative measure. 

Now fix $\epsilon > 0$ sufficiently small such that the set 
$$
 \Ac_\epsilon : = \{ Z \in \Ac : \nu_0(Z) > \epsilon\}
$$
is non-empty (notice that it must be finite). Then define 
$$
\nu_1 := \frac{1}{ \sum_{Z \in \Ac_{\epsilon}} \nu_0(Z)}  \sum_{Z \in \Ac_\epsilon} \nu_0(\cdot \cap Z)
$$
and define $\mathsf{m}_1 : M \rightarrow \Prob(\Fc_\theta)$ by 
$$
\mathsf{m}_1(x) = (g_x)_* \nu_1.
$$
Then by construction the map $\mathsf{m}_1$ is $\rho$-equivariant and for $\mu$-a.e.\ $x \in M$, $\mathsf{m}_1(x)$ is supported on finitely many irreducible $d_0$-dimensional algebraic varieties.

For $\alpha \in \theta$, let $\rho_\alpha : = \Phi_\alpha \circ \rho : \Gamma \rightarrow \SL(V_\alpha)$. In what follows, we view the boundary map $\zeta_\alpha$ in Property \ref{item:boundary maps of Tits repn} as having domain $\Fc_\theta$. 

\begin{proposition}\label{prop:key result in boundary map} There exists a $\Gamma$-invariant  full $\mu$-measure set $M' \subset M$ such that:  if $x_0 \in M'$, $\{\gamma_n\} \subset \Gamma$ is escaping,  $\gamma_n \in \mathscr{H}(R_n)$ for some $R_n \rightarrow +\infty$, 
$$
x_0 \in \bigcap_{n \geq 1}\gamma  \Oc_R(\gamma_n)
$$
for some $R > 0$, and 
$$
\frac{\rho_\alpha(\gamma \gamma_n)}{\norm{\rho_\alpha(\gamma \gamma_n)}} \rightarrow S_\alpha \in \End (V_\alpha)
$$
 for all $\alpha \in \theta$, then 
 \begin{enumerate}
 \item $\rank S_\alpha = 1$ for all $\alpha \in \theta$, 
 \item $\mathsf{m}_1(x_0) = \Dc_{\xi}$ where $\xi \in \Fc_\theta$ is the unique point with $\zeta_\alpha (\xi) = {\rm im} \, S_\alpha$ for all $\alpha \in \theta$. 
\end{enumerate}
\end{proposition}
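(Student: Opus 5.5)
The plan is to run the ``standard argument'' for convergence of random walks, with the martingale convergence theorem replaced by Theorem~\ref{thm:weak continuity property of boundary maps}. First, apply Theorem~\ref{thm:weak continuity property of boundary maps} to $F := \mathsf{m}_1 : M \to \Prob(\Fc_\theta)$, a separable metric space under the weak-$*$ topology. Intersect the resulting full measure set over all $\gamma\in\Gamma$ to get a $\Gamma$-invariant full measure set $M'$. Fix $x_0 \in M'$ and $\gamma,\gamma_n,R,S_\alpha$ as in the statement; after relabelling we may assume $\gamma_n \in \mathscr{H}(n)$. The theorem then gives a subsequence, elements $h_1,\dots,h_m$, and a null set $E$ such that for $x\notin E$ some $i$ satisfies $\mathsf{m}_1(\gamma\gamma_{n_j}h_i x)\to \mathsf{m}_1(x_0)$.

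By equivariance,
$$
\mathsf{m}_1(\gamma\gamma_{n_j}h_ix)=\rho(\gamma\gamma_{n_j})_*\mathsf{m}_1(h_ix).
$$
Pass to a further subsequence so that $\rho(\gamma\gamma_{n_j})$ converges in the sense of each $\rho_\alpha$ normalized, with limits $S_\alpha$. The key is to understand the pushforward of a measure $\eta$ on $\Fc_\theta$ under such a sequence. We work in each $\Pb(V_\alpha)$ via $\zeta_\alpha$. If $\zeta_\alpha$-images of $\eta$-a.e.\ points avoid $\Pb(\ker S_\alpha)$, then $(\zeta_\alpha)_*\rho(\gamma\gamma_{n_j})_*\eta\to (S_\alpha)_*(\zeta_\alpha)_*\eta$. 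Thus the limit $(\zeta_\alpha)_*\mathsf{m}_1(x_0)$ is supported on $\Pb(\operatorname{im} S_\alpha)$.

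The first point to check is that, for a set of $x$ of positive measure, $\mathsf{m}_1(h_ix)$ gives no mass to $\zeta_\alpha^{-1}(\Pb(\ker S_\alpha))$. This preimage is a proper algebraic subvariety of $\Fc_\theta$, since $\Phi_\alpha$ is irreducible. Here we use the structure of $\nu_1$: it is carried by finitely many irreducible $d_0$-dimensional varieties, and any algebraic set meets each of them in a set of smaller dimension, hence of zero $\nu_0$-mass, unless it contains the component. So we must rule out, for $\mu$-a.e.\ $x$, that some component $g_x Z$ lies inside $\zeta_\alpha^{-1}(\Pb(\ker S_\alpha))$.

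To do this, for $x$ in a positive-measure set choose points $v$ in the components. By ergodicity and equivariance the family $\{g_xZ\}$ is ``spread out'' by $\rho(\Gamma)$. Lemma~\ref{lem:rho irr implies flag irr} (strong irreducibility) then produces $\gamma'$ moving each finite configuration off the proper subspaces $\ker S_\alpha$. Since the $h_i$ and $x$ range over a full measure set, we can choose $x$ so that the configuration avoids the kernels. This is the step I expect to be the main obstacle: converting strong irreducibility into a statement about $\mu$-a.e.\ $x$, uniformly in the finitely many $h_i$. I would handle it with a contradiction argument. The set of $x$ for which a component lies in a kernel is $\Gamma$-quasi-invariant, so by ergodicity it is either null or conull. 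If it is conull, replacing $x$ by $\rho(g)x$ shows that every $\rho(\Gamma)$-translate of a component lies in a fixed proper subvariety, which contradicts Lemma~\ref{lem:rho irr implies flag irr}.

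Granting this, $(\zeta_\alpha)_*\mathsf{m}_1(x_0)$ is supported on $\Pb(\operatorname{im} S_\alpha)$. We conclude as follows.

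(1) Suppose $\rank S_\alpha\ge 2$ for some $\alpha$. Applying the argument to $\mathsf{m}_1(x_0)$, and hence to $\nu_1$ up to $g_{x_0}$, shows that $\nu_1$ gives full mass to the proper subvariety $\zeta_\alpha^{-1}\Pb(g_{x_0}^{-1}\operatorname{im}S_\alpha)$. We then repeat along a second conical sequence at the $\Gamma$-translate $\rho(g)x_0$. Intersecting these with irreducible-translate arguments drops the dimension below $d_0$, contradicting minimality. A cleaner alternative uses the contracting hypothesis: compose with a $\Psf_\theta$-contracting sequence to force rank one. I would first try the dimension-drop argument.

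(2) Once each $S_\alpha$ has rank one, the support of $\mathsf{m}_1(x_0)$ lies in $\bigcap_\alpha\zeta_\alpha^{-1}(\operatorname{im}S_\alpha)$. This is a single point $\xi$, because the maps $(\zeta_\alpha)_{\alpha\in\theta}$ jointly embed $\Fc_\theta$. Hence $\mathsf{m}_1(x_0)=\Dc_\xi$.
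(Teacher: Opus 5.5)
\textbf{There is a genuine gap in your step (1), and it carries the whole proposition.} Once $\mathsf{m}_1(x_0)$ is known to be a Dirac mass, the rest is routine.

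Your preferred ``dimension-drop'' argument uses only conicality, strong irreducibility and the algebraic structure of $\nu_1$. No argument of that kind can work, because the $\Psf_\theta$-contracting hypothesis is indispensable. Here is a counterexample:
\begin{itemize}
\item Let $\Gamma$ be a closed surface group with Lebesgue measure on $\partial\Hb^2$.
\item Let $\rho:\Gamma\to\SL(2,\Rb)$ factor through $\Gamma\to\Zb$, with image a dense cyclic group of rotations in $\mathsf{SO}(2)$.
\item Every finite-index subgroup of $\rho(\Gamma)$ contains an irrational rotation, so $\rho(\Gamma)$ is strongly irreducible on $\Rb^2$.
\item Every limit $S_\alpha$ is a rotation, of rank $2$, so conclusion (1) fails outright. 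Moreover, the constant map to Lebesgue measure on $\Pb(\Rb^2)$ is equivariant, and gives $d_0=1$.
\end{itemize}

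Your concrete steps also fail on their own terms. First, $\zeta_\alpha^{-1}\Pb(g_{x_0}^{-1}{\rm im}\,S_\alpha)$ need not be proper. Second, passing to $gx_0$ replaces the conical sequence by $g\gamma\gamma_n$, the limit by $\rho_\alpha(g)S_\alpha$, and the components by their $\rho(g)$-translates, so it gives no new constraint. Your ``cleaner alternative'' cannot be run in your order either. Composing with a contracting sequence produces a \emph{different} limit; it never makes $S_\alpha$ itself rank one. You must first prove that $\mathsf{m}_1(x_0)$ is Dirac, and only then deduce $\rank S_\alpha=1$.

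\textbf{The missing mechanism (the paper's).}
\begin{enumerate}
\item Fix $\{g_m\}\subset\Gamma$ with $\rho_\alpha(g_m)/\norm{\rho_\alpha(g_m)}\to T_\alpha$ of rank one for all $\alpha\in\theta$. Fix $y_0\notin\Gamma\cdot E$.
\item By Lemma~\ref{lem:rho irr implies flag irr}, choose $u_1,u_2\in\Gamma$ such that, for all $\alpha,i$, we have $S_\alpha\rho_\alpha(h_iu_1)T_\alpha\neq 0$, and no $\rho(u_2)$-translate of a component of $\mathsf{m}_1(y_0)$ lies in $\zeta_\alpha^{-1}\Pb(\ker S_\alpha\rho_\alpha(h_iu_1)T_\alpha)$.
\item Apply Theorem~\ref{thm:weak continuity property of boundary maps} at each point $u_1g_mu_2y_0\notin E$. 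Pass to $m$'s with a fixed index $i$, and choose $n_m\to\infty$ diagonally. This gives $\mathsf{m}_1(x_0)=\lim_m\rho(\gamma\gamma_{n_m}h_1u_1g_mu_2)_*\mathsf{m}_1(y_0)$.
\item The normalized matrices converge to $T'_\alpha\propto S_\alpha\rho_\alpha(h_1u_1)T_\alpha\rho_\alpha(u_2)$. This has rank one, and its kernel is uncharged by $(\zeta_\alpha)_*\mathsf{m}_1(y_0)$. Hence $(\zeta_\alpha)_*\mathsf{m}_1(x_0)=\Dc_{\Pb({\rm im}\,T'_\alpha)}$, with ${\rm im}\,T'_\alpha\subset{\rm im}\,S_\alpha$.
\item Only now does rank one follow. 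Since $\mathsf{m}_1$ is a.e.\ Dirac, $f_\alpha$ is defined. If $\rank S_\alpha\ge 2$, then $S_\alpha^{-1}(f_\alpha(x_0))$ is a proper subspace. Irreducibility then gives $u_3$ with $\rho_\alpha(h_iu_3)f_\alpha(z_0)\notin\Pb(\ker S_\alpha)$ and $S_\alpha\rho_\alpha(h_iu_3)f_\alpha(z_0)\neq f_\alpha(x_0)$ for all $i$. This contradicts $\Dc_{f_\alpha(x_0)}=\lim_n\rho_\alpha(\gamma\gamma_nh_iu_3)_*\Dc_{f_\alpha(z_0)}$.
\end{enumerate}

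\textbf{A minor point.} Your ergodicity argument for kernel avoidance is incorrect. The bad set is not $\Gamma$-invariant, since $\ker S_\alpha$ stays fixed while components move by $\rho(g)$. The argument is also unnecessary: fix one $y_0\notin\Gamma\cdot E$ and pick $u$ directly by Lemma~\ref{lem:rho irr implies flag irr}, handling all $h_i$ at once.
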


Delaying the proof of Proposition~\ref{prop:key result in boundary map} for a moment, we complete the proof of Theorem \ref{thm:boundary map}. The proposition implies that $\mathsf{m}_1(x)$ is a Dirac mass when $x \in M' \cap \Lambda^{\rm con}(\mathscr{H})$ and so  $f(x) = \supp \mathsf{m}_1(x)$ provides a $\rho$-equivariant $\mu$-a.e.\ defined measurable map $f : M \rightarrow \Fc_\theta$. 

\begin{lemma} $f$ satisfies  part (1) of the ``moreover'' part of Theorem \ref{thm:boundary map}. \end{lemma}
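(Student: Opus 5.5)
We take $x \in M'$, where $M'$ is the $\Gamma$-invariant full measure set from Proposition~\ref{prop:key result in boundary map}, intersected with the full measure set where $f(x) = \supp \mathsf{m}_1(x)$ is defined. Fix $\gamma \in \Gamma$, an escaping sequence $\{\gamma_n\}$ with $\gamma_n \in \mathscr{H}(R_n)$ and $R_n \to +\infty$, and $R>0$ such that $x \in \bigcap_n \gamma \Oc_R(\gamma_n)$. The conclusion is a statement about convergence of the whole sequence. We therefore argue by subsequences: it suffices to show that every subsequence of $\{\gamma_n\}$ has a further subsequence along which
\[
\alpha(\kappa(\rho(\gamma\gamma_n))) \to +\infty \quad \text{for all } \alpha \in \theta, \qquad U_\theta(\rho(\gamma\gamma_n)) \to f(x).
\]
Passing to a subsequence preserves all the hypotheses: escaping, membership $\gamma_n \in \mathscr{H}(R_n)$ with $R_n\to\infty$, and $x$ lying in the intersection of shadows.

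Given any subsequence, we use compactness of the unit sphere in $\End(V_\alpha)$ and finiteness of $\theta$. This lets us pass to a further subsequence along which
\[
\frac{\rho_\alpha(\gamma\gamma_n)}{\norm{\rho_\alpha(\gamma\gamma_n)}} \to S_\alpha \quad \text{for every } \alpha\in\theta.
\]
Proposition~\ref{prop:key result in boundary map} then gives $\rank S_\alpha = 1$ for each $\alpha \in \theta$. It also gives $\mathsf{m}_1(x) = \Dc_\xi$, where $\xi$ is the unique point with $\zeta_\alpha(\xi) = {\rm im}\, S_\alpha$; hence $\xi = f(x)$. By Property~\ref{item:SVs of Tits repn}, $\alpha(\kappa(g)) = \log(\sigma_1(\Phi_\alpha(g))/\sigma_2(\Phi_\alpha(g)))$. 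Since the normalized matrices converge to a rank one limit, their second singular value tends to $0$ while the first is $1$. Therefore $\alpha(\kappa(\rho(\gamma\gamma_n))) \to +\infty$ along this subsequence, for all $\alpha \in \theta$.

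It remains to identify the limit of $U_\theta(\rho(\gamma\gamma_n))$. By compactness of $\Fc_\theta$ and $\Fc_{\opp^*\theta}$, we pass to a further subsequence with $U_\theta(\rho(\gamma\gamma_n)) \to y$ and $U_{\opp^*\theta}(\rho(\gamma\gamma_n)^{-1}) \to z$. Lemma~\ref{lem:rank one limits in Valpha} says that $S_\alpha$ has image $\zeta_\alpha(y)$, so $\zeta_\alpha(y) = {\rm im}\, S_\alpha = \zeta_\alpha(\xi)$ for all $\alpha\in\theta$. The map $\Fc_\theta \to \prod_{\alpha\in\theta}\Pb(V_\alpha)$, $y\mapsto(\zeta_\alpha(y))_\alpha$, is injective. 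Indeed, this is exactly the uniqueness assertion used in Proposition~\ref{prop:key result in boundary map}(2), and it holds because $\Psf_\theta = \bigcap_{\alpha\in\theta}\Psf_\alpha$ and each $\zeta_\alpha$ is injective on $\Fc_\alpha$. Hence $y = \xi = f(x)$.

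Since every subsequence has a further subsequence with the desired limits, the full sequence satisfies both conclusions. The only delicate point is the bookkeeping of subsequences and the injectivity of $(\zeta_\alpha)_{\alpha\in\theta}$. All the substantive work is in Proposition~\ref{prop:key result in boundary map}, which we are assuming here; this lemma is essentially a formal consequence of it.
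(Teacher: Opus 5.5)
Your proof is correct and follows essentially the same route as the paper. In both, Proposition~\ref{prop:key result in boundary map} forces every limit of $\rho_\alpha(\gamma\gamma_n)/\norm{\rho_\alpha(\gamma\gamma_n)}$ to have rank one and image $\zeta_\alpha(f(x))$; Property~\ref{item:SVs of Tits repn} then gives $\alpha(\kappa(\rho(\gamma\gamma_n)))\to+\infty$; and Lemma~\ref{lem:rank one limits in Valpha} identifies $\lim U_\theta(\rho(\gamma\gamma_n)) = f(x)$. The only difference is that you spell out the subsequence bookkeeping and the injectivity of $y \mapsto (\zeta_\alpha(y))_{\alpha\in\theta}$, which the paper leaves implicit.
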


\begin{proof} Suppose $x \in M'$ and $x$, $\gamma$, $\{\gamma_n\}$ satisfy the hypothesis of part (1). Proposition~\ref{prop:key result in boundary map} implies that any limit point of 
$$
\frac{\rho_\alpha(\gamma \gamma_n)}{\norm{\rho_\alpha(\gamma \gamma_n)}} 
$$
has rank one and image $\zeta_\alpha  f(x)$. So by Property~\ref{item:SVs of Tits repn}  of the representations $\Phi_\alpha$, we have 
$$
\alpha(\kappa(\rho(\gamma \gamma_n))) \rightarrow + \infty \quad \text{for all} \quad \alpha \in \theta.
$$
Then Lemma~\ref{lem:rank one limits in Valpha} implies that 
$
U_{\theta}(\rho(\ga \ga_n)) \rightarrow f(x).
$
\end{proof} 

We now prove the part (2) of the theorem.
 With the notation introduced at the start of the proof, $d_0=0$ and $\Ac_\epsilon$ is a point. Since $\epsilon > 0$ was an arbitrary small positive number, this implies that $\Ac$ is a point. Hence there exists $\lambda > 0$ such that 
$$ 
\mathsf{m}_0(x) = \lambda \Dc_{f(x)} + \tilde{\mathsf{m}}_0(x) 
$$
$\mu$-a.e., where $\tilde{\mathsf{m}}_0(x)$ is a non-atomic non-negative measure.

\begin{lemma} If $\mathsf{m}: M \rightarrow \Prob(\Fc_\theta)$ is an $\rho$-equivariant $\mu$-a.e.\ defined map, then $\mathsf{m}(x) = \Dc_{f(x)}$ $\mu$-a.e.  \end{lemma}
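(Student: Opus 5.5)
The plan is to run the whole construction preceding the lemma again, but starting from the given map $\mathsf{m}$ instead of $\mathsf{m}_0$. Ergodicity (Theorem~\ref{thm:ergodicity KZ}) and \cite[Corollary 3.2.17, Proposition 2.1.11]{Zimmer_book} show that $\mathsf{m}$ takes values in a single $\Gsf$-orbit. So we may write $\mathsf{m}(x) = (g'_x)_*\nu_0'$ for a fixed $\nu_0' \in \Prob(\Fc_\theta)$ and a measurable $x \mapsto g'_x$. We then define $d_0'$, $\Ac'$, $\Ac'_\epsilon$, $\nu_1'$ and $\mathsf{m}_1'$ exactly as before. The proof of Proposition~\ref{prop:key result in boundary map} used only $\rho$-equivariance of $\mathsf{m}_1$, the structure of its values as normalized restrictions of $\mathsf{m}_0$ to finitely many irreducible $d_0$-dimensional varieties, and the PS-system hypotheses. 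It therefore applies to $\mathsf{m}_1'$ and gives $\mathsf{m}_1'(x) = \Dc_{\xi}$. Here $\xi$ is determined by the images of the limits $S_\alpha$ of $\rho_\alpha(\gamma\gamma_n)/\norm{\rho_\alpha(\gamma\gamma_n)}$ along any conical sequence at $x$.

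These limits depend only on $x$ and the chosen conical sequence, not on which measurable map we started from. Take $x$ in the intersection of the two full-measure sets $M'$ (one for $\mathsf{m}$, one for $\mathsf{m}_0$) and in $\Lambda^{\rm con}(\mathscr{H})$. Choose a conical sequence at $x$ and pass to a subsequence along which the normalized matrices converge. This gives $\mathsf{m}_1'(x) = \Dc_{f(x)}$ for $\mu$-a.e.\ $x$. As in the discussion before the lemma, $d_0' = 0$ and $\Ac'$ is a single point. Since $\Ac'$ is a single point and the atom is carried along equivariantly, we get
$$
\mathsf{m}(x) = \lambda' \Dc_{f(x)} + \tilde{\mathsf{m}}(x)
$$
for some constant $\lambda' \in (0,1]$, where $\tilde{\mathsf{m}}(x)$ is non-atomic. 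It remains to show $\lambda' = 1$.

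Suppose $\lambda' < 1$. Then $x \mapsto \tilde{\mathsf{m}}(x)/(1-\lambda')$ is again a $\rho$-equivariant measurable map into $\Prob(\Fc_\theta)$; equivariance holds because $\Dc_{f(x)}$ is equivariant and the decomposition into atomic and non-atomic parts is canonical. Apply the previous paragraph to this new map. It must give a measure with an atom at $f(x)$ of positive mass, since $d_0 = 0$ forced positive-measure points. But $\tilde{\mathsf{m}}(x)$ is non-atomic, a contradiction. Hence $\mathsf{m}(x) = \Dc_{f(x)}$ $\mu$-a.e.

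The "in particular" statement follows by applying this to $\mathsf{m} := \Dc_{f'(\cdot)}$. The main obstacle is justifying that Proposition~\ref{prop:key result in boundary map} really transfers verbatim to a map built from an arbitrary $\mathsf{m}$. I would check that its proof uses no property of $\mathsf{m}_0$ beyond equivariance and measurability. If it does, I would instead argue directly: apply Theorem~\ref{thm:weak continuity property of boundary maps} to $F = \mathsf{m}$ (with $\Prob(\Fc_\theta)$ separable), and use that $\rho(\gamma\gamma_n)_*$ contracts any measure not charging the relevant hyperplane-type varieties to $\Dc_{f(x)}$.
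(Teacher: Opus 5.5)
Your proposal is correct and follows the paper's proof. You rerun the construction from the arbitrary equivariant map $\mathsf{m}$ to get $\mathsf{m}(x)=\lambda'\Dc_{f(x)}+\tilde{\mathsf{m}}(x)$ with $\tilde{\mathsf{m}}(x)$ non-atomic, identifying the atom with $f(x)$ through the common limits $S_\alpha$ (the paper phrases this as both maps satisfying part (1)). Then you normalize the remainder and rerun to obtain an atom, a contradiction; noting that $\tilde{\mathsf{m}}(x)$ has constant mass $1-\lambda'$ simply replaces the paper's ergodicity step.
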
 

\begin{proof} At the start of the argument,  $\mathsf{m}_0 : M \rightarrow \Prob(\Fc_\theta)$ was an arbitrary $\rho$-equivariant $\mu$-a.e.\ defined map. 
Hence repeating the proof above, there exist $\lambda' > 0$ and a $\mu$-a.e.\ defined $\rho$-equivariant map $f' : M \rightarrow \Fc_\theta$ such that $\mathsf{m}(x) = \lambda' \Dc_{f'(x)} + \tilde{\mathsf{m}}(x)$, where $\tilde{\mathsf{m}}(x)$ is a non-atomic non-negative measure. Further, $f'$ satisfies part (1) of the moreover part of the theorem. Since $f$ also satisfies part (1) of the moreover part of the theorem, we must have $f'=f$ $\mu$-a.e. 

Then it suffices to show that $\tilde{\mathsf{m}}(x) \equiv 0$ for $\mu$-a.e.\ $x \in M$. Suppose not. Then since $x \mapsto \tilde{\mathsf{m}}(x)(\Fc_\theta)$ is $\Gamma$-invariant and $\Gamma$ acts ergodically on $(M,\mu)$, we have $\tilde{\mathsf{m}}(x)$ is a non-zero measure for $\mu$-a.e.\ $x$. Then $\frac{\tilde{\mathsf{m}}}{\norm{\tilde{\mathsf{m}}}} : M \rightarrow \Prob(\Fc_\theta)$ is an  $\rho$-equivariant $\mu$-a.e.\ defined map. So repeating the argument again, we see that $\tilde{\mathsf{m}}(x)$ has an atom for $\mu$-a.e.\ $x$, which is a contradiction. 
\end{proof}

Now it remains to show the part (3).  Suppose that $\mathscr{H}(R) \equiv \Ga$. Since $\Fc_{\theta} \subset \partial_{\theta} X$ by Proposition \ref{prop:embedding of partial flags}, we can consider $f$ as the map into $\partial_{\theta} X$. We then consider the \emph{mixed shadow }
$$
\Oc_R^{f}(\ga) := \Oc_R(\ga) \cap f^{-1}(\Oc_R^{\theta}(\rho(\ga)))
$$
for $\ga \in \Ga$ and $R > 0$. By the part (1)  of Theorem \ref{thm:boundary map}, it suffices to show that for $\mu$-a.e.\ $x \in M$, there exist $R > 0$, $\ga \in \Ga$, and an escaping sequence $\{ \ga_n \} \subset \Ga$ such that $x \in \ga \Oc_R^{f}(\ga_n)$ for all $n \in \Nb$. Then the ``in particular'' part follows from Lemma \ref{lem:shadow translate shadow}(1).

By Property \ref{item:properness}, we can fix an enumeration $\Ga = \{ \ga_n \}$ such that
$$
\norm{\ga_1}_{\sigma} \le \norm{\ga_2}_{\sigma} \le \cdots.
$$
By the Mixed Shadow Lemma in our earlier work \cite[Theorem 6.2]{KimZimmer1} and the Shadow Lemma (Proposition \ref{prop.shadowlemma}), we can fix $R > 0$ so that for some $C > 1$, we have
$$
\frac{1}{C} \mu \left( \Oc_R(\ga) \right) \le \mu \left( \Oc_R^{f}(\ga) \right)  \le C \mu \left( \Oc_R(\ga) \right)  \quad \text{for all} \quad \ga \in \Ga. 
$$

By \cite[Proof of Theorem 4.1]{KimZimmer1}, the sequence of sets $A_n := \Oc_R(\ga_n)$, $n \in \Nb$, satisfies the following Kochen--Stone Lemma, with $\Omega = M$ and $\nu = \mu$.

\begin{lemma}[Kochen--Stone Lemma, \cite{KochenStone}]\label{lem: KS BC lemma}
Let $(\Omega ,\nu)$ be a finite measure space. If $\{ A_n\} \subset \Omega$ is a sequence of measurable sets where 
$$
\sum_{n \ge 1} \nu(A_n) = +\infty \quad \text{and} \quad \liminf_{N \rightarrow  +  \infty} \frac{ \sum_{n, m = 1}^N \nu(A_n \cap A_m)}{\left(\sum_{n=1}^N \nu(A_n)\right)^2} < +\infty,
$$
then  
$$
\nu\left( \{ x \in \Omega : x \text{ is contained in infinitely many of } A_1, A_2, \dots \}\right) > 0. 
$$
\end{lemma}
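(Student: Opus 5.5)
This is the Kochen--Stone lemma, a quantitative converse to Borel--Cantelli. I would prove it via the Paley--Zygmund (second moment) inequality applied to the counting functions
$$
S_N := \sum_{n=1}^N \mathbf{1}_{A_n}.
$$
Here
$$
\int S_N \, d\nu = \sum_{n=1}^N \nu(A_n) =: a_N \to +\infty
\quad\text{and}\quad
\int S_N^2 \, d\nu = \sum_{n,m=1}^N \nu(A_n \cap A_m) =: b_N .
$$
The hypothesis gives a subsequence $N_k \to \infty$ and a constant $L < \infty$ with $b_{N_k} \le L a_{N_k}^2$.

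\textbf{Step 1 (Paley--Zygmund bound).} Fix $\lambda \in (0,1)$. Split $\int S_N\,d\nu$ into the parts over $\{S_N \le \lambda a_N\}$ and $\{S_N > \lambda a_N\}$. The first part is at most $\lambda a_N$. The second is at most
$$
\left(\int S_N^2 \, d\nu\right)^{1/2} \nu(S_N > \lambda a_N)^{1/2}
$$
by Cauchy--Schwarz. This gives
$$
\nu(S_N > \lambda a_N) \ge \frac{(1-\lambda)^2 a_N^2}{b_N},
$$
and along $N_k$ the right-hand side is at least $(1-\lambda)^2/L > 0$.

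\textbf{Step 2 (passing to the limsup set).} Let $S_\infty := \lim_N S_N$, the total number of $A_n$ containing $x$; this is a monotone limit. Since $S_\infty \ge S_{N_k}$ and $a_{N_k} \to \infty$, for each fixed $T$ and all large $k$ we get
$$
\nu(S_\infty > T) \ge \nu(S_{N_k} > \lambda a_{N_k}) \ge (1-\lambda)^2/L .
$$
The sets $\{S_\infty > T\}$ decrease to $\{S_\infty = \infty\}$ as $T\to\infty$, and $\nu$ is finite. Continuity from above therefore yields
$$
\nu(S_\infty = \infty) \ge (1-\lambda)^2/L > 0 .
$$
The set $\{S_\infty = \infty\}$ is exactly the set of points lying in infinitely many $A_n$, which is the claim.

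\textbf{Main obstacle.} The only delicate point is Step 2: the lower bound holds only along the subsequence $N_k$ supplied by the liminf, so it must be transferred to the limsup set by monotonicity of $S_N$ in $N$ together with finiteness of $\nu$. The first moment $a_N$ diverges, so fixed thresholds $T$ are eventually beaten. Everything else is routine.
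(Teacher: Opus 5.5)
The paper does not prove this lemma; it quotes it from \cite{KochenStone} and uses it as a black box. Your second-moment (Paley--Zygmund) argument is therefore a self-contained substitute for the citation. Its overall structure is correct, including Step 2: passing to $\{S_\infty = \infty\}$ via monotonicity of $S_N$, $a_{N_k} \to +\infty$, and continuity from above.

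There is one slip in Step 1. On $\{S_N \le \lambda a_N\}$ you only get $\int_{\{S_N \le \lambda a_N\}} S_N \, d\nu \le \lambda a_N\, \nu(\Omega)$, not $\lambda a_N$. So the displayed inequality $\nu(S_N > \lambda a_N) \ge (1-\lambda)^2 a_N^2/b_N$ is false for a general finite measure. For example, take $\Omega = [0,2]$ with Lebesgue measure and $A_n = \Omega$ for all $n$; the hypotheses hold with $b_N/a_N^2 = 1/2$. For $\lambda \ge 1/2$ the left side is $0$, while the right side is $2(1-\lambda)^2 > 0$.

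The repair is immediate. Since $\sum_n \nu(A_n) = +\infty$, we have $\nu(\Omega) > 0$. Replacing $\nu$ by $\nu/\nu(\Omega)$ multiplies $b_N/a_N^2$ by $\nu(\Omega)$ and changes neither the hypotheses nor the conclusion. Equivalently, keep $\nu$ and restrict to $\lambda < 1/\nu(\Omega)$, which gives $\nu(S_N > \lambda a_N) \ge (1-\lambda\nu(\Omega))^2 a_N^2/b_N$. With this change Step 2 goes through verbatim. Letting $\lambda \to 0$ then yields the quantitative Kochen--Stone bound $\nu(\{S_\infty = \infty\}) \ge 1/L$. In the paper's application $\nu = \mu$ is a probability measure, so the slip is harmless there.
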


The uniform estimates on shadows implies that the new sequence of sets $B_n := \Oc_R^{f}(\ga_n)$, $n \in \Nb$, also satisfies this Kochen--Stone Lemma. Therefore, setting
$$
E := \left\{ x \in M : \exists \ R > 0, \ \ga \in \Ga, \text{ escaping } \{ \ga_n \} \subset \Ga \text{ s.t. } x \in \bigcap_{n \ge 1} \ga \Oc_R^{f}(\ga_n ) \right\},
$$
we have $\mu(E) > 0$ and $E$ is $\Ga$-invariant. Since the $\Ga$-action on $(M, \mu)$ is ergodic (Theorem \ref{thm:ergodicity KZ}), we have 
$
\mu(E) = 1.
$
This completes the proof of (3).

\medskip

Therefore, Theorem \ref{thm:boundary map} follows, and it remains to prove Proposition \ref{prop:key result in boundary map}.

\subsection{Proof of Proposition~\ref{prop:key result in boundary map}}

Let $M' \subset M$ be a full $\mu$-measure set satisfying the conclusion of Theorem~\ref{thm:weak continuity property of boundary maps}. Suppose $x_0 \in M'$, $\gamma \in \Gamma$, $\{\gamma_n\} \subset \Gamma$ is escaping,  $\gamma_n \in \mathscr{H}(R_n)$ for some $R_n \rightarrow +\infty$, 
$$
x_0 \in \bigcap_{n \geq 1}\gamma  \Oc_R(\gamma_n)
$$
for some $R > 0$, and 
$$
\frac{\rho_\alpha(\gamma \gamma_n)}{\norm{\rho_\alpha(\gamma \gamma_n)}} \rightarrow S_\alpha \in \End (V_\alpha)
$$
 for all $\alpha \in \theta$. 

Using Theorem~\ref{thm:weak continuity property of boundary maps} and replacing $\{\gamma_n\}$ by a subsequence, there exists a $\mu$-null set $E \subset M$ and $h_1,\dots, h_N \in \Gamma$ such that: if $y \in M \smallsetminus E$, then 
$$
\mathsf{m}_1(x_0) = \lim_{n \rightarrow  + \infty} \mathsf{m}_1( \gamma \gamma_n h_i y)
$$
for some $1 \leq i \leq N$. 

Given $T \in \End(V_\alpha)$ and a measure $\nu$ on $\Pb(V_\alpha)$, we define the pushforward measure 
$$
T_* \nu := T_*\left( \nu|_{\Pb(V_\alpha) \smallsetminus \Pb(\ker T)} \right). 
$$
The following well known observation will allow us to study the limit points of $\frac{\rho_\alpha(\gamma \gamma_n)}{\norm{\rho_\alpha(\gamma \gamma_n)}}$ in $\End(V_\alpha)$. 

\begin{observation}\label{obs:limits of endos acting on measures} Suppose $\nu$ is a Borel probability measure on $\Pb(\Rb^d)$ and $T_n \rightarrow T$ in $\End(\Rb^d)$. If $\nu( \Pb(\ker T)) = 0$, then $(T_n)_* \nu \rightarrow T_* \nu$.
\end{observation}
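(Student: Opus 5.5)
The claim is that if $T_n \to T$ in $\End(\Rb^d)$ and $\nu(\Pb(\ker T))=0$, then $(T_n)_*\nu \to T_*\nu$ weak-$*$, with the convention that $T_*\nu$ pushes forward only the restriction of $\nu$ to $\Pb(\Rb^d)\smallsetminus\Pb(\ker T)$. I would argue directly with test functions and dominated convergence.

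Fix a continuous function $\varphi$ on $\Pb(\Rb^d)$. The pushforward under $T_n$ is defined on $\Pb(\Rb^d)\smallsetminus \Pb(\ker T_n)$, so
$$
\int \varphi \, d(T_n)_*\nu = \int_{\Pb(\Rb^d)\smallsetminus \Pb(\ker T_n)} \varphi([T_n v])\, d\nu([v]).
$$
Now fix $[v] \notin \Pb(\ker T)$. Then $Tv \neq 0$, and since $T_n v \to Tv$ we get $T_n v \neq 0$ for all large $n$. Hence $[v]\notin \Pb(\ker T_n)$ eventually, and $[T_n v] \to [Tv]$ in $\Pb(\Rb^d)$.

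It follows that the integrands $\mathbf{1}_{\Pb(\Rb^d)\smallsetminus\Pb(\ker T_n)}([v])\,\varphi([T_nv])$ converge pointwise to $\varphi([Tv])$ off $\Pb(\ker T)$. By hypothesis $\Pb(\ker T)$ is a $\nu$-null set, so the convergence holds $\nu$-a.e. The integrands are also bounded by $\|\varphi\|_\infty$, and $\nu$ is a probability measure.

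Dominated convergence then gives
$$
\int \varphi\, d(T_n)_*\nu \to \int_{\Pb(\Rb^d)\smallsetminus\Pb(\ker T)} \varphi([Tv])\,d\nu = \int\varphi\, dT_*\nu.
$$
Since $\varphi$ was an arbitrary continuous function, this is weak-$*$ convergence. Note that $\nu(\Pb(\ker T))=0$ also gives total mass $\|T_*\nu\|=1$, so the limit is again a probability measure.

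There is essentially no obstacle. The only point needing care is the pointwise step: one has to check that $[v]$ eventually leaves $\Pb(\ker T_n)$. This is exactly where $v \notin \ker T$ is used, through $T_n v\to Tv\neq 0$.
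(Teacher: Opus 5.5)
Your proof is correct. The pointwise convergence $[T_n v]\to[Tv]$ off the $\nu$-null set $\Pb(\ker T)$, with $[v]$ eventually leaving $\Pb(\ker T_n)$, combined with dominated convergence against continuous test functions, gives exactly the claimed weak-$*$ convergence under the paper's restricted-pushforward convention. The paper gives no proof of this and calls it a well-known observation; your argument is the standard one that fills it in.
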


Since $\rho(\Ga)$ is $\Psf_{\theta}$-contracting,   there exists a sequence $\{g_m\} \subset \Gamma$ such that 
$$
\min_{\alpha \in \theta} \alpha(\kappa(\rho(g_m))) \rightarrow  + \infty. 
$$
Passing to a subsequence we can suppose that 
$$
\frac{\rho_\alpha(g_m)}{\norm{\rho_\alpha(g_m)}} \rightarrow T_\alpha \in \End (V_{\alpha})
$$
 for all $\alpha \in \theta$. By Property~\ref{item:SVs of Tits repn}, 
 $$
 \frac{ \sigma_1( \rho_\alpha(g_m))}{ \sigma_2( \rho_\alpha(g_m))} \rightarrow +\infty
 $$
 and hence $T_\alpha$ has rank one for each $\alpha \in \theta$.

Fix 
$$
y_0 \in M \smallsetminus \Gamma \cdot E
$$
(this set has full $\mu$-measure and hence is non-empty).

\begin{lemma}\label{lem:finding u1,u2}

   There exist $u_1,u_2 \in \Gamma$ such that 
$$
S_\alpha\rho_\alpha(h_i u_1) T_\alpha \neq 0 \quad \text{for all} \quad\alpha \in \theta, \ 1 \leq i \leq N
$$
and
$$
\rho(u_2) Y \not\subset \zeta_\alpha^{-1}\Big( \Pb(\ker S_\alpha\rho_\alpha(h_i u_1) T_\alpha)\Big) \quad \text{for all} \quad\alpha \in \theta, \  Y \in g_{y_0}\Ac_{\epsilon}.
$$
\end{lemma}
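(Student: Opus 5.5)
The plan is to reduce both requirements to finitely many conditions of the form ``$\rho_\alpha(u)v \notin W$'', with $v \neq 0$ and $W$ a proper subspace, and then apply Lemma~\ref{lem:rho irr implies flag irr} to the strongly $(\Phi_\alpha)_{\alpha\in\theta}$-irreducible group $\rho(\Gamma)$. This will be done twice: first to produce $u_1$, then, with $u_1$ fixed, to produce $u_2$.

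\emph{Choice of $u_1$.} Each $S_\alpha$ is a limit of operators of norm one, so $\norm{S_\alpha}=1$ and $\ker S_\alpha$ is a proper subspace of $V_\alpha$; we do not need to know that $S_\alpha$ has rank one. Each $T_\alpha$ has rank one, so ${\rm im}\, T_\alpha = \Rb v_\alpha$ for some $v_\alpha \neq 0$. Then
$$
S_\alpha \rho_\alpha(h_i u_1) T_\alpha \neq 0 \quad\Longleftrightarrow\quad \rho_\alpha(u_1) v_\alpha \notin W_{\alpha,i} := \rho_\alpha(h_i)^{-1}\ker S_\alpha .
$$
Each $W_{\alpha,i}$ is a proper subspace. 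We apply Lemma~\ref{lem:rho irr implies flag irr} to the finite list of triples $(\alpha, v_\alpha, W_{\alpha,i})$ with $\alpha\in\theta$ and $1\le i\le N$, where the roots are allowed to repeat. This gives a single $u_1\in\Gamma$ satisfying the first assertion.

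\emph{Simplifying the second condition.} Since $T_\alpha$ has rank one and $S_\alpha\rho_\alpha(h_iu_1)T_\alpha\neq 0$, this composition is a nonzero operator that factors through the one-dimensional space ${\rm im}\, T_\alpha$. Hence
$$
\ker\big(S_\alpha\rho_\alpha(h_iu_1)T_\alpha\big) = \ker T_\alpha ,
$$
which is a hyperplane independent of $i$ and of $u_1$. So the second requirement becomes
$$
\rho(u_2)Y \not\subset Z_\alpha := \zeta_\alpha^{-1}\big(\Pb(\ker T_\alpha)\big) \quad \text{for all } \alpha\in\theta \text{ and } Y \in g_{y_0}\Ac_\epsilon .
$$
The set $g_{y_0}\Ac_\epsilon$ is finite because $\Ac_\epsilon$ is, and each of its elements is a nonempty variety because it has positive $(g_{y_0})_*\nu_0$-measure.

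\emph{Choice of $u_2$.} For each $Y\in g_{y_0}\Ac_\epsilon$ and each $\alpha\in\theta$, pick a point $y_Y\in Y$ and a nonzero vector $w_{\alpha,Y}$ spanning the line $\zeta_\alpha(y_Y)$. It suffices to have $\rho(u_2)y_Y\notin Z_\alpha$ for every pair $(\alpha,Y)$. By the $\Phi_\alpha$-equivariance of $\zeta_\alpha$ in Property~\ref{item:boundary maps of Tits repn}, this is equivalent to
$$
\rho_\alpha(u_2)w_{\alpha,Y}\notin \ker T_\alpha .
$$
Since $\ker T_\alpha$ is a proper subspace, another application of Lemma~\ref{lem:rho irr implies flag irr}, now to the finite list $(\alpha, w_{\alpha,Y}, \ker T_\alpha)$, gives a single $u_2$ satisfying all of these conditions at once.

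\emph{Main obstacle.} The delicate point is that all the conditions must be met by one pair $(u_1,u_2)$ simultaneously. This is handled by the multi-condition form of Lemma~\ref{lem:rho irr implies flag irr}, which is where strong irreducibility is used. The second step is simplified by the kernel identity above, which removes any dependence on $u_1$ and $i$. The only remaining care is to check that $\ker S_\alpha$ and $\ker T_\alpha$ are proper and that each $Y$ is nonempty, both of which were noted above.
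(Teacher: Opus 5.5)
Your proposal is correct and follows the paper's proof: it applies Lemma~\ref{lem:rho irr implies flag irr} twice, first to the nonzero vector $v_\alpha$ spanning ${\rm im}\,T_\alpha$ against the subspaces $\rho_\alpha(h_i)^{-1}\ker S_\alpha$, and then to vectors spanning $\zeta_\alpha$ of a point of each $Y\in g_{y_0}\Ac_\epsilon$. The one difference is your identity $\ker(S_\alpha\rho_\alpha(h_iu_1)T_\alpha)=\ker T_\alpha$; the paper skips it and applies the lemma directly to those kernels, whereas your version makes the second condition visibly independent of $i$ and $u_1$.
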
 

\begin{proof} For each $\alpha \in \theta$ and $1 \leq i \leq N$, fix $v_\alpha \in {\rm im} \, T_\alpha$ non-zero. By Lemma \ref{lem:rho irr implies flag irr}, there exists $u_1 \in \Gamma$ such that 
$$
\rho_\alpha(u_1) v_\alpha \notin \rho_\alpha(h_i)^{-1} \ker S_\alpha
$$
for all $\alpha \in \theta$ and $1 \leq i \leq N$. Then $u_1$ satisfies the first part of the lemma. Next for each $Y \in g_{y_0}\Ac_{\epsilon}$, fix a non-zero vector $w_Y \in V_\alpha$ with $[w_Y] \in \zeta_\alpha (Y)$. Then by Lemma \ref{lem:rho irr implies flag irr}, there exists $u_2 \in \Gamma$ such that 
$$
\rho_\alpha(u_2) w_Y \notin \ker S_\alpha\rho_\alpha(h_i u_1) T_\alpha
$$
for all $\alpha \in \theta$ and $Y \in g_{y_0}\Ac_{\epsilon}$ (recall that $\Ac_\epsilon$ is finite). Then $u_2$ satisfies the second part of the lemma. 
\end{proof}

\medskip

For each $m \in \Nb$, since $u_1 g_m u_2 y_0 \notin E$, there exists some $1 \leq i_m \leq N$ such that 
$$
\lim_{n \rightarrow  + \infty} \rho(\gamma \gamma_n h_{i_m} u_1 g_m u_2)_*\mathsf{m}_1(y_0) =  \lim_{n \rightarrow  + \infty} \mathsf{m}_1(\gamma \gamma_n h_{i_m} u_1 g_m u_2y_0) = \mathsf{m}_1(x_0).
$$
Replacing $\{g_m\}$ with a subsequence and relabelling the $\{h_i\}$ we can assume that $h_{i_m} = h_1$ for all $m$. Next fix $n_m \rightarrow  + \infty$ such that 
$$
\lim_{m \rightarrow  + \infty} \rho(\gamma \gamma_{n_m} h_1 u_1 g_m u_2)_*\mathsf{m}_1(y_0)  = \mathsf{m}_1(x_0).
$$ 
Notice that 
$$
\lim_{m \rightarrow  + \infty} \frac{\rho_\alpha(\gamma\gamma_{n_m} h_1 u_1 g_m u_2)}{\norm{\rho_\alpha(\ga \gamma_{n_m} h_1 u_1 g_m u_2)}} = \frac{S_\alpha \rho_\alpha(h_1 u_1) T_\alpha \rho_\alpha(u_2)}{\norm{S_\alpha \rho_\alpha(h_1 u_1) T_\alpha \rho_\alpha(u_2)}}=:T'_\alpha \in \End(V_\alpha),  
$$
since $S_\alpha \rho_\alpha(h_1u_1) T_\alpha \neq 0$. 

By the choice of $u_2$, if $Y \in g_{y_0} \Ac_{\epsilon}$  and $\alpha \in \theta$, then 
$$
Y \not\subset \zeta_\alpha^{-1}\left( \Pb(\ker T_\alpha')\right),
$$
and so 
$$
\dim \left( Y \cap \zeta_\alpha^{-1}\left(\Pb(\ker T_\alpha')\right)\right) < d_0.
$$
 Hence by the definition of $\mathsf{m}_1$,
$$
(\zeta_\alpha)_*\mathsf{m}_1(y_0)\left( \Pb(\ker T_\alpha') \right) =\mathsf{m}_1(y_0)\left( \zeta_\alpha^{-1}\left( \Pb(\ker T_\alpha')\right) \right) =0.
$$
So Observation~\ref{obs:limits of endos acting on measures}  implies that
\begin{align*}
(\zeta_\alpha)_* \mathsf{m}_1(x_0) & = \lim_{m \rightarrow  + \infty} (\zeta_\alpha )_*  \rho(\gamma \gamma_{n_m} h_1 u_1 g_{m} u_2)_*\mathsf{m}_1(y_0) \\
& = \lim_{m \rightarrow  + \infty} \rho_\alpha (\gamma \gamma_{n_m}  h_1 u_1 g_{m} h_2)_*(\zeta_\alpha )_* \mathsf{m}_1(y_0) \\
& = ( T_\alpha')_* (\zeta_\alpha)_* \mathsf{m}_1(y_0).
\end{align*}
Since $\rank T_\alpha' = \rank T_\alpha=1$ we then see that 
$$
(\zeta_\alpha)_* \mathsf{m}_1(x_0) = \Dc_{\Pb({\rm im} T_\alpha')}. 
$$

Thus $\mathsf{m}_1(x_0)=\Dc_\xi$ where $\xi \in \Fc_\theta$ is the unique point with $\zeta_\alpha (\xi) = \Pb({\rm im} \, T_\alpha')$ for all $\alpha \in \theta$. Since ${\rm im} \, T_\alpha' \subset {\rm im} \, S_\alpha$, to finish the proof it suffices to show that $\rank S_\alpha =1$. 

Fix $\alpha \in \theta$. Since $\mathsf{m}_1(x_0)$ is a Dirac mass, the definition of $\mathsf{m}_1$ implies that $\mathsf{m}_1(x)$ is a Dirac mass for $\mu$-a.e.\ $x$. Then $f_\alpha(x) := \supp\, (\zeta_\alpha)_* \mathsf{m}_1(x)$ defines a  $\rho$-equivariant $\mu$-a.e.\ defined measurable map $f_\alpha : M \rightarrow \Pb(V_\alpha)$.

Suppose for a contradiction that $\rank S_\alpha > 1$. Fix $z_0 \in M \smallsetminus \Ga \cdot E$ such that $f_{\alpha}(z_0)$ is defined.

\begin{lemma}    
   There exists $u_3 \in \Gamma$ such that 
$$
\rho_\alpha(h_i u_3) f_\alpha(z_0) \notin \Pb(\ker S_\alpha )
$$
and 
$$
S_\alpha \rho_\alpha(h_i u_3)f_\alpha(z_0) \neq  f_\alpha(x_0) 
$$
for all $1 \leq i \leq N$. 
 \end{lemma}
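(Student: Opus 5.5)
We need $u_3 \in \Gamma$ so that for each $i$ the vector $\rho_\alpha(h_iu_3)v$, where $v$ is a nonzero vector with $[v]=f_\alpha(z_0)$, avoids two bad sets. The first is the proper subspace $\ker S_\alpha$. The second is the preimage under $S_\alpha$ of the line $f_\alpha(x_0)$. The plan is to show that, for each $i$, both conditions say that $\rho_\alpha(u_3)v$ avoids one fixed proper linear subspace of $V_\alpha$. Then Lemma~\ref{lem:rho irr implies flag irr} gives a single $u_3$ working for all $i$ at once.

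Fix a nonzero vector $w$ with $[w]=f_\alpha(x_0)$, and consider
$$
W:=S_\alpha^{-1}(\Rb w)=\{u\in V_\alpha : S_\alpha u\in \Rb w\}.
$$
This is a linear subspace containing $\ker S_\alpha$. It is proper: otherwise ${\rm im}\, S_\alpha\subset \Rb w$, so $\rank S_\alpha\le 1$, contradicting the standing assumption $\rank S_\alpha>1$. Now let $u=\rho_\alpha(h_iu_3)v$. If $u\notin W$, then $u\notin\ker S_\alpha$, so $[S_\alpha u]$ is a well-defined point of $\Pb(V_\alpha)$. Moreover $S_\alpha u\notin \Rb w$, so $[S_\alpha u]\neq f_\alpha(x_0)$. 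Thus the lemma reduces to finding $u_3$ with
$$
\rho_\alpha(u_3)v\notin W_i:=\rho_\alpha(h_i)^{-1}W \quad\text{for all } 1\le i\le N.
$$
Each $W_i$ is a proper subspace, since $\rho_\alpha(h_i)$ is invertible.

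Apply Lemma~\ref{lem:rho irr implies flag irr} to the group $\rho(\Gamma)$, which is strongly $(\Phi_\alpha)_{\alpha\in\theta}$-irreducible by hypothesis. Take $m=N$, all roots $\alpha_i=\alpha$, all vectors $v_i=v$, and the subspaces $W_i$. This produces $\gamma'=\rho(u_3)$ for some $u_3\in\Gamma$ with $\rho_\alpha(u_3)v\notin W_i$ for every $i$, as required.

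The only point needing care is properness of $W$, and that is exactly where $\rank S_\alpha>1$ is used. Since $f_\alpha(z_0)$ and $f_\alpha(x_0)$ are fixed points, there is no measure-theoretic subtlety here. The real work is in how $u_3$ is used afterwards to reach the contradiction.
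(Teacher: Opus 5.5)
Your proof is correct and matches the paper's argument. You fix $v$ with $[v]=f_\alpha(z_0)$, use $\rank S_\alpha>1$ to see that $S_\alpha^{-1}(f_\alpha(x_0))$ is a proper subspace, and apply Lemma~\ref{lem:rho irr implies flag irr} to its translates by $\rho_\alpha(h_i)^{-1}$. The only difference is cosmetic: the paper asks $\rho_\alpha(u_3)v$ to avoid $\rho_\alpha(h_i)^{-1}\ker S_\alpha$ and $\rho_\alpha(h_i)^{-1}S_\alpha^{-1}(f_\alpha(x_0))$ separately, while you observe the first is contained in the second and use $N$ subspaces instead of $2N$.
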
  
 
 \begin{proof}Let $f_\alpha(z_0)=[v]$ for some non-zero $v \in V_\alpha$. Notice that since $S_\alpha$ has rank at least two, $S_\alpha^{-1}(f_\alpha(x_0) ) \subset V_\alpha$ is a proper linear subspace. So by Lemma \ref{lem:rho irr implies flag irr}, there exists $u_3 \in \Gamma$ such that 
$$
\rho_\alpha(u_3) v \notin \rho_\alpha(h_i)^{-1} \ker S_\alpha 
$$
and 
$$
 \rho_\alpha(u_3)v \notin   \rho_\alpha(h_i)^{-1} S_\alpha^{-1}( f_\alpha(x_0) ) 
$$
for all $1 \leq i \leq N$. Then $u_3$ satisfies the desired conclusion. 
 \end{proof} 
 
 Since $u_3 z_0 \notin E$, there exists $1 \leq i \leq N$ such that 
 $$
\mathsf{m}_1(x_0)=  \lim_{n \rightarrow +  \infty} \mathsf{m}_1(\gamma \gamma_n h_i  u_3z_0)=\lim_{n \rightarrow  + \infty} \rho(\gamma \gamma_n h_i  u_3)_*\mathsf{m}_1(z_0).
$$
Applying $(\zeta_\alpha )_*$, Observation~\ref{obs:limits of endos acting on measures} implies that
\begin{align*}
\Dc_{f_\alpha(x_0)} & = (\zeta_\alpha )_*\mathsf{m}_1(x_0)=\lim_{n \rightarrow  + \infty} \rho_\alpha(\gamma \gamma_n h_i u_3)_*\Dc_{f_\alpha(z_0)} \\
& = \Dc_{S_\alpha  \rho_\alpha(h_i u_3)f_\alpha(z_0)}.
\end{align*} 
Since $S_\alpha \rho_\alpha(h_iu_3)f_\alpha(z_0) \neq  f_\alpha(x_0)$,  we have a contradiction. Hence $\rank S_\alpha =1$, completing the proof of Proposition~\ref{prop:key result in boundary map}.
\qed


\section{Amenable actions of transverse groups}\label{sec:amenable actions of transverse groups}


In this section we prove that transverse groups act amenably on their limit sets, and hence  the amenable assumption in Theorem \ref{thm:boundary map} is satisfied.

\begin{theorem}\label{thm:amenable} Suppose $\Gamma < \Gsf$ is a non-elementary $\Psf_\theta$-transverse group. If $\mu$ is a $\Gamma$-quasi-invariant Borel probability measure on $\Lambda_\theta(\Gamma)$, then the $\Gamma$-action on $(\Lambda_\theta(\Gamma), \mu)$ is amenable. 
\end{theorem}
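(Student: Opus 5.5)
The plan is to reduce the statement to amenability of convergence group actions on their limit sets. By Observation~\ref{obs:symmetry theta}, we may replace $\theta$ by $\theta \cup \opp^*\theta$, and $\Lambda_\theta(\Gamma)$ is homeomorphic to $\Lambda_{\theta\cup\opp^*\theta}(\Gamma)$. The $\Gamma$-action on $\Lambda := \Lambda_\theta(\Gamma)$ is then a minimal convergence action on a compact metrizable space, infinite because $\Gamma$ is non-elementary. Amenability of the action on $(\Lambda,\mu)$ is a statement about the measure class only. So it suffices to show that the topological action $\Gamma \curvearrowright \Lambda$ is \emph{topologically amenable}: there should exist continuous $\lambda_n : \Lambda \to \Prob(\Gamma)$ with $\sup_{x}\norm{\lambda_n(gx) - g_*\lambda_n(x)} \to 0$ for each $g$. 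Topological amenability implies measurewise amenability for every quasi-invariant measure, and this is the definition recalled before Theorem~\ref{thm:boundary map}.

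To get topological amenability, I would use the Gromov hyperbolic space attached to a convergence group. Since $\Gamma$ is countable and acts on $\Lambda$ as a convergence group, the space $\Lambda^{(3)}$ of distinct triples carries a proper discontinuous $\Gamma$-action. Following Bowditch, there is a connected locally finite hyperbolic graph $K$ with the following properties: $\Gamma$ acts on $K$ properly on vertices, not necessarily cocompactly. Its Gromov boundary contains $\Lambda$ equivariantly, as $\partial K$ or via the Bowditch ``annulus system'' construction. Vertex stabilizers are finite.

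I would then adapt the Kaimanovich / Adams / Germain argument for hyperbolic graphs with bounded geometry. For $x \in \Lambda$, let $\lambda_n(x)$ be the normalized counting measure on vertices of $K$ at distance between $n$ and $2n$ along geodesic rays from a basepoint toward $x$, averaged over a tail. Thinness of triangles gives the uniform estimate $\norm{\lambda_n(gx)-g_*\lambda_n(x)} \lesssim \dist(o,go)/n$. Since vertex stabilizers are finite, the measures on vertices can be transferred to $\Prob(\Gamma)$ by choosing orbit representatives.

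The main obstacle is bounded geometry. The graph $K$ for a non-cocompact convergence action need not be uniformly locally finite, and the Kaimanovich-type argument needs uniform bounds on the number of geodesic segments. My first attempt is to avoid this by citing the general theorem that any convergence group action of a countable group on a compact metrizable space is topologically amenable on its limit set; this follows from fine hyperbolic graph / relative hyperbolicity techniques, or from Ozawa's and Kaimanovich's work on boundary amenability for actions with finite stabilizers of triples. If such a citation is unavailable, my fallback is to work with the Borel amenability of the equivalence relation generated by $\Gamma$ on $\Lambda$. I would show it is hyperfinite off conical points using the shadow structure. On the conical limit set, I would use the fact that $\mu$ is supported there after ergodic decomposition. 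On non-conical points, I would note that stabilizers act on horospherical-like sets, where amenability of point stabilizers is derived from the convergence property. This last step is where I expect most difficulty.
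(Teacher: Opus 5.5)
\paragraph{Central step fails.} You plan to cite a ``general theorem'' that every convergence group action on a compact metrizable space is topologically amenable. No such theorem exists, and the statement is false. Take $\Gamma = F_2 * \Zb$, hyperbolic relative to $\{F_2\}$, acting on its Bowditch boundary. This is a non-elementary (geometrically finite) convergence action, and the parabolic point $p$ fixed by $F_2$ has stabilizer exactly $F_2$.

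The probability measure $\mu = \sum_{gF_2 \in \Gamma/F_2} c_{gF_2}\,\Dc_{gp}$ with all $c_{gF_2} > 0$ is $\Gamma$-quasi-invariant and has $\mu(\{p\}) > 0$. So any $\lambda_n$ as in the definition of amenability must satisfy $\norm{\lambda_n(p) - h_*\lambda_n(p)} \to 0$ for all $h \in F_2$. Summing $\lambda_n(p)$ over the right cosets $F_2 s$ then gives a Reiter sequence for $F_2$, which is impossible. Ozawa-type boundary amenability concerns the existence of \emph{some} amenable compact action, not this one. The obstruction is intrinsic to the action, so no auxiliary hyperbolic graph can rescue the argument.

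Any proof must therefore use more than the convergence property; in effect it must show that limit-point stabilizers of transverse groups are amenable. Your fallback fails at the same place, for three reasons:
\begin{itemize}
\item Amenability of the action needs amenability of the orbit equivalence relation \emph{and} of $\mu$-a.e.\ stabilizer.
\item The step ``amenability of point stabilizers is derived from the convergence property'' is exactly what fails in the example above.
\item An arbitrary quasi-invariant $\mu$ need not be carried by conical points, and ergodic decomposition does not change this.
\end{itemize}

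\paragraph{The missing idea.} You are right that growth control is the real issue, but Kaimanovich's argument needs only a convergent Poincar\'e series, not bounded geometry. The paper gets this from a geometric model of the transverse group.
\begin{itemize}
\item After passing to a quotient of $\Gsf$ (trivial center, $\Psf_\theta$ containing no simple factor), \cite[Theorem 6.2]{CZZ2024} gives a projectively visible $\Gamma_0 < \Aut(\Omega)$, isomorphic to $\Gamma$, with an equivariant homeomorphism $\Lambda_\Omega(\Gamma_0) \to \Lambda_\theta(\Gamma)$.
\item Tholozan's bound $\delta_\Omega(\Gamma_0) \le d-2$ lets one fix $\delta > \delta_\Omega(\Gamma_0)$ and define equivariant $\nu_p \in \Prob(\Gamma_0)$ proportional to $\sum_\gamma e^{-\delta \dist_\Omega(p,\gamma o)}\Dc_\gamma$. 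These satisfy $\norm{\nu_p - \nu_q} \le 2\delta e^{2\delta}\dist_\Omega(p,q)$ whenever $\dist_\Omega(p,q) \le 1$, with no local finiteness needed.
\item Every limit point is a $\Cc^1$-smooth point of $\partial\Omega$, so Hilbert geodesic rays from different basepoints to it are asymptotic up to a time shift.
\item Hence the averages $\lambda_n(p,x) = \frac{1}{n}\int_0^n \nu_{\ell_{px}(t)}\,dt$ satisfy $\norm{\lambda_n(o,\gamma x) - \gamma_*\lambda_n(o,x)} = \norm{\lambda_n(o,\gamma x) - \lambda_n(\gamma o,\gamma x)} \to 0$ for every $x$ and every $\gamma$.
\end{itemize}
This incidentally shows that limit-point stabilizers of transverse groups are amenable, which is why no transverse group behaves like the example above.
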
 

To prove Theorem~\ref{thm:amenable} we first use a result from~\cite{CZZ2024} that says a transverse group can be identified with a group acting nicely on a properly convex domain in some projective space. The Hilbert metric on this properly convex domain then has enough hyperbolic behavior to adapt an argument of Kaimanovich~\cite{Kaimanovich2004}.

\subsection{Background on convex real projective geometry}

We recall some terminology from real projective geometry. 

Suppose $\Omega \subset \Pb(\Rb^d)$ is \emph{properly convex}, that is, it is a bounded convex subset of some affine chart of $\Pb(\Rb^d)$. The \emph{automorphism group} of $\Omega$ is 
$$
\Aut(\Omega) = \{ g \in \PGL(d,\Rb) : g \Omega = \Omega\}.
$$
The \emph{limit set} of a subgroup $H < \Aut(\Omega)$, denoted by $\Lambda_\Omega(H)$, is the set of points $x \in \partial \Omega$ where there exist $o \in \Omega$ and $\{h_n\} \subset H$ with $h_n o \rightarrow x$. 

Given $x,y \in \overline\Omega$, we let $[x,y]$ denote the projective line segment contained in $\overline\Omega$ joining $x$ to $y$. We further define $(x,y) : =[x,y] \smallsetminus \{x,y\}$, $[x,y) : =[x,y] \smallsetminus \{y\}$, and $(x,y] : =[x,y] \smallsetminus \{x\}$.

The \emph{Hilbert metric} $\dist_\Omega$ on $\Omega$ is defined as follows: for $p,q \in \Omega$ distinct let $a,b \in \partial \Omega$ be the unique points with $p,q \in (a,b)$ and with order $a,p,q,b$, then define 
$$
\dist_\Omega(x,y) := \frac{1}{2} \log \frac{\norm{a-q}\norm{b-p}}{\norm{a-p}\norm{b-q}}
$$
where $\norm{\cdot}$ is any norm on any affine chart containing $\overline{\Omega}$. The Hilbert metric is a proper geodesic metric where $\Aut(\Omega)$ acts by isometries. Further, for $p,q \in \Omega$ the line segment $[p,q]$ can be parametrized to be a geodesic.

An element $W \in \Gr_{d-1}(\Rb^d)$ 
is a \emph{supporting hyperplane} at $x \in \partial \Omega$ if $x \in \Pb(W)$ and $\Pb(W) \cap \Omega = \emptyset$. Convexity of $\Omega$ implies that every boundary point is contained in at least one supporting hyperplane and we say that $\partial \Omega$ is \emph{$\Cc^1$-smooth at $x \in \partial\Omega$} if $x$ is contained in exactly one supporting hyperplane.  

We end this section by recording some useful geometric properties. 

\begin{observation}[{see e.g.\ the proof of ~\cite[Lemma 3.4]{ConvexDivisibleI}}]\label{obs:decay of geodesics} Suppose $x \in \partial \Omega$ is a $\Cc^1$-smooth point. For $p \in \Omega$, let $\ell_p: [0, + \infty) \rightarrow \Omega$ denote the unit speed parametrization of the geodesic ray $[p,x)$. If $p,q \in \Omega$, then there exists $T_{p,q} \in \Rb$ such that 
$$
\lim_{t \rightarrow  + \infty} \dist_\Omega(\ell_p(t),\ell_q(t+T_{p,q})) = 0.
$$
Moreover $T_{p,q}$ depends continuously on $p,q$ and the convergence is uniform on compact subsets of $\Omega$. 
\end{observation}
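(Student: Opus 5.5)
The plan is to reduce to a planar computation in an affine chart adapted to the tangent hyperplane at $x$.

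\emph{Reduction to a plane.} If $p$, $q$, $x$ are collinear, then $[p,x)$ and $[q,x)$ lie on the same projective line, and the claim holds with $T_{p,q}=\pm\dist_\Omega(p,q)$. Otherwise let $P$ be the projective plane spanned by $p,q,x$, and set $\Omega_P:=\Omega\cap P$. This is a properly convex planar domain. Since Hilbert distances are computed along projective lines, $\dist_{\Omega_P}=\dist_\Omega$ on $\Omega_P$, and both rays lie in $\Omega_P$.

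\emph{Smoothness in the slice.} The point $x$ is still $\Cc^1$-smooth in $\partial\Omega_P$. Let $W$ be the unique supporting hyperplane of $\Omega$ at $x$; then $W\cap P$ is a supporting line of $\Omega_P$ at $x$. For uniqueness, I would show that any supporting line of $\Omega_P$ at $x$ extends (Hahn--Banach in an affine chart) to a supporting hyperplane of $\Omega$ at $x$, which must then be $W$.

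\emph{Choice of chart.} Fix an affine chart of $P$ containing $\overline{\Omega_P}$ in which $x=0$ and the tangent line $W\cap P$ is the horizontal axis. Near $0$, $\partial\Omega_P$ consists of two convex arcs $s\mapsto f_\pm(s)$ with $f_\pm(s)=o(s)$; this is exactly the $\Cc^1$ condition. The rays $[p,x)$ and $[q,x)$ are segments of two distinct lines through $0$, both transverse to the horizontal axis.

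\emph{Matching points.} For small height $h>0$, let $u_h\in[p,x)$ and $v_h\in[q,x)$ be the points at height $h$. Then $\abs{x-v_h}/\abs{x-u_h}=c$ is a constant independent of $h$, by Thales' theorem. Let $a_h,b_h$ be the endpoints of the horizontal chord of $\Omega_P$ through $u_h$ and $v_h$. Its width $w(h)$ satisfies $w(h)/h\to+\infty$, because $f_\pm(s)=o(s)$. On the other hand, $\abs{u_h-v_h}=O(h)$, and $u_h,v_h$ stay at distance comparable to $w(h)$ from $a_h,b_h$. The cross-ratio formula for $\dist_\Omega(u_h,v_h)$ then gives $\dist_\Omega(u_h,v_h)\to0$.

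\emph{The time shift.} Let $a_p$ be the far endpoint of the chord through $p$ and $x$. The explicit formula gives
$$
\dist_\Omega(p,u_h)=\tfrac12\log\frac{\abs{a_p-u_h}\abs{x-p}}{\abs{a_p-p}\abs{x-u_h}}=-\tfrac12\log\abs{x-u_h}+C_p+o(1),
$$
and similarly $\dist_\Omega(q,v_h)=-\tfrac12\log\abs{x-v_h}+C_q+o(1)$. Since $\abs{x-v_h}/\abs{x-u_h}=c$, the difference $\dist_\Omega(q,v_h)-\dist_\Omega(p,u_h)$ converges to
$$
T_{p,q}:=C_q-C_p-\tfrac12\log c .
$$
Write $t=\dist_\Omega(p,u_h)$, so $u_h=\ell_p(t)$. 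Then $v_h=\ell_q(t+T_{p,q}+o(1))$, and the triangle inequality gives $\dist_\Omega(\ell_p(t),\ell_q(t+T_{p,q}))\to0$.

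\emph{Continuity and uniformity.} The constants $C_p$, $C_q$, $c$ are explicit continuous functions of the points $p$, $q$, $a_p$, $a_q$, and these endpoints depend continuously on $p,q\in\Omega$. The $o(1)$ terms are controlled uniformly for $p,q$ in a compact set $K\subset\Omega$. The reason is that the rays from $K$ to $x$ make angles with the tangent line that are bounded away from $0$, so the constant in $\abs{u_h-v_h}=O(h)$ and the comparability constants are uniform. Hence $T_{p,q}$ is continuous and the convergence is uniform on compact sets.

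\emph{Main obstacle.} The main point to handle carefully is this uniformity, especially as the plane $P$ varies with $p$ and $q$ and degenerates near collinear configurations. To avoid that, I would redo the estimates directly in a fixed affine chart of $\Pb(\Rb^d)$ adapted to $W$. There, the only input is $\Cc^1$-smoothness in the form
$$
\operatorname{dist}(z,\Pb(W))=o(\abs{z-x})\quad\text{for } z\in\partial\Omega,
$$
applied to the chords parallel to $W$ through $u_h$ and $v_h$. This removes the planar slicing entirely.
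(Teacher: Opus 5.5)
Your argument is correct and is the standard argument behind the paper's citation (the paper itself gives no proof of its own). You cut both rays by a hyperplane parallel to the tangent hyperplane at height $h$, use $\Cc^1$-smoothness to see that the endpoints of the resulting chord lie at distance $\gg h$ from $u_h,v_h$ while $\abs{u_h-v_h}=O(h)$, and read off $T_{p,q}$ from the cross-ratio formula; your fixed-chart version is the right way to get continuity of $T_{p,q}$ and uniformity on compact sets.

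One small slip: these endpoint distances need not be comparable to $w(h)$ when $\partial\Omega$ is asymmetric at $x$ (e.g.\ $f_+(s)=s^2$, $f_-(s)=\abs{s}^{3/2}$). However, each is $\gg h$ because both $f_\pm(s)=o(s)$, and that is all your estimate for $\dist_\Omega(u_h,v_h)$ uses.
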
 

Given a discrete group $\Gamma < \Aut(\Omega)$, the \emph{Hilbert metric critical exponent} is 
$$
\delta_\Omega(\Gamma) : = \limsup_{R \rightarrow  + \infty} \frac{1}{R} \log \#\{ \gamma \in \Gamma : \dist_\Omega(\gamma o, o) \leq R\}
$$
where $o \in \Omega$ is any fixed point. A result of Tholozan implies that this critical exponent has a uniform upper bound, which only depends on the dimension $d$. 

\begin{theorem}[{\cite{Tholozan}}] If $\Gamma < \Aut(\Omega)$ is discrete, then $\delta_\Omega(\Gamma) \leq d-2$. \end{theorem}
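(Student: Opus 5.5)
The plan is to reduce the orbit-counting bound to a volume-growth bound for balls in $\Omega$, and then to prove the volume bound by comparing $\dist_\Omega$ with a Riemannian metric whose Ricci curvature is bounded below. Throughout, $n := d-1 = \dim \Omega$.

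\emph{Step 1 (reduction to volume growth).} Fix a $\Gamma$-invariant volume form $\mathrm{vol}$ on $\Omega$. The Busemann or Holmes--Thompson volume of $\dist_\Omega$ will do, and so will the volume of the Riemannian metric used in Step 2. Fix $o \in \Omega$. The stabilizer $\Gamma_o$ is finite, since $\Gamma$ is discrete and acts properly on $(\Omega, \dist_\Omega)$. Proper discontinuity also gives $\epsilon>0$ such that the balls $B_\Omega(\gamma o,\epsilon)$ are pairwise disjoint for distinct points of the orbit $\Gamma o$. Each such ball has the same volume $v_\epsilon>0$, because $\Gamma$ acts by isometries. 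A packing argument then gives
$$
\#\{\gamma \in \Gamma : \dist_\Omega(\gamma o, o) \le R\} \le \frac{\# \Gamma_o}{v_\epsilon}\, \mathrm{vol}\bigl(B_\Omega(o, R+\epsilon)\bigr).
$$
So it suffices to show that $\mathrm{vol}(B_\Omega(o,R)) \le C e^{(n-1)R}$ for all $R \geq 0$.

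\emph{Step 2 (Riemannian comparison).} Let $h$ be the Blaschke (affine sphere) metric on $\Omega$. This is the metric induced by the hyperbolic affine sphere asymptotic to the cone over $\Omega$ (Cheng--Yau). It is $\Aut(\Omega)$-invariant and satisfies $\mathrm{Ric}_h \ge -(n-1)h$ (Calabi). The key comparison I will aim for is
$$
\dist_h(p,q) \le \dist_\Omega(p,q) + C_n \quad \text{for all } p,q \in \Omega,
$$
together with comparability of the Hilbert and Blaschke volumes up to multiplicative constants depending only on $n$. Given these, $B_\Omega(o,R) \subset B_h(o,R+C_n)$. Bishop--Gromov for $\mathrm{Ric}_h\ge -(n-1)$ then bounds $\mathrm{vol}_h(B_h(o,r))$ by the volume of an $r$-ball in $\Hb^n$, which is $O(e^{(n-1)r})$. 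Combining this with Step 1 gives $\delta_\Omega(\Gamma) \le n-1 = d-2$.

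\emph{Main obstacle.} The hard step is the comparison in Step 2. Pointwise, Benoist--Hulin show that $h$ and the Finsler norm of $\dist_\Omega$ are uniformly bi-Lipschitz, with constants depending only on $n$. That yields the volume comparability, but only $\dist_h \le c\,\dist_\Omega$ with some $c>1$, which destroys the exponent. What is needed is the sharp inequality with constant $1$ up to an additive error. I would try to obtain it as follows. By the Benzécri compactness of pointed properly convex domains, together with continuity of the affine sphere in the domain, it suffices to control $\dist_h$ along a single Hilbert geodesic $[p,q]$. For this I would compare the Blaschke length element along the segment with the cross-ratio length element. The plan is to use the two-dimensional slice of the cone over $\Omega$ spanned by the line through $p,q$ and the two endpoints $a,b \in \partial\Omega$ of that line. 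Along the middle portion of the segment, I would try to show that the density of $h$ is at most $(1+o(1))$ times the density of $\dist_\Omega$, with the $o(1)$ terms integrable along the segment. The bounded portions near $p$ and $q$ would be absorbed into $C_n$ by compactness.
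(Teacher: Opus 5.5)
The paper gives no proof of this statement. It quotes it from Tholozan and uses it as a black box.

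\textbf{What is sound.} Your architecture is the right one, and it is essentially Tholozan's route. The packing argument in Step~1 is correct. Once you have $\dist_h \le \dist_\Omega + C_n$, completeness of $h$ (Cheng--Yau), Calabi's bound $\mathrm{Ric}_h \ge -(n-1)h$ and Bishop--Gromov give $\delta_\Omega(\Gamma)\le n-1=d-2$. If you use $\mathrm{vol}_h$ in Step~1, you do not even need the Benoist--Hulin volume comparison.

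\textbf{The gap.} That comparison inequality is the entire content of the theorem, and the proposal does not establish it. Moreover, the route you sketch cannot work as stated, for three reasons.
\emph{Compactness is not sharp.} Benz\'ecri compactness plus continuity of the affine sphere only produces uniform, non-sharp constants; this is how the Benoist--Hulin bi-Lipschitz bound you discarded is obtained. It cannot deliver the constant $1$.
\emph{No local decay.} The ratio of the Blaschke density to the Hilbert density at $x\in[p,q]$, in the direction of the segment, depends only on the pointed domain $(\Omega,x)$ and that direction. Lying in the ``middle portion'' of $[p,q]$ gives no reason for it to be $\le 1+o(1)$.
\emph{The ratio can exceed $1$.} In an affine chart let $\Omega_\epsilon=\{s(\cos\theta,\sin\theta): 0\le s<1+\epsilon\cos 4\theta\}$. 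By the dihedral symmetry, $h$ at the origin is $c(\epsilon)$ times the Euclidean metric. Rotating by $\pi/4$ exchanges $\Omega_{\epsilon}$ and $\Omega_{-\epsilon}$, so $c$ is even. Since $c$ depends smoothly on $\epsilon$ (assuming the standard smooth dependence of the Cheng--Yau solution) and $c(0)=1$, we get $c(\epsilon)=1+O(\epsilon^2)$. But the Hilbert norm of $e_1$ at the origin is $1/(1+\epsilon)$, so the ratio is $1+\epsilon+O(\epsilon^2)>1$.
\emph{The slice alone is insufficient.} The affine sphere of the sector $P\cap C(\Omega)$ is a hyperbola, whose metric coincides with the Hilbert metric of the chord. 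The curve $M\cap P$ cut out of the affine sphere $M$ of $\Omega$, by contrast, depends on all of $\Omega$.

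\textbf{What is missing} is an \emph{integrated} comparison in which the local excess cancels along the segment. Here is one way to get it that does use your slice.
Geodesics of the induced Blaschke connection satisfy $\gamma_{ss}=\lambda\gamma$ with $\lambda=h(\gamma_s,\gamma_s)$, so up to parametrization they are exactly the curves $M\cap P$.
Let $\phi_1,\phi_2$ be linear forms on $P$ vanishing on the two boundary rays of $P\cap C(\Omega)$, and set $A_i=\phi_i\circ\gamma$. In $h$-arclength $\sigma$ one gets $\ddot A_i+\mu\dot A_i=A_i$ with $\mu=\frac{d}{d\sigma}\log\sqrt{\lambda}$. This $\mu$ is bounded by the uniformly bounded cubic form $\nabla h$. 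The Hilbert-to-Blaschke density ratio is $\tau=\tfrac12\frac{d}{d\sigma}\log(A_1/A_2)$.
With $\eta=\mu/2$ and $z_i=e^{\int\eta}A_i$ one has $\ddot z_i=(1+\dot\eta+\eta^2)z_i$. So the Wronskian $W$ of $z_1,z_2$ is constant, and $g=z_1z_2/W=1/(2\tau)$ satisfies $\dot g^2-2g\ddot g+4(1+\dot\eta+\eta^2)g^2=1$.
Divide by $4g$, integrate over the segment, move the derivative off $\dot\eta$, and complete the square. This gives $\int g\,d\sigma\le\int\frac{d\sigma}{4g}+B$. The boundary term $B$ is bounded in terms of $n$, using the cubic-form bound and the bounds on $\tau$ and $\dot\tau$.
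Since $\frac{1}{2\tau}+\frac{\tau}{2}\ge 1$, this says $\int(1-\tau)\,d\sigma\le B$, that is, $\mathrm{length}_h([p,q])\le\dist_\Omega(p,q)+C_n$.
Without an argument of this kind, Step~2, and hence the proof, is incomplete.
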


\subsection{Projectively visible groups} \label{sec:projectively visible} 

In ~\cite{CZZ2024,CZZ2026}, Canary, Zhang, and the second author studied transverse groups by constructing actions on certain types of properly convex domains. 

\begin{definition} Suppose $\Omega \subset \Pb(\Rb^d)$ is properly convex. A discrete subgroup $\Gamma < \Aut(\Omega)$ is \emph{projectively visible} if 
\begin{itemize} 
\item $(x,y) \subset \Omega$ for every $x,y \in \Lambda_\Omega(\Gamma)$, 
\item every $x \in  \Lambda_\Omega(\Gamma)$ is a $\Cc^1$-smooth point of $\partial \Omega$. 
\end{itemize} 
\end{definition} 

Under some mild conditions on $\Gsf$ and $\Psf_\theta$, every transverse group can be identified with a projectively visible subgroup. 

\begin{theorem}[{\cite[Theorem 6.2]{CZZ2024}}] \label{thm:transverse have proj models} Suppose $\Gsf$ has trivial center, $\theta \subset \Delta$,  
   and $\Psf_\theta$ contains no simple factors of $\Gsf$. If $\Gamma < \Gsf$ is $\Psf_\theta$-transverse, then there   exist $d \in \Nb$, a properly convex domain $\Omega \subset \Pb(\Rb^d)$, a projectively visible subgroup $\Gamma_0 < \Aut(\Omega)$, an isomorphism $\rho : \Gamma_0 \rightarrow \Gamma$, and a $\rho$-equivariant homeomorphism $\xi : \Lambda_\Omega(\Gamma_0) \rightarrow \Lambda_\theta(\Gamma)$. 
\end{theorem}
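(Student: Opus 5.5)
The plan is to reduce to a projective linear representation in which $\Gamma$ becomes $\Psf_1$-transverse, and then build $\Omega$ directly from the limit set, following the strategy of Danciger--Gu\'eritaud--Kassel and Zimmer for Anosov groups.

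\emph{Step 1: linearize.} Since $\Gsf$ has trivial center and $\Psf_\theta$ contains no simple factor, I expect a faithful irreducible representation $\Phi : \Gsf \to \SL(V)$ with the following property: $\Phi(\Psf_\theta)$ is contained in the stabilizer of a line, and $\log \frac{\sigma_1}{\sigma_2}(\Phi(g)) = \min_{\alpha \in \theta} \alpha(\kappa(g))$ up to a bounded error. A natural candidate is a tensor product of the Tits representations $\Phi_\alpha$, $\alpha \in \theta \cup \opp^*\theta$, from Section~\ref{sec:linear representations}; by Observation~\ref{obs:symmetry theta} we may assume $\theta$ is symmetric. By Properties~\ref{item:SVs of Tits repn}--\ref{item:boundary maps of Tits repn}, the resulting $\Phi(\Gamma)$ is $\Psf_1$-transverse in $\SL(V)$. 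The maps $\zeta$ and $\zeta^*$ then give equivariant injections
$$
\Lambda_\theta(\Gamma) \to \Pb(V), \qquad \Lambda_\theta(\Gamma) \to \Gr_{\dim V - 1}(V),
$$
and these are transverse on distinct pairs of points. The no-simple-factor hypothesis is what makes $\Phi$ faithful and keeps $\Phi(\Gamma)$ discrete. We then pass to $\Gamma_0 = \Phi(\Gamma)$, with $\rho = \Phi^{-1}|_{\Gamma_0}$.

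\emph{Step 2: force a properly convex invariant set.} Replace $V$ by $\mathrm{Sym}^2 V$ and use the Veronese embedding. The images of limit points are then lines $[v \otimes v]$, and the dual images are hyperplanes. The gain is positivity: on a suitable lift of the limit set to $\mathrm{Sym}^2 V$, the pairings $\langle \zeta^*(x), \zeta(y) \rangle$ become squares of the original pairings. These are nonnegative, and nonzero for $x \neq y$ by transversality. I would then define
$$
\Omega := \Pb\left(\{ w : \langle \tilde\zeta^*(x), w \rangle > 0 \text{ for all } x \in \Lambda_\theta(\Gamma) \}\right),
$$
or rather the interior of the convex hull of the lifted limit curve intersected with this cone. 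Proper convexity of $\Omega$ comes from irreducibility: the lifted limit set spans $\mathrm{Sym}^2 V$, or a $\Gamma$-invariant subspace to which we restrict.

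\emph{Step 3: verify visibility.} We need two properties: the open segments between distinct limit points lie in $\Omega$, and each limit point is a $\Cc^1$-smooth point of $\partial\Omega$, with the dual hyperplane as its unique supporting hyperplane. The first should follow from strict positivity of the pairings on distinct points. The second follows from convergence-group dynamics together with Proposition~\ref{prop:NS dynamics}: near a conical point, the hyperplane $\zeta^*(x)$ is the only limit of supporting hyperplanes. Finally, I would check that $\Lambda_\Omega(\Gamma_0)$ equals the image of $\Lambda_\theta(\Gamma)$, using Proposition~\ref{prop:NS dynamics}, so that $\xi$ is the inverse of the boundary embedding.

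The main obstacle is Step 3 at non-conical, for instance parabolic, limit points. There both $\Cc^1$-smoothness and the segment condition need uniform control of the shape of $\Omega$ near the point. I would first try to shrink $\Omega$ by intersecting it with the dual cone, to enforce smoothness, while checking that the segment condition is preserved.
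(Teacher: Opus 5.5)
The paper does not prove this statement; it quotes it from \cite{CZZ2024}. Your architecture is the right kind of argument: a proximal representation making $\Gamma$ $\Psf_1$-transverse, a positivity trick, and a domain cut out by dual limit hyperplanes. However, Steps 2 and 3 contain genuine gaps.

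\emph{Proper convexity.} Whether $\Pb(\{w : \langle \tilde\zeta^*(x), w\rangle > 0 \ \forall x\})$ is properly convex depends on whether the lifted \emph{dual} limit set spans $(\mathrm{Sym}^2V)^*$. That the lifted limit set spans is beside the point. Moreover, $\Phi$ is irreducible for $\Gsf$, not for $\Gamma$, and transverse groups can be reducible. Two examples:
\begin{itemize}
\item A non-semisimple deformation $\gamma \mapsto \left(\begin{smallmatrix} 1 & c(\gamma) \\ 0 & A(\gamma)\end{smallmatrix}\right)$ of a cocompact Fuchsian group in $\SL(3,\Rb)$ is $\Psf_1$-Anosov, yet every dual limit hyperplane contains $e_1$.
\item $\Gamma' \times \{1, g\}$, with $\Gamma'$ Fuchsian in the upper-left block and $g = \mathrm{diag}(-1,-1,1)$, is transverse.
\end{itemize}
In both examples the dual limit hyperplanes share a common nonzero subspace, and this persists after applying $\Phi$ and $\mathrm{Sym}^2$. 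So your set is a cylinder.

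Restricting to the span $W$ of the lifted limit set does not repair this in general, because the restricted dual functionals need not span $W^*$. It can also destroy faithfulness. In the second example, $g$ fixes every lifted limit vector, so it acts trivially on $W$ and $\rho$ is no longer an isomorphism.

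What works is to stay in all of $\mathrm{Sym}^2V$, where $\mathrm{Sym}^2\Phi$ is faithful, and to enlarge the dual side. Let $K$ consist of the normalized $\tilde\zeta^*(x)$ together with the normalized $\Gamma$-orbits of finitely many positive definite forms $\eta_1,\dots,\eta_N$ spanning $\mathrm{Sym}^2V^*$. With $\theta$ symmetric, $\Phi(\Gamma)$ diverges in both the $\Psf_1$ and $\Psf_{d-1}$ directions, so these orbits accumulate only on the normalized $\tilde\zeta^*(\Lambda_\theta(\Gamma))$. Hence $\overline{K}$ is a compact, spanning subset of the cone of positive semidefinite forms, $\Gamma$-invariant up to positive scaling. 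Now set
\[
\Omega := \Pb(\{w : \langle k, w\rangle > 0 \ \forall k \in \overline{K}\}).
\]
It is $\Gamma$-invariant and properly convex, and it is non-empty because it contains the positive definite tensors.

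\emph{Smoothness.} Step 3 is not an obstacle, and both remedies you suggest point the wrong way. The supporting hyperplanes of $\Omega$ at $\zeta(x)$ are the $[k]$ with $0 \neq k$ in the closed cone generated by $\overline{K}$ and $\langle k, \tilde\zeta(x)\rangle = 0$. By Carath\'eodory, such a $k$ is a positive combination of elements of $\overline{K}$, each of which must vanish on $\tilde\zeta(x)$ since all pairings are nonnegative. The forms $\gamma\eta_i$ never vanish there, and by transversality $\tilde\zeta^*(y)$ vanishes there only when $y = x$. Hence $\zeta^*(x)$ is the unique supporting hyperplane. This is purely algebraic and holds at every limit point, conical or parabolic, with no dynamics needed.

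The same positivity gives $(\zeta(x),\zeta(y)) \subset \Omega$ and $\zeta(\Lambda_\theta(\Gamma)) \subset \partial\Omega$. It also gives $\Pb(\zeta^*(y)) \cap \Omega = \emptyset$, which together with Proposition~\ref{prop:NS dynamics} identifies $\Lambda_\Omega(\Gamma_0)$ with $\zeta(\Lambda_\theta(\Gamma))$.

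By contrast, passing to a convex hull or intersecting $\Omega$ with further convex sets can only enlarge the set of supporting hyperplanes at a boundary point. For instance, the positive definite cone is not $\Cc^1$ at rank-one points once $\dim V \geq 3$. Smoothness at limit points comes from making $\Omega$ as large as positivity allows, not from shrinking it.
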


\subsection{Proof of Theorem~\ref{thm:amenable}} 
By replacing $\Gsf$ with a quotient, it suffices to consider the case where $\Gsf$ has trivial center and $\Psf_\theta$ does not contain any simple factors of $\Gsf$, see~\cite[Section 2.4]{CZZ2024}. 

Then by Theorem~\ref{thm:transverse have proj models} there exist $d \in \Nb$, a properly convex domain $\Omega \subset \Pb(\Rb^d)$, a projectively visible subgroup $\Gamma_0 < \Aut(\Omega)$, an isomorphism $\rho : \Gamma_0 \rightarrow \Gamma$, and a $\rho$-equivariant homeomorphism $\xi : \Lambda_\Omega(\Gamma_0) \rightarrow \Lambda_\theta(\Gamma)$. 

Then it suffices to fix a $\Gamma_0$-quasi-invariant Borel probability measure $\mu$ on $\Lambda_\Omega(\Gamma_0)$ and show that $\Gamma_0$ acts amenably on $(\Lambda_\Omega(\Gamma_0),\mu)$. Using Observation~\ref{obs:decay of geodesics}, we can argue exactly as in ~\cite[Theorems 1.33 and 3.15]{Kaimanovich2004}. 

\begin{lemma}[compare to {\cite[Theorem 1.33, Theorem 1.38]{Kaimanovich2004}}] There exists a sequence of $\Gamma_0$-equivariant maps $\lambda_n : \Omega \times \Lambda_\Omega(\Gamma_0) \rightarrow \Prob(\Gamma_0)$ such that 
$$
\lim_{n \rightarrow  + \infty} \norm{\lambda_n(p, x) - \lambda_n(q,x)} = 0
$$
for any $p,q \in \Omega$ and $x \in \Lambda_\Omega(\Gamma_0)$. Moreover, with $x$ fixed the convergence is uniform on any compact subset of $\Omega$. 
\end{lemma}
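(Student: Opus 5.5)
I will follow Kaimanovich's construction, replacing hyperbolic horofunction and geodesic estimates with Observation~\ref{obs:decay of geodesics}. Fix a basepoint $o \in \Omega$. For $p \in \Omega$ and $x \in \Lambda_\Omega(\Gamma_0)$, let $\ell_p : [0,+\infty) \to \Omega$ be the unit speed parametrization of the ray $[p,x)$. Because $\Gamma_0$ is discrete and acts properly on $(\Omega, \dist_\Omega)$, the sets
$$
A(p,x,t) := \{ g \in \Gamma_0 : \dist_\Omega(g o, \ell_p(t)) \leq D \}
$$
are finite for a fixed constant $D$. I would like them to be non-empty for all large $t$. This holds, possibly after enlarging $D$ depending on $p$ compactly, whenever $x$ is conical. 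In general, though, rays toward limit points need not stay near the orbit.

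To avoid needing non-emptiness, I would use the standard device of counting measures weighted by distance. Define a finite measure $\beta_{p,x,t}$ on $\Gamma_0$ by
$$
\beta_{p,x,t}(g) := e^{-s\,\dist_\Omega(go, \ell_p(t))}
$$
with $s > \delta_\Omega(\Gamma_0)$. This is finite by Tholozan's bound, and in fact finite for any $s > d-2$. Then set
$$
\lambda_n(p,x) := \frac{1}{n}\int_0^n \frac{\beta_{p,x,t}}{\norm{\beta_{p,x,t}}}\,dt .
$$
Equivariance follows because $\ell_{gp}$ for $gx$ equals $g\ell_p$, so $\beta_{gp,gx,t} = g_*\beta_{p,x,t}$. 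Measurability in $(p,x)$ follows from continuity of $\ell_p$ in $(p,x)$, using $\Cc^1$-smoothness and visibility.

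The key estimate runs as follows. By Observation~\ref{obs:decay of geodesics} we have $\dist_\Omega(\ell_p(t), \ell_q(t+T_{p,q})) \to 0$. The triangle inequality then gives pointwise ratios $\beta_{q,x,t+T}/\beta_{p,x,t} \in [e^{-s\epsilon_t}, e^{s\epsilon_t}]$ with $\epsilon_t \to 0$, so
$$
\norm{\tfrac{\beta_{p,x,t}}{\norm{\beta_{p,x,t}}} - \tfrac{\beta_{q,x,t+T}}{\norm{\beta_{q,x,t+T}}}} \to 0 .
$$
Averaging over $t \in [0,n]$ absorbs the time shift $T_{p,q}$ at cost $O(|T_{p,q}|/n)$, using that each normalized measure has total variation at most $2$. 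This yields $\norm{\lambda_n(p,x)-\lambda_n(q,x)} \to 0$. Uniformity on compact sets follows from the uniform convergence and the continuity of $T_{p,q}$ stated in the observation.

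The main obstacle is checking that the ray $\ell_p$ actually exists and depends nicely on $(p,x)$, and that the decay in the observation applies at every point of $\Lambda_\Omega(\Gamma_0)$. This is where projective visibility, namely $\Cc^1$-smoothness at limit points, is essential. I expect the rest to be routine.

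Afterwards, to get the amenable action, I would evaluate at $p=o$ and use $\norm{\lambda_n(o,gx) - g_*\lambda_n(o,x)} = \norm{\lambda_n(o,gx)-\lambda_n(go,gx)} \to 0$.
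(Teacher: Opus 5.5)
Your proposal is correct and is essentially the paper's proof. Your normalized measure $\beta_{p,x,t}/\norm{\beta_{p,x,t}}$ is exactly the paper's $\nu_{\ell_{px}(t)}$, your pointwise ratio bound $e^{\pm s\epsilon_t}$ is the paper's claim $\norm{\nu_p-\nu_q} \le 2\delta e^{2\delta}\dist_\Omega(p,q)$, and $\lambda_n$ is the same Ces\`aro average along the ray combined with Observation~\ref{obs:decay of geodesics}. Your explicit treatment of the time shift $T_{p,q}$ (costing $O(\abs{T_{p,q}}/n)$) and of uniformity on compact sets simply spells out details that the paper leaves to the reader.
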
  

\begin{proof} Fix $\delta > \delta_\Omega(\Gamma_0)$ and $o \in \Omega$. Then for $p \in \Omega$ consider the measures 
$$
\nu_p := \frac{1}{\sum_{\gamma \in \Gamma_0} e^{-\delta \dist_\Omega(p, \gamma o)}} \sum_{\gamma \in \Gamma_0} e^{-\delta \dist_\Omega(p, \gamma o)} \Dc_{\gamma} \in {\rm Prob}(\Gamma_0)
$$
where $\Dc_{\gamma }$ is the Dirac mass  at $\gamma $. 

\medskip

\noindent \textbf{Claim:} If $\dist_\Omega(p,q) \leq 1$, then 
\begin{equation}\label{eqn:difference between nu measures}
\norm{\nu_p - \nu_q } \leq 2\delta e^{2\delta} \dist_\Omega(p,q).
\end{equation} 

\medskip 

\noindent \emph{Proof of Claim:} For $p \in \Omega$, let $M_p : = \sum_{\gamma \in \Gamma_0} e^{-\delta \dist_\Omega(p, \gamma o)} $. Now fix $p,q \in \Omega$ with $\dist_\Omega(p,q) \leq 1$. Then  for every $\gamma \in \Gamma$, 
$$
e^{-\delta \dist_\Omega(p, \gamma o)} = e^{\epsilon_\gamma}e^{-\delta \dist_\Omega(q, \gamma o)} \quad \text{where} \quad\abs{\epsilon_\gamma} \leq \delta \dist_\Omega(p,q).
$$
So $M_p = e^{\epsilon} M_q$ where $\abs{\epsilon} \leq \delta \dist_\Omega(p,q)$. Then 
\begin{align*}
\norm{ \nu_p-\nu_q} & = \sum_{\gamma \in \Gamma} \abs{ \frac{1}{M_p}e^{-\delta \dist_\Omega(p, \gamma o)} - \frac{1}{M_q}e^{-\delta \dist_\Omega(q, \gamma o)} } \\
& = \frac{1}{M_q}  \sum_{\gamma \in \Gamma} \abs{ e^{-\epsilon} e^{-\delta \dist_\Omega(p, \gamma o)} - e^{-\delta \dist_\Omega(q, \gamma o)} }\\
&  = \frac{1}{M_q}  \sum_{\gamma \in \Gamma} e^{-\delta \dist_\Omega(q, \gamma o)} \abs{ e^{-\epsilon+\epsilon_\gamma}  - 1  } \leq \sup_{\gamma \in \Gamma}\abs{ e^{-\epsilon+\epsilon_\gamma}  - 1  } .
\end{align*} 
Now $\abs{-\epsilon+\epsilon_\gamma} \leq 2 \delta \dist_\Omega(p,q)$, so by the mean value theorem 
$$
\norm{ \nu_p-\nu_q}  \leq e^{2\delta \dist_\Omega(p,q)} 2 \delta \dist_\Omega(p,q) \leq 2 \delta e^{2\delta} \dist_\Omega(p,q). 
$$
\hfill $\blacktriangleleft$

\medskip

Next for $p \in \Omega$ and $x \in \Lambda_\Omega(\Gamma_0)$, let $\ell_{px} : [0, + \infty) \rightarrow \Omega$ be the unit speed parametrization of the geodesic ray $[p,x)$. Then define 
$$
\lambda_n(p,x) : = \frac{1}{n} \int_0^n \nu_{\ell_{px}(t)} d t. 
$$
Observation~\ref{obs:decay of geodesics} and Equation~\eqref{eqn:difference between nu measures} imply that these measures have the desired properties. 
\end{proof} 

Next define $\lambda_n : \Lambda_\Omega(\Gamma_0) \rightarrow \Prob(\Gamma_0)$ by 
$$
\lambda_n(x) := \lambda_n(o,x).
$$
Then
$$
\lim_{n \rightarrow  + \infty} \norm{\lambda_n(\gamma x) - \gamma_*\lambda_n(x)} = \lim_{n \rightarrow  + \infty} \norm{\lambda_n(o,\gamma x) - \lambda_n(\gamma o, \gamma x)}=0 
$$
for every $x \in \Lambda_{\Omega}(\Gamma_0)$ and every $\ga \in \Ga_0$.
\qed

\part{Applications} 


\section{Lifting Patterson--Sullivan measures} \label{section:lifting map}


For the rest of the section, suppose $\Gamma < \Gsf$ is a $\Psf_\theta$-transverse group, $\phi \in \mfa_\theta^*$, $\delta:=\delta^\phi(\Gamma) < +\infty$, and 
$$
\sum_{\gamma \in \Gamma} e^{-\delta \phi(\kappa(\gamma))} = +\infty.
$$
 Let $\mu$ denote the unique $(\Ga, \phi, \delta)$-Patterson--Sullivan measure on $\La_{\theta}(\Ga) \subset \Fc_{\theta} \subset \partial_{\theta} X$, see Theorem~\ref{thm:CZZ PS transverse}. This section is devoted to the proof of the following.

\begin{theorem} \label{thm:lifting properties}
With the notations above, suppose $\Theta \supset \theta$ and $\Gamma $ is $\Psf_{\Theta}$-contracting and strongly $(\Phi_{\alpha})_{\alpha \in \Theta}$-irreducible. Then the following holds:
\begin{enumerate}
\item There exists a unique $\mu$-a.e defined injective $\Gamma$-equivariant measurable map $$f : \Lambda_\theta(\Gamma) \rightarrow \Fc_{\Theta}.$$
\item The pushforward $f_*\mu$ is the unique $(\Ga, \phi, \delta)$-Patterson--Sullivan measure on $\partial_\Theta X$. 
\item $(f_*\mu)( \Lambda^{\rm concon}_\Theta(\Gamma)) = 1$.  Moreover,
for $\mu$-a.e.\ $x \in \La_{\theta}(\Ga)$, there exist $R > 0$ and an escaping sequence $\{ \ga_n \} \subset \Ga$ such that
$$
\lim_{n \to + \infty} \min_{\alpha \in \Theta} \alpha(\kappa(\ga_n)) = + \infty, \quad x \in \bigcap_{n \ge 1} \Oc_R^{\theta}(\ga_n), \quad \text{and} \quad f(x) \in \bigcap_{n \ge 1} \Oc_R^{\Theta}(\ga_n).
$$

\item If $\mu'$ is a $(\Gamma, \phi, \beta)$-Patterson--Sullivan measure on $\partial_\Theta X$, then $\beta \geq \delta$. 
\end{enumerate}

\end{theorem}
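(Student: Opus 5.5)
The plan is to feed the PS-system on $\Fc_\theta$ into Theorem~\ref{thm:boundary map} with $\rho=\id$ and the set $\Theta$ in place of $\theta$. By Theorem~\ref{thm:transverse well-behaved}, with the PS-measure $\mu$ (viewed on $\partial_\theta X$ via $\iota$), the system $(\partial_\theta X,\Gamma,\phi\circ B_\theta,\mu)$ is well-behaved with respect to the trivial hierarchy. This needs strong $(\Phi_\alpha)_{\alpha\in\theta}$-irreducibility, which follows from the $\Theta$ hypothesis. Proposition~\ref{prop:concon limit set for transverse groups}(3) gives $\mu(\La^{\rm con}_\theta(\Ga))=1$. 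This set is the $\mathscr H$-conical limit set for the trivial hierarchy. Indeed, $\bigcap_n\La_R(\Ga)=\La_R(\Ga)$, so $\La^{\rm con}(\mathscr H)=\Ga\cdot\bigcup_R\La_R(\Ga)=\La_\theta^{\rm con}(\Ga)$. Amenability of the action on $(\La_\theta(\Ga),\mu)$ is Theorem~\ref{thm:amenable}. Theorem~\ref{thm:boundary map} then produces a unique $\Gamma$-equivariant measurable $f:\La_\theta(\Ga)\to\Fc_\Theta$.

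For injectivity in (1), compose $f$ with the projection $\pi:\Fc_\Theta\to\Fc_\theta$. Then $\pi\circ f$ and the identity are both equivariant maps into $\Fc_\theta$. The uniqueness in Theorem~\ref{thm:boundary map}(2), applied to the set $\theta$, gives $\pi\circ f=\id$ $\mu$-a.e. Hence $f$ is an a.e.\ section and is injective on a full-measure set. Uniqueness of $f$ is Theorem~\ref{thm:boundary map}(2) for $\Theta$.

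For (3), apply Theorem~\ref{thm:boundary map}(3). For $\mu$-a.e.\ $x$ it gives $\gamma$, $R$ and an escaping $\{\ga_n\}$ with $x\in\bigcap\ga\Oc_R^\theta(\ga_n)$ and $f(x)\in\bigcap\ga\Oc_R^\Theta(\ga_n)$. It also gives $\min_{\alpha\in\Theta}\alpha(\kappa(\ga_n))\to\infty$. Replacing $\ga_n$ by $\ga\ga_n$ and enlarging $R$ via Lemma~\ref{lem:shadow translate shadow}(1) removes $\ga$. Since $\kappa(\ga\ga_n)-\kappa(\ga_n)$ is bounded, the root condition persists.

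For (2), check that $f_*\mu$ is a $(\Ga,\phi,\delta)$-PS measure on $\partial_\Theta X$. Equivariance gives $d\ga_*f_*\mu/df_*\mu(f(x))=e^{-\delta\phi B_\theta^{IW}(\ga^{-1},x)}$. Since $\phi\in\fa_\theta^*$ and $\pi_\theta\circ f=\id$, $\phi\, B_\Theta^{IW}(\ga^{-1},f(x))=\phi\,B_\theta^{IW}(\ga^{-1},x)$; this is compatibility of the partial Iwasawa cocycles with projection, \eqref{eqn:defn of BIW}. Then Proposition~\ref{prop:embedding of partial flags}(2) transfers this to $\partial_\Theta X$.

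For uniqueness in (2) and for (4), let $\mu'$ be any $(\Ga,\phi,\beta)$-PS measure on $\partial_\Theta X$. Push it by $\pi_\theta:\partial_\Theta X\to\partial_\theta X$. Since $\phi\,\xi=\phi\,\pi_\theta\xi$ for $\phi\in\fa_\theta^*$, $(\pi_\theta)_*\mu'$ is a $(\Ga,\phi,\beta)$-PS measure on $\partial_\theta X$.

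\textbf{This is the main obstacle:} one must know that $(\pi_\theta)_*\mu'$ is supported on $\La_\theta(\Ga)$. My plan is a shadow-lemma/Kochen--Stone argument, in two steps. First, Proposition~\ref{prop.shadowlemma} in the PS-system of Theorem~\ref{thm:Zdense PS} gives $\mu'(\Oc_R(\ga))\asymp e^{-\beta\phi(\kappa(\ga))}$. If $\beta<\delta$, the sums diverge, and a Kochen--Stone argument as in the proof of Theorem~\ref{thm:boundary map}(3) would show that the conical set has positive measure. Applying Proposition~\ref{prop:concon limit set for transverse groups} and Theorem~\ref{thm:CZZ PS transverse}(2) then forces $\beta\ge\delta$. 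Second, for $\beta=\delta$, the uniqueness in Proposition~\ref{prop:concon limit set for transverse groups}(3) gives $(\pi_\theta)_*\mu'=\mu$.

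Next I would run the boundary-map argument with $\mu'$ on $\partial_\Theta X$ itself. The restriction of $\mu'$ to the preimage of the conical set gives a well-behaved system by Theorem~\ref{thm:transverse well-behaved}. Theorem~\ref{thm:boundary map}(3) shows $\mu'$ is concentrated on $\Fc_\Theta$ and that $\xi=f(\pi_\theta\xi)$ $\mu'$-a.e. The second identity would come from uniqueness of equivariant maps applied to the identity on $\Fc_\Theta$ composed with $\pi_\theta$. Hence $\mu'=f_*(\pi_\theta)_*\mu'=f_*\mu$.
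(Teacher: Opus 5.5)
Your parts (1) and (3), the existence half of (2), and the identification $(\pi_\theta)_*\mu'=\mu$ when $\beta=\delta$ are all fine. Your route to (4) also works, and it differs from the paper's: you push $\mu'$ down to $\partial_\theta X$ and run the divergence/Kochen--Stone argument there for $(\pi_\theta)_*\mu'$, where Theorem~\ref{thm:transverse well-behaved} genuinely applies, then finish with Theorem~\ref{thm:CZZ PS transverse}(2).

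The gap is the last step, upgrading $(\pi_\theta)_*\mu'=\mu$ to $\mu'=f_*\mu$. You invoke Theorem~\ref{thm:transverse well-behaved} to get a well-behaved PS-system on $\partial_\Theta X$. That theorem requires $\Gamma$ to be $\Psf_\Theta$-transverse, but here $\Gamma$ is only $\Psf_\theta$-transverse and $\Psf_\Theta$-contracting, which is the whole point of the paper.
\begin{itemize}
\item For $\alpha\in\Theta\smallsetminus\theta$ there may be escaping sequences with $\alpha(\kappa(\ga_n))$ bounded. Along these, $\Oc_R^\Theta(\ga_n)$ need not shrink, so \ref{item:diam goes to zero ae} and \ref{item:intersecting shadows} are unavailable for $\Theta$-shadows.
\item $(\pi_\theta)_*\mu'=\mu$ does not make the $\Theta$-conical set $\mu'$-conull. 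The inclusion $\pi_\theta\Oc_R^\Theta(\ga)\subset\Oc_R^\theta(\ga)$ goes the wrong way for pulling conicality back.
\item Pulling back $\theta$-shadows by $\pi_\theta$ does not help either, since those sets contain whole fibres and never shrink.
\end{itemize}
So you never establish that $\mu'$ is concentrated on $\Fc_\Theta$, and that is exactly what is missing. If you had it, disintegrating $\mu'$ over $\mu$ along $\pi_\theta$ would give a $\Gamma$-equivariant map $\La_\theta(\Ga)\to\Prob(\Fc_\Theta)$. Theorem~\ref{thm:boundary map}(2) would then force it to be $x\mapsto\Dc_{f(x)}$, which yields $\mu'=f_*\mu$.

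The paper supplies this ingredient through Proposition~\ref{prop:uniquePS}, which works entirely inside the well-behaved $\theta$-system on the domain.
\begin{itemize}
\item For $\mu$-a.e.\ $x$, the $\theta$-conical sequences are automatically $\Theta$-contracting by Theorem~\ref{thm:boundary map}(1). Hence $\Oc_R^\Theta(\ga_n)$ shrinks to $f(x)$ by Lemma~\ref{lem:diam decay}(3).
\item Given a $\mu'$-null set $E$ and a neighborhood $\Uc\supset E$ with small $\mu'(\Uc)$, cover $f^{-1}(E)$ by $\theta$-shadows whose $\Theta$-shadows lie in $\Uc$.
\item Apply the Vitali Lemma~\ref{lem:V covering} to the $\theta$-shadows, and transfer their disjointness to the $\Theta$-shadows via $\pi_\theta\Oc_R^\Theta(\ga)\subset\Oc_R^\theta(\ga)$.
\item Compare measures using the Shadow Lemma. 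This gives $f_*\mu\ll\mu'$ whenever $\beta\le\delta$.
\end{itemize}
Applying this to $\mu'$ restricted to the complement of $f(Y)$ gives $\mu'(f(Y))=1$. Then Theorem~\ref{thm:CZZ PS transverse} together with $\pi\circ f=\id$ yields $\beta=\delta$ and $\mu'=f_*\mu$.
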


By Theorem \ref{thm:transverse well-behaved}, $(\partial_\theta X, \Ga, \phi \circ B_{\theta}, \mu)$ is a well-behaved PS-system with respect to the trivial hierarchy $\mathscr{H}(R) \equiv \Ga$. Hence, together with Theorem \ref{thm:KZ Rigidity} and \cite{Benoist_properties}, we obtain singularity between Patterson--Sullivan measures.

\begin{corollary} \label{cor:singularity between PS}
   Suppose further that $\Ga$ is Zariski dense in $\Gsf$. If $\psi \in \fa_{\Theta}^*$, $\beta \ge 0$, and $\mu_{\psi}$ is a $(\Ga, \psi, \beta)$-Patterson--Sullivan measure on $\partial_{\Theta} X$, then
   $$
\text{$f_* \mu$ and $\mu_\psi$ are non-singular} \quad \Longleftrightarrow \quad f_* \mu = \mu_\psi \quad \Longleftrightarrow \quad \delta \cdot \phi = \beta \cdot \psi.
   $$
\end{corollary}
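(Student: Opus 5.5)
The plan is to combine Theorem~\ref{thm:lifting properties}(2) with the boundary rigidity Theorem~\ref{thm:KZ Rigidity}, and then use Benoist's result on the Jordan spectrum to pass from length equality to equality of functionals. The implications $\delta\cdot\phi = \beta\cdot\psi \Rightarrow f_*\mu = \mu_\psi$ and $f_*\mu = \mu_\psi \Rightarrow$ non-singular are the easy directions. For the first, if $\delta\phi = \beta\psi$ then $\mu_\psi$ is a $(\Ga,\phi,\delta)$-Patterson--Sullivan measure on $\partial_\Theta X$: the Radon--Nikodym derivatives $e^{-\beta\psi\xi(\ga o)} = e^{-\delta\phi\xi(\ga o)}$ coincide. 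By the uniqueness in Theorem~\ref{thm:lifting properties}(2), $\mu_\psi = f_*\mu$. Here I view $\phi \in \fa_\theta^* \subset \fa_\Theta^*$ and use that $\phi\circ\pi_\Theta$ agrees with $\phi\circ\pi_\theta$, so both measures are compared on $\partial_\Theta X$. The second implication is trivial.

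The main direction is non-singular $\Rightarrow \delta\phi = \beta\psi$. I would apply Theorem~\ref{thm:KZ Rigidity} with the following inputs.
\begin{itemize}
\item $(M_1,\Ga_1,\sigma_1,\mu_1) = (\partial_\theta X, \Ga, \phi\circ B_\theta, \mu)$. This is well-behaved with respect to the trivial hierarchy by Theorem~\ref{thm:transverse well-behaved}. Since Zariski density gives strong $(\Phi_\alpha)_{\alpha\in\theta}$-irreducibility by Remark~\ref{remark:Zdense irreducible}, that theorem applies.
\item The conical limit set has full measure. Indeed, $\Lambda^{\rm con}(\mathscr{H}) = \La^{\rm con}_\theta(\Ga)$ for the trivial hierarchy, and this set has full $\mu$-measure by Proposition~\ref{prop:concon limit set for transverse groups}(3), using divergence of the Poincaré series.
\item $(M_2,\Ga_2,\sigma_2,\mu_2) = (\partial_\Theta X, \Ga, \psi\circ B_\Theta, \mu_\psi)$. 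This is a PS-system by Theorem~\ref{thm:Zdense PS}, again by Zariski density.
\item $\rho = \id$, and the map is the injective equivariant map $f$ from Theorem~\ref{thm:lifting properties}(1), composed with the embedding $\iota$ of Proposition~\ref{prop:embedding of partial flags}.
\end{itemize}
The magnitudes are $\norm{\ga}_{\sigma_1} = \phi(\kappa(\ga))$ and $\norm{\ga}_{\sigma_2} = \psi(\kappa(\ga))$. If $f_*\mu$ and $\mu_\psi$ are non-singular, the theorem yields
$$
\sup_{\ga\in\Ga} \abs{\delta\phi(\kappa(\ga)) - \beta\psi(\kappa(\ga))} < +\infty.
$$

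The final step passes from this to functionals. Replace $\ga$ by $\ga^n$, divide by $n$, and let $n\to+\infty$. This gives $(\delta\phi - \beta\psi)(\lambda(\ga)) = 0$ for every $\ga \in \Ga$. By \cite{Benoist_properties}, for Zariski dense $\Ga$ the Jordan projections $\lambda(\Ga)$ span $\fa$; in fact their asymptotic cone, the limit cone, has nonempty interior. Hence the linear functional $\delta\phi - \beta\psi \in \fa^*$ vanishes on a spanning set, so $\delta\phi = \beta\psi$.

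The main obstacle I expect is the bookkeeping of Theorem~\ref{thm:KZ Rigidity}'s hypotheses: checking that $f$, viewed as a map into $\partial_\Theta X$, is measurable, equivariant and injective on a full-measure set. I would get this from Theorem~\ref{thm:lifting properties}(1) together with $\iota$ being an embedding. A second point to confirm is that the cocycle normalizations match, namely that the Radon--Nikodym derivative convention in Definition~\ref{defn: PS on X theta} agrees with $\sigma(\ga^{-1},\xi) = \phi\,\xi(\ga o)$ in the PS-system definition. This follows directly from $B_\Theta(\ga^{-1},\xi) = \xi(\ga o)$.
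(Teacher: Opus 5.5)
Your proposal is correct and follows the paper's proof essentially verbatim: the two easy implications come from the uniqueness in Theorem~\ref{thm:lifting properties}(2), and the main implication applies Theorem~\ref{thm:KZ Rigidity} to the well-behaved system $(\partial_\theta X, \Gamma, \phi\circ B_\theta, \mu)$ and the system $(\partial_\Theta X, \Gamma, \psi \circ B_\Theta, \mu_\psi)$, then passes to Jordan projections and invokes Benoist. Your explicit checks of the rigidity theorem's hypotheses fill in bookkeeping the paper leaves implicit; these are the trivial hierarchy, the full-measure conical set, injectivity of $f$ via $\iota$, and the cocycle normalization $B_\Theta(\gamma^{-1},\xi)=\xi(\gamma o)$.
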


\begin{proof}
If $\delta \cdot \phi = \beta \cdot \psi$, then it follows from the uniqueness in Theorem \ref{thm:lifting properties}(2) that $f_* \mu = \mu_{\psi}$. Clearly, if $f_* \mu = \mu_{\psi}$, then they are non-singular. So it suffices to show that non-singularity implies that the functionals coincide up to an appropriate scaling. 

Suppose $f_* \mu$ and $\mu_\psi$ are non-singular. Then by Theorem \ref{thm:KZ Rigidity} we have 
$$
\sup_{\ga \in \Ga} \abs{ \delta \cdot \phi(\kappa(\ga)) - \beta \cdot \psi (\kappa(\ga))} < + \infty.
$$
By definition, the Jordan projections then satisfy 
$$
\delta \cdot \phi(\lambda(\ga)) =  \beta \cdot \psi (\lambda(\ga))
$$
for all $\gamma \in \Gamma$. Since $\Ga$ is Zariski dense, this implies $\delta \cdot \phi = \beta \cdot \psi$ by \cite{Benoist_properties}. 
\end{proof}

The rest of this section is devoted to the proof of Theorem \ref{thm:lifting properties}.

\subsection{Part (1)} Theorem \ref{thm:amenable} implies that the $\Ga$-action on $(\La_{\theta}(\Gamma), \mu)$ is amenable and Theorem~\ref{thm:CZZ PS transverse} (and Proposition \ref{prop:concon limit set for transverse groups}) implies that $\mu$ is supported on the conical limit set.
So we can apply Theorem \ref{thm:boundary map} to the inclusion $\Ga \hookrightarrow \Gsf$. As a result, we have a unique measurable $\Ga$-equivariant $\mu$-a.e.\ defined map
$$
f : \La_{\theta}(\Ga) \to \Fc_\Theta.
$$
 To finish the proof of part (1) we only need to show that $f$ is injective on a set of full $\mu$-measure. Let $\pi : \Fc_\Theta \rightarrow \Fc_\theta$ be the natural projection.  Then $\pi f$ is a $\Ga$-equivariant $\mu$-a.e.\ defined map. 
However, $\Gamma$ is $\Psf_\theta$-contracting and strongly $(\Phi_{\alpha})_{\alpha \in \theta}$-irreducible, so by Theorem \ref{thm:boundary map} the identity is the unique measurable $\Ga$-equivariant $\mu$-a.e.\ defined map
$
\La_{\theta}(\Ga) \to \Fc_\theta.
$
Thus we must have 
\begin{equation}\label{eqn:f is a section}
\pi f = \id_{\Fc_\theta} \quad \text{$\mu$-a.e. }
\end{equation} 
Hence $f$ is injective on a set of full $\mu$-measure.

\subsection{Parts (2) and (4)} Since $\phi \in \mfa_\theta^*={\rm span}\{\omega_\alpha : \alpha \in \theta\}$, Equation~\eqref{eqn:defn of pi_theta to mfa_theta}  implies that 
$$
\phi B_\Theta^{IW}(g,x)  = \phi B_\theta^{IW}(g,\pi(x))
$$
for all $x \in \Fc_\Theta$ and $g \in \Gsf$. Hence Equation~\eqref{eqn:f is a section}  implies that $f_*\mu$ is a $(\Gamma, \phi, \delta)$-Patterson--Sullivan measure on $\Fc_\Theta \subset \partial_\Theta X$. 

The uniqueness in part (2) and the estimate in part (4) will be a consequence of the following general result (which we also use in the proof of Theorem \ref{thm:convexity body} below).

\begin{proposition} \label{prop:uniquePS}
   Suppose that  $\mu'$ is a Radon measure on $\partial_{\Theta} X$ such that for each $R > 0$ sufficiently large there exists $C_0 = C_0(R) > 0$ so that
   $$
   (f_* \mu)(\Oc_R^{\Theta}(\ga)) \le C_0 \mu' (\Oc_R^{\Theta}(\ga)) \quad \text{for all} \quad \ga \in \Ga.
   $$
   Then
   $$
   f_*\mu \ll \mu'.
   $$
\end{proposition}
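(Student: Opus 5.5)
The approach is a Vitali-covering argument on the set where absolute continuity might fail. The covering uses the shadows supplied by Theorem~\ref{thm:lifting properties}(3) together with Lemma~\ref{lem:V covering}. Let $A \subset \partial_\Theta X$ be a Borel set with $\mu'(A) = 0$. We want $(f_*\mu)(A) = 0$. Since $f_*\mu$ is concentrated on $f(M_0)$ for a suitable full-measure set $M_0 \subset \La_\theta(\Ga)$, we may intersect $A$ with the set of good points from part (3). Concretely, for each fixed $R$ (in a countable exhausting family) consider
\[
A_R := \Big\{ f(x) \in A : x \in M_0,\ \exists \text{ escaping } \{\ga_n\} \text{ with } x \in \bigcap_n \Oc_R^\theta(\ga_n),\ f(x) \in \bigcap_n \Oc_R^\Theta(\ga_n),\ \min_{\alpha\in\Theta}\alpha(\kappa(\ga_n)) \to \infty \Big\}.
\]
It suffices to show $(f_*\mu)(A_R)=0$ for each large $R$.

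Fix $\epsilon>0$ and an open $U \supset A_R$ with $\mu'(U) < \epsilon$; this uses outer regularity of the Radon measure $\mu'$. By Lemma~\ref{lem:diam decay}(3), each $\xi = f(x) \in A_R$ lies in shadows $\Oc_R^\Theta(\ga_n)$ of arbitrarily small diameter, so we may choose one contained in $U$. Let $I \subset \Ga$ be the set of all $\ga$ with $\Oc_R^\Theta(\ga) \subset U$ and $\Oc_R^\Theta(\ga) \cap A_R \ne \emptyset$ (realized via the mixed condition on $x$). Then $A_R \subset \bigcup_{\ga \in I} \Oc_R^\Theta(\ga)$.

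We next extract a disjoint subfamily. The natural choice is to pull back to $\La_\theta(\Ga)$ and apply Lemma~\ref{lem:V covering} to the well-behaved PS-system $(\partial_\theta X, \Ga, \phi\circ B_\theta, \mu)$ of Theorem~\ref{thm:transverse well-behaved}, whose hierarchy is trivial. Apply it to the shadows $\Oc_R^\theta(\ga)$, $\ga\in I$. This yields $J \subset I$ such that the shadows $\Oc_R^\theta(\ga)$, $\ga \in J$, are pairwise disjoint, and every $x$ as above lies in some $\Oc_{R'}^\theta(\ga)$ with $\ga \in J$.

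The main obstacle is transferring this disjointness and covering to the $\Theta$-shadows. Here I would use $\pi_\Theta$-compatibility: $\pi\,\Oc_R^\Theta(\ga) \subset \Oc_R^\theta(\ga)$ and $\pi f = \id$ hold $\mu$-a.e. Hence the sets $\Oc_R^\Theta(\ga)$, $\ga \in J$, are disjoint $f_*\mu$-a.e. after projection, and therefore disjoint as subsets of $U$ up to measure zero for $\mu'$ restricted to $f(M_0)$. Some care is needed, since $\mu'$ need not live on $\Fc_\Theta$. For this reason I would instead directly use the covering statement from Property~\ref{item:intersecting shadows} for $\Theta$-shadows, which holds by Theorem~\ref{thm:transverse well-behaved} applied with $\Theta$ in place of $\theta$ (legitimate because $\Ga$ is $\Psf_\Theta$-contracting and $\Psf_\theta$-transverse; if needed, replace by the mixed shadows). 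The covering uses the enlarged radius $R'$, and the Shadow Lemma (Proposition~\ref{prop.shadowlemma}) for $f_*\mu$ gives $f_*\mu(\Oc_{R'}^\Theta(\ga)) \le C\, f_*\mu(\Oc_R^\Theta(\ga))$. Combining these,
\[
(f_*\mu)(A_R) \le \sum_{\ga\in J} (f_*\mu)(\Oc_{R'}^\Theta(\ga)) \le C\sum_{\ga \in J}(f_*\mu)(\Oc_R^\Theta(\ga)) \le C C_0 \sum_{\ga\in J}\mu'(\Oc_R^\Theta(\ga)) \le C C_0\, \mu'(U) < CC_0\epsilon.
\]
Letting $\epsilon\to0$ gives $(f_*\mu)(A_R)=0$. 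Taking the union over $R$ shows $f_*\mu \ll \mu'$.
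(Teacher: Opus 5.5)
Your argument breaks at the step where you switch to a Vitali covering by $\Theta$-shadows. Theorem~\ref{thm:transverse well-behaved} with $\Theta$ in place of $\theta$ requires $\Ga$ to be $\Psf_\Theta$-transverse, and that is not a hypothesis. You only have that $\Ga$ is $\Psf_\theta$-transverse and $\Psf_\Theta$-contracting, and the latter merely says that \emph{some} sequence has all $\Theta$-roots diverging.

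The case the paper cares about, e.g.\ a Zariski dense $\Psf_\theta$-Anosov group with $\Theta=\Delta$, is exactly one where $(\partial_\Theta X,\Ga,\phi\circ B_\Theta,f_*\mu)$ is only known to be a PS-system (Theorem~\ref{thm:Zdense PS}), not a well-behaved one. So Property~\ref{item:intersecting shadows} and Lemma~\ref{lem:V covering} are unavailable for $\Theta$-shadows, and ``replace by mixed shadows'' does not change this.

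The $\theta$-level covering cannot rescue your first inequality $(f_*\mu)(A_R)\le\sum_{\ga\in J}(f_*\mu)(\Oc^\Theta_{R'}(\ga))$ either. It gives $x\in\Oc^\theta_{R'}(\ga)$ for some $\ga\in J$, but that says nothing about whether $f(x)$ lies in any enlarged $\Theta$-shadow of $\ga$.

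The route you abandoned does work, and you were mistaken that disjointness holds only almost everywhere. Since $\pi_\theta$ commutes with the $\Gsf$-action and $\omega_\alpha\circ\pi_\theta=\omega_\alpha$ for $\alpha\in\theta\subset\Theta$, the inclusion $\pi_\theta\Oc^\Theta_R(\ga)\subset\Oc^\theta_R(\ga)$ holds for every point of $\partial_\Theta X$. Hence pairwise disjointness of $\{\Oc^\theta_R(\ga)\}_{\ga\in J}$ forces pairwise disjointness of $\{\Oc^\Theta_R(\ga)\}_{\ga\in J}$ as subsets of $\partial_\Theta X$, with no assumption on where $\mu'$ lives.

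The missing idea is to do all covering and measure estimates on the $\theta$ side and only then pass to $\Theta$-shadows. First, $(f_*\mu)(A_R)\le\mu\bigl(\bigcup_{\ga\in J}\Oc^\theta_{R'}(\ga)\bigr)\le C_1\sum_{\ga\in J}\mu(\Oc^\theta_R(\ga))$. Next, apply the Shadow Lemma (Proposition~\ref{prop.shadowlemma}) to both PS-systems, which have the same dimension $\delta$ and the same magnitude $\phi(\kappa(\ga))$ because $\phi\in\fa_\theta^*$. This gives $\mu(\Oc^\theta_R(\ga))\asymp e^{-\delta\phi(\kappa(\ga))}\asymp(f_*\mu)(\Oc^\Theta_R(\ga))$. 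Finally, the hypothesis and the disjointness above bound the sum by a constant times $\mu'(U)$. This is the paper's proof.
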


\begin{proof} 

Since $f_* \mu$ is supported on $\Fc_{\Theta}$, it suffices to fix a Borel subset $E \subset \Fc_{\Theta}$ with $\mu'(E)=0$ and then show that $f_*\mu(E)=0$.  

For each $R > 0$, let $E_R \subset f^{-1}(E)$ be the subset satisfying:
\begin{itemize}
   \item for each $x \in E_R$, there exists an escaping sequence $\{\ga_n\} \subset \Ga$ such that
   $$
   x \in \bigcap_{n \ge 1} \Oc_R^{\theta}(\ga_n)
   $$
   and

   \item Theorem \ref{thm:boundary map}(1) holds for all $x \in E_R$.
\end{itemize}
Then Proposition~\ref{prop:concon limit set for transverse groups} implies that  $(f_* \mu)(E) = \mu\left( \bigcup_{R > 0} E_R \right)$. Since $\{E_R\}_{R > 0}$ is an increasing family of sets, it suffices to fix $R > 0$ sufficiently large and show that $\mu(E_R) =0$.

Fix $\epsilon > 0$ and let $\Uc \subset \partial_{\Theta} X$ be an open neighborhood of $E$ such that
$$
\mu'(\Uc) \le  \epsilon.
$$

\noindent \textbf{Claim:} If $x \in E_R$, then there exists $\gamma_x \in \Gamma$ so that 
$
x \in \Oc_R^\theta(\ga_x)$ and $\Oc_R^{\Theta}(\ga_x) \subset \Uc.
$

\medskip 

\noindent \emph{Proof of Claim:} By the definition of $E_R$ and Theorem \ref{thm:boundary map}(1), there exists an escaping sequence $\{ \ga_n \} \subset \Ga$ such that
$$
x \in \bigcap_{n \ge 1} \Oc_R^{\theta}(\ga_n), \quad \lim_{n \to + \infty} \min_{\alpha \in \Theta} \alpha(\kappa(\ga_n)) = + \infty, \quad \text{and} \quad f(x) =  \lim_{n \to + \infty} U_{\Theta}(\ga_n) \in \Fc_{\Theta}.
$$
By Lemma \ref{lem:endpoint in shadow}(2), $U_{\Theta}(\ga_n) \in \Oc_R^{\Theta}(\ga_n)$ for all large $n \ge 1$. Moreover, $\diam \Oc_R^{\Theta}(\ga_n) \to 0$ as $n \to + \infty$ by Lemma \ref{lem:diam decay}(3). Since $\Uc \subset \partial_{\Theta} X$ is an open neighborhood of $f(x)$, it follows that there exists $\ga_x \in \Ga$ such that 
$$x \in \Oc_R^{\theta}(\ga_x) \quad \text{and} \quad \Oc_R^{\Theta}(\ga_x) \subset \Uc.$$
\hfill $\blacktriangleleft$

Hence we have a countable subset $I := \{ \ga_x : x \in E_R \} \subset \Ga$ so that 
$$
E_R \subset \bigcup_{\ga \in I} \Oc_R^{\theta}(\ga) \quad \text{and} \quad \bigcup_{\ga \in I} \Oc_R^{\Theta}(\ga) \subset \Uc.
$$
By Lemma \ref{lem:V covering}, there exist $J \subset I$ and $R' \ge R$ such that the shadows $\left\{ \Oc_R^{\theta}(\ga) : \ga \in J \right\}$ are pairwise disjoint and
$$
\bigcup_{\ga \in I} \Oc_R^{\theta}(\ga) \subset \bigcup_{\ga \in J} \Oc_{R'}^{\theta}(\ga).
$$

We then have 
$$
\mu(E_R) \le \mu \left( \bigcup_{\ga \in I} \Oc_R^{\theta}(\ga) \right) \le \sum_{\ga \in J} \mu( \Oc_{R'}^{\theta}(\ga)).
$$
By Theorem \ref{thm:Zdense PS}, $(\partial_{\theta} X,  \Ga, \phi \circ B_{\theta}, \mu)$ and $(\partial_{\Theta} X,  \Ga, \phi \circ B_{\Theta}, f_*\mu)$ are PS-systems. So, after possibly increasing $R$, the Shadow Lemma (Proposition \ref{prop.shadowlemma}) implies that there exists $C_1 = C_1(R) > 0 $ so that
$$
\mu(\Oc_{R'}^{\theta}(\ga)) \le C_1 \mu(\Oc_R^{\theta}(\ga)) 
$$
and 
$$
\mu(\Oc_{R}^{\theta}(\ga)) \le C_1 (f_*\mu)(\Oc_R^{\Theta}(\ga)) 
$$
for all  $\ga \in \Ga$. 
This implies that
$$
\mu(E_R) \le C_1 \sum_{\ga \in J} \mu( \Oc_R^{\theta}(\ga)) \le C_1^2 \sum_{\ga \in J} (f_*\mu)( \Oc_R^{\Theta}(\ga)) \le C_0C_1^2 \sum_{\ga \in J} \mu'( \Oc_R^{\Theta}(\ga)).
$$

Let $\pi_{\theta} : \partial_{\Theta} X \to \partial_{\theta} X$ denote the map obtained by postcomposing with the canonical  projection $\pi_{\theta} : \fa_{\Delta} \to \fa_{\theta}$ restricted to $\fa_\Theta$. Equation~\eqref{eqn:defn of pi_theta to mfa_theta}  implies that $\pi_{\theta} \Oc_R^{\Theta}(\ga) \subset \Oc_R^{\theta}(\ga)$. Then, since the shadows $\left\{ \Oc_R^{\theta}(\ga) : \ga \in J \right\}$ are pairwise disjoint,  the shadows $\left\{ \Oc_R^{\Theta}(\ga) : \ga \in J \right\}$ are pairwise disjoint as well. Therefore,
$$\begin{aligned}
\mu(E_R) & \le C_0C_1^2 \mu' \left( \bigcup_{\ga \in J} \Oc_R^{\Theta}(\ga) \right)  \le C_0C_1^2  \mu'(\Uc)  \le  C_0C_1^2 \epsilon.
\end{aligned}
$$
Since  $\epsilon > 0$ is arbitrary and $C_0,C_1$ are independent of $\epsilon$, we have 
$$
\mu(E_R) = 0.
$$
This finishes the proof.
\end{proof}

Now we deduce the uniqueness in part (2) and the estimate in part (4) using Proposition \ref{prop:uniquePS}. 

\begin{lemma} If $\beta \leq \delta$ and $\mu'$ is a $(\Ga, \phi, \beta)$-Patterson--Sullivan measure on $\partial_{\Theta} X$, then $\beta = \delta$ and $\mu' = \mu$. 
\end{lemma}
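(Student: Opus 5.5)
We will use Proposition~\ref{prop:uniquePS} to show $f_*\mu \ll \mu'$, and then compare Radon--Nikodym derivatives. (In the statement, $\mu' = \mu$ is read as $\mu' = f_*\mu$, since $\mu'$ lives on $\partial_\Theta X$.)

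\textbf{Step 1: shadow estimates.} By Theorem~\ref{thm:Zdense PS}, both $(\partial_\Theta X, \Ga, \phi\circ B_\Theta, f_*\mu)$ and $(\partial_\Theta X, \Ga, \phi\circ B_\Theta, \mu')$ are PS-systems, of dimensions $\delta$ and $\beta$ respectively. In both, the magnitude is $\norm{\ga}_\phi = \phi(\kappa(\ga))$. The Shadow Lemma (Proposition~\ref{prop.shadowlemma}) then gives, for all $R$ large and all $\ga\in\Ga$,
$$
(f_*\mu)(\Oc_R^\Theta(\ga)) \le C e^{-\delta \phi(\kappa(\ga))} \quad\text{and}\quad \mu'(\Oc_R^\Theta(\ga)) \ge C^{-1} e^{-\beta\phi(\kappa(\ga))}.
$$
Since $\Ga$ is $\Psf_\theta$-transverse and $\phi \in \fa_\theta^*$ with finite critical exponent, $\phi(\kappa(\ga))$ is bounded below on $\Ga$. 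One justification: $\phi(\kappa(\ga))\to+\infty$ along escaping sequences, because divergence of the Poincar\'e series forces $\phi(\kappa)$ to be proper, or directly by property~\ref{item:properness}. Hence, for $\beta\le\delta$,
$$
e^{-\delta\phi(\kappa(\ga))} \le C' e^{-\beta\phi(\kappa(\ga))}.
$$
Thus $(f_*\mu)(\Oc_R^\Theta(\ga)) \le C_0\,\mu'(\Oc_R^\Theta(\ga))$ for all $\ga$, and Proposition~\ref{prop:uniquePS} yields $f_*\mu \ll \mu'$.

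\textbf{Step 2: the derivative is $\Ga$-invariant and determines $\beta$.} Set $h := d(f_*\mu)/d\mu'$. Using the two transformation rules,
$$
\frac{d\ga_*f_*\mu}{df_*\mu} = e^{-\delta\phi\xi(\ga o)} \quad\text{and}\quad \frac{d\ga_*\mu'}{d\mu'} = e^{-\beta\phi\xi(\ga o)},
$$
we get
$$
h(\ga^{-1}\xi) = h(\xi)\, e^{(\beta-\delta)\phi\xi(\ga o)} \quad \text{for } f_*\mu\text{-a.e.\ } \xi.
$$
Take $\xi$ in the support of Theorem~\ref{thm:lifting properties}(3), so that $\xi \in \Oc_R^\Theta(\ga_n)$ with $\ga_n$ escaping. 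Then property~\ref{item:almost constant on shadows} gives $\phi\xi(\ga_n o) = \phi B_\Theta(\ga_n^{-1},\xi)$, and this is comparable to $\phi(\kappa(\ga_n))$ up to sign and a bounded error; the precise sign convention should be checked. It tends to $+\infty$.

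If $\beta<\delta$, then along such sequences $h(\ga_n^{-1}\xi)/h(\xi) \to 0$ or $\infty$ for a.e.\ $\xi$. This contradicts a Poincar\'e recurrence type argument: the set $\{a<h<b\}$ has positive $f_*\mu$-measure for some $0<a<b$, and by conservativity (full measure of the conical set and ergodicity, Theorem~\ref{thm:ergodicity KZ}) a.e.\ point returns infinitely often along conical sequences. I expect this recurrence argument to be the main obstacle.

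A cleaner alternative is to apply the Shadow Lemma to $f_*\mu$ restricted to $\{h\le b\}$, which is dominated by $b\mu'$. Combined with a Lebesgue density argument on shadows (Lemma~\ref{lem:KZ continuity bdr map} with $F=h$), this gives
$$
e^{-\delta\phi(\kappa(\ga_n))} \asymp h(\xi)\, e^{-\beta\phi(\kappa(\ga_n))}
$$
along conical $\ga_n$, forcing $\beta=\delta$. In either case $\beta=\delta$, so $h$ is $\Ga$-invariant, hence constant by ergodicity of $(\partial_\Theta X,\Ga, f_*\mu)$.

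\textbf{Step 3: conclude $\mu' = f_*\mu$.} Now that $\beta=\delta$, the constant value of $h$ gives $f_*\mu = c\,\mu'|_{\{h>0\}}$. It remains to rule out $\mu'$ charging a set $N$ with $f_*\mu(N)=0$. Write $\mu' = \mu'_{ac} + \mu'_s$ relative to $f_*\mu$. Both parts are $\Ga$-quasi-invariant with the same cocycle, so if $\mu'_s\neq0$, its normalization is again a $(\Ga,\phi,\delta)$-PS measure. Step 1 then gives $f_*\mu \ll \mu'_s/\norm{\mu'_s}$, contradicting $\mu'_s \perp f_*\mu$. Hence $\mu'_s=0$, and comparing total masses gives $\mu'=f_*\mu$.
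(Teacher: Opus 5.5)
\textbf{There is a genuine gap in Step 2, the proof that $\beta=\delta$.} Your Steps 1 and 3 are fine and agree with the paper: Step 1 is the paper's first step, and Step 3 is its argument that $\mu'(f(Y))=1$, phrased via the Lebesgue decomposition. Neither version of Step 2 works as written.

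In the recurrence version, ergodicity and conservativity only give that a.e.\ $\xi\in A=\{a<h<b\}$ satisfies $\ga^{-1}\xi\in A$ for infinitely many $\ga$. To contradict $h(\ga^{-1}\xi)=h(\xi)e^{(\beta-\delta)\phi\xi(\ga o)}$ you would need such returns along $\ga_n$ with $\xi\in\Oc^\Theta_R(\ga_n)$ for a \emph{fixed} $R$, since only then is $\abs{\phi\xi(\ga_n o)}\to\infty$ forced. For an arbitrary return, $\phi\xi(\ga o)$ is a horofunction value that can stay bounded. Nothing you cite produces such conical returns.

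The ``cleaner alternative'' runs in the wrong direction. The bound $(f_*\mu)|_{\{h\le b\}}\le b\,\mu'$ plus the Shadow Lemma for $\mu'$ only gives $e^{-\delta\phi(\kappa(\ga_n))}\lesssim b\,e^{-\beta\phi(\kappa(\ga_n))}$, which is consistent with $\beta<\delta$. You would need the reverse bound $\mu'(\Oc^\Theta_R(\ga_n))\lesssim h(\xi)^{-1}(f_*\mu)(\Oc^\Theta_R(\ga_n))$. That requires controlling the $\mu'$-mass of shadows where $h$ is small or zero, which is exactly what is unknown at that stage.

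Moreover, Lemma~\ref{lem:KZ continuity bdr map} needs a well-behaved PS-system, and none is known on $\partial_\Theta X$. Theorem~\ref{thm:transverse well-behaved} requires $\Psf_\Theta$-transversality, while $\Ga$ is only $\Psf_\theta$-transverse. This is why Proposition~\ref{prop:uniquePS} does its covering argument on $\partial_\theta X$.

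\textbf{The repair is to reorder, which gives the paper's proof.} Your Step 3 never uses $\beta=\delta$. For any $\beta\le\delta$, the normalized singular part of $\mu'$ relative to $f_*\mu$ is again a $(\Ga,\phi,\beta)$-Patterson--Sullivan measure, so Step 1 applies to it and forces it to vanish. Thus $\mu'\ll f_*\mu$, and $\mu'$ is concentrated on $f(Y)$, where $Y$ is a $\Ga$-invariant $\mu$-conull set on which $f$ is defined. Since $\pi\circ f=\id$ and $\phi B^{IW}_\Theta(g,\cdot)=\phi B^{IW}_\theta(g,\pi(\cdot))$, the measure $\pi_*\mu'$ is a $(\Ga,\phi,\beta)$-Patterson--Sullivan measure on $\La_\theta(\Ga)$. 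Theorem~\ref{thm:CZZ PS transverse}(2) then gives $\beta\ge\delta$, hence $\beta=\delta$, and part~(3) gives $\pi_*\mu'=\mu$, i.e.\ $\mu'=f_*\mu$. Step 2 becomes unnecessary.

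Your coboundary idea can also be salvaged, but only on $\partial_\theta X$. With $H=h\circ f$ one has $\log H(\ga y)-\log H(y)=(\beta-\delta)\,\phi B_\theta(\ga,y)$. Apply Lemma~\ref{lem:KZ continuity bdr map} to $(\partial_\theta X,\Ga,\phi\circ B_\theta,\mu)$ with $F=\log H$. The $\ga_n^{-1}$-image of the part of $\Oc^\theta_R(\ga_n)$ where $\log H\approx\log H(x_0)$ has $\mu$-measure bounded below, by the Shadow Lemma and \ref{item:almost constant on shadows}. On that image $\log H\ge(\delta-\beta)\phi(\kappa(\ga_n))-C$, which contradicts $H<\infty$ a.e.\ when $\beta<\delta$.
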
 

\begin{proof} 
 Since both $(\partial_{\Theta}X, \Ga, \phi \circ B_{\Theta}, f_*\mu)$ and $(\partial_{\Theta}X, \Ga, \phi \circ B_{\Theta}, \mu')$  are PS-systems by Theorem \ref{thm:Zdense PS}, it follows from the Shadow Lemma (Proposition \ref{prop.shadowlemma}) that for all large enough $R > 0$, there exists $C_0 > 0$ such that 
$$
   (f_* \mu)(\Oc_R^{\Theta}(\ga)) \le C_0 \mu' (\Oc_R^{\Theta}(\ga)) \quad \text{for all} \quad \ga \in \Ga.
   $$
   Hence, by Proposition \ref{prop:uniquePS}, we have 
$
f_* \mu \ll \mu'.
$

Fix a full $\mu$-measure $\Gamma$-invariant set $Y \subset \Lambda_\theta(\Gamma)$ where $f$ is defined. We claim that $\mu'(f(Y)) = 1$. If not, then 
$$
\mu'' : = \frac{1}{\mu'(\partial_{\Theta} X \smallsetminus f(Y)) } \mu'|_{\partial_{\Theta} X \smallsetminus f(Y)}
$$
is a $(\Ga, \phi, \beta)$-Patterson--Sullivan measure. Then arguing as above, we have $f_* \mu \ll \mu''$, which is impossible. Hence $\mu'(f(Y)) = 1$.

Recall that $\pi f = \id_{\Fc_\theta}$, see Equation~\eqref{eqn:f is a section}. So  $\pi_* \mu'$ is a $(\Ga, \phi, \beta)$-Patterson--Sullivan measure on $\Lambda_\theta(\Gamma)$. Then $\beta =\delta$ and $\mu = f^{-1}_* \mu'$ by  \cite{CZZ2024} (see Theorem \ref{thm:CZZ PS transverse existence}).
\end{proof} 

\subsection{Part (3)}  \label{subsection:contracting limit set full measure}

This follows from the part (3) of Theorem \ref{thm:boundary map} and Lemma \ref{lem:shadow translate shadow}(1), as the associated hierarchy $\mathscr{H}$ is trivial (Theorem \ref{thm:transverse well-behaved}). 
\qed


\section{Ergodic dichotomy for BMS-measures on homogeneous~spaces} \label{section:BMS}


In this section, we prove the ergodic dichotomy for the diagonal action on homogeneous spaces.
Let $\Ga < \Gsf$ be a $\Psf_{\theta}$-transverse group which is $\Psf_\Delta$-contracting and strongly $(\Phi_{\alpha})_{\alpha \in \Delta}$-irreducible. In this section, we apply the machinery developed in earlier sections to study ergodic theory on the homogeneous space $\Ga \ba \Gsf$ or $\Ga \ba \Gsf / \Msf$.

First recall the Hopf parametrization $\Gsf / \Msf = \Fc_{\Delta}^{(2)} \times \fa$ given by
$$
g \Msf = \left(g \Psf_{\Delta}, g w_0 \Psf_{\Delta}, B_{\Delta}^{IW}(g, \Psf_{\Delta}) \right)
$$ 
where $\Fc_{\Delta}^{(2)} \subset \Fc_{\Delta} \times \Fc_{\Delta}$ is the subset of transverse pairs.
Then right multiplication $\Asf$ on $\Gsf / \Msf$ corresponds to translation on the $\fa$-component of $\Fc_{\Delta}^{(2)} \times \fa$.

Fix $\phi \in \fa_{\theta}^*$ and $\delta \ge 0$, and suppose that there exist $(\Ga, \phi, \delta)$ and $(\Ga, \opp^*\phi , \delta)$-PS measures  $\mu_{\phi}$ and $\mu_{\opp^*\phi }$ on $\La_{\Delta}(\Ga) \subset \Fc_{\Delta}$ respectively. While they may not exist or they may not be unique in general, we assume the existence and make a choice.

We first define a $\Ga$-invariant Radon measure $\nu_{\phi}$ on $\Fc_{\Delta}^{(2)}$ by 
$$
d \nu_{\phi}(\xi, \eta) := e^{ \delta \phi \Gc_{\Delta}(\xi, \eta)} d \mu_{\phi}(\xi) d \mu_{\opp^*\phi }(\eta)
$$
where $ \Gc_{\Delta}$ is the \emph{$\fa$-valued Gromov product}, i.e.
\begin{equation} \label{eqn:Gromov product}
   \Gc_{\Delta}(\xi, \eta) :=  - \left( B_{\Delta}^{IW}(g^{-1}, \xi) + \opp B (g^{-1}, \eta) \right)
\end{equation} for $g \in \Gsf$ with $(g  \Psf_{\Delta}, g w_0 \Psf_{\Delta} ) = (\xi, \eta)$. By the irreducibility of $\Ga$ and the quasi-invariance of $\mu_{\phi}$ under the $\Ga$-action, it follows from Fubini's theorem that
\begin{equation} \label{eqn:positive product meaasure}
\nu_{\phi}(\Fc_{\Delta}^{(2)}) > 0.
\end{equation}
See \cite[Proposition 10.2]{KOW_PD} for instance.

Then the measure $\nu_{\phi} \otimes {\rm Leb}_{\fa}$ on $\Gsf / \Msf = \Fc_{\Delta}^{(2)} \times \fa$ is $\Ga$-invariant, and hence it induces the $\Asf$-invariant Radon measure
$$
\mathsf{m}_{\phi} \quad \text{on} \quad \Ga \ba \Gsf / \Msf
$$
which we call the \emph{Bowen--Margulis--Sullivan measure} associated to $(\mu_{\phi}, \mu_{\phi \circ \opp})$.

Combining the ergodicity results for the diagonal action of $\Gamma$ on $\Fc_{\theta} \times \Fc_{\opp^* \theta}$ established in~\cite{CZZ2024} with our lifting theorem, we obtain the ergodic dichotomy for $\nu_{\phi}$ and $\mathsf{m}_{\phi}$, which generalizes the classical Hopf--Tsuji--Sullivan dichotomy.

\begin{theorem} \label{thm:ergodic dichotomy}
The following dichotomy holds.
\begin{enumerate}
   \item If $\sum_{\ga \in \Ga} e^{-\delta \phi(\kappa(\ga))} = + \infty$, then $\delta = \delta^{\phi}(\Ga)$ and $(\mu_{\phi}, \mu_{\opp^*\phi })$ is a unique pair of PS-measures, and we have 
   \begin{itemize}
      \item $\mu_{\phi}(\La_{\Delta}^{\rm concon}(\Ga)) = \mu_{\opp^*\phi }(\La_{\Delta}^{\rm concon}(\Ga)) = 1$,
      \item the diagonal $\Ga$-action on $(\Fc_{\Delta}^{(2)}, \nu_{\phi})$ is ergodic, and
      \item the $\Asf$-action on $(\Ga \ba \Gsf / \Msf, \mathsf{m}_{\phi})$ is ergodic.
   \end{itemize}
   \item If $\sum_{\ga \in \Ga} e^{-\delta \phi(\kappa(\ga))} < + \infty$, then we have 
   \begin{itemize}
      \item $\mu_{\phi}(\La_{\Delta}^{\rm con}(\Ga)) = \mu_{\opp^*\phi }(\La_{\Delta}^{\rm con}(\Ga)) = 0$,
      \item the diagonal $\Ga$-action on $(\Fc_{\Delta}^{(2)}, \nu_{\phi})$ is not ergodic, and
      \item the $\Asf$-action on $(\Ga \ba \Gsf / \Msf, \mathsf{m}_{\phi})$ is not ergodic.
   \end{itemize}
\end{enumerate}
\end{theorem}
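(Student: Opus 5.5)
The plan is to reduce everything to the corresponding statements on $\Fc_\theta \times \Fc_{\opp^*\theta}$ from \cite{CZZ2024}. The bridge is the pushforward under the projections $\pi : \Fc_\Delta \to \Fc_\theta$ and $\pi' : \Fc_\Delta \to \Fc_{\opp^*\theta}$.

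\textbf{Reduction step.} Since $\phi \in \fa_\theta^*$, we have $\phi B_\Delta^{IW}(g,x) = \phi B_\theta^{IW}(g,\pi x)$. Hence $\pi_*\mu_\phi$ is a $(\Ga,\phi,\delta)$-PS measure on $\La_\theta(\Ga)$, and likewise $\pi'_*\mu_{\opp^*\phi}$ is a $(\Ga,\opp^*\phi,\delta)$-PS measure on $\La_{\opp^*\theta}(\Ga)$. Note that $\Ga$ is also $\Psf_{\opp^*\theta}$-transverse, by Observation~\ref{obs:symmetry theta}.

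\textbf{Divergent case.} By Theorem~\ref{thm:CZZ PS transverse} we get $\delta = \delta^\phi(\Ga)$, together with uniqueness of $\pi_*\mu_\phi =: \bar\mu$ and $\pi'_*\mu_{\opp^*\phi} =: \bar\mu'$. Divergence for $\opp^*\phi$ follows from $\opp^*\phi(\kappa(\ga)) = \phi(\kappa(\ga^{-1}))$.

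Apply Theorem~\ref{thm:lifting properties} with $\Theta = \Delta$. The lift $f_*\bar\mu$ is the unique $(\Ga,\phi,\delta)$-PS measure on $\partial_\Delta X \supset \Fc_\Delta$, so $\mu_\phi = f_*\bar\mu$. The same holds for $\mu_{\opp^*\phi} = f'_*\bar\mu'$, and part (3) gives full measure on $\La^{\rm concon}_\Delta(\Ga)$.

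For ergodicity, transport the problem through the a.e.\ injective equivariant map $(f,f')$. The Gromov product satisfies $\phi \Gc_\Delta(\xi,\eta) = \phi \Gc_\theta(\pi\xi,\pi'\eta)$, again because $\phi \in \fa_\theta^*$. This identification needs checking from the definitions of $\pi_\theta$ and $\opp$. Granting it, $(f\times f')_*\bar\nu_\phi = \nu_\phi$, where $\bar\nu_\phi$ is the analogous Radon measure on $\La_\theta(\Ga)\times\La_{\opp^*\theta}(\Ga)$ minus the diagonal. One should also check that $\nu_\phi$-a.e.\ pairs $(f(x),f'(y))$ are transverse in $\Fc_\Delta$; this is needed for the pushforward to land in $\Fc_\Delta^{(2)}$, and is a potential subtlety. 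Then $\Ga$-ergodicity of $\bar\nu_\phi$ (divergent case of \cite{CZZ2024}, their Hopf--Tsuji--Sullivan dichotomy) transfers to $\nu_\phi$, since invariant sets pull back to invariant sets.

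For $\Asf$-ergodicity of $\mathsf{m}_\phi$, I would argue as in the standard Hopf argument but in product form. An $\Asf$-invariant function on $\Ga\ba\Gsf/\Msf$ lifts to a $\Ga$-invariant function on $\Fc_\Delta^{(2)}\times\fa$ that is invariant under $\fa$-translation. Such a function depends only on $(\xi,\eta)$, so it is a $\Ga$-invariant function on $(\Fc_\Delta^{(2)},\nu_\phi)$ and hence constant. This step is straightforward since the whole $\fa$ acts, not just a one-parameter flow.

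\textbf{Convergent case.} Here $\bar\mu(\La_\theta^{\rm con}(\Ga)) = 0$ by the convergent half of the dichotomy in \cite{CZZ2024}, or directly by Borel--Cantelli with the Shadow Lemma. Since $\pi(\La^{\rm con}_\Delta(\Ga)) \subset \La^{\rm con}_\theta(\Ga)$, using $\pi_\theta\Oc_R^\Delta(g) \subset \Oc_R^\theta(g)$, we get $\mu_\phi(\La^{\rm con}_\Delta(\Ga)) = 0$. Non-ergodicity of $\bar\nu_\phi$ from \cite{CZZ2024} pulls back along the equivariant projection $\pi\times\pi'$, which is measure-class preserving. Because the projection is defined everywhere, no lift is needed. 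This gives non-ergodicity of $\nu_\phi$, and hence of $\mathsf{m}_\phi$ by the same correspondence of invariant sets.

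The main obstacle is the compatibility step in the divergent case. It requires showing that the Gromov product and transversality survive the lift, so that $\nu_\phi$ is exactly the image of $\bar\nu_\phi$. One also has to confirm that the ergodicity statement in \cite{CZZ2024} is for the same measure class.
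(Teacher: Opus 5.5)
Your non-ergodicity step in the convergent case has a gap. You pull back a nontrivial $\Ga$-invariant partition for $\bar\nu_\phi$ along $\pi\times\pi'$, claiming this map is measure-class preserving from $(\Fc_\Delta^{(2)},\nu_\phi)$ to $(\Fc_\theta^{(2)},\bar\nu_\phi)$. What is actually true is only $(\pi\times\pi')_*(\mu_\phi\otimes\mu_{\opp^*\phi})=\bar\mu\otimes\bar\mu'$ on the full products. The measure $\nu_\phi$ lives on $\Fc_\Delta^{(2)}$, and transversality in $\Fc_\Delta$ is strictly stronger than transversality of the projections. In the convergent regime nothing forces $\Fc_\Delta^{(2)}$ to be conull for $\mu_\phi\otimes\mu_{\opp^*\phi}$; only $\nu_\phi(\Fc_\Delta^{(2)})>0$ is known. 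So $(\pi\times\pi')_*\nu_\phi$ is merely absolutely continuous with respect to $\bar\nu_\phi$. An invariant set that splits $\bar\nu_\phi$ nontrivially may therefore have $\nu_\phi$-null or $\nu_\phi$-conull preimage. This is the same transversality issue you flagged in the divergent case, but here no ergodicity is available to resolve it. The paper runs the implication the other way. If $\nu_\phi$ were ergodic, its pushforward to $\Fc_\theta^{(2)}$ would be ergodic, since pushforwards of ergodic measures under equivariant maps always are. That pushforward is supported on $\La_\theta(\Ga)\times\La_\theta(\Ga)$, so \cite[Corollary 12.1]{CZZ2024} forces $\sum_{\ga}e^{-\delta\phi(\kappa(\ga))}=+\infty$, a contradiction. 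Your proof that $\mu_\phi(\La_\Delta^{\rm con}(\Ga))=0$ via $\pi_\theta\Oc_R^\Delta\subset\Oc_R^\theta$ is fine; the paper instead applies the Shadow Lemma on $\partial_\Delta X$ directly.

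In the divergent case your outline matches the paper's:
\begin{itemize}
\item lift with Theorem~\ref{thm:lifting properties} for $\Theta=\Delta$;
\item get full measure on the contracting conical limit set from its part (3);
\item import ergodicity from \cite[Corollary 12.1]{CZZ2024};
\item deduce $\Asf$-ergodicity from the Hopf parametrization.
\end{itemize}
Obtaining $\delta=\delta^\phi(\Ga)$ by projecting $\mu_\phi$ to $\La_\theta(\Ga)$ and using Theorem~\ref{thm:CZZ PS transverse}(2) is a fine substitute for Theorem~\ref{thm:lifting properties}(4).

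However, what you call the main obstacle is not needed. Ergodicity is a measure-class property, so you need neither the Gromov product identity nor the exact identity $(f\times f')_*\bar\nu_\phi=\nu_\phi$. Since $(f\times f')_*(\bar\mu\otimes\bar\mu')=\mu_\phi\otimes\mu_{\opp^*\phi}$, the diagonal action on $(\Fc_\Delta^2,\mu_\phi\otimes\mu_{\opp^*\phi})$ is ergodic. Restricting to the invariant set $\Fc_\Delta^{(2)}$, which has positive measure by \eqref{eqn:positive product meaasure}, keeps it ergodic. Finally, $\nu_\phi$ is equivalent to that restriction because $e^{\delta\phi\Gc_\Delta}$ is positive and finite on $\Fc_\Delta^{(2)}$. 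As a byproduct, $\Fc_\Delta^{(2)}$ is conull, which settles your transversality worry a posteriori.
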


\begin{proof}
Suppose first that $\sum_{\ga \in \Ga} e^{-\delta \phi(\kappa(\ga))} = + \infty$. The existence of $(\mu_{\phi}, \mu_{\opp^*\phi})$ implies $\delta \ge \delta^{\phi}(\Ga)$ by Theorem \ref{thm:lifting properties}(4), and hence $\delta = \delta^{\phi}(\Ga)$ due to the divergence of the series.

Without loss of generality, we may assume $\theta = \opp^* \theta$ (Observation \ref{obs:symmetry theta}).
Now by Theorem \ref{thm:lifting properties}, we have $\mu_\phi = f_* \mu$ and $\mu_{\opp^*\phi} = \bar f_* \bar \mu$ where 
\begin{itemize}
   \item $\mu$ is the $(\Ga, \phi, \delta)$-Patterson--Sullivan measure on $\La_{\theta}(\Ga)$ and $f :  \La_{\theta}(\Ga) \to \Fc_{\Delta}$ is the $\mu$-a.e.\ defined injective $\Ga$-equivariant map, 
      \item $\bar\mu$ is the $(\Ga, \opp^*\phi, \delta)$-Patterson--Sullivan measure $\La_{\theta}(\Ga)$ and $\bar f :  \La_{\theta}(\Ga) \to \Fc_{\Delta}$ is the $\bar\mu$-a.e.\ defined injective $\Ga$-equivariant map.
\end{itemize}
Theorem \ref{thm:lifting properties} also implies that 
$$
\mu_{\phi}(\La_{\Delta}^{\rm concon}(\Ga)) = \mu_{\opp^*\phi}(\La_{\Delta}^{\rm concon}(\Ga)) = 1.
$$

As proved in \cite[Corollary 12.1]{CZZ2024}, the diagonal $\Ga$-action on $(\La_{\theta}(\Ga)^2, \mu \otimes \bar\mu)$ is ergodic.  Hence, the $\Ga$-action on $(\Fc_{\Delta}^2, \mu_{\phi} \otimes \mu_{\opp^*\phi })$ is ergodic as well. Since $\Fc_{\Delta}^{(2)} \subset \Fc_{\Delta}^{2}$ is $\Ga$-invariant, the $\Ga$-action on $(\Fc_{\Delta}^{(2)}, \nu_{\phi})$ is ergodic as well. It then follows from the definition of $\mathsf{m}_{\phi}$ that the $\Asf$-action on $(\Ga \ba \Gsf / \Msf, \mathsf{m}_{\phi})$ is also ergodic.

\medskip

Next suppose that $\sum_{\ga \in \Ga} e^{-\delta \phi(\kappa(\ga))} < + \infty$. The Shadow Lemma (Proposition \ref{prop.shadowlemma}) implies that $\mu_{\phi}(\La_{\Delta}^{\rm con}(\Ga)) = \mu_{\opp^*\phi }(\La_{\Delta}^{\rm con}(\Ga)) = 0$. It now suffices to show that the $\Ga$-action on $(\Fc_{\Delta}^{(2)}, \nu_{\phi})$ is not ergodic, which also implies the non-ergodicity of the $\Asf$-action on $(\Ga \ba \Gsf / \Msf, \mathsf{m}_{\phi})$.

Suppose to the contrary that the $\Ga$-action on $(\Fc_{\Delta}^{(2)}, \nu_{\phi})$ is ergodic. Then passing through the projection $\pi : \Fc_{\Delta}^{(2)} \to \Fc_{\theta}^{(2)}$, the $\Ga$-action on $(\Fc_{\theta}^{(2)}, \pi_*\nu_{\phi})$ is ergodic, where $\Fc_{\theta}^{(2)}$ is the set of transverse pairs in $\Fc_{\theta}^2$. On the other hand, since $\pi_* \nu_{\phi}$ is supported on $\La_\theta(\Ga) \times \La_{\theta}(\Ga)$, such an ergodicity implies $\sum_{\ga \in \Ga} e^{-\delta \phi(\kappa(\ga))} = + \infty$ by \cite[Corollary 12.1]{CZZ2024}, contradiction.
\end{proof}


\section{Strict convexity of critical exponent} \label{section:strict convexity}


This section is devoted to the proof of our main theorem on the strict convexity of the entropy. We combine the strategy of \cite[Proposition 14.5]{BCZZ_coarse} with the machinery developed in this paper.

\begin{theorem} \label{thm:convexity body}

Suppose $\Ga < \Gsf$ is a $\Psf_{\theta}$-transverse group,  $\Theta \supset \theta$, and $\Phi_\alpha(\Gamma) < \SL(V_\alpha)$ has semisimple Zariski closure for each $\alpha \in \Theta$. 
Assume:
\begin{itemize}
   \item $\phi \in \fa_{\theta}^*$ satisfies $\delta^{\phi}(\Ga) < + \infty$ and $\sum_{\ga \in \Ga} e^{-\delta^{\phi}(\Ga) \phi(\kappa(\ga))} = + \infty$.
   \item $\phi_1, \phi_2 \in \fa_{\Theta}^*$ satisfy $\delta^{\phi_1}(\Ga) = \delta^{\phi_2}(\Ga) = 1$.
   \item  There exist $t \in (0, 1)$  and $C \ge 0$ such that 
   $$
   \phi(\kappa(\gamma)) \ge t \phi_1(\kappa(\gamma)) + (1 - t) \phi_2(\kappa(\gamma)) -C \quad \text{for all } \ga \in \Ga.
   $$
\end{itemize}
Then $\delta^{\phi}(\Ga) \le 1$ and equality holds if and only if 
   $$\sup_{\ga \in \Ga} \abs{ \phi(\kappa(\ga))- \phi_i(\kappa(\ga))} < + \infty \quad \text{for } i = 1, 2.$$
   In particular, if $\delta^{\phi}(\Ga) = 1$, then
   $$
   \phi(\lambda(\ga)) = \phi_1(\lambda(\ga)) = \phi_2(\lambda(\ga)) \quad \text{for all } \ga \in \Ga.
   $$
\end{theorem}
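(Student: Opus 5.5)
The inequality $\delta^\phi(\Ga)\le 1$ comes first and is elementary. By the hypothesis,
$$e^{-s\phi(\kappa(\ga))}\le e^{sC}\,e^{-st\phi_1(\kappa(\ga))}\,e^{-s(1-t)\phi_2(\kappa(\ga))}.$$
Applying H\"older with exponents $1/t$ and $1/(1-t)$ gives
$$\sum_{\ga \in \Ga} e^{-s\phi(\kappa(\ga))}\le e^{sC}\Big(\sum_{\ga \in \Ga} e^{-s\phi_1(\kappa(\ga))}\Big)^{t}\Big(\sum_{\ga \in \Ga} e^{-s\phi_2(\kappa(\ga))}\Big)^{1-t}.$$
This is finite for $s>1$, so $\delta^\phi(\Ga)\le1$. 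For the ``if'' direction of the equality case, bounded differences force equal critical exponents, so $\delta^\phi(\Ga)=\delta^{\phi_1}(\Ga)=1$. The ``in particular'' statement follows from bounded difference by applying it to $\ga^n$, dividing by $n$, and using $\lambda(\ga)=\lim\kappa(\ga^n)/n$.

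For the ``only if'' direction, assume $\delta:=\delta^\phi(\Ga)=1$. Let $f_*\mu$ be the $(\Ga,\phi,1)$-Patterson--Sullivan measure on $\partial_\Theta X$ from Theorem~\ref{thm:lifting properties}. Applying that theorem requires $\Psf_\Theta$-contraction and strong $(\Phi_\alpha)_{\alpha\in\Theta}$-irreducibility. I would first reduce to this case using the semisimple Zariski closure hypothesis: pass to a finite-index subgroup and an irreducible factor of each $\Phi_\alpha$ (or a Levi reduction), as in \cite{BCZZ_coarse}. I expect this reduction to be the main technical nuisance. By Proposition~\ref{prop:PS meas exists}, take $(\Ga,\phi_i,1)$-Patterson--Sullivan measures $\mu_i$ on $\partial_\Theta X$. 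The key step is to build, following \cite[Proposition 14.5]{BCZZ_coarse}, a Radon measure $\mu'$ dominating $f_*\mu$ on shadows. The natural candidate is the geometric interpolation: either $\mu' = \mu_1^{t}\mu_2^{1-t}$ in the sense of densities with respect to $\mu_1+\mu_2$, or a Patterson-type limit measure built from the series with weights $e^{-s(t\phi_1+(1-t)\phi_2)(\kappa(\ga))}$.

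Concretely, the Shadow Lemma (Proposition~\ref{prop.shadowlemma}, valid for these PS-systems by Theorem~\ref{thm:Zdense PS}) gives
$$f_*\mu(\Oc_R^\Theta(\ga))\asymp e^{-\phi(\kappa(\ga))}\le e^{C}e^{-t\phi_1(\kappa(\ga))-(1-t)\phi_2(\kappa(\ga))}\asymp \mu_1(\Oc_R^\Theta(\ga))^t\,\mu_2(\Oc_R^\Theta(\ga))^{1-t}.$$
If $\mu_1\perp\mu_2$, the H\"older-type interpolation of shadow masses does not immediately yield a measure. Instead I would use $\mu_1(\Oc)^t\mu_2(\Oc)^{1-t}\le t\mu_1(\Oc)+(1-t)\mu_2(\Oc)$ and set $\mu':=t\mu_1+(1-t)\mu_2$. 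Proposition~\ref{prop:uniquePS} then gives $f_*\mu\ll t\mu_1+(1-t)\mu_2$, so $f_*\mu$ is non-singular to some $\mu_i$, say $\mu_1$.

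Next, Theorem~\ref{thm:KZ Rigidity} applies to the well-behaved system $(\Lambda_\theta(\Ga),\Ga,\phi\circ B_\theta,\mu)$, which has full conical measure, together with the PS-system $(\partial_\Theta X,\Ga,\phi_1\circ B_\Theta,\mu_1)$ and the injective map $f$. It yields $\sup_{\ga \in \Ga}|\phi(\kappa(\ga))-\phi_1(\kappa(\ga))|<\infty$. Substituting this back into the hypothesis gives
$$(1-t)\phi_2(\kappa(\ga))\le(1-t)\phi(\kappa(\ga))+C'.$$
This is one-sided, so to get bounded difference for $\phi_2$ as well, I would also need $f_*\mu$ non-singular to $\mu_2$. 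I would obtain it by rerunning the argument with the restriction of $f_*\mu$ to the part singular to $\mu_1$. Alternatively, I would use the one-sided bound together with $\delta^{\phi_2}(\Ga)=\delta^{\phi}(\Ga)=1$ and the divergence of the $\phi$-series: a Radon--Nikodym/shadow argument then shows $\mu_2$-shadows dominate $f_*\mu$-shadows, so Proposition~\ref{prop:uniquePS} gives $f_*\mu\ll\mu_2$ and rigidity applies again.

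The main obstacle I foresee is making this last symmetric conclusion rigorous, that is, showing non-singularity with both $\mu_1$ and $\mu_2$ rather than with just one of them.
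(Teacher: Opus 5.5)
Your special-case argument, where $\Ga$ is already $\Psf_\Theta$-contracting and strongly $(\Phi_\alpha)_{\alpha\in\Theta}$-irreducible, is the paper's argument: H\"older gives $\delta^\phi(\Ga)\le 1$; the Shadow Lemma and AM--GM with Proposition~\ref{prop:uniquePS} give $f_*\mu\ll\mu_1+\mu_2$; then Theorem~\ref{thm:KZ Rigidity} applies for $\mu_1$. The step you call the main obstacle is one line, namely your second alternative without the extra ingredients. Once $\abs{\phi(\kappa(\ga))-\phi_1(\kappa(\ga))}$ is bounded, the hypothesis gives $\phi(\kappa(\ga))\ge\phi_2(\kappa(\ga))-C'$, so
\[
(f_*\mu)(\Oc_R^\Theta(\ga))\le C_0e^{-\phi(\kappa(\ga))}\le C_0e^{C'}e^{-\phi_2(\kappa(\ga))}\le C_0^2e^{C'}\mu_2(\Oc_R^\Theta(\ga)),
\]
and Proposition~\ref{prop:uniquePS} gives $f_*\mu\ll\mu_2$ directly. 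No divergence or Radon--Nikodym argument is needed. Your first alternative would not work. $f_*\mu$ is $\Ga$-ergodic and $\mu_1$ is quasi-invariant, so non-singularity already forces $f_*\mu\ll\mu_1$, and the part singular to $\mu_1$ is zero.

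The genuine gap is the reduction to the special case, which you only point to. Passing to a finite-index subgroup and an irreducible factor of each $\Phi_\alpha$ gives irreducibility but not $\Psf_\Theta$-contraction. For $\alpha\in\Theta\smallsetminus\theta$, $\Phi_\alpha(\Ga)$ need not be proximal on any factor. For example, $\Ga$ Zariski dense in $\SL(2,\Cb)\subset\SL(4,\Rb)$ is irreducible on $\Rb^4$ but has $\sigma_1=\sigma_2$ identically, yet it is $\Psf_{\{\alpha_2\}}$-transverse. Without contraction you cannot invoke Theorem~\ref{thm:lifting properties} or Theorem~\ref{thm:Zdense PS} on $\partial_\Theta X$.

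The missing idea is Lemma~\ref{lem:irr prox reps}. Let $\Hsf_\alpha$ be the (connected) Zariski closure and compose with $\wedge^{r_\alpha}$, where $r_\alpha$ is the least index with $\alpha_{r_\alpha}$ not identically zero on the Cartan subgroup of $\Hsf_\alpha$. Take the irreducible factor $W_\alpha$ carrying the top weight. Then $\omega_1\circ\kappa\circ\rho_\alpha=r_\alpha\,\omega_1\circ\kappa+O(1)$ and $\rho_\alpha(\Hsf_\alpha)$ is $\Psf_1$-contracting. Work in $\bar\Gsf=\prod_{\alpha\in\Theta}\SL(W_\alpha)$ with $\bar\phi=\sum_\alpha c_\alpha r_\alpha^{-1}\omega_{\beta^\alpha_1}$, and similarly $\bar\phi_1,\bar\phi_2$. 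This changes every functional by a bounded amount, so critical exponents, divergence and the hypothesis inequality survive.

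You must also check two things for $\bar\Ga$. First, it is contracting for all $\beta^\alpha_1$ simultaneously, via Lemmas 6.23 and 6.25 of \cite{BQ_book}. Second, and crucially, it is still $\Psf_{\bar\theta}$-transverse. For $\alpha\in\theta$, divergence forces $r_\alpha=1$, so $W_\alpha\subset V_\alpha$ and transversality is inherited from Property~\ref{item:boundary maps of Tits repn}. Your proposal never addresses transversality of the reduced group, and the whole Patterson--Sullivan construction on $\La_{\bar\theta}(\bar\Ga)$ depends on it.
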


Delaying the proof of Theorem \ref{thm:convexity body}, we first deduce an analogous statement for general linear combinations.

\begin{corollary} \label{cor:convexity general lin comb}
  Suppose $\Ga < \Gsf$ is a $\Psf_{\theta}$-transverse group,  $\Theta \supset \theta$, and $\Phi_\alpha(\Gamma) < \SL(V_\alpha)$ has semisimple Zariski closure for each $\alpha \in \Theta$.  
Assume:
\begin{itemize}
   \item $\phi \in \fa_{\theta}^*$ satisfies $\delta^{\phi}(\Ga) < + \infty$ and $\sum_{\ga \in \Ga} e^{-\delta^{\phi}(\Ga) \phi(\kappa(\ga))} = + \infty$.
   \item $\phi_1, \phi_2 \in \fa_{\Theta}^*$ satisfy  $\delta^{\phi_1}(\Ga), \delta^{\phi_2}(\Ga) < + \infty$ and 
   $$
   \phi \ge c_1 \phi_1 + c_2 \phi_2 \quad \text{on} \quad \mfa^+
   $$
 for some $c_1, c_2 > 0$.

\end{itemize}
Then 
$$
\delta^{\phi}(\Ga) \le \frac{1}{\frac{c_1}{\delta^{\phi_1}(\Ga)} + \frac{c_2}{\delta^{\phi_2}(\Ga)} }
$$
and  equality holds if and only if
$$
\sup_{\ga \in \Ga} \abs{ \delta^{\phi}(\Ga) \phi(\kappa(\ga)) - \delta^{\phi_i}(\Ga) \phi_i(\kappa(\ga))} < + \infty \quad \text{for} \quad i = 1, 2.
$$
\end{corollary}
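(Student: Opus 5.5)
We deduce Corollary~\ref{cor:convexity general lin comb} from Theorem~\ref{thm:convexity body} by rescaling. Write $\delta_i := \delta^{\phi_i}(\Ga)$. First note $\delta_i > 0$: the group $\Ga$ is non-elementary (as is implicit in the divergence hypothesis), hence infinite, and $\phi_i(\kappa(\ga))$ grows at most linearly in word length, so the critical exponent is positive. Set $\psi_i := \delta_i \phi_i \in \fa_\Theta^*$. Since $\delta^{c\psi}(\Ga) = c^{-1}\delta^{\psi}(\Ga)$ for $c>0$, we get $\delta^{\psi_1}(\Ga) = \delta^{\psi_2}(\Ga) = 1$. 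Let
$$
s := \frac{c_1}{\delta_1} + \frac{c_2}{\delta_2} > 0, \qquad t := \frac{c_1}{s\delta_1} \in (0,1),
$$
so that $1-t = \frac{c_2}{s\delta_2}$. The hypothesis $\phi \ge c_1\phi_1 + c_2\phi_2$ on $\fa^+$, evaluated at $\kappa(\ga) \in \fa^+$, then reads
$$
\tfrac{1}{s}\phi(\kappa(\ga)) \ge t\,\psi_1(\kappa(\ga)) + (1-t)\,\psi_2(\kappa(\ga)) \quad \text{for all } \ga \in \Ga .
$$
This is the third hypothesis of Theorem~\ref{thm:convexity body} with $C = 0$, applied to the functional $\phi' := \tfrac{1}{s}\phi \in \fa_\theta^*$.

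Next we check the remaining hypotheses for $\phi'$. We have $\delta^{\phi'}(\Ga) = s\,\delta^{\phi}(\Ga) < +\infty$. The divergence condition is scale invariant, since
$$
\delta^{\phi'}(\Ga)\,\phi'(\kappa(\ga)) = \delta^{\phi}(\Ga)\,\phi(\kappa(\ga)),
$$
so the relevant Poincar\'e series is unchanged. The transversality and semisimplicity assumptions are the same as in the theorem.

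Theorem~\ref{thm:convexity body} now gives $s\,\delta^{\phi}(\Ga) = \delta^{\phi'}(\Ga) \le 1$, which is exactly $\delta^{\phi}(\Ga) \le 1/s$, the claimed inequality. Moreover, equality $\delta^{\phi'}(\Ga) = 1$ holds if and only if
$$
\sup_{\ga \in \Ga} \abs{\phi'(\kappa(\ga)) - \psi_i(\kappa(\ga))} < +\infty \quad \text{for } i = 1,2 .
$$
When $\delta^{\phi}(\Ga) = 1/s$ we have $\phi' = \delta^{\phi}(\Ga)\,\phi$, so this condition is precisely
$$
\sup_{\ga \in \Ga} \abs{\delta^{\phi}(\Ga)\,\phi(\kappa(\ga)) - \delta_i\,\phi_i(\kappa(\ga))} < +\infty .
$$

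It remains to prove the converse: if these bounded-difference conditions hold, then $\delta^{\phi}(\Ga) = 1/s$. Here we argue directly, using that critical exponents are invariant under bounded perturbations of the functional. If $\delta^{\phi}(\Ga)\,\phi(\kappa(\cdot))$ differs from $\delta_1 \phi_1(\kappa(\cdot))$ by a bounded amount, then
$$
\delta^{\delta^{\phi}(\Ga)\phi}(\Ga) = \delta^{\delta_1\phi_1}(\Ga) = 1,
$$
and the left side is $1$ trivially. So this comparison gives no information by itself, and we must use the inequality hypothesis. Combining the bounded differences for $i=1,2$ with $\phi \ge c_1\phi_1 + c_2\phi_2$ gives, for all $\ga \in \Ga$,
$$
\phi(\kappa(\ga)) \ge \frac{\delta^{\phi}(\Ga)}{\delta_1}\, c_1\,\phi(\kappa(\ga)) + \frac{\delta^{\phi}(\Ga)}{\delta_2}\, c_2\,\phi(\kappa(\ga)) - C' = s\,\delta^{\phi}(\Ga)\,\phi(\kappa(\ga)) - C'
$$
for some constant $C'$. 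Since $\phi(\kappa(\ga)) \to +\infty$ along an escaping sequence in $\Ga$ (because $\delta^{\phi}(\Ga) < \infty$), we conclude $s\,\delta^{\phi}(\Ga) \le 1$. This is the inequality already established, so it does not yet force equality.

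The reverse inequality $s\,\delta^{\phi}(\Ga) \ge 1$ is the main obstacle. To get it, we use $\phi_i \approx \frac{\delta^{\phi}(\Ga)}{\delta_i}\,\phi$ on $\kappa(\Ga)$ again, now to bound the Poincar\'e series for $\phi$ from below. Suppose $s\,\delta^{\phi}(\Ga) < 1$. On $\kappa(\Ga)$ we have, up to bounded error,
$$
c_1\phi_1 + c_2\phi_2 \approx s\,\delta^{\phi}(\Ga)\,\phi .
$$
I would first try to combine this with a reverse comparison between $\phi$ and $c_1\phi_1 + c_2\phi_2$ on $\kappa(\Ga)$, obtained from the Jordan-projection identities of Theorem~\ref{thm:convexity body}, which are the rigid limiting case. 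I expect this step to need the most care.
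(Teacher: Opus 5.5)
Your rescaling ($\psi_i=\delta_i\phi_i$, $\phi'=\tfrac1s\phi$ with $s=\tfrac{c_1}{\delta_1}+\tfrac{c_2}{\delta_2}$, $t=\tfrac{c_1}{s\delta_1}$, $C=0$) is exactly the paper's proof. It correctly gives the inequality and the ``only if'' half of the equality case.

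The gap is the ``if'' half, which you leave open. It cannot be filled, because in the form stated it is false. Take $\Theta=\theta$, a Zariski dense non-elementary $\Psf_\theta$-Anosov group $\Ga$, and $\phi=\phi_1=\phi_2=\omega_\alpha$ for some $\alpha\in\theta$. Then $0<\delta:=\delta^{\phi}(\Ga)<+\infty$ and the Poincar\'e series diverges. Take $c_1=c_2=\tfrac14$. Since $\omega_\alpha\ge 0$ on $\fa^+$, we have $\phi\ge\tfrac12\phi=c_1\phi_1+c_2\phi_2$ there, and $1/s=2\delta$. So the inequality reads $\delta\le 2\delta$ and is strict. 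Yet $\sup_{\ga\in\Ga}\abs{\delta^{\phi}(\Ga)\phi(\kappa(\ga))-\delta_i\phi_i(\kappa(\ga))}=0$ for $i=1,2$. This also shows that the ``reverse comparison'' you were looking for cannot be extracted from the hypotheses. Here the bounded-difference condition holds, while $\phi-(c_1\phi_1+c_2\phi_2)=\tfrac12\phi$ is unbounded on $\kappa(\Ga)$.

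After your rescaling, Theorem~\ref{thm:convexity body} gives precisely the following: equality holds if and only if $\sup_{\ga\in\Ga}\abs{\tfrac1s\phi(\kappa(\ga))-\delta_i\phi_i(\kappa(\ga))}<+\infty$ for $i=1,2$. The paper's proof is just this substitution followed by ``apply the theorem,'' so this corrected condition is all it establishes. It agrees with the stated condition exactly when $\tfrac1s=\delta^{\phi}(\Ga)$. That is why the forward implication survives replacing $\tfrac1s\phi$ by $\delta^{\phi}(\Ga)\phi$ and the backward one does not. With the condition phrased via $\tfrac1s\phi$, your argument is complete as it stands, and no Jordan-projection step is needed. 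The stated form is harmless in the case used for Theorem~\ref{thm:strict convexity intro} ($\Ga$ Zariski dense, $\phi_1,\phi_2$ linearly independent), since there both sides are always false.

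One minor point concerns $\delta_i>0$, which does not follow from divergence. An infinite cyclic $\Psf_\theta$-Anosov group has $\delta^{\phi}(\Ga)=0$ and a divergent series, so non-elementarity is a tacit hypothesis. Also, transverse groups need not be finitely generated, so you should get $\delta_i>0$ from a free subgroup rather than from word length.
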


\begin{proof}
Let 
$$\psi := \frac{1}{\frac{c_1}{\delta^{\phi_1}(\Ga)} + \frac{c_2}{\delta^{\phi_2}(\Ga)}} \cdot \phi \in \fa_{\theta}^*.$$
Then we have
$$
\delta^{\psi}(\Ga) = \left( \frac{c_1}{\delta^{\phi_1}(\Ga)} + \frac{c_2}{\delta^{\phi_2}(\Ga)} \right) \delta^{\phi}(\Ga) \quad \text{and} \quad \sum_{\ga \in \Ga} e^{-\delta^{\psi}(\Ga) \psi(\kappa(\ga))} = + \infty.
$$
In addition, setting $\psi_i := \delta^{\phi_i}(\Ga) \cdot \phi_i$, we have $\delta^{\psi_i}(\Ga) = 1$ for $i = 1, 2$, and moreover
$$
\psi \ge t \psi_1 + (1 - t) \psi_2
$$
where $t := \frac{ \frac{c_1}{\delta^{\phi_1}(\Ga)}}{ \frac{c_1}{\delta^{\phi_1}(\Ga)} + \frac{c_2}{\delta^{\phi_2}(\Ga)} } \in (0, 1)$. Then applying Theorem \ref{thm:convexity body} to $\psi$, $\psi_1$, and $\psi_2$ finishes the proof.
\end{proof}

\subsection{Proof of Theorem \ref{thm:convexity body} in a special case} \label{subsection:proof of convexity} We first prove the theorem with an extra assumption. 

\medskip
\noindent \textbf{Extra Assumption:}  $\Ga$ is $\Psf_{\Theta}$-contracting and strongly $(\Phi_{\alpha})_{\alpha \in \Theta}$-irreducible. 
\medskip

Notice that, if $s > 0$, then H\"older inequality implies that 
$$\begin{aligned}
\sum_{\ga \in \Ga} e^{-s \phi(\kappa(\ga))} &  \le e^{Cs} \sum_{\ga \in \Ga} e^{-s t\phi_1(\kappa(\ga))} e^{-s (1-t)\phi_2(\kappa(\ga))} \\
& \le  e^{Cs} \left( \sum_{\ga \in \Ga} e^{-s \phi_1(\kappa(\ga))} \right)^t \left( \sum_{\ga \in \Ga} e^{-s \phi_2(\kappa(\ga))} \right)^{1-t}.
\end{aligned}
$$
So Equation~\eqref{eqn:series defn of critical exp} implies that 
$$
\delta^{\phi}(\Ga) \le 1.
$$

In addition, it is straightforward that $ \sup_{\ga \in \Ga} \abs{ \phi(\kappa(\ga))- \phi_i(\kappa(\ga))} < + \infty$ for $i = 1, 2$ implies  $\delta^{\phi}(\Ga) = 1$, and therefore the theorem reduces to the following. 

\medskip

\noindent \textbf{Claim:} If $\delta^{\phi}(\Ga) = 1$, then $  \sup_{\ga \in \Ga} \abs{ \phi(\kappa(\ga))- \phi_i(\kappa(\ga))} < + \infty$ for $i = 1, 2$.

\medskip

Assume $\delta^\phi(\Gamma)=1$. Let $\mu$ be the unique $(\Ga, \phi, 1)$-Patterson--Sullivan measure on $\La_{\theta}(\Ga)$ (see Theorem~\ref{thm:CZZ PS transverse}) and let $f : \Lambda_\theta(\Gamma) \rightarrow \Fc_{\Theta}$ denote the map in Theorem~\ref{thm:lifting properties}. Then $f_* \mu$ is a  $(\Ga, \phi, 1)$-Patterson--Sullivan measure on $\Fc_\Theta \subset \partial_\Theta X$. Next for $i=1,2$ let  $\mu_i$ be a $(\Gamma,\phi_i,1)$-Patterson--Sullivan measure on $\partial_{\Theta} X$ (see Proposition \ref{prop:PS meas exists}). 

By Theorems \ref{thm:Zdense PS} and \ref{thm:lifting properties}, the measures $f_*\mu$, $\mu_1$, and $\mu_2$ are each part of a Patterson--Sullivan system on $\partial_\Theta X$. In particular, these measures satisfy the Shadow Lemma (Proposition \ref{prop.shadowlemma}). Then we have that for any large $R > 0$, there exists $C_0 = C_0(R) > 1$ such that for all $\ga \in \Ga$ and $i = 1,2$,
$$
(f_* \mu)(\Oc_R^{\Theta}(\ga)) \le C_0 e^{ - \phi (\kappa(\ga))} \quad \text{and} \quad \mu_i ( \Oc_R^{\Theta}(\ga)) \ge C_0^{-1} e^{- \phi_i(\kappa(\ga))}.
$$
Hence, it follows from $\phi(\kappa(\ga)) \ge t \phi_1(\kappa(\ga)) + (1-t) \phi_2(\kappa(\ga)) - C$ that for all $\ga \in \Ga$,
$$\begin{aligned}
(f_* \mu)(\Oc_R^{\Theta}(\ga)) & \le C_0^2 e^{C} \mu_1(\Oc_R^{\Theta}(\ga))^{t} \mu_2(\Oc_R^{\Theta}(\ga))^{1-t} \\
& \le C_0^2 e^{C} \left( t \mu_1(\Oc_R^{\Theta}(\ga)) + (1-t) \mu_2(\Oc_R^{\Theta}(\ga)) \right) \\
& \le C_0^2 e^{C} (\mu_1 +  \mu_2 )(\Oc_R^{\Theta}(\ga))
\end{aligned}
$$
where the weighted arithmetic-mean geometric-mean inequality is used in the second inequality.

By Proposition \ref{prop:uniquePS}, we then have
$$
f_* \mu \ll \mu_1 + \mu_2.
$$
In particular, at least one of $\mu_1$ or $\mu_2$ is non-singular to $f_* \mu$. After relabeling, we can suppose that $\mu_1$ is non-singular to $f_* \mu$. Then by Theorem \ref{thm:KZ Rigidity}, we have
$$
\sup_{\ga \in \Ga} \abs{ \phi(\kappa(\ga)) - \phi_1(\kappa(\ga))} < + \infty.
$$
This implies that  there there exists $C' > 0$ such that 
   $$
   \phi(\kappa(\gamma)) \ge \phi_2(\kappa(\gamma)) -C'
   $$
    for all $\gamma \in \Gamma$. Then using the Shadow Lemma and Proposition \ref{prop:uniquePS} as above, we have 
$$
f_* \mu \ll \mu_2.
$$
Therefore by Theorem \ref{thm:KZ Rigidity},
$$
\sup_{\ga \in \Ga} \abs{ \phi(\kappa(\ga)) - \phi_2(\kappa(\ga))} < + \infty.
$$
This completes the proof.
\qed

\subsection{Proof of Theorem \ref{thm:convexity body} in general} We now prove the theorem in full generality. The idea is to replace $\Gsf$ with a different group where $\Gamma$ is contracting and strongly irreducible.

We will freely use the notation in Section~\ref{sec:sl notation}. For each $\alpha \in \Theta$, let $\Hsf_\alpha < \SL(V_\alpha)$ denote the Zariski closure of $\Phi_\alpha(\Gamma) < \SL(V_{\alpha})$. Replacing $\Gamma$ with a finite index subgroup, we can assume that each $\Hsf_\alpha$ is connected. 

\begin{lemma} \label{lem:irr prox reps}
    For each $\alpha \in \Theta$ there exists an irreducible representation $\rho_\alpha : \Hsf_\alpha \rightarrow \SL(W_\alpha)$  such that 
\begin{enumerate}
\item There exist $C_\alpha > 0$ and $r_\alpha \in \Nb$ where
$$
\abs{\omega_1( \kappa(\rho_\alpha( h))) -r_\alpha \omega_1(\kappa(h))} \leq C_{\alpha}
$$
for all $h \in \Hsf_\alpha$.
\item $\rho_{\alpha}(\Hsf_{\alpha})$ is $\Psf_{\alpha_1}$-contracting. 
\end{enumerate}
When $\alpha \in \theta$, we can assume that  $r_{\alpha} =1$ and $W_{\alpha} < V_{\alpha}$ is an irreducible factor of the $\Hsf_\alpha$-action on $V_\alpha$.
\end{lemma}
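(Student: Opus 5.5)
Since $\Hsf_\alpha$ is the connected Zariski closure of $\Phi_\alpha(\Gamma)$ and is semisimple by hypothesis, the plan is to construct $\rho_\alpha$ from the highest weight theory of $\Hsf_\alpha$ acting on $V_\alpha$. We treat the two cases separately.

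\textbf{Case $\alpha \in \theta$.} Because $\Hsf_\alpha$ is semisimple, $V_\alpha$ splits as a direct sum $W^{(1)} \oplus \cdots \oplus W^{(k)}$ of irreducible $\Hsf_\alpha$-modules. We may choose the inner product on $V_\alpha$ adapted to a Cartan decomposition of $\Hsf_\alpha$ compatible with that of $\SL(V_\alpha)$ (Mostow), so that the splitting is orthogonal; changing the norm only alters $\omega_1\kappa$ by a bounded amount. Then
$$\norm{h} = \max_j \norm{h|_{W^{(j)}}}.$$
For each $j$ set $\lambda_j := \omega_1\kappa(h|_{W^{(j)}})$, so that $\omega_1\kappa(h) = \max_j \lambda_j$. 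Each $\lambda_j$ is, up to a bounded error, a highest weight $\chi_j$ of $W^{(j)}$ evaluated on the Cartan projection $\kappa_{\Hsf}(h)$ in a Cartan subspace of $\Hsf_\alpha$. Since the $\chi_j$ are dominant weights, their maximum over the positive chamber is attained by a single $j_0$ only when one weight dominates the others in the chamber order. This is a partial-order issue on the chamber and is not automatic.

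To handle it, I use that $\Phi_\alpha(\Gamma)$ is proximal, since $\Gamma$ is $\Psf_\theta$-transverse and so $\alpha(\kappa)\to\infty$ along $\Gamma$. Consequently $\Hsf_\alpha$ contains proximal elements. The top eigenline of such an element lies in one factor $W^{(j_0)}$, and the top weight of $V_\alpha$ as an $\Hsf_\alpha$-module has multiplicity one. That weight is $\chi_{j_0}$, and every other weight of $V_\alpha$ is at most $\chi_{j_0}$ in the chamber order, so $\chi_j \le \chi_{j_0}$ on the positive chamber for all $j$. It follows that $\omega_1\kappa(h)=\lambda_{j_0}+O(1)$, giving (1) with $W_\alpha := W^{(j_0)}$ and $r_\alpha=1$. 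Property (2) holds because a proximal element of $\Hsf_\alpha$ restricts to a proximal element on $W^{(j_0)}$, and its powers are $\Psf_{\alpha_1}$-contracting.

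\textbf{Case $\alpha \notin \theta$.} Here $\Hsf_\alpha$ need not contain proximal elements on $V_\alpha$. The plan is to use Tits' theorem (as in \cite{Tits_representations}, already invoked for the $\Phi_\alpha$). Let $\chi$ be a highest weight of some irreducible factor of $V_\alpha$ that is maximal among these weights in the sense above, namely that it realizes $\omega_1\kappa$ up to $O(1)$. Choose $r_\alpha$ so that $r_\alpha\chi$ is the highest weight of an irreducible representation $\rho_\alpha$ of $\Hsf_\alpha$ whose highest weight space is one-dimensional; Tits gives such a representation with $\norm{\rho_\alpha(h)} = e^{r_\alpha\chi(\kappa_\Hsf(h))}$. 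This yields (1). For (2), the gap $\sigma_1/\sigma_2$ of $\rho_\alpha(h)$ equals $e^{\beta(\kappa_\Hsf(h))}$ for a simple root $\beta$ of $\Hsf_\alpha$ not orthogonal to $\chi$. Since $\Hsf_\alpha$ has no compact factors on which $\chi$ is nontrivial (compact factors act trivially on the weight), we can take $h=\exp(tH)$ with $H$ regular. Then $\beta(\kappa)\to\infty$, so $\rho_\alpha(\Hsf_\alpha)$ is $\Psf_{\alpha_1}$-contracting.

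\textbf{Main obstacle.} The hardest step is justifying that one highest weight dominates all others uniformly on the chamber, which is what gives (1) with only bounded error. If strict domination fails, the maximum of several weights is piecewise and no single representation works. I would resolve this by choosing $\chi$ as the highest weight of $V_\alpha$ itself restricted to $\Hsf_\alpha$, which dominates every weight of $V_\alpha$ in the root order. Relating $\kappa_{\Hsf}$ to $\kappa$ via the compatible Cartan decompositions then requires care.
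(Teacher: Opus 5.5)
There is a genuine gap, and it is exactly the step you flag as the main obstacle; your proposed fix does not close it. For $\alpha\in\theta$ you pass from ``$\Hsf_\alpha$ contains a proximal element'' to ``the top weight of $V_\alpha$ has multiplicity one and dominates every other weight on the whole chamber of $\Hsf_\alpha$''. Only $\Gsf$ is assumed to act irreducibly on $V_\alpha$. When $\Hsf_\alpha$ acts reducibly, the highest weights $\chi_j$ of the factors need not have a largest element. Different proximal elements of $\Hsf_\alpha$ can then have their top eigenlines in different factors, so a single proximal element only tells you which factor is on top at one point of the chamber. Your fallback (take $\chi$ to be the highest weight of $V_\alpha$ as an $\Hsf_\alpha$-module) presupposes $\Hsf_\alpha$-irreducibility, and your case $\alpha\notin\theta$ rests on the same $\chi$.

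No argument can supply this step, because with ``for all $h\in\Hsf_\alpha$'' the statement fails in this generality. Let $\Hsf_\alpha=\SL(2,\Rb)\times\SL(2,\Rb)$ sit block-diagonally in $\SL(4,\Rb)$.
\begin{itemize}
\item For $h=({\rm diag}(e^t,e^{-t}),{\rm diag}(e^s,e^{-s}))$ with $t,s\ge 0$ we have $\omega_1(\kappa(h))=\max(t,s)$.
\item For an irreducible $\rho$ of $\Hsf_\alpha$ with highest weight $(a,b)$ we have $\omega_1(\kappa(\rho(h)))=at+bs+O(1)$.
\item Both sides are homogeneous of degree one, so (1) would force $at+bs=r\max(t,s)$ on the chamber. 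Evaluating at $(1,0)$, $(0,1)$, $(1,1)$ gives $a=b=r$ and $2r=r$, a contradiction.
\end{itemize}
This really occurs with $\alpha=\alpha_1\in\theta$. For a free group $F_2$, let $\Ga=\{{\rm diag}(\rho_1(w),\rho_2(w)) : w\in F_2\}$, where $\rho_1$ is Schottky and $\rho_2$ is Zariski dense and close enough to the trivial representation that $\log\norm{\rho_2(w)}\le\epsilon\abs{w}$, with $\epsilon$ below the linear growth rate of $\log\norm{\rho_1(w)}$. Then $\alpha_1$ of the Cartan projection grows linearly in $\abs{w}$, so $\Ga$ is $\Psf_1$-Anosov. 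Since $\rho_1,\rho_2$ are non-conjugate, the Zariski closure of $\Ga$ is the whole block group. Transversality decides which factor is on top only along $\Ga$, not on all of $\Hsf_\alpha$.

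The paper argues differently but has the same gap at this point. It takes $r$ minimal with $\alpha_r(\kappa(a))>0$ for some $a$ in a Cartan subgroup of $\Hsf_\alpha$. Then every $h\in\Hsf_\alpha$ has its top $r$ singular values equal, and $\omega_1(\kappa(\wedge^r h))=r\,\omega_1(\kappa(h))$. It then picks an irreducible factor of $\wedge^r V_\alpha$ realizing $\omega_1\kappa$ for all $h$. That choice is your domination claim again, left unjustified; in the example $r=1$ and neither factor of $\Rb^2\oplus\Rb^2$ works.

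If $\Hsf_\alpha$ is non-compact and acts irreducibly on $V_\alpha$ (as in the Zariski dense case and the Lipschitz application), both routes are fine. There $r=\dim V^\chi$ for the highest weight space $V^\chi$, and the factor generated by $\wedge^r V^\chi$ has highest weight $r\chi$ with a one-dimensional weight space. Your appeal to Tits' theorem for $\Hsf_\alpha$ produces such a representation, less explicitly than the paper's $\wedge^r$ construction.

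For $\alpha\in\theta$ you can also recover what the later reduction to $\bar\Gamma$ actually uses, namely (1) for $h\in\Phi_\alpha(\Ga)$. Choose an inner product making $V_\alpha=\bigoplus_j W^{(j)}$ orthogonal. The top singular direction of $\Phi_\alpha(\ga)$ lies in the factor carrying $\sigma_1$. Minimality of $\Ga$ on its (non-elementary) limit set then forces a single factor to carry $\sigma_1$ for all but finitely many $\ga$.
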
 

\begin{remark} In the lemma, we assume that $W_\alpha$ is endowed with some inner product (the properties do not depend on the choice). \end{remark}

\begin{proof} Fix $\alpha \in \Theta$, let $d = \dim V_\alpha$, and identify $\SL(V_\alpha)$ with $\SL(d, \Rb)$ using an orthogonal basis of $V_\alpha$. 
   Conjugating $\Hsf_\alpha$ (this is what introduces the additive error), we can assume that the Cartan subgroup $\Asf_\alpha$ of $\Hsf_\alpha$ consists of positive diagonal matrices and the maximal compact subgroup of $\Hsf_\alpha$ is a subgroup of $\mathsf{SO}(d)$. 

Fix $r \in \Nb$ minimally so that $\alpha_{r}( \kappa( a)) > 0$ for some $a \in \Asf_\alpha$. Consider the standard representation $\tau : \SL(d, \Rb) \rightarrow \SL(\wedge^r \Rb^d)$, and identify $\SL(\wedge^r \Rb^d)$ with $\SL( {d \choose r}, \Rb)$. Notice that if $a\in \Asf_\alpha$, then the $r$ largest entries along the diagonal in $a$ are equal to  $e^{\omega_1(\kappa(a))}$. Hence $\omega_1( \kappa( \tau(a))) = r \omega_1(\kappa(a))$. Thus $\omega_1( \kappa( \tau(h))) = r \omega_1(\kappa(h))$ for all $h \in \Hsf_\alpha$. Further, by the choice of $r$, there exists $a \in \Asf_\alpha$ so that 
$$
\alpha_{1}( \kappa( \tau(a))) = \alpha_r (\kappa(a)) > 0
$$
and hence 
$$
\alpha_{1}( \kappa( \tau(a^n))) \rightarrow +\infty.
$$

 Finally, since $\Hsf_\alpha$ is semisimple, we can decompose $\wedge^r \Rb^d$ into $\tau|_{\Hsf_\alpha}$-irreducible factors and then pick the factor $W_\alpha \subset \wedge^r \Rb^d$ where the associated representation $\rho : \Hsf_\alpha \rightarrow \SL(W_\alpha)$ satisfies 
$$
\omega_1( \kappa( \rho(h)))  = \omega_1( \kappa( \tau(h))) 
$$
for all $h \in \Hsf_\alpha$. Then $\rho_\alpha : = \rho$ and $r_\alpha : = r$ have the desired properties. 

Notice that when $\alpha \in \theta$, then $r=1$ by the divergence property of transverse groups, from which the last claim follows.
\end{proof} 

Let $\bar \Gsf : = \prod_{\alpha \in \Theta} \SL(W_\alpha)$ and $\rho : =  ( \rho_\alpha \circ \Phi_\alpha|_{\Gamma})_{\alpha \in \Theta} : \Gamma \rightarrow \bar \Gsf$ using the representations in Lemma \ref{lem:irr prox reps}, and let
$$
\bar \Gamma : = \rho(\Gamma). 
$$
We can assume that the simple roots of $\bar \Gsf$ are $\cup_{\alpha \in \Theta} \{ \beta_j^{\alpha}\}_{j=1}^{d_{\alpha}-1}$ where $\beta_1^{\alpha}, \dots, \beta_{d_\alpha-1}^{\alpha}$ are the standard simple roots of $\SL(W_{\alpha}) = \SL(d_{\alpha}, \Rb)$ described in Section~\ref{sec:sl notation}.  Notice that we can choose the irreducible representations $\Phi_{\beta_1^\alpha}$ for $\bar \Gsf$ to just be projection onto the associated factor. Then let 
$$
\bar \Theta : = \{ \beta^\alpha_1\}_{\alpha \in \Theta} \quad \text{and} \quad \bar \theta : = \{ \beta^\alpha_1\}_{\alpha \in \theta}.
$$

\begin{lemma}  The group $\bar\Ga$ is $\Psf_{\bar \Theta}$-contracting, strongly $(\Phi_{\beta})_{\beta \in \bar\Theta}$-irreducible, and $\bar \Gamma$ is $\Psf_{\bar \theta}$-transverse.
\end{lemma}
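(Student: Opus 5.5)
We will verify the three properties separately, using the description of $\bar\Gsf$ as the product $\prod_{\alpha\in\Theta}\SL(W_\alpha)$. In $\bar\Gsf$ the simple roots $\beta_1^\alpha$ measure $\log(\sigma_1/\sigma_2)$ in the $\alpha$-factor, and the representations $\Phi_{\beta_1^\alpha}$ are projections onto the factors.

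\emph{Strong irreducibility.} Since $\Phi_{\beta_1^\alpha}$ is the projection to $\SL(W_\alpha)$, we need that every finite index subgroup $\Gamma'<\Gamma$ acts irreducibly on $W_\alpha$ through $\rho_\alpha\circ\Phi_\alpha$. We have already replaced $\Gamma$ by a finite index subgroup so that each Zariski closure $\Hsf_\alpha$ is connected. For any finite index $\Gamma'$, the Zariski closure of $\Phi_\alpha(\Gamma')$ is a finite index algebraic subgroup of $\Hsf_\alpha$, and hence equals $\Hsf_\alpha$ by connectedness. So $\rho_\alpha(\Phi_\alpha(\Gamma'))$ is Zariski dense in $\rho_\alpha(\Hsf_\alpha)$. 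Any $\rho_\alpha\Phi_\alpha(\Gamma')$-invariant subspace is then $\rho_\alpha(\Hsf_\alpha)$-invariant, because invariance is a Zariski closed condition. Irreducibility of $\rho_\alpha$ from Lemma~\ref{lem:irr prox reps} then gives the claim.

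\emph{Contracting.} This is the step I expect to be the main obstacle, since we need a single sequence that contracts simultaneously in all factors. By Lemma~\ref{lem:irr prox reps}(2), each $\rho_\alpha(\Hsf_\alpha)$ is $\Psf_{\alpha_1}$-contracting. For a connected semisimple group this means the Cartan subspace $\mfa_{\Hsf_\alpha}$ contains elements where the relevant root $\beta_1^\alpha$ is positive. Consider the Zariski closure $\Hsf$ of $\rho(\Gamma)$ in $\bar\Gsf$, which projects onto each $\rho_\alpha(\Hsf_\alpha)$. Choose a Cartan subspace of $\Hsf$ compatible with the product structure. The set of elements of this Cartan subspace on which $\beta_1^\alpha$ vanishes identically (after Weyl conjugation) is a finite union of proper subspaces, one family for each $\alpha$. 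Hence there is an element $a$ of $\Hsf$ with $\beta_1^\alpha(\kappa(a^n))\to+\infty$ for every $\alpha\in\Theta$, so $\Hsf$ is $\Psf_{\bar\Theta}$-contracting. Then Benoist's theorem, that the Jordan cone of a Zariski dense subgroup has nonempty interior in the cone of its closure, produces elements $\gamma\in\Gamma$ whose Jordan projections $\lambda(\rho(\gamma))$ avoid all the walls $\ker\beta_1^\alpha$. Taking powers $\gamma^n$ then gives $\beta_1^\alpha(\kappa(\rho(\gamma^n)))\to+\infty$ for all $\alpha$. If Benoist is inconvenient, I would instead cite \cite{GolMar1989} applied to $\Hsf$.

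\emph{Transversality.} For $\alpha\in\theta$ we have $r_\alpha=1$ and $W_\alpha\subset V_\alpha$ is an invariant factor. If $\gamma_n\to\infty$ in $\Gamma$, then $\alpha(\kappa(\gamma_n))\to\infty$ by transversality of $\Gamma$. By \ref{item:SVs of Tits repn}, $\Phi_\alpha(\gamma_n)$ then has a gap $\sigma_1/\sigma_2\to\infty$, with attracting line $\zeta_\alpha(\Lambda_\theta)$. We must show that this gap persists on $W_\alpha$, i.e.\ that $W_\alpha$ contains the limit lines $\zeta_\alpha(\Lambda_\alpha(\Gamma))$. This follows because $\zeta_\alpha(\Lambda_\alpha(\Gamma))$ spans a $\Gamma$-invariant subspace, and the factor chosen in Lemma~\ref{lem:irr prox reps} realizes the top singular value $\omega_1$. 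So, up to bounded error, $\sigma_1$ on $W_\alpha$ equals $\sigma_1$ on $V_\alpha$, while $\sigma_2(\rho_\alpha(h))\le\sigma_2(h)$. Hence $\beta_1^\alpha(\kappa(\rho(\gamma_n)))\to\infty$. This also gives an equivariant identification of the limit sets: $\Lambda_{\bar\theta}(\bar\Gamma)$ is the image of $\Lambda_\theta(\Gamma)$ under $x\mapsto(\zeta_\alpha(x))_{\alpha\in\theta}$, whose components lie in $\Pb(W_\alpha)$. Transversality of distinct limit points follows from \ref{item:boundary maps of Tits repn}(b), restricted to the invariant factor $W_\alpha$ and its dual, after symmetrizing $\theta$ via Observation~\ref{obs:symmetry theta}.
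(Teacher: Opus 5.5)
Your proof is correct. Strong irreducibility is argued exactly as in the paper, and your transversality step follows the paper's outline. You also supply the justification the paper omits for $\beta_1^\alpha(\kappa(\rho(\gamma_n)))\to+\infty$, namely Lemma~\ref{lem:irr prox reps}(1) with $r_\alpha=1$ together with $\sigma_2(h|_{W_\alpha})\le\sigma_2(h)$.

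Two small corrections there:
\begin{itemize}
\item The remark that $\zeta_\alpha(\Lambda_\alpha(\Gamma))$ spans a $\Gamma$-invariant subspace plays no role. Containment of the limit lines in $\Pb(W_\alpha)$ is a consequence of the singular value comparison, not an equivalent reformulation of it.
\item For the hyperplane components coming from $\opp^*\bar\theta$, apply the same lemma to $h^{-1}$. This shows the bottom singular value is also realized on $W_\alpha$, so the repelling hyperplane in $W_\alpha$ is $\zeta^*_\alpha(y)\cap W_\alpha$, and \ref{item:boundary maps of Tits repn}(b) descends to $W_\alpha$.
\end{itemize}

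The genuine difference is the contraction step.
\begin{itemize}
\item The paper works factor by factor. \cite[Lemma 6.23]{BQ_book} transfers proximality from $\rho_\alpha(\Hsf_\alpha)$ to the Zariski dense subgroup $\rho_\alpha\Phi_\alpha(\Gamma)$. Then \cite[Lemma 6.25]{BQ_book} gives a single element that is proximal in all the finitely many factors at once.
\item You instead pass to the joint Zariski closure $\Hsf$ of $\rho(\Gamma)$ in $\bar\Gsf$ and show via its Cartan subspace that $\Hsf$ is $\Psf_{\bar\Theta}$-contracting. You then invoke Benoist's limit cone theorem to get $\gamma$ with $\beta_1^\alpha(\lambda(\rho(\gamma)))>0$ for every $\alpha$.
\end{itemize}

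Your route exposes the mechanism: regularity in $\Hsf$ forces proximality in every $W_\alpha$. The price is several checks you leave implicit:
\begin{itemize}
\item Pass to a finite-index subgroup so that $\Hsf$ is Zariski connected.
\item Note that $\Hsf$ is semisimple. Its unipotent radical and central torus project trivially to each semisimple $\rho_\alpha(\Hsf_\alpha)$, and $\Hsf$ embeds in the product of these.
\item Make the ``bad set'' precise. Contraction of the irreducible group $\rho_\alpha(\Hsf_\alpha)$ forces its highest weight space on $W_\alpha$ to be one-dimensional. Hence $e^H$ is proximal on every $W_\alpha$ for all $H$ in the open Weyl chamber of $\Hsf$.
\end{itemize}
With that stronger form of the last point, your Goldsheid--Margulis alternative also works directly. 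If you only know the bad set is ``a finite union of proper subspaces,'' you genuinely need the nonempty-interior statement. The paper's route avoids the joint Zariski closure entirely, at the cost of citing two specialized lemmas.
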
 

\begin{proof} 
   
   First notice that the strong $(\Phi_{\beta})_{\beta \in \bar\Theta}$-irreducibility follows from the irreducibility of $\rho_{\alpha}$ and that $\Hsf_{\alpha}$ is the Zariski closure of $\Phi_{\alpha}(\Ga)$. Moreover, since $\Hsf_{\alpha}$ is the identity component of the Zariski closure of $\Phi_{\alpha}(\Ga)$, it follows from Lemma \ref{lem:irr prox reps}(2) and \cite[Lemma 6.23]{BQ_book} that for each $\beta \in \bar\Theta$, there exists a sequence $\{\ga_n\} \subset \Ga$ satisfying
   $$
   \beta(\kappa(\rho(\ga_n))) \to + \infty.
   $$
   Now by \cite[Lemma 6.25]{BQ_book}, the sequence $\{\ga_n\} \subset \Ga$ can be chosen independent of the choice of $\beta$, i.e., there exists a sequence $\{\ga_n\} \subset \Ga$ such that for each $\beta \in \bar\Theta$,
   $$
   \beta(\kappa(\rho(\ga_n))) \to + \infty.
   $$
   Hence, $\bar\Ga$ is $\Psf_{\bar\Theta}$-contracting.

   It remains to prove that $\bar \Ga$ is $\Psf_{\bar \theta}$-transverse. For $\alpha \in \theta$, note that $W_{\alpha}$ can be chosen as an irreducible factor of the $\Hsf_{\alpha}$-action on $V_{\alpha}$   and $r_{\alpha} = 1$ in Lemma \ref{lem:irr prox reps}.  We then have 
   $$
   \beta(\kappa(\rho(\ga_n))) \to + \infty
   $$
   for all $\beta \in \bar\theta$ and an escaping sequence $\{\ga_n\} \subset \Ga$, since $\Ga$ is $\Psf_{\theta}$-transverse.
   In addition, we also have that $\bar \Gsf / \Psf_{\bar \theta} = \prod_{\alpha \in \theta} \Pb(W_{\alpha})$, which is a subspace of $\prod_{\alpha \in \theta} \Pb(V_{\alpha})$.  Since the limit set of $(\Phi_{\alpha})_{\alpha \in \theta}(\Ga) < \prod_{\alpha \in \theta} \SL(V_{\alpha})$ is transverse by Property \ref{item:boundary maps of Tits repn}, this implies that the limit set of $\bar \Gamma$ in $\bar \Gsf / \Psf_{\bar \theta}$ is transverse as well.
\end{proof} 

 Since $\phi \in \mfa_\theta^* = {\rm span}\{ \omega_\alpha\}_{\alpha \in \theta}$, we can write $\phi = \sum_{\alpha \in \theta} c_\alpha \omega_\alpha $. Let 
$$
\bar \phi : = \sum_{\alpha \in \theta} \frac{c_\alpha}{r_\alpha} \omega_{\beta^\alpha_1} \in \mfa_{\bar \theta}^*.
$$
Then there exists $C > 0$ such that 
$$
\abs{ \phi(\kappa(\gamma)) - \bar{\phi}(\kappa(\rho(\gamma)))} \leq C \quad \text{for all } \gamma \in \Gamma. 
$$
Likewise, we can define $\bar \phi_1, \bar \phi_2 \in \mfa_{\bar \Theta}^*$ corresponding to $\phi_1, \phi_2$, respectively. Then after increasing $C > 0$ we can assume that 
$$
\abs{ \phi_i(\kappa(\gamma)) - \bar{\phi}_i(\kappa(\rho(\gamma)))} \leq C \quad \text{for all } \gamma \in \Gamma. 
$$
Finally, applying the special case of the theorem to $\bar \Gamma$ and $\bar \phi, \bar \phi_1, \bar \phi_2$ finishes the proof. 
\qed


\section{Lipschitz limit sets} \label{section:Lipschitz}
 

In this section, we prove the following entropy rigidity result for Anosov groups with Lipschitz limit sets.

\begin{theorem}\label{thm:Lipschitz limit set rigidity}

   Suppose $\Gamma < \SL(d,\Rb)$ is a $\Psf_1$-Anosov group acting strongly irreducibly on $\Rb^d$ and on $\wedge^{p+1} \Rb^d$ whose limit set $\Lambda_1(\Gamma)$ is a Lipschitz $p$-manifold, for some $p \le d-2$. Then
$$
\delta^{\phi_{\rm H}}(\Gamma) = p,
$$
if and only if $\Gamma$ is conjugate to a uniform lattice in $\mathsf{SO}(d-1, 1)$ and $p=d-2$. 
\end{theorem}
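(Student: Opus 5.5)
The plan is to follow the argument of Pozzetti--Sambarino--Wienhard \cite[Proposition 7.7]{PSW2023}, replacing their use of strict convexity of $\Qc_{\{\alpha_1,\alpha_{p+1}\}}(\Gamma)$, which needs $\Psf_{p+1}$-Anosov, with Corollary~\ref{cor:convexity general lin comb}. The ``if'' direction is classical. If $\Gamma$ is a uniform lattice in $\mathsf{SO}(d-1,1)$, then $\Lambda_1(\Gamma)$ is the round sphere $\partial \Hb^{d-1}$. Moreover $\phi_{\rm H}(\kappa(\gamma))$ equals the hyperbolic displacement, so $\delta^{\phi_{\rm H}}(\Gamma) = d-2 = p$. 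The work is in the ``only if'' direction.

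For that direction, assume $\delta^{\phi_{\rm H}}(\Gamma) = p$.

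\textbf{Step 1: the PSW functional.} The proof of \cite[Theorem A]{PSW2023} uses the Lipschitz hypothesis and strong irreducibility on $\wedge^{p+1}\Rb^d$. It shows that the functional $\psi_p := \alpha_1 + \cdots + \alpha_p$, which satisfies $\psi_p(\kappa(g)) = \log \frac{\sigma_1}{\sigma_{p+1}}(g)$, has $\delta^{\psi_p}(\Gamma) \le 1$. I would extract this as an input. Note that $\psi_p = 2\omega_{\alpha_1} - \omega_{\alpha_{p+1}} + (\text{correction})$ lies in $\mfa^*_{\{\alpha_1,\dots,\alpha_{p+1}\}}$ but not in $\mfa_{\{\alpha_1\}}^*$.

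\textbf{Step 2: a convex decomposition.} I would write $\phi_{\rm H}$ as a positive combination of $\psi_p$ and a second functional $\phi_2$, the ``dual'' PSW functional, namely $\psi_p \circ \opp$, which gives $\log\frac{\sigma_{d-p}}{\sigma_d}$. Concretely,
$$
2\phi_{\rm H} = \log\frac{\sigma_1}{\sigma_d} \ge \tfrac12\Big(\log\frac{\sigma_1}{\sigma_{p+1}} + \log\frac{\sigma_{d-p}}{\sigma_d}\Big)\quad\text{on } \mfa^+,
$$
with equality exactly when $p = d-2$. Since $\opp$ preserves critical exponents, both functionals have critical exponent at most $1$.

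Corollary~\ref{cor:convexity general lin comb} applies with $\theta = \{\alpha_1,\alpha_{d-1}\}$ (using Observation~\ref{obs:symmetry theta}) and $\Theta = \Delta$. Its hypotheses hold because the Zariski closure of an irreducible group is reductive, hence semisimple in $\SL$ after passing to the relevant representations, and because Anosov groups are of divergence type. The corollary gives
$$
\delta^{\phi_{\rm H}}(\Gamma) \le \frac{1}{\tfrac14 + \tfrac14} \cdot \frac{1}{2}\cdot(\text{normalisation}),
$$
and with the constants tracked this should reduce to the PSW bound $\delta^{\phi_{\rm H}} \le p$. Equality then forces $\phi_{\rm H}$, $\psi_p$, and $\psi_p\circ\opp$ to agree on Jordan projections, each up to the normalising constants. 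If the constants do not close up cleanly, I would instead pair $\psi_p$ with $\omega_{\alpha_1}$-type functionals, choosing the pair so that the inequality is sharp precisely in the PSW equality case.

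\textbf{Step 3: deduce rigidity from equal lengths.} Equality $\phi_{\rm H}(\lambda(\gamma)) = c\,\psi_p(\lambda(\gamma))$ for all $\gamma$, together with its $\opp$-version, is a linear relation among the eigenvalue moduli $\lambda_1,\dots,\lambda_d$ of every element. Since $\Gamma$ is irreducible, Benoist's theorem \cite{Benoist_properties} says the Jordan projections of the Zariski closure span its Cartan subspace. Hence the Zariski closure $\Hsf$ has Jordan projections satisfying $\lambda_2 = \cdots = \lambda_{d-1}$ (in log) and $\lambda_1 = -\lambda_d$, and the relation also forces $p = d-2$. An irreducible semisimple $\Hsf < \SL(d,\Rb)$ of real rank one whose Jordan projections have this form should be conjugate to $\mathsf{SO}(d-1,1)$. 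Then $\Gamma$ is a convex cocompact subgroup of $\mathsf{SO}(d-1,1)$ whose limit set is a topological $(d-2)$-sphere, so $\Gamma$ is a uniform lattice.

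\textbf{Main obstacle.} I expect the hardest part to be Step 2: choosing the right pair of functionals and checking the constants so that the corollary's bound is exactly $p$ and its equality case is usable. A secondary point is the classification in Step 3, which I would handle by a representation-theoretic check of weights.
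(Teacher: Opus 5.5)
Your overall plan matches the paper's: replace the $\Psf_{p+1}$-Anosov strict convexity used by Pozzetti--Sambarino--Wienhard with Theorem~\ref{thm:convexity body}, pass to Jordan projections, then land in $\mathsf{SO}(d-1,1)$. Your ``if'' direction is also fine. But Steps~1--2 use the wrong functional and cannot produce the bound $p$.

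The input from \cite[Theorem A]{PSW2023} is $\delta^{\phi_p}(\Gamma)=\delta^{\bar\phi_p}(\Gamma)=1$, where $\phi_p=(p+1)\omega_{\alpha_1}-\omega_{\alpha_{p+1}}$, i.e.\ $\phi_p(\kappa(g))=\log\frac{\sigma_1(g)^p}{\sigma_2(g)\cdots\sigma_{p+1}(g)}$, and $\bar\phi_p=\phi_p\circ\opp$. Your $\psi_p=\alpha_1+\cdots+\alpha_p$ satisfies $\psi_p\le\phi_p$ on $\mfa^+$, so $\delta^{\psi_p}(\Gamma)\ge 1$. Your claim $\delta^{\psi_p}(\Gamma)\le 1$ is false. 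Take a uniform lattice in $\mathsf{SO}(d-1,1)$ with $p=d-2\ge 2$; it satisfies every hypothesis. Then $\psi_p(\kappa(\gamma))=\phi_{\rm H}(\kappa(\gamma))=\dist_{\Hb^{d-1}}(o,\gamma o)$, so $\delta^{\psi_p}(\Gamma)=d-2$.

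Your Step~2 inequality does not help either. It is an equality only when $t_1=t_{d-p}$ and $t_{p+1}=t_d$, which never happens on spectra $(r,0,\dots,0,-r)$ with $r>0$, and it has nothing to do with $p=d-2$. Moreover, when $2p+1>d$ the best constant with $\phi_{\rm H}\ge c(\psi_p+\psi_p\circ\opp)$ on $\mfa^+$ is $c=\frac14$. So Corollary~\ref{cor:convexity general lin comb} gives at best $\delta^{\phi_{\rm H}}(\Gamma)\le 2\delta^{\psi_p}(\Gamma)$, which is $2(d-2)$ for hyperbolic lattices.

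The comparison you need is $p\,\phi_{\rm H}\ge\frac12(\phi_p+\bar\phi_p)$ on $\mfa^+$ (Lemma~\ref{lem:comparing functionals}). Apply Theorem~\ref{thm:convexity body} with $\phi=p\,\phi_{\rm H}$, $\phi_1=\phi_p$, $\phi_2=\bar\phi_p$, $t=\frac12$. This gives $\delta^{\phi_{\rm H}}(\Gamma)\le p$, and equality forces $\phi_p(\lambda(\gamma))=\bar\phi_p(\lambda(\gamma))=p\,\phi_{\rm H}(\lambda(\gamma))$ for all $\gamma$.

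One caveat on this route. The difference $p\,\phi_{\rm H}-\frac12(\phi_p+\bar\phi_p)$ equals $\frac12\sum_{j=1}^{\min(p,d-2-p)}(t_{j+1}-t_{d-j})$. So it vanishes exactly when $t_2=\cdots=t_{d-1}$ only for $p\le d-3$. For $p=d-2$ one has $\phi_p=(d-1)t_1+t_d$ and $\bar\phi_p=-t_1-(d-1)t_d$, so the two sides agree identically. There the equality case yields only $\lambda_1(\gamma)\lambda_d(\gamma)=1$. The equality clause of Lemma~\ref{lem:comparing functionals} is therefore valid only for $p\le d-3$, and getting $\lambda_2=\cdots=\lambda_{d-1}$ when $p=d-2$ needs an additional argument.

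Step~3 has a second gap: the length relation cannot force $p=d-2$. The spectra $(\ell,0,\dots,0,-\ell)$ of $\mathsf{SO}(d-1,1)$ satisfy $\phi_p=\bar\phi_p=p\,\phi_{\rm H}$ for every $p\le d-2$. They also satisfy your relations, since $\psi_p=\psi_p\circ\opp=\phi_{\rm H}=\ell$. So nothing in the eigenvalue data distinguishes $p<d-2$ from $p=d-2$, and your final sentence already presupposes $p=d-2$.

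This is where the Lipschitz hypothesis has to be used a second time. Once Observation~\ref{obs:eigenvalue implies conjugacy} conjugates $\Gamma$ to a Zariski dense subgroup of $\mathsf{SO}(d-1,1)$, $\Lambda_1(\Gamma)$ is the hyperbolic limit set. Then \cite[Theorem 1.3]{Kapovich2009} shows that $\Gamma$ preserves and acts cocompactly on a totally geodesic $\Hb^{p+1}\subset\Hb^{d-1}$. Strong irreducibility forces $p+1=d-1$, and $\Gamma$ is a uniform lattice.

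Your rank-one/highest-weight sketch is essentially that observation. You still need to exclude the non-real rank-one groups, which the paper does via the eigenvalues of the derivative at the fixed points of loxodromics.
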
 

We will freely use the notation in Section~\ref{sec:sl notation}. In addition, notice that the Jordan projection $\lambda : \SL(d,\Rb)  \rightarrow \mfa^+$ is given by 
$$
\lambda(g) = {\rm diag}(\log \lambda_1 (g), \dots, \log \lambda_d (g))
$$
where 
$$
\lambda_1(g) \geq \cdots \geq \lambda_d(g)
$$
are the absolute values of the eigenvalues of $g$. 
Also, we can choose the representation $\Phi_{\alpha_j}$ to be the standard irreducible representation $\SL(d,\Rb) \rightarrow \SL(\wedge^j \Rb^d)$. 

In terms of the fundamental weights, the Hilbert functional $\phi_{\rm H} \in \mfa^*$ satisfies 
$$
\phi_{\rm H}= \frac{1}{2}( \omega_1+\omega_{d-1}).
$$

Let $\phi_p : = (p+1) \omega_1 - \omega_{p+1}$ and $\bar \phi_p : = (p+1) \omega_{d-1} - \omega_{d-p-1}$. Equation~\eqref{eqn:opp inv and cartan proj} implies that  
$$
 \bar\phi_p(\kappa(g)) = \phi_p(\kappa(g^{-1}))
 $$
 for all $g \in \SL(d,\Rb)$, so we have 
$$
\delta^{\bar \phi_p}(\Gamma) = \delta^{ \phi_p}(\Gamma) 
$$
for every subgroup $\Gamma < \SL(d,\Rb)$. 

We use the following result of Pozzetti--Sambarino--Wienhard. 

\begin{theorem}[{\cite[Theorem A]{PSW2023}}]\label{thm:PSW}

   Suppose $\Gamma < \SL(d,\Rb)$ is a $\Psf_1$-Anosov group acting strongly irreducibly on $\Rb^d$ and  on $\wedge^{p+1} \Rb^d$ for some $p \le d-2$. If $\Lambda_1(\Gamma)$ is a Lipschitz $p$-manifold, then 
$$
\delta^{\phi_p}(\Gamma) = \delta^{\bar \phi_p}(\Gamma) =1.
$$
\end{theorem}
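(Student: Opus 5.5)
The plan is to realize the $p$-dimensional Lebesgue (Hausdorff) measure on the Lipschitz manifold $\Lambda_1(\Gamma)$ as a coarse Patterson--Sullivan measure of dimension $1$ for the functional $\phi_p = (p+1)\omega_1 - \omega_{p+1}$. Then we read off the critical exponent from the general fact that for an Anosov (more generally transverse) group the dimension of a PS-measure on the limit set is the critical exponent. The second equality $\delta^{\bar\phi_p}(\Gamma)=\delta^{\phi_p}(\Gamma)$ is immediate from $\bar\phi_p(\kappa(g)) = \phi_p(\kappa(g^{-1}))$, as observed just before the statement. So everything reduces to showing $\delta^{\phi_p}(\Gamma)=1$.

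\textbf{Step 1 (local Jacobian).} By Rademacher's theorem, $\Lambda_1(\Gamma)$ has a tangent $p$-plane $T_x \subset \Pb(\Rb^d)$ at $\mathcal{H}^p$-a.e.\ point $x$. Write $T_x = \Pb(\ell_x \oplus Z_x)$ with $\ell_x$ the line $x$. For $g \in \SL(d,\Rb)$, the Jacobian of the projective action restricted to $T_x$ at $x$ is
\[
\frac{\norm{\wedge^{p+1} g|_{\ell_x \oplus Z_x}}}{\norm{g|_{\ell_x}}^{p+1}}.
\]
The numerator is at most $\sigma_1(g)\cdots\sigma_{p+1}(g)$, and the denominator is comparable to $\sigma_1(g)^{p+1}$ when $x$ lies in the $\Psf_1$-shadow of $g$. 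So on shadows the Jacobian is bounded above by roughly $e^{-\phi_p(\kappa(g))}$. A matching lower bound requires the $(p+1)$-plane $\ell_x\oplus Z_x$ to be uniformly transverse to the kernel hyperplane of the limit of $\wedge^{p+1} g_n/\norm{\wedge^{p+1} g_n}$. Here the strong irreducibility of $\Gamma$ on $\wedge^{p+1}\Rb^d$ enters, via a genericity argument in the spirit of Lemma~\ref{lem:rho irr implies flag irr}. It should show that the set of $x$ where the tangent plane degenerates has measure zero.

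\textbf{Step 2 (shadow estimates).} Integrating Step 1 gives $\mathcal{H}^p\big(\Lambda_1(\Gamma)\cap \Oc_R(\gamma)\big) \asymp e^{-\phi_p(\kappa(\gamma))}$, uniformly in $\gamma \in \Gamma$. The Lipschitz condition ensures that the image of a fixed chart ball under $\gamma$ is a Lipschitz graph over the image plane. Then the normalized Lebesgue measure $\mu$ satisfies a shadow lemma with exponent $\phi_p$ and dimension $1$. Combining this with the Vitali covering (Lemma~\ref{lem:V covering}) and the fact that the shadows of elements of a given $\phi_p$-size cover $\Lambda_1(\Gamma)$ with bounded multiplicity, summing over $\{\gamma : \phi_p(\kappa(\gamma))\in [T,T+1]\}$ gives
\[
\#\{\gamma : \phi_p(\kappa(\gamma))\in [T,T+1]\} \asymp e^{T}.
\]
This yields $\delta^{\phi_p}(\Gamma)=1$. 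Alternatively, one shows that $\mu$ is a coarse $(\Gamma,\phi_p,1)$-PS measure on $\partial_\Delta X$ (or on a cocycle built from $\wedge^{p+1}$). One then gets $\delta^{\phi_p}(\Gamma)\le 1$ from an argument like Theorem~\ref{thm:lifting properties}(4), and $\delta^{\phi_p}(\Gamma)\ge 1$ from the covering count.

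\textbf{Main obstacle.} The hardest step is the uniform lower bound on the Jacobian in Step 1. The Lipschitz tangent planes exist only almost everywhere and need not vary continuously, so transversality of $\ell_x\oplus Z_x$ to the relevant hyperplanes in $\wedge^{p+1}\Rb^d$ must be obtained measure-theoretically rather than pointwise. I would first try to use cones of Lipschitz directions, which are continuous and $\Gamma$-equivariant, in place of tangent planes. I would then argue, via irreducibility on $\wedge^{p+1}\Rb^d$ and the contraction dynamics of Proposition~\ref{prop:NS dynamics}, that these cones avoid the degenerate hyperplanes for all but finitely many shadow configurations.
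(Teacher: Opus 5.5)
Your strategy (normalized $\mathcal{H}^p$ on $\Lambda_1(\Gamma)$ as a dimension-one PS-type measure for $\phi_p$, a Jacobian computed from tangent planes, shadow estimates, counting) is sound. The step you single out is exactly where the hypothesis on $\wedge^{p+1}\Rb^d$ has to enter. The paper gives no proof of its own: it cites \cite{PSW2023} and adds this hypothesis to bypass a flawed step there. But that step, the uniform bound $\mathcal{H}^p(\Lambda_1(\Gamma)\cap\Oc_R(\gamma))\geq c\,e^{-\phi_p(\kappa(\gamma))}$, is the real content of the theorem, and your plan for it does not work.

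\textbf{Why the cone plan fails.} A cone of Lipschitz directions at $x$ is an open set of $(p+1)$-planes through $\ell_x$ and is not canonically $\Gamma$-equivariant. Nothing prevents it from meeting the degenerate locus $\{P:\hat P\in\ker S\}$, where $\hat P$ is a Pl\"ucker vector. Note also that $\ker S$ need not be a hyperplane, since $\sigma_{p+1}/\sigma_{p+2}$ need not diverge along $\Gamma$. Pointwise uniform transversality of merely a.e.-defined tangent planes is neither available nor needed. Nullity of the degenerate set for each fixed $S$ would not by itself make the constants uniform in $\gamma$.

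\textbf{The missing idea.} What you need is an averaged statement. The map $x\mapsto[\hat T_x]$ is $\Gamma$-equivariant $\mathcal{H}^p$-a.e. Hence the smallest subspace of $\wedge^{p+1}\Rb^d$ containing $\hat T_x$ for a.e.\ $x$ is $\Gamma$-invariant, and by irreducibility it is everything. So $F(S):=\int_{\Lambda_1(\Gamma)}\norm{S\hat T_x}\,d\mathcal{H}^p(x)$ (with unit $\hat T_x$) is positive for every $S\neq 0$. Three facts then give the uniform bound after integrating the Jacobian over $\gamma^{-1}\Oc_R(\gamma)$: $F$ is continuous, the closure of $\{\wedge^{p+1}\gamma/\norm{\wedge^{p+1}\gamma}\}$ is compact, and $\mathcal{H}^p$ of small balls is uniformly small.

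In this paper's language, $-\log J_g(x)=\phi_p\big(B^{IW}_{\{\alpha_1,\alpha_{p+1}\}}(g,(x,\ell_x\oplus Z_x))\big)$ holds exactly. So the push-forward of normalized $\mathcal{H}^p$ under $x\mapsto(x,\ell_x\oplus Z_x)$ is a genuine $(\Gamma,\phi_p,1)$-Patterson--Sullivan measure on $\Fc_{\{\alpha_1,\alpha_{p+1}\}}$; this, not $\partial_\Delta X$, is the right target. Theorem~\ref{thm:Zdense PS} with $\theta=\{\alpha_1,\alpha_{p+1}\}$ has as hypothesis precisely strong irreducibility on $\Rb^d$ and $\wedge^{p+1}\Rb^d$. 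Together with Proposition~\ref{prop.shadowlemma} it gives the two-sided shadow estimate.

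\textbf{Second gap: passing from shadows to $\delta^{\phi_p}(\Gamma)$.} The tools you invoke do not apply as stated. The fact that a PS-measure's dimension equals the critical exponent, Theorem~\ref{thm:lifting properties}(4), and Lemma~\ref{lem:V covering} all need either the functional to lie in $\mfa^*_{\{\alpha_1,\alpha_{d-1}\}}$ or a well-behaved system. But $\phi_p$ involves $\omega_{p+1}$, and \ref{item:intersecting shadows} for the magnitude $\phi_p(\kappa(\gamma))$ would require coarse additivity of $\omega_{p+1}$, essentially $\Psf_{p+1}$-Anosov.

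So do the count with the $\Psf_1$-shadows $\Oc^{\{\alpha_1\}}_R(\gamma)$. Their Lebesgue measure is $\asymp e^{-\phi_p(\kappa(\gamma))}$: the lower bound comes from the above, since $\pi_{\{\alpha_1\}}\Oc^{\{\alpha_1,\alpha_{p+1}\}}_R(\gamma)\subset\Oc^{\{\alpha_1\}}_R(\gamma)$, and the upper bound is your pointwise Jacobian bound. These shadows satisfy \ref{item:intersecting shadows} for $\omega_1$ by Theorem~\ref{thm:transverse well-behaved}. If two of them meet with $\omega_1(\kappa(\gamma))\le\omega_1(\kappa(\eta))$, then coarse additivity of $\omega_1$ and subadditivity of $\omega_{p+1}$ give $\phi_p(\kappa(\eta))\geq\phi_p(\kappa(\gamma))+\phi_p(\kappa(\gamma^{-1}\eta))-C$.

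Since $\phi_p\geq p\,\alpha_1$ on $\mfa^+$ and $\alpha_1(\kappa(\cdot))$ is proper on $\Gamma$, this gives bounded multiplicity on each window $\{\gamma:\phi_p(\kappa(\gamma))\in[T,T+C']\}$. Covering holds because limit points are conical and $\phi_p(\kappa(\cdot))$ has bounded increments along geodesic rays in $\Gamma$. With these two repairs your count $\#\{\gamma:\phi_p(\kappa(\gamma))\in[T,T+C']\}\asymp e^{T}$, and hence $\delta^{\phi_p}(\Gamma)=1$, goes through.
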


\begin{remark} Theorem A in~\cite{PSW2023} does not include the assumption that $\Gamma$ acts strongly irreducibly on $\wedge^{p+1} \Rb^d$, however the proof of~\cite[Lemma 6.8]{PSW2023} uses ~\cite[Proposition 10.3]{Lab2006} which appears to be false. Assuming this irreduciblity allows one to avoid this citation in the proof. 

\end{remark}  

The functions $\phi_{\rm H}$, $\phi_p$, and $\bar \phi_p$ have the following relation.

\begin{lemma}\label{lem:comparing functionals} If $X = {\rm diag}(t_1,\dots, t_d) \in \mfa^+$, then 
$$
p \phi_{\rm H}(X) \geq \frac{1}{2} (\phi_p + \bar \phi_p)(X)
$$
with equality if and only if $t_2 = \cdots = t_{d-1}$. 
\end{lemma}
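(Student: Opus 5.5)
This is a direct computation in coordinates, so the plan is to express every functional in terms of $t_1,\dots,t_d$ and compare. Using $\omega_j(X)=t_1+\cdots+t_j$ and $t_1+\cdots+t_d=0$, we have $\omega_{d-1}(X)=-t_d$ and $\omega_{d-p-1}(X)=-(t_{d-p}+\cdots+t_d)$. This gives
$$
\phi_p(X)=(p+1)t_1-(t_1+\cdots+t_{p+1})=\sum_{j=2}^{p+1}(t_1-t_j),
$$
$$
\bar\phi_p(X)=-(p+1)t_d+(t_{d-p}+\cdots+t_d)=\sum_{j=d-p}^{d-1}(t_j-t_d).
$$
Also $p\phi_{\rm H}(X)=\tfrac p2(t_1-t_d)$.

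Next write $t_1-t_d=(t_1-t_j)+(t_j-t_d)$ for any index $j$. It then suffices to show
$$
\sum_{j=2}^{p+1}(t_1-t_j)+\sum_{j=d-p}^{d-1}(t_j-t_d)\le p(t_1-t_d).
$$
Pair the $k$-th term of the first sum, $t_1-t_{k+1}$, with the $k$-th term of the second sum, $t_{d-p-1+k}-t_d$, for $k=1,\dots,p$. Each pair equals
$$
(t_1-t_d)-(t_{k+1}-t_{d-p-1+k}).
$$
Since $p\le d-2$, we have $k+1\le d-p-1+k$, so the monotonicity $t_1\ge\cdots\ge t_d$ on $\mfa^+$ gives $t_{k+1}-t_{d-p-1+k}\ge0$. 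Summing over $k$ yields the inequality, with deficit
$$
p(t_1-t_d)-(\phi_p+\bar\phi_p)(X)=\sum_{k=1}^{p}\bigl(t_{k+1}-t_{d-p-1+k}\bigr)\ge0 .
$$
Dividing by $2$ gives $p\phi_{\rm H}(X)\ge\tfrac12(\phi_p+\bar\phi_p)(X)$.

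For the equality case, equality holds exactly when every term of the deficit vanishes, i.e.\ $t_{k+1}=t_{d-p-1+k}$ for $k=1,\dots,p$. If $t_2=\cdots=t_{d-1}$, every such term is clearly zero. Conversely, the $k=1$ term gives $t_2=t_{d-p}$, and the $k=p$ term gives $t_{p+1}=t_{d-1}$. By monotonicity these squeeze $t_2,\dots,t_{d-p}$ to be equal and $t_{p+1},\dots,t_{d-1}$ to be equal. When $p\ge1$ we have $p+1\le d-p$ or the two blocks overlap, and the chain $t_2\ge t_{p+1}\ge t_{d-p}\ge t_{d-1}$ then forces $t_2=\cdots=t_{d-1}$. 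When $p=1$ the single condition $t_2=t_{d-1}$ already suffices.

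The only step needing care is this equality case. I would verify directly, for small $p$ relative to $d$, that the extreme terms alone pin down the middle entries via the ordering. The statement implicitly assumes $p\ge1$: for $p=0$ both sides vanish identically, so the "only if" direction would fail there.
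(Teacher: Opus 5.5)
\textbf{The gap is in the converse of the equality case.} Your inequality is correct. It is the same direct coordinate computation as in the paper, and your shift-pairing of $t_{k+1}$ with $t_{d-p-1+k}$ is cleaner than the paper's pairing of $t_i$ with $t_{d+1-i}$, because every deficit term is visibly nonnegative.

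The problem is the clause ``$p+1\le d-p$ or the two blocks overlap''. It is not a dichotomy: both halves say $2p+1\le d$. When $2p+1>d$, the blocks $\{2,\dots,d-p\}$ and $\{p+1,\dots,d-1\}$ are disjoint. The middle link $t_{p+1}\ge t_{d-p}$ of your chain is then the reverse of what the ordering gives. For example, with $d=6$ and $p=3$, the $k=1$ and $k=p$ terms only yield $t_2=t_3$ and $t_4=t_5$; you also need the $k=2$ term, $t_3=t_4$.

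The repair is to use all $p$ terms. Set $g:=d-p-2$. Then
\[
p(t_1-t_d)-(\phi_p+\bar\phi_p)(X)=\sum_{k=1}^{p}\bigl(t_{k+1}-t_{k+1+g}\bigr)=\sum_{m=2}^{d-2}c_m\,(t_m-t_{m+1}),
\]
where $c_m$ is the number of $k\in\{1,\dots,p\}$ with $k+1\le m\le k+g$. When $p\ge 1$ and $g\ge 1$, every $c_m\ge 1$. So the deficit is at least $t_2-t_{d-1}$, and it vanishes exactly when $t_2=\cdots=t_{d-1}$.

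\textbf{When $g=0$, i.e.\ $p=d-2$, the claim itself is false.} Every deficit term is then $t_{k+1}-t_{k+1}=0$. Indeed, for $p=d-2$ one has $\phi_p+\bar\phi_p=(d-2)(\omega_1+\omega_{d-1})=2p\,\phi_{\rm H}$ identically on $\mfa$. So for $d\ge 4$ equality holds without $t_2=\cdots=t_{d-1}$: with $d=4$, $p=2$ and $X={\rm diag}(3,1,-1,-3)$, both sides equal $6$ although $t_2\neq t_3$.

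You correctly flagged the degenerate case $p=0$. But the ``only if'' direction actually requires $1\le p\le d-3$ when $d\ge 4$. Unlike $p=0$, the bad case $p=d-2$ lies inside the stated range $p\le d-2$. It is also the case that matters in the later rigidity theorem.

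The paper's own proof has the same blind spot. It truncates the reflected sum at $r=\min\{\lfloor d/2\rfloor,p+1\}$. But the terms with both $i$ and $d+1-i$ in $\{2,\dots,p+1\}$ cancel, so the correct cutoff is $\min\{p+1,d-p-1\}$, which gives an empty sum when $p=d-2$. Your shift-pairing exposes this degeneracy at a glance, so you should state the lemma with the hypothesis $p\le d-3$.
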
 

\begin{proof} Note 
\begin{align*}
\frac{1}{2} & (\phi_p + \bar \phi_p)(X) = \frac{p+1}{2}( t_1 - t_{d}) -\frac{1}{2} ( t_1 + \cdots + t_{p+1} - t_{d-p} - \cdots - t_d) \\
& =  \frac{p}{2}( t_1 - t_{d}) - \frac{1}{2}(t_2 - t_{d-1}) - \frac{1}{2} (t_3-t_{d-2}) - \cdots - \frac{1}{2} (t_r - t_{d-r+1})
\end{align*} 
where $r := \min\left\{ \lfloor d/2 \rfloor, p+1\right\}$. So 
$$
p \phi_{\rm H}(X) \geq \frac{1}{2} (\phi_p + \bar \phi_p)(X)
$$
with equality if and only if $t_2=t_{d-1}$, $t_3=t_{d-2}$ ,..., and $t_r = t_{d-r+1}$. Since $t_2 \geq \cdots \geq t_{d-1}$ the result follows.  
\end{proof}

\subsection{Proof of Theorem~\ref{thm:Lipschitz limit set rigidity}} Suppose $\Gamma < \SL(d,\Rb)$ is a $\Psf_1$-Anosov group such that $\Lambda_1(\Gamma)$ is a Lipschitz $p$-manifold and $\Gamma$ acts strongly irreducibly on $\Rb^d$ and  on $\wedge^{p+1} \Rb^d$. By  \cite[Corollary 2.20]{BCLS2015}, $\Ga$ has semisimple Zariski closure, and hence each $\Phi_{\alpha}(\Ga)$ has semisimple Zariski closure. This implies that  $\Ga$ satisfies the first line of Theorem \ref{thm:convexity body} with 
$$
\theta = \{\alpha_1, \alpha_{d-1}\} \quad \text{and} \quad \Theta = \{ \alpha_1, \alpha_{p+1}, \alpha_{d-p-1}, \alpha_{d-1}\}.
$$
Further, $\phi_{\rm H} \in \mfa_\theta^*$ and $\phi_p, \bar \phi_p \in \mfa_\Theta^*$.

Theorem~\ref{thm:PSW} implies that 
$$
\delta^{\phi_p}(\Gamma) = \delta^{\bar\phi_p}(\Gamma) =1.
$$
Then Theorem \ref{thm:convexity body} and Lemma~\ref{lem:comparing functionals} imply that 
$$
 \delta^{\phi_{{\rm H}} }(\Gamma)= p \delta^{p \phi_{{\rm H}} }(\Gamma) \leq p \cdot 1=p
$$
with equality if and only if 
\begin{equation}\label{eqn:equality of length}
\phi_p(\lambda(\gamma)) =  \bar\phi_p(\lambda(\gamma))=p \phi_{{\rm H}}(\lambda(\gamma))
\end{equation} 
for all $\gamma \in \Gamma$.

The backward direction of the theorem is clear, so suppose for the rest of the section that $ \delta^{\phi_{{\rm H}} }(\Gamma) = p$.  Then Lemma~\ref{lem:comparing functionals} and Equation~\eqref{eqn:equality of length} imply that 
\begin{equation}\label{eqn:equality of middle eigenvalues}
\lambda_2(\gamma) = \cdots = \lambda_{d-1}(\gamma)
\end{equation} 
for all $\gamma \in \Gamma$. 

\begin{lemma} $\lambda_2(\gamma) = \cdots = \lambda_{d-1}(\gamma)=1$ for all $\gamma \in \Gamma$. \end{lemma}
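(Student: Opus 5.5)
We want to show that, under Equation~\eqref{eqn:equality of middle eigenvalues}, the common middle eigenvalue modulus $\lambda_2(\gamma)$ is $1$ for every $\gamma \in \Gamma$. The plan is to turn the equality of middle eigenvalues into a homomorphism to $\Rb$ and then use Zariski density of $\Gamma$ in its semisimple Zariski closure to force that homomorphism to vanish.

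\textbf{Setup.} Write $\ell(\gamma) := \log \lambda_2(\gamma)$ and let $\Hsf$ denote the Zariski closure of $\Gamma$. By \cite[Corollary 2.20]{BCLS2015} (as already used above), $\Hsf$ is semisimple. After passing to a finite index subgroup, which does not affect the statement for Jordan projections of powers, we may assume $\Hsf$ is connected. By Benoist's results \cite{Benoist_properties}, the closed cone generated by $\lambda(\Gamma)$ equals the limit cone of $\Hsf$, and this cone has nonempty interior in $\lambda(\Asf_{\Hsf})$, the Jordan projections of a Cartan subgroup of $\Hsf$.

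\textbf{Transfer to $\Hsf$.} The condition $t_2 = \cdots = t_{d-1}$ on $\lambda(\gamma)$ is a linear condition on $\mfa$. Since it holds on $\lambda(\Gamma)$, it holds on the cone generated by $\lambda(\Gamma)$. Because this cone has nonempty interior in the linear span of $\lambda(\Asf_{\Hsf}) \subset \mfa$, the condition holds on that whole span, and hence for every element of $\Hsf$. In particular, every $h$ in a split Cartan subgroup $\Asf_{\Hsf}$ of $\Hsf$, conjugated to be diagonal, has eigenvalue moduli of the form $(e^{a}, e^{b}, \dots, e^{b}, e^{c})$ up to ordering. Here $b = \ell(h)$, and the map $h \mapsto b$ is (after choosing the ordering on a Weyl chamber) a linear functional on $\mfa_{\Hsf}$.

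\textbf{Using semisimplicity.} The key point is that this functional is forced to vanish. The Weyl group of $\Hsf$ acts on $\mfa_{\Hsf}$ with no nonzero fixed vectors, because $\Hsf$ is semisimple. The multiset of eigenvalues of $h$ is Weyl invariant, so the functional $b$ is determined Weyl-invariantly: the eigenvalue appearing with multiplicity $d-2$ is intrinsic as long as $d - 2 > 2$. This gives a Weyl-invariant linear functional, which must therefore be zero.

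An alternative route avoids Weyl groups. The map $\gamma \mapsto \ell(\gamma)$ is additive on commuting elements, and $\det = 1$ gives $\log\lambda_1 + (d-2)\ell + \log\lambda_d = 0$. One then uses the fact that $\Hsf$, being semisimple, contains elements conjugate to their inverses up to the Weyl group action. The opposition involution symmetry would give $\ell(\gamma^{-1}) = \ell(\gamma)$, while the eigenvalue multiset of $\gamma^{-1}$ gives $\ell(\gamma^{-1}) = -\ell(\gamma)$. Together these force $\ell = 0$.

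\textbf{Main obstacle.} The hardest step is rigorously extending the linear identity from $\lambda(\Gamma)$ to all of $\Hsf$ via Benoist's limit cone theorem, and then justifying the vanishing in the small cases $d \le 4$. There the multiplicity argument is ambiguous, so I would fall back on the $\gamma^{-1}$ symmetry argument. For $d = 3$ that argument degenerates, since the middle eigenvalue satisfies $\lambda_1\lambda_2\lambda_3 = 1$. In that case I would instead use the $\Psf_1$-Anosov and irreducibility structure to run Benoist density directly on the functional $\log\lambda_2$ restricted to the limit cone.
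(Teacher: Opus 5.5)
Your proposal never uses Equation~\eqref{eqn:equality of length}, and that is the gap. You try to derive $\lambda_2(\gamma)=1$ from Equation~\eqref{eqn:equality of middle eigenvalues} and semisimplicity of the Zariski closure alone. That implication is false when $d=3$, which is in scope with $p=1$. There the condition $\lambda_2=\cdots=\lambda_{d-1}$ is vacuous. Consider the image of a non-Fuchsian Hitchin representation into $\SL(3,\Rb)$: it is Zariski dense, $\Psf_1$-Anosov, strongly irreducible on $\Rb^3$ and $\wedge^2\Rb^3$, and has a $\Cc^1$ (hence Lipschitz) circle as limit set. Yet $\lambda_2(\gamma)\neq 1$ for many $\gamma$, since its limit cone has nonempty interior in $\mfa^+$.

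So your $d=3$ fallback (``Benoist density on $\log\lambda_2$ using the Anosov and irreducibility structure'') cannot succeed. What excludes such examples is exactly the hypothesis $\delta^{\phi_{\rm H}}(\Gamma)=p$, which enters only through \eqref{eqn:equality of length}.

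Your $\gamma^{-1}$ argument is also unjustified. The identity $\ell(\gamma^{-1})=-\ell(\gamma)$ is right, but nothing gives $\ell(\gamma^{-1})=\ell(\gamma)$ unless the opposition involution of $\Hsf$ is trivial. That triviality is essentially the conclusion you are heading towards.

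For $d\ge 5$, and with a little care for $d=4$, your Weyl-group argument can be made rigorous. At a generic point of the interior of the limit cone, some restricted weight of $\Hsf$ on $\Rb^d$ has multiplicity $\ge d-2$. It is the unique such weight, hence $W$-invariant, hence $0$. But this is much heavier than necessary and still leaves $d=3$ open.

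The missing step is a direct computation.
\begin{itemize}
\item Since $p+1\le d-1$, equal middle eigenvalues give $\phi_p(\lambda(\gamma))=p\log\lambda_1(\gamma)-p\log\lambda_2(\gamma)$.
\item Also $p\,\phi_{\rm H}(\lambda(\gamma))=\frac p2\bigl(\log\lambda_1(\gamma)-\log\lambda_d(\gamma)\bigr)$.
\item Equating these via \eqref{eqn:equality of length} gives $\log\lambda_2(\gamma)=\frac12\bigl(\log\lambda_1(\gamma)+\log\lambda_d(\gamma)\bigr)$.
\item Substitute into $\log\lambda_1(\gamma)+(d-2)\log\lambda_2(\gamma)+\log\lambda_d(\gamma)=0$. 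This yields $\frac d2\bigl(\log\lambda_1(\gamma)+\log\lambda_d(\gamma)\bigr)=0$, hence $\log\lambda_2(\gamma)=0$.
\end{itemize}
This is the paper's argument. It works uniformly for all $d\ge 3$ and needs no Zariski closure or Weyl group input.
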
 

\begin{proof} Fix $\gamma \in \Gamma$. Then Equations~\eqref{eqn:equality of length} and~\eqref{eqn:equality of middle eigenvalues} imply that 
$$
p \log \lambda_1(\gamma) -p\log \lambda_2(\gamma) = \frac{p}{2} ( \log \lambda_1(\gamma) - \log \lambda_d(\gamma)).
$$
So 
$$
 \log \lambda_2(\gamma)  = \frac{1}{2} ( \log \lambda_1(\gamma) + \log \lambda_d(\gamma)).
$$
On the other hand 
$$
\log \lambda_1(\gamma) + (d-2) \log \lambda_2(\gamma) + \log \lambda_d(\gamma) = 0. 
$$
So 
$$
\frac{d}{2} \left( \log \lambda_1(\gamma) + \log \lambda_d(\gamma) \right)= 0, 
$$
which implies that $\lambda_d(\gamma) = \lambda_1(\gamma)^{-1}$ and hence 
\begin{equation*}
\lambda_2(\gamma) = \cdots = \lambda_{d-1}(\gamma)=1. \qedhere
\end{equation*}
\end{proof} 

To finish the proof we use the following observation, which is a consequence of a theorem of Benoist and the basic theory of irreducible representations of semisimple Lie groups. 
 
\begin{observation}\label{obs:eigenvalue implies conjugacy} If $d \geq 3$, $G < \SL(d,\Rb)$ is a strongly irreducible $\Psf_1$-contracting  subgroup, and 
$$
\lambda_2(g) = \cdots = \lambda_{d-1}(g) = 1
$$
for all $g \in G$, then $G$ is conjugate to a Zariski dense subgroup of  $\mathsf{SO}_0(d-1,1)$ or  $\mathsf{SO}(d-1,1)$.

\end{observation}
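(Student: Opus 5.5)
The approach is to pass to the Zariski closure and then read off its structure from the eigenvalue condition. Replacing $G$ by a finite index subgroup (which keeps strong irreducibility and $\Psf_1$-contraction), let $\Hsf$ be the identity component of its Zariski closure. Since $G$ is strongly irreducible, $\Hsf$ acts irreducibly on $\Rb^d$, so $\Hsf$ is reductive with compact or trivial center. The determinant-one and irreducibility conditions then force $\Hsf$ to be semisimple up to a compact central torus; this torus does not affect moduli of eigenvalues. By Benoist's theorem on the asymptotic cone of Jordan projections, the closed cone generated by $\lambda(G)$ has nonempty interior in the Jordan-projection cone of $\Hsf$. The Jordan projections of $\Hsf$ are obtained from those of its Cartan subspace $\mfa_{\Hsf}$ via the representation. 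So the linear identities $\log\lambda_2=\cdots=\log\lambda_{d-1}=0$, which hold on $\lambda(G)$, extend by Zariski density/openness to all of $\lambda(\Hsf)$, and hence to all of $\mfa_{\Hsf}$.

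The next step is to analyze the weights of the representation $\Hsf \to \SL(d,\Rb)$ restricted to $\mfa_{\Hsf}$. For $H \in \mfa_{\Hsf}$ regular in the positive chamber, the eigenvalues are $e^{\chi(H)}$ for weights $\chi$. The condition says that all but the top and bottom moduli vanish: every weight except the highest $\chi_1$ and the lowest $\chi_d$ restricts to zero on the chamber, and hence is zero. Since $\Hsf$ is $\Psf_1$-contracting (this is inherited from $G$), the highest weight has multiplicity one, and likewise the lowest, because $\chi_d=-\chi_1$ is forced by the trace-zero condition. So the weight multiset is $\{\chi_1, 0^{(d-2)}, -\chi_1\}$. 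Weyl-group invariance of the weight set then forces $\chi_1$ to be fixed up to sign by the Weyl group, and the saturation of weights (weight strings) shows that $\mfa_{\Hsf}$ is one-dimensional. Thus $\Hsf$ has real rank one, and $\chi_1$ is the only positive weight, which is a multiple of the root or half-root.

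From here the task is to identify $\Hsf$. For a rank one simple group, the restricted roots are $\pm\beta$ and possibly $\pm2\beta$. The representation has weights only $\pm\chi_1$ and $0$, so the root strings through $\chi_1$ must avoid $2\chi_1$. I expect this to force $\chi_1=\beta$ and $2\beta$ not to be a root, i.e.\ $\Hsf$ is locally $\mathsf{SO}(n,1)$, acting on $\Rb^d$ by a representation whose only weights are $\pm\beta$ (each with multiplicity one) and $0$. One then checks that such an irreducible representation is the standard one, so $n=d-1$. This can be argued via the highest weight theory: the highest weight $\beta$ of the standard representation determines the representation, and for $\mathsf{SU}(n,1)$, $\mathsf{Sp}(n,1)$, and $F_4^{-20}$ the minimal proximal representations carry weights $\pm\beta/2$ or have larger nonzero weight sets. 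Hence $\Hsf$ is conjugate to $\mathsf{SO}_0(d-1,1)$, and it preserves the corresponding quadratic form. Since the finite-index subgroup normalizes $\Hsf$, the whole group $G$ normalizes it and so lies in its normalizer, which in $\SL(d,\Rb)$ is contained in $\mathsf{SO}(d-1,1)$ up to scalars. This gives the stated conclusion.

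The main obstacle is the classification step: excluding the other rank one groups and non-standard representations using only the weight multiset. The first thing I would try is the observation that the zero weight space and Weyl symmetry determine the representation through $\mathfrak{sl}_2$-triples. Every root vector maps $V_{\chi_1}$ into $V_{\chi_1+\beta'}$, which must be zero unless $\beta'=-\chi_1$ or $-2\chi_1$. This pins down the $\mathfrak{so}(d-1,1)$ structure. The hypothesis $d\ge3$ ensures that the zero weight space is nontrivial.
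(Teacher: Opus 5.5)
Your reductions up to real rank one are sound and parallel the paper's: Zariski closure, Benoist's limit cone theorem to push $\lambda_2=\cdots=\lambda_{d-1}=1$ to the whole Cartan subspace of $\Hsf$, then rank one. The cleanest justification of rank one is that all weights lie on $\Rb\chi_1$, so $\ker\chi_1\subset\mfa_{\Hsf}$ acts by $0$ and must vanish by faithfulness.

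The genuine gap is the identification step, which you only state as an expectation. Write $\chi_1=c\beta$ and normalize the coroots by $\beta(H_\beta)=2=2\beta(H_{2\beta})$. The $\mathfrak{sl}_2$-strings for $\beta$ give $c\in\{1/2,1\}$. The case $c=1/2$ dies because $V_0\neq 0$ (this is where $d\geq 3$ enters) would be a proper submodule. The remaining case is $c=1$ with $2\beta$ a root, i.e.\ $\mathsf{SU}(n,1)$, $\mathsf{Sp}(n,1)$, $\mathsf{F}_4^{-20}$. The mechanism you propose cannot exclude it:
\begin{itemize}
\item $\mathfrak{g}_{-\beta}$ maps $V_\beta\to V_0\to V_{-\beta}$ and $\mathfrak{g}_{-2\beta}$ maps $V_\beta\to V_{-\beta}$, and all of these are permitted;
\item the eigenvalue patterns $(2,0,\dots,0,-2)$ of $H_\beta$ and $(1,0,\dots,0,-1)$ of $H_{2\beta}$ are both consistent with $\mathfrak{sl}_2$-theory.
\end{itemize}
So the weight multiset $\{\beta,0^{(d-2)},-\beta\}$ does not by itself force $\mathfrak{so}(d-1,1)$. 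Your sentence ``I expect this to force $\chi_1=\beta$ and $2\beta$ not to be a root'' is precisely the content of the observation. Your side claim is also inaccurate: $\pm\beta/2$ can never be weights for these groups, since $\beta/2$ has eigenvalue $1/2$ under $H_{2\beta}$. In any case, a statement about \emph{minimal} proximal representations would not suffice.

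The missing ingredient must use the module structure, not just the weights. One way to close it:
\begin{itemize}
\item Let $v^+$ span $V_\beta$, so $\mathfrak{m}v^+=0$ and $\mathfrak{g}_\beta v^+=0$. Let $\langle X,Y\rangle=-\mathrm{Kill}(X,\vartheta Y)$, with $\vartheta$ the Cartan involution.
\item If $Z_1,Z_2\in\mathfrak{g}_{-\beta}$ are orthogonal, then $(\vartheta Z_1)Z_2v^+=[\vartheta Z_1,Z_2]v^+$ is a multiple of $\langle Z_1,Z_2\rangle v^+=0$.
\item So for the $\mathfrak{sl}_2$-triple through $\vartheta Z_1$, the vector $Z_2v^+\in V_0$ lies in a trivial summand, whence $Z_1Z_2v^+=0$.
\item Hence $(Z_1,Z_2)\mapsto Z_1Z_2v^+$ is a multiple of $\langle\cdot,\cdot\rangle$, in particular symmetric. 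Therefore $\mathfrak{g}_{-2\beta}v^+=[\mathfrak{g}_{-\beta},\mathfrak{g}_{-\beta}]v^+=0$.
\item This contradicts $\mathfrak{sl}_2$-theory, since $v^+$ is a highest weight vector of weight $1$ for the $2\beta$-triple.
\end{itemize}
With this supplied, the rest of your route works and gives a purely algebraic proof. The proximal representation of $\mathfrak{so}(n,1)$ with highest weight $\beta$ is the standard one, so $n=d-1$. The normalizer in $\SL(d,\Rb)$ lies in $\mathsf{SO}(d-1,1)$ because $d\geq 3$ rules out $g^{T}Qg=-Q$.

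The paper instead identifies $X=\Hb^m$ geometrically, via the derivative of a loxodromic at its fixed point under the equivariant embedding $\partial X\hookrightarrow\Pb(\Rb^d)$, and then quotes highest-weight theory for $\PO_0(m,1)$. Be aware that this derivative step silently needs the same fact. Under the hypothesis, the derivative on $T_{\xi(x^+)}\Pb(\Rb^d)$ has moduli $\lambda_1^{-1}$ and $\lambda_1^{-2}$. So the non-real-hyperbolic pattern $e^{-\ell},e^{-2\ell}$ is excluded only once one knows the $e^{-2\ell}$ direction of $T\partial X$ does not land in the $\lambda_d$-eigenline, which is again the $c=1$ configuration.
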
 

Delaying the proof of the observation, we finish the proof of the theorem. Observation~\ref{obs:eigenvalue implies conjugacy} implies that $\Gamma$ is conjugate into $\mathsf{SO}(d-1,1)$. Next consider the Klein-Beltrami model $\Hb^{d-1} \subset \Pb(\Rb^d)$ of real hyperbolic $(d-1)$-space. Then $\mathsf{SO}(d-1,1) \rightarrow \Isom(\Hb^d)$ is a finite cover and the limit set $\Lambda_1(\Gamma) \subset \Pb(\Rb^d)$ coincides with the hyperbolic limit set of the image of $\Gamma$ in $\Isom(\Hb^d)$. Then, since $\Lambda_1(\Gamma)$ is a Lipschitz $p$-manifold, ~\cite[Theorem 1.3]{Kapovich2009} implies that $\Gamma$ preserves and acts cocompactly on  a totally geodesic copy of $\Hb^{p+1}$ inside $\Hb^{d-1}$. Since $\Gamma$ is strongly irreducible, we must have $\Hb^{p+1}=\Hb^{d-1}$. Hence $p=d-2$ and $\Gamma$ is a uniform lattice in $\mathsf{SO}(d-1,1)$. 
\qed

\subsection{Proof of Observation~\ref{obs:eigenvalue implies conjugacy}} Let $G'$ be the image of $G$ in $\PSL(d,\Rb)$, let $\mathsf{H}$ denote the Zariski closure of $G'$ in $\PSL(d,\Rb)$, and let $\mathsf{H}^0 < \mathsf{H}$ denote the connected component of the identity. By~\cite[Lemma 2.18]{BCLS2015}, $\mathsf{H}^0$ is a connected semisimple Lie group with trivial center and no compact factors. By a theorem of Benoist~\cite{Benoist_properties}, 
$$
\lambda_2(h) = \cdots = \lambda_{d-1}(h) = 1
$$
for all $h \in \mathsf{H}^0$. Thus $\mathsf{H}^0$ is a rank one non-compact simple group. Let $X$ be the symmetric space associated to $\mathsf{H}^0$ and let $\rho : \mathsf{H}^0 \rightarrow \Isom(X)$ be the induced map. Since $\mathsf{H}^0$ has trivial center, $\rho$ induces an isomorphism between $\mathsf{H}^0$ and $\Isom_0(X)$, the connected component of the identity in $\Isom(X)$. Further, $X$ is a negatively curved symmetric space, the geodesic boundary has an $\Isom(X)$-invariant smooth structure, and there exists a $\rho^{-1}$-equivariant smooth embedding $\xi : \partial X \hookrightarrow \Pb(\Rb^d)$ of the boundary of $X$ (for details about the construction of $\xi$, see for instance~\cite[Section 4]{ZhuZimmerII}). 

\begin{lemma} $X=\Hb^m$ is the real hyperbolic $m$-space, $m = \dim X$. \end{lemma}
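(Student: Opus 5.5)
The plan is to use the classification of rank one non-compact simple Lie groups. Up to isogeny, $\mathsf{H}^0$ is one of $\mathsf{SO}_0(m,1)$, $\mathsf{SU}(m',1)$, $\mathsf{Sp}(m',1)$, or $F_4^{-20}$, so $X$ is real, complex, quaternionic, or octonionic hyperbolic space. The task is to rule out the non-real cases. The input is the eigenvalue condition $\lambda_2(h)=\cdots=\lambda_{d-1}(h)=1$ for all $h\in\mathsf{H}^0$, together with the irreducibility of the inclusion $\mathsf{H}^0 \hookrightarrow \PSL(d,\Rb)$. Here $G$ is strongly irreducible, so $\mathsf{H}^0$ acts irreducibly on $\Rb^d$ (lifting to a finite cover of $\mathsf{H}^0$ in $\SL(d,\Rb)$ if needed).

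First fix a Cartan subspace $\mathfrak{a}_{\mathsf H}=\Rb H_0$ of $\mathsf{H}^0$. The restricted roots are $\pm\beta$, with possibly $\pm 2\beta$ as well; $\pm 2\beta$ occurs exactly in the non-real cases. Next consider the weights of the irreducible representation $\Rb^d$ restricted to $\mathfrak{a}_{\mathsf H}$. The eigenvalue hypothesis says these weights take only the values $\chi, 0, -\chi$, where $\chi$ is the highest weight. The top and bottom weights each occur with multiplicity one, since the eigenvalue list $\lambda_1>\lambda_2=\cdots=\lambda_{d-1}=1>\lambda_d$ gives only one eigenvalue of each extreme size.

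The key step is a weight-string argument. Irreducibility implies $V=\mathcal{U}(\mathfrak n^-)v_\chi$, where $v_\chi$ spans the highest weight space. The weight spaces are therefore obtained from $v_\chi$ by applying root vectors. Every weight lies in $\chi-\Zb_{\ge0}\beta$ and in $\{\chi,0,-\chi\}$, so $\chi$ is an integer multiple of $\beta$.
\begin{itemize}
\item If $\chi=\beta$, then $\mathfrak g_{-\beta}v_\chi=V_0$ and $\mathfrak g_{-\beta}V_0=V_{-\chi}$, while $\mathfrak g_{-2\beta}$ maps $V_\chi$ to $V_{-\chi}$.
\item If $\chi=2\beta$, the weights $\pm\beta$ are forbidden, so $\mathfrak g_{-\beta}$ kills $v_\chi$. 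Then $V_0=\mathfrak g_{-2\beta}v_\chi$, and we need to show this forces a contradiction.
\end{itemize}
The cleaner route is to compare with the known minimal representations. For $\mathsf{SU}(m',1)$ with $m'\ge 2$, the lowest-dimensional real representations are $\Cb^{m'+1}$ and the adjoint. Their weights are $\{\pm\beta,0\}$ plus $\pm 2\beta$ terms, which produce at least two distinct nonzero middle eigenvalue moduli. Instead of a case list, one can argue directly: the $\mathsf{K}$-stabilizer $\mathsf{M}$ acts on $V_0$, and the highest weight vector is fixed by the minimal parabolic. Then $\xi(\partial X)$ is the orbit of $[v_\chi]$, so $\partial X \cong \mathsf{K}/\mathsf{M}$ embeds as the orbit. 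Using the sizes, $\dim V = 2+\dim V_0$ with $V_0=\mathfrak g_{-\beta}v_\chi\oplus\mathfrak g_{-2\beta}v_\chi$ in the $\chi=2\beta$ case.

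The contradiction should come from the $\mathfrak{sl}_2$-triple attached to $\beta$. Take $E\in\mathfrak g_{\beta}$ and $F\in\mathfrak g_{-\beta}$ with $[E,F]$ proportional to $H_0$. If $\chi=2\beta$ and $\mathfrak g_{-\beta}v_\chi=0$, then $v_\chi$ is a highest weight vector of $\mathfrak{sl}_2$-weight $\chi(H_\beta)=2\langle 2\beta,\beta\rangle/\langle\beta,\beta\rangle=4\neq 0$ that is killed by $F$, which is impossible. Hence $\chi=\beta$, so $\mathfrak g_{-2\beta}v_\chi$ has weight $-\beta$, which is not allowed. Therefore either $\mathfrak g_{-2\beta}=0$, giving the real hyperbolic case, or $\mathfrak g_{-2\beta}$ kills $v_\chi$.

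The main obstacle is excluding the second alternative. The plan is an $\mathfrak{sl}_2$ argument for the triple attached to $2\beta$. Its semisimple element has eigenvalue $\chi(H_{2\beta})=2\langle\beta,2\beta\rangle/\langle 2\beta,2\beta\rangle=1\neq0$ on $v_\chi$. A vector of nonzero $\mathfrak{sl}_2$-weight that is also killed by the positive root vector $\mathfrak g_{2\beta}$ (it is a highest weight vector) cannot be killed by $\mathfrak g_{-2\beta}$. Hence $\mathfrak g_{2\beta}=0$, so $\mathsf{H}^0$ is locally $\mathsf{SO}_0(m,1)$ and $X=\Hb^m$.
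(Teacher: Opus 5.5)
You correctly reduce to $\chi=\beta$; the step you then use to exclude the non-real rank one groups is false, so those cases are never excluded. (Your $\mathfrak{sl}_2$-argument at $\beta$ also makes the unexplained case split $\chi\in\{\beta,2\beta\}$ unnecessary: $Fv_\chi\neq 0$ has weight $\chi-\beta$, which, since $\chi\in\Zb_{\ge 1}\beta$, must be $0$.) The error is the next sentence. Once $\chi=\beta$, the vector $\mathfrak{g}_{-2\beta}v_\chi$ has weight $-\beta=-\chi$. That is the lowest weight, and it is allowed with multiplicity one; your own first bullet records that $\mathfrak{g}_{-2\beta}$ maps $V_\chi$ to $V_{-\chi}$. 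So your $\mathfrak{sl}_2$-computation at $2\beta$ correctly shows $\mathfrak{g}_{-2\beta}v_\chi\neq 0$ but gives no contradiction. The vectors $v_\chi$ and $F'v_\chi$ simply span a standard $2$-dimensional module of that $\mathfrak{sl}_2$, just as $v_\chi,Fv_\chi,F^2v_\chi$ span a $3$-dimensional module at $\beta$. Everything you check is consistent with a hypothetical representation of $\mathsf{SU}(n,1)$, $\mathsf{Sp}(n,1)$ or $F_4^{-20}$ with weights $\beta,0,-\beta$ and $\dim V_{\pm\beta}=1$. In that configuration a loxodromic element would contract $\xi(\partial X)$ at its attracting fixed point at two rates, as on a complex hyperbolic boundary: $\lambda_1^{-1}$ along $\mathfrak{g}_{-\beta}v_\chi\subset V_0$ and $\lambda_1^{-2}$ along $\mathfrak{g}_{-2\beta}v_\chi\subset V_{-\chi}$. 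So neither individual real-root $\mathfrak{sl}_2$-modules nor eigenvalue moduli can exclude it. The paper's geometric route, via derivatives at fixed points and Mostow's description of the rank one boundaries, likewise has to rule out exactly this case.

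The missing ingredient comes from $\mathfrak{m}$.
\begin{itemize}
\item Since $\mathfrak{m}$ lies in a compact subalgebra, it acts by skew-symmetric operators for a suitable inner product. It also preserves the line $V_\chi$, so $\mathfrak{m}v_\chi=0$.
\item Take $\mathfrak{t}\subset\mathfrak{m}$ maximal abelian. Then $\mathfrak{a}\oplus\mathfrak{t}$ is a Cartan subalgebra of $\mathfrak{h}$, and $v_\chi$ is a weight vector in $V\otimes\Cb$ of weight $\nu$ with $\nu|_{\mathfrak{a}}=\chi$ and $\nu|_{\mathfrak{t}}=0$. Being a weight of a finite-dimensional module, $\nu$ is integral for the roots of $\mathfrak{h}_{\Cb}$.
\item In each of the three non-real cases, the roots $\alpha$ of $\mathfrak{h}_{\Cb}$ restricting to $\beta$ have the same length as the real root $2\beta$. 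Thus $\langle\alpha,\alpha\rangle=4\langle\beta,\beta\rangle$, while $\langle\nu,\alpha\rangle=\langle\chi,\beta\rangle$.
\item For $\chi=\beta$ this gives $2\langle\nu,\alpha\rangle/\langle\alpha,\alpha\rangle=\tfrac12\notin\Zb$, a contradiction.
\end{itemize}
Hence $\chi$ must be an even multiple of $\beta$, the same parity constraint as in the Cartan--Helgason theorem. Combined with your correct step that $\mathfrak{g}_{-\beta}v_\chi\neq 0$ forces $\chi=\beta$, this eliminates the complex, quaternionic and octonionic cases.
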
 

\begin{proof} Suppose $\gamma \in \Isom(X)$ is loxodromic, i.e. $\gamma$ has no fixed points in $X$ and has two fixed points $x^\pm$ in $\partial X$. Then the eigenvalue condition implies that all eigenvalues of the derivative $d(\gamma)_{x^\pm} : T_{x^\pm} \partial X \rightarrow T_{x^\pm} \partial X$ have the same modulus. From the description of the negatively curved symmetric spaces in~\cite[Chapther 19]{Mostow_book}, this is only possible if $X$ is a real hyperbolic space. 
\end{proof} 

Now we can identify $\Isom(X)$ with $\PO(m,1)$ and view $\rho^{-1}$ as an  irreducible representation of $\PO_0(m,1)$, the connected component of the identity in $\PO(m,1)$. It then follows from the eigenvalue condition and the theory of highest weights (see for instance~\cite[Lemma 10.4]{ZhangZimmer}) that $m=d-1$ and $\mathsf{H}^0 = \rho^{-1}(\PO_0(d-1,1))$ is conjugate to $\PO_0(d-1,1)$. So, after conjugating, we can assume that $\mathsf{H}^0  = \PO_0(d-1, 1)$.

Next let $\widehat{\Hsf}$ be the normalizer of $\PO_0(d-1,1)$ in $\PGL(d,\Rb)$ and let $\tau : \widehat{\Hsf} \rightarrow \Aut( \PO_0(d-1,1))$ be the map induced by conjugation.  By Schur's lemma, $\tau$ is injective. Further,  $\tau|_{\PO(d-1,1)}$ is onto. Hence $\mathsf{H} < \widehat{\Hsf} = \PO(d-1,1)$. This implies that  $G$ is  a Zariski dense subgroup of  $\mathsf{SO}_0(d-1,1)$ or  $\mathsf{SO}(d-1,1)$.
\qed


\section{Representations of mapping class groups and boundary maps} \label{section:MCG}


In this section, we apply our theory to representations of mapping class groups.
Let $S$ be a connected orientable surface of finite type with negative Euler characteristic. Its mapping class group $\Mod(S)$ is the group of isotopy classes of orientation-preserving homoeomorphisms on $S$.

Let $\PML$ denote the projective space of measured laminations on $S$, on which the mapping class group $\Mod(S)$ naturally acts. The space $\PML$ is homeomorphic to a sphere and admits a Lebesgue measure class. We show that representations of $\Mod(S)$ transfer this measure structure on $\PML$ to partial flag manifolds.

\begin{theorem} \label{thm:PML boundary map body}
   Suppose $\rho : \Mod(S) \to \Gsf$ is a representation such that $\rho(\Mod(S))$ is $\Psf_{\theta}$-contracting and strongly $(\Phi_{\alpha})_{\alpha \in \theta}$-irreducible. Then there exists a unique ${\rm Leb}$-a.e.\ defined $\rho$-equivariant map
   $$
   f : \PML \to \Fc_{\theta}.
   $$
   Moreover,  $f(x) \in \La_{\theta}^{\rm concon}(\rho(\Mod(S)))$ for ${\rm Leb}$-a.e.\ $x \in \PML$. 
\end{theorem}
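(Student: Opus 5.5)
The plan is to deduce this from Theorem~\ref{thm:boundary map}. We need a well-behaved Patterson--Sullivan system on $\PML$ with measure in the Lebesgue class, whose conical limit set has full measure and whose action is amenable. Then apply the theorem to $\rho$.

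\textbf{Step 1: the PS-system.} Fix a basepoint $X_0$ in Teichm\"uller space $\Tc(S)$, with the Teichm\"uller metric $d_{\Tc}$. Take $\mu$ to be the Thurston measure class, normalized as the probability measure on $\PML$ obtained by restricting Thurston measure to the set of laminations with extremal length at most one at $X_0$; this lies in the Lebesgue class. The structure is as follows.
\begin{itemize}
\item The cocycle is $\sigma(g,x)=\tfrac12\log\big(\mathrm{Ext}_{X_0}(g^{-1}x)/\mathrm{Ext}_{X_0}(x)\big)$, up to sign conventions. It is an honest cocycle, and $\mu$ is a $(\sigma,6g-6+2n)$-PS measure since Thurston measure is homogeneous of that degree.
\item The magnitude is $\norm{g}_\sigma=d_{\Tc}(X_0,gX_0)$.
\item The shadows $\Oc_R(g)$ are the endpoints of Teichm\"uller rays from $X_0$ passing within $R$ of $gX_0$.
\end{itemize}

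\textbf{Step 2: the axioms.} Property~\ref{item:coycles are bounded} follows from Kerckhoff's formula, since $|\sigma(g,x)|\le d_{\Tc}(X_0,g^{-1}X_0)$. Property~\ref{item:almost constant on shadows} is the usual comparison of extremal length along rays through $gX_0$.

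Shadows are only well behaved in the thick part, so the hierarchy is chosen to reflect this. Let $\mathscr{H}(R)$ be the set of $g$ such that the Teichm\"uller geodesic $[X_0,gX_0]$ has a terminal segment of length at least $R$ in a fixed thick part $\Tc_{\epsilon}$. Then Properties~\ref{item:intersecting shadows} and~\ref{item:diam goes to zero ae} follow from contraction and thin-triangle behaviour of thick Teichm\"uller geodesics (Minsky, Rafi). Property~\ref{item:diam goes to zero ae} also uses that $\mu$-a.e.\ lamination is uniquely ergodic, by Masur--Veech.

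For Properties~\ref{item:empty Z intersection} and~\ref{item:baire}: the complements $M\smallsetminus g_n^{-1}\Oc_{R_n}(g_n)$ Hausdorff-converge to a set $Z$ consisting of laminations that fail to be transverse to a fixed limiting lamination. This $Z$ is a closed proper subset with empty interior. Minimality of $\Mod(S)$ acting on the uniquely ergodic locus and density of pseudo-Anosov fixed points then let us move any point of $Z$ off a finite union $\bigcup h_iZ$.

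\textbf{Step 3: conical limit set and amenability.} Conicality of a.e.\ point follows from Masur's recurrence theorem: for Lebesgue-a.e.\ $x$, the Teichm\"uller ray toward $x$ returns infinitely often to a fixed compact subset of moduli space. Hence there are $g_n\in\mathscr{H}(n)$ with $x\in\Oc_R(g_n)$, since the ray stays thick for time $n$ near suitable return times. Amenability of the action on $(\PML,{\rm Leb})$ is Hamenst\"adt's theorem \cite{Hamenstadt_amenable}. Theorem~\ref{thm:boundary map} then gives existence and uniqueness of $f$.

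\textbf{Step 4: the concon statement.} This does not follow from part~(3) of Theorem~\ref{thm:boundary map}, because the hierarchy is not trivial. Instead, rerun its proof using mixed shadows restricted to $\mathscr{H}(R)$. Apply the Kochen--Stone lemma to the shadows of the thick elements $g\in\mathscr{H}(R)$ with $x\in\Oc_R(g)$, then use ergodicity (Theorem~\ref{thm:ergodicity KZ}). Alternatively, combine part~(1) of the theorem with Lemma~\ref{lem:endpoint in shadow} and Lemma~\ref{lem:diam decay}(3) to place $f(x)$ in $\Oc^\theta_R(\rho(\gamma\gamma_n))$ after enlarging $R$.

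The main obstacle is Step 2. Verifying Properties~\ref{item:intersecting shadows}, \ref{item:diam goes to zero ae} and~\ref{item:baire} needs the delicate thick-geodesic geometry of Teichm\"uller space together with the identification of the Hausdorff limits $Z$. I expect this is where most of the work goes, and where citing \cite{KimZimmer1} for parts of it would be most valuable.
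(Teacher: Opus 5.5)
Your overall architecture matches the paper's. You feed a well-behaved PS-system for $\Mod(S)$, with a Lebesgue-class measure, conull conical limit set and amenable action, into Theorem~\ref{thm:boundary map}. You then get the ``moreover'' part by rerunning the proof of its part~(3) with mixed shadows and Kochen--Stone.

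\textbf{The gap is in Steps 2 and 3.} There you build a new PS-system on $\PML$ itself and verify its axioms from scratch, but what you give are sketches and the decisive points are not established.
\begin{itemize}
\item For Properties~\ref{item:empty Z intersection} and~\ref{item:baire}, you assert that every Hausdorff limit $Z$ is the set of laminations not transverse to one limiting lamination. Teichm\"uller space is not hyperbolic, and rays from far-away basepoints can be forced through thin product regions. Identifying these limits is therefore exactly the delicate part, and nothing in your sketch proves it.
\item In Step 3, Masur's recurrence (infinitely many returns to a compact set) does not by itself give thick stretches of length $n$. You would need ergodicity of the Teichm\"uller flow plus a fellow-travelling argument to produce $g_n \in \mathscr{H}(n)$.
\item You cannot simply cite \cite{KimZimmer1} for parts of this. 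The axioms, especially \ref{item:empty Z intersection} and \ref{item:baire}, are statements about a specific compact space, and your system on $\PML$ is not the one treated there.
\end{itemize}

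\textbf{How the paper avoids this.} $\UE$ embeds in the Gardiner--Masur boundary $\partial_{GM}\Tc(S)$. \cite[Theorem 10.1]{KimZimmer1} already shows that $\Mod(S)$ acting on $\partial_{GM}\Tc(S)$ with the pushforward of ${\rm Leb}$ is a well-behaved PS-system with conull conical limit set. Since $\UE$ is ${\rm Leb}$-conull (Masur--Veech), a map defined a.e.\ there is a ${\rm Leb}$-a.e.\ defined map on $\PML$.

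A smaller point concerns amenability. Hamenst\"adt's theorem gives amenability on $\partial\Cc(S)$ for every quasi-invariant measure. You still have to transfer it, as the paper does, through the measure-forgetting map $\mathcal{FML} \to \partial\Cc(S)$, which is injective on $\UE$.

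\textbf{Step 4.} Your first route is the paper's, but its key input is missing. The fact that shadows from the non-trivial hierarchy satisfy the Kochen--Stone hypotheses is not supplied by the trivial-hierarchy argument; the paper takes it from \cite[Section 5.3]{Coulon_PS}.

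Your ``alternative'' route does not work. Part~(1) of Theorem~\ref{thm:boundary map}, together with $U_\theta(\rho(\ga\ga_n)) \in \Oc^\theta_R(\rho(\ga\ga_n))$ and $\diam \Oc^\theta_R(\rho(\ga\ga_n)) \to 0$, only says these shadows accumulate at $f(x)$. It does not place $f(x)$ in $\Oc^\theta_{R'}(\rho(\ga\ga_n))$ for any fixed $R'$: compare a sequence $g_n o$ converging horocyclically to a point of $\partial \Hb^2$, whose shadows shrink to that point without containing it.

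Conicality of $f(x)$ is precisely what the following are needed for: the mixed shadows $\Oc_R(\ga) \cap f^{-1}(\Oc^\theta_R(\rho(\ga)))$, the Mixed Shadow Lemma \cite[Theorem 6.2]{KimZimmer1}, the Kochen--Stone lemma, and ergodicity.
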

 
\begin{proof}
We now deduce Theorem \ref{thm:PML boundary map body} from  Theorem \ref{thm:boundary map} and the universal amenability of the $\Mod(S)$-action on the boundary of the curve graph $\Cc(S)$ of $S$, shown by Hamenst\"adt \cite{Hamenstadt_amenable}.  The curve graph $\Cc(S)$ is the simplicial graph whose vertices are isotopy classes of essential simple closed curves on $S$, and two of them are connected by an edge if they have disjoint representatives. There is a natural action of $\Mod(S)$ on $\Cc(S)$ by isometries, and Masur and Minsky showed that $\Cc(S)$ is Gromov hyperblic \cite{MM_CC1}. In particular, the $\Mod(S)$-action on $\Cc(S)$ continuously extends to the Gromov boundary $\partial \Cc(S)$, and Hamenst\"adt proved that the $\Mod(S)$-action on $\partial \Cc(S)$ is amenable with resepect to any quasi-invariant Borel measure.

We now relate the above discussion to $\PML$. Klarreich \cite{Klarreich_boundary} and Hamenst\"adt \cite{Hamenstadt_train} characterized $\partial \Cc(S)$ as the space of (unmeasured) filling geodesic laminations on $S$. Hence, denoting by $\mathcal{FML} \subset \PML$ the projective space of filling measured laminations on $S$, there exists a measure-forgetting map
$$
\pi : \mathcal{FML} \to \partial \Cc(S)
$$
which is continuous \cite[Lemma 3.12]{hamenstadt2009invariant}. In particular, on the space $\UE \subset \PML$ of uniquely ergodic measured laminations, whose elements have unique transverse measures up to scaling, the map $\pi$ is injective.

By Masur \cite{masur1982interval} and Veech \cite{veech1982gauss}, $\UE$ is ${\rm Leb}$-conull. Therefore, the universal amenability of the $\Mod(S)$-action on $\partial \Cc(S)$ implies that the amenabiilty of the $\Mod(S)$-action on $(\UE, {\rm Leb})$.

In addition, $\UE \subset \PML$ is also embedded in the Gardiner--Masur boundary $\partial_{GM} \Tc(S)$ \cite{gardiner1991extremal,miyachi2013teichmuller} of the Teichm\"uller space $\Tc(S)$ of $S$. In our earlier work \cite[Theorem 10.1]{KimZimmer1}, we showed that the $\Mod(S)$-action on $\partial_{GM} \Tc(S)$ equipped with the pushforward of ${\rm Leb}$ under the embedding $\UE \hookrightarrow \partial_{GM} \Tc(S)$ is a well-behaved Patterson--Sullivan system whose conical limit set is conull. Therefore, we can apply Theorem \ref{thm:boundary map} and then Theorem~\ref{thm:PML boundary map body} follows, except for the ``moreover'' part.

To show the ``moreover'' part, we first note that the hierarchy involved in the associated well-behaved PS-system is not trivial (see \cite[Section 10]{KimZimmer1} for explicit description of shadows and hierarchy), and hence the part (3) of Theorem \ref{thm:boundary map} does not apply directly. Nevertheless, it was shown in \cite[Section 5.3]{Coulon_PS} that its shadows induced from the hierarchy satisfy the Kochen--Stone Lemma (Lemma \ref{lem: KS BC lemma}). Hence, in the proof of the part (3) of Theorem \ref{thm:boundary map},  one can replace shadows with mixed shadows and deduce the desired property.
\end{proof}


\section{Gromov hyperbolic spaces with exponentially bounded geometry} \label{section:GH exp bdd}


In this section, we apply our framework to a Gromov hyperbolic space  when it has exponentially bounded geometry.

\begin{definition} \label{defn:exp bdd}
   A proper geodesic Gromov hyperbolic space $(X, \dist_X)$ is said to have \emph{exponentially bounded geometry} if there exist $a, r > 0$ such that for any $x \in X$ and $R > 0$, the radius $R$-ball $B_X(x, R) \subset X$ centered at $x$ can contain at most $a^R$ number of pairwise disjoint balls of radius $r$.
\end{definition}

In the rest of this section, we fix a proper geodesic Gromov hyperbolic space $(X, \dist_X)$ with exponentially bounded geometry, and a discrete subgroup $\Ga < \Isom(X)$ of isometries, i.e., the $\Ga$-action on $X$ is proper. In this case, the critical exponent $\delta_{X}(\Ga)$ of the Poincar\'e series
$$
s \mapsto \sum_{\ga \in \Ga} e^{-s \dist_X(o, \ga o)}
$$
is finite \cite[Proposition 1.29]{Kaimanovich2004}, for any fixed basepoint $o \in X$. 

We denote by $\partial X$ the Gromov boundary of $X$ and by $\La(\Ga) \subset \partial X$ the limit set of $\Ga$, the set of accumulation points of $\Ga o \subset X$. We further assume that $\# \La(\Ga) \ge 3$, that is, $\Ga$ is non-elementary. We then have
$$
0 < \delta_{X}(\Ga) < + \infty,
$$
and that the $\Ga$-action on $\La(\Ga) \subset \partial X$ is a minimal convergence action. 

The \emph{Busemann coarse-cocycle} $\beta: \Ga \times \La(\Ga) \to \Rb$ is defined by 
$$
\beta(\ga, x) := \limsup_{p \to x} \dist_X(\ga^{-1} o, p) - \dist_X(o, p).
$$
In \cite{Coornaert_PS}, Coornaert constructed a coarse $(\Ga, \beta, \delta_{X}(\Ga))$-Patterson--Sullivan measure in the sense of Equation \eqref{eqn.coarse PS meaasure intro}. 

As an application of our theory, we also show that representations of $\Ga$ admit measurable boundary maps.

\begin{theorem} \label{thm:GH exp bdd}
Suppose $(X, \dist_X)$ and $\Ga$ are as above, $\sum_{\ga \in \Ga} e^{-\delta_{X}(\Ga) \dist_X(o, \ga o)} = + \infty$, and $\mu$ is a coarse $(\Ga, \beta, \delta_{X}(\Ga))$-PS measure.
If $\rho : \Ga \to \Gsf$ is a representation such that $\rho(\Ga)$ is $\Psf_{\theta}$-contracting and strongly $(\Phi_{\alpha})_{\alpha \in \theta}$-irreducible, then there exists a unique $\mu$-a.e.\ defined $\rho$-equivariant map
$$
f:\La(\Ga) \to \Fc_{\theta}.
$$
Moreover, $f(x) \in \La_{\theta}^{\rm concon}(\rho(\Ga))$ for $\mu$-a.e.\ $x \in \La(\Ga)$.
\end{theorem}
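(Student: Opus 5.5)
The plan is to deduce Theorem~\ref{thm:GH exp bdd} directly from Theorem~\ref{thm:boundary map}, exactly as in the mapping class group case. The task is to produce a well-behaved PS-system on $\La(\Ga)$, with trivial hierarchy, whose conical limit set is $\mu$-conull, and on which the $\Ga$-action is amenable.

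\textbf{Step 1: the PS-system.} Take $M = \La(\Ga)$ with the visual metric, $\sigma = \beta$, the measure $\mu$, magnitudes $\norm{\ga}_\sigma := \dist_X(o, \ga o)$, and the classical shadows
$$\Oc_R(\ga) := \{ x \in \La(\Ga) : \text{some geodesic ray from } o \text{ to } x \text{ meets } B_X(\ga o, R)\}.$$
Our earlier work \cite{KimZimmer1} already verifies that this is a well-behaved PS-system with respect to the trivial hierarchy $\mathscr{H}(R) \equiv \Ga$; for Coornaert's measure the verification is routine. Properties \ref{item:coycles are bounded} and \ref{item:almost constant on shadows} are standard Busemann estimates. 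Property \ref{item:properness} follows from properness of the action. Properties \ref{item:shadow inclusion}, \ref{item:intersecting shadows}, and \ref{item:diam goes to zero ae} are thin-triangle facts. For \ref{item:empty Z intersection} and \ref{item:baire}, note that any Hausdorff limit $Z$ of $M \smallsetminus \ga_n^{-1}\Oc_{R_n}(\ga_n)$ with $R_n \to \infty$ is a single point of $\La(\Ga)$, namely the limit of $\ga_n^{-1} o$. Since $\Ga$ is non-elementary, $\La(\Ga)$ is infinite and the action is minimal. Hence for any $x$ and finitely many points $h_i Z$, some $g$ moves $x$ off this finite set.

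\textbf{Step 2: conical limit set of full measure.} Divergence of the Poincar\'e series at $\delta_X(\Ga)$ gives $\mu(\La^{\rm con}(\mathscr{H})) = 1$. This is the Hopf--Tsuji--Sullivan dichotomy in this coarse setting. It follows from the Shadow Lemma (Proposition~\ref{prop.shadowlemma}) combined with the Kochen--Stone Lemma (Lemma~\ref{lem: KS BC lemma}), as in \cite[Proof of Theorem 4.1]{KimZimmer1}, followed by ergodicity (Theorem~\ref{thm:ergodicity KZ}). The quasi-independence estimate needed for Kochen--Stone uses the intersecting shadows property \ref{item:intersecting shadows} and the divergence of the series.

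\textbf{Step 3: amenability and conclusion.} Kaimanovich \cite{Kaimanovich2004} shows that when $X$ has exponentially bounded geometry, the $\Ga$-action on $\partial X$ is amenable with respect to every quasi-invariant measure. In particular it is amenable on $(\La(\Ga), \mu)$. Theorem~\ref{thm:boundary map} then yields the map $f$. Its uniqueness is part~(2) of that theorem, and the ``moreover'' statement is part~(3), which applies because the hierarchy is trivial.

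The main obstacle is Step 1, specifically checking that the system of \cite{KimZimmer1} is really stated for an arbitrary coarse PS-measure and arbitrary proper hyperbolic $X$ rather than under extra hypotheses. If it is not, I would verify \ref{item:intersecting shadows} and \ref{item:diam goes to zero ae} by hand using $\delta$-thin triangles. The key point is that two intersecting $R$-shadows of $\alpha, \beta$ with $\dist(o,\alpha o) \le \dist(o,\beta o)$ force $\alpha o$ to lie within $R'$ of a geodesic $[o, \beta o]$.
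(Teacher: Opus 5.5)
Your proposal follows the paper's proof. The paper likewise takes the well-behaved PS-system with trivial hierarchy from \cite[Section 8]{KimZimmer1}, deduces $\mu(\La^{\rm con}(\mathscr{H}))=1$ from divergence by a Borel--Cantelli type argument (citing \cite[Proposition 7.1]{BCZZ_coarse}), uses Kaimanovich's amenability \cite{Kaimanovich2004}, and then applies Theorem~\ref{thm:boundary map}, with uniqueness and the ``moreover'' part coming from its parts (2) and (3).

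One step needs repair. In Step 2 you cannot use Theorem~\ref{thm:ergodicity KZ} to upgrade $\mu(\La^{\rm con}(\mathscr{H}))>0$ to full measure, because that theorem assumes $\mu(\La^{\rm con}(\mathscr{H}))=1$, which is exactly what you are proving. Instead, argue as follows. Suppose the $\Ga$-invariant Borel set $\La(\Ga) \smallsetminus \La^{\rm con}(\mathscr{H})$ had positive $\mu$-measure. The normalized restriction of $\mu$ to this set is again a coarse $(\Ga,\beta,\delta_X(\Ga))$-PS measure giving a well-behaved PS-system. The Shadow Lemma and Kochen--Stone argument would then give it positive conical mass, which is a contradiction. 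Alternatively, cite the divergence-implies-full-measure result directly, as the paper does.
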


\begin{proof} It is easy to verify that this coarse PS-measure gives rise to a well-behaved Patterson--Sullivan system with respect to the trivial hierarchy $\mathscr{H}(R) \equiv \Ga$, see~\cite[Section 8]{ KimZimmer1}. 

It is also a straightforward application of the Borel--Cantelli Lemma to verify that the assumption $\sum_{\ga \in \Ga} e^{-\delta_{X}(\Ga) \dist_X(o, \ga o)} = + \infty$ implies that 
$$
\mu(\La^{\rm con}(\mathscr{H})) = 1,
$$
see for instance \cite[Proposition 7.1]{BCZZ_coarse}. Moreover, Kaimanovich showed that the $\Ga$-action on $\La(\Ga)$ is amenable with respect to any $\Ga$-quasi-invariant Borel measure \cite[Theorem 3.15]{Kaimanovich2004}. Therefore, Theorem \ref{thm:boundary map} applies.

\end{proof}

\section{Proving everything claimed in the introduction} \label{section:deductions}

In this last section, we explain why all of the the statements in the introduction are true. We first note that Zariski dense subgroups of $\Gsf$ are strongly $(\Phi_{\alpha})_{\alpha \in \Delta}$-irreducible (Remark \ref{remark:Zdense irreducible}) and $\Psf_{\Delta}$-contracting (see Section \ref{sec: existence of boundary maps}).

If $\Ga < \Gsf$ is a non-elementary  $\Psf_{\theta}$-Anosov group, then $\Ga$ is $\Psf_{\theta}$-transverse. Moreover in this case, $\La_{\theta}(\Ga) = \La_{\theta}^{\rm con} =  \La_{\theta}^{\rm concon}(\Ga)$. Hence, for any $\phi \in \fa_{\theta}^*$ with $\delta^{\phi}(\Ga) < + \infty$, the existence of a $(\Ga, \phi, \delta^{\phi}(\Ga))$-Patterson--Sullivan measure on $\La_{\theta}(\Ga)$ and the Shadow Lemma imply  $\sum_{\ga \in \Ga} e^{-\delta^{\phi}(\Ga) \phi(\kappa(\ga))} = + \infty$ \cite{Quint_PS,Sambarino_report}. 
\begin{itemize}
   \item Theorem \ref{thm:boundary map in intro} is a special case of Theorem \ref{thm:boundary map}.
   \item Theorem \ref{thm:Anosov existence uniqueness} follows from Theorem \ref{thm:lifting properties} and Proposition \ref{prop:embedding of partial flags}.
   \item 
Theorem \ref{thm:singular introduction} is a special case of Corollary \ref{cor:singularity between PS}.

\item 
Theorem \ref{thm:BMSergodic} is a special case of Theorem \ref{thm:ergodic dichotomy}, together with Equation \eqref{eqn:positive product meaasure}.

\item Theorem \ref{thm:extreme points} follows from Theorem \ref{thm:convexity body}.

\item 
Theorem \ref{thm:strict convexity intro} is a special case of Corollary \ref{cor:convexity general lin comb}.

\item 
Theorem \ref{thm:Lipschitz limit set rigidity intro} is proved as Theorem \ref{thm:Lipschitz limit set rigidity}.

\end{itemize}

\bibliographystyle{alpha}
\bibliography{geom}

\end{document}